\newtheorem{theorem}{Theorem}
\newtheorem{lemma}[theorem]{Lemma}
\newtheorem{proposition}[theorem]{Proposition}
\newtheorem{corollary}[theorem]{Corollary}
\theoremstyle{definition}
\theoremstyle{remark}
\newtheorem{remark}[theorem]{Remark}
\numberwithin{equation}{section}
\numberwithin{theorem}{section}
\def\A{{\mathcal A}}
\def\AA{{\mathbb A}}
\def\BD{G}
\def\C{{\mathbb C}}
\def\CC{{\mathcal C}}
\def\D{{\mathfrak d}}
\def\DD{{\mathcal D}}
\def\FF{{\mathcal F}}
\def\FFF{{\mathcal F}}
\def\ttF{{\tt F}}
\def\ttD{{\tt D}}
\def\G{{\mathcal G}}
\def\cGamma{\mathring{\Gamma}}
\def\H{\mathcal H}
\def\J{\mathcal J}
\def\L{{\mathcal L}}
\def\bL{{\mathbf L}} 
\def\M{{\mathcal M}}
\def\nuu{U}
\def\O{{\mathcal O}}
\def\P{{\mathcal P}}
\def\Q{{\mathbb Q}}
\def\cPi{\mathring{\Pi}}
\def\S{{\mathbf S}}
\def\UU{\overline{\A}}
\def\X{{\mathcal X}}
\def\Z{{\mathbb Z}}
\def\ZZ{{\mathcal Z}}
\def\bfG{\mathbf \Gamma}
\def\bfGr{\bfG^{{\rm r}}}
\def\bfGc{\bfG^{{\rm c}}}
\def\tbfGr{\tilde\bfG^{{\rm r}}}
\def\tbfGc{\tilde\bfG^{{\rm c}}}
\def\Gammar{{\Gamma^\er}}
\def\Gammac{{\Gamma^\ec}}
\def\Lo{{(\L^1)}}
\def\Ld{{(\L^2)}}
\def\nalo{{\left(\nabla_{\L}^1\right)}}
\def\nald{{\left(\nabla_{\L}^2\right)}}
\def\lgrado{{\left(\L^1\nabla_{\L}^1\right)}}
\def\lgradd{{\left(\L^2\nabla_{\L}^2\right)}}
\def\gradlo{{\left(\nabla_{\L}^1\L^1\right)}}
\def\gradld{{\left(\nabla_{\L}^2\L^2\right)}}
\def\badB{B^{\romon}}
\def\badC{B^{\romtw}}
\def\badBp{B^{\romth}}
\def\badG{B^{\romfo}}
\def\badD{\bar B^{\romon}}
\def\badE{\bar B^{\romtw}}
\def\badDp{\bar B^{\romth}}
\def\badF{\bar B^{\romfo}}
\def\b{\mathfrak b}
\def\ceta{\mathring{\eta}}
\def\cgamma{{\mathring{\gamma}}}
\def\cgammar{{\cgamma^\er}}
\def\cgammac{{{\cgamma^\ec}}}
\def\cxi{\mathring{\xi}}
\def\ea{{\rm a}}
\def\ec{{\rm c}}
\def\el{{\rm l}}
\def\er{{\rm r}}
\def\fy{\varphi}
\def\tfy{\tilde\fy}
\def\ttd{{\tt d}}
\def\ttf{{\tt f}}
\def\ttg{{\tt g}}
\def\ttw{{\tt w}}
\def\g{\mathfrak g}
\def\gammar{{\gamma^\er}}
\def\gammac{{\gamma^\ec}}
\def\gl{\mathfrak g\mathfrak l}
\def\h{\mathfrak h}
\def\ii{{\hat\imath}}
\def\jj{\hat\jmath}
\def\ml{{\mathfrak l}}
\def\n{\mathfrak n}
\def\one{\mathbf 1}
\def\pp{{\mathfrak p}}
\def\q{{\bf q}}
\def\sl{\mathfrak {sl}}
\def\wB{{\widetilde{B}}}
\def\x{{\bf x}}
\def\Diag{\operatorname{Diag}}
\def\End{\operatorname{End}}
\def\Id{{\operatorname {Id}}}
\def\Ima{{\operatorname {Im}}}
\def\Kil{\langle \cdot,\cdot\rangle}
\def\Mat{\operatorname{Mat}}
\def\Poi{{\{\cdot,\cdot\}}}
\def\Tr{\operatorname{Tr}}
\def\deg{{\operatorname{deg}}}
\def\diag{\operatorname{diag}}
\def\dim{\operatorname{dim}}
\def\dnabla{{\raisebox{2pt}{$\bigtriangledown$}}\negthinspace}
\def\romon{\mbox{\tiny\rm I}}
\def\romtw{\mbox{\tiny\rm II}}
\def\romth{\mbox{\tiny\rm III}}
\def\romfo{\mbox{\tiny\rm IV}}
\def\:{{:\ }}
\begin{document}

\title
{Plethora of cluster structures on $GL_n$}

\author{M. Gekhtman}

\address{Department of Mathematics, University of Notre Dame, Notre Dame,
IN 46556}
\email{mgekhtma@nd.edu}

\author{M. Shapiro}
\address{Department of Mathematics, Michigan State University, East Lansing,
MI 48823}
\email{mshapiro@math.msu.edu}

\author{A. Vainshtein}
\address{Department of Mathematics \& Department of Computer Science, University of Haifa, Haifa,
Mount Carmel 31905, Israel}
\email{alek@cs.haifa.ac.il}

\begin{abstract}
We continue the study of multiple cluster structures in the rings of regular functions on
$GL_n$, $SL_n$ and $\Mat_n$ that are compatible with Poisson—-Lie  and
Poisson-homogeneous structures. According to our initial conjecture, each class in the Belavin--Drinfeld classification of Poisson--Lie structures on semisimple complex group $\G$ corresponds to a cluster structure in $\O(\G)$. Here we prove this conjecture for a large subset of Belavin--Drinfeld (BD) data of $A_n$ type, which includes all the previously known examples. 
Namely, we subdivide all possible $A_n$ type BD data into oriented and non-oriented kinds. In the oriented case, we single out BD data satisfying a certain combinatorial condition that we call aperiodicity and prove that for any BD data of this kind there exists a regular cluster structure compatible with the corresponding Poisson--Lie bracket. In fact, we extend the aperiodicity condition to pairs of oriented BD data and prove a more general result that establishes an existence of a regular cluster structure on $SL_n$ compatible with a Poisson bracket homogeneous with respect to the right and left action of two copies of $SL_n$ equipped with two different Poisson-Lie brackets. If the aperiodicity condition is not satisfied, a
compatible cluster structure has to be replaced with a generalized cluster structure. We will address this situation in 
future publications. 
\end{abstract}

\subjclass[2010]{53D17,13F60}
\keywords{Poisson--Lie group,  cluster algebra, Belavin--Drinfeld triple}

\maketitle

\tableofcontents

\section{Introduction}

In this paper we continue the systematic study of multiple cluster structures in the rings of regular functions on
$GL_n$, $SL_n$ and $\Mat_n$  started in  \cite{GSVM, GSVPNAS, GSVMem}. It follows an approach developed and implemented 
in \cite{GSV1, GSV2, GSVb} for constructing cluster structures on algebraic varieties. 

Recall that given a complex algebraic Poisson variety $\left(\M, \Poi\right)$, a compatible cluster structure $\CC_\M$ on $\M$ 
is a collection of coordinate charts (called clusters) comprised of regular functions with simple birational 
transition maps between charts (called cluster transformations, see \cite{FZ1}) such that the logarithms of any two 
functions in the same chart have a constant Poisson bracket. Once found, any such chart can be used as a starting point, 
and our construction allows us to restore the whole $\CC_\M$, provided the arising birational maps preserve regularity.
Algebraic structures corresponding to $\CC_\M$ (the cluster algebra and the upper cluster algebra)
are closely related to the ring $\O(\M)$ of regular functions on $\mathcal{M}$. 
In fact, under certain rather mild conditions, $\O(\M)$ can be obtained by tensoring the upper cluster
algebra with $\C$, see \cite{GSVb}.

This construction was applied in \cite[Ch.~4.3]{GSVb} to double Bruhat cells in semisimple Lie groups
equipped with (the restriction of) the {\em standard\/} Poisson--Lie structure. It was shown that
the resulting cluster structure coincides with the one built in \cite{BFZ}. 
The standard Poisson--Lie structure is a particular case of Poisson--Lie structures corresponding to quasi-triangular
Lie bialgebras. Such structures are associated with solutions to the classical Yang--Baxter equation.
Their complete classification was obtained by Belavin and Drinfeld in \cite{BD}. Solutions are parametrized by the data that consists of a continuous and a discrete components. The latter, called the Belavin--Drinfeld triple, is defined in terms 
of the root system of the Lie algebra of the corresponding semisimple Lie group.
In \cite{GSVM} we conjectured that any such solution gives rise to a compatible cluster structure on 
this Lie group. This conjecture was verified in \cite{Eis} for $SL_5$ and proved in \cite{Eis1, Eis2} for the 
simplest non-trivial Belavin--Drinfeld triple in $SL_n$ and in \cite{GSVMem} for the Cremmer--Gervais case.

In this paper we extend these results to a wide class of Belavin--Drinfeld triples in $SL_n$. We define a subclass 
of {\em oriented\/} triples, see Section \ref{sec:combdata}, and encode the corresponding information in a combinatorial
object called a Belavin--Drinfeld graph. Our main result claims that the conjecture of \cite{GSVM} holds true whenever
the corresponding Belavin--Drinfeld graph is acyclic. In this case the structure of the Belavin--Drinfeld graph is mirrored in
the explicit construction of the initial cluster.
In fact, we have proved a stronger result: given two oriented 
Belavin--Drinfeld triples in $SL_n$ we define the graph of the pair, and if this graph possesses a certain 
acyclicity property then the Poisson bracket defined by the pair (note that it is not Poisson--Lie anymore) gives rise 
to a compatible cluster structure on $SL_n$. 

If the Belavin--Drinfeld graph has cycles then the conjecture of \cite{GSVM}
needs to be modified: one has to consider generalized cluster structures instead of the ordinary ones. We will address
Belavin--Drinfeld graphs with cycles in a separate publication.

In \cite{GY}, Goodearl and Yakimov developed a uniform approach for constructing cluster algebra structures in 
symmetric Poisson nilpotent algebras using sequences of Poisson-prime elements in chains of Poisson unique factorization
domains. These results apply to a large class of Poisson varieties, e.g., Schubert cells in Kac--Moody groups viewed as
Poisson subvarieties with respect to the standard Poisson-Lie bracket. It is worth pointing out, however, that the approach of  \cite{GY}, in its current form, does not seem to be applicable to the situation we consider here. This is evident from the fact that for cluster structures constructed in \cite{GY}, the cluster algebra and the corresponding upper cluster algebra always coincide. In contrast, as we have shown in \cite{GSVPNAS}, the simplest non-trivial Belavin--Drinfreld data 
in $SL_3$ results in a strict inclusion of the cluster algebra into  the upper cluster algebra.

The paper is organized as follows. Section \ref{sec:prelim} contains a concise description of necessary definitions and results on cluster algebras and Poisson--Lie groups. Section~\ref{sec:mainres} presents main constructions and results. The Belavin--Drinfeld graph and related combinatorial data are defined in Section~\ref{sec:combdata}. 
The same section contains the formulations of the main Theorems~\ref{mainth} and~\ref{genmainth}. 
An explicit construction of the initial cluster is contained in Section~\ref{thebasis} and summarized in 
Theorem~\ref{logcanbasis}. Section~\ref{sec:basis} is dedicated to the proof of this theorem. 
The quiver that together with the initial cluster defines the compatible cluster 
structure is built in Section \ref{thequiver}, see Theorem~\ref{quiver} whose proof is contained in Section~\ref{sec:quiver}. 
Section \ref{outline} outlines the proof of the main Theorems~\ref{mainth} and~\ref{genmainth}.
It contains, inter alia, Theorem~\ref{prototype} 
that enables us to implement the induction step in the proof of an isomorphism between the constructed upper cluster algebra and the ring of regular functions on $\Mat_n$. A detailed constructive proof of this isomorphism is the subject of 
Section~\ref{sec:induction}. Section~\ref{sec:regtor} is devoted to showing that cluster structures we constructed are regular and admit a global toric action.

Our research was supported in part by the NSF research grants DMS \#1362801 and DMS \#1702054 (M.~G.), NSF research grants DMS \#1362352 and DMS-1702115 (M.~S.), and ISF grants \#162/12 and \#1144/16 (A.~V.).  
While working on this project, we benefited from support of the following institutions and programs: 
Universit\'e Claude Bernard Lyon 1 (M.~S., Spring 2016), University of Notre Dame (A.~V., Spring 2016), 
Research in Pairs Program at the Mathematisches Forschungsinstitut Oberwolfach (M.~G., M.~S., A.~V., Summer 2016), 
Max Planck Institute for Mathematics, Bonn (M.~G.~and A.~V., Fall 2016), Bernoulli Brainstorm Program at EPFL, 
Lausanne (M.~G.~and A.~V., Summer 2017), Research in Paris Program at the Institut Henri Poincar\'e (M.~G., M.~S., A.~V., 
Fall 2017), Institute Des Hautes \'Etudes Scientifiques in  (M.~G.~and A.~V., Fall 2017), Mathematical Institute of the University of Heidelberg (M.~G., Spring 2017 and Summer 2018), Michigan State University (A.~V., Fall 2018). This paper was finished during the joint visit of the authors to the University of Notre Dame Jerusalem Global Gateway and the University of Haifa in December 2018. We are grateful to all these institutions for their hospitality and outstanding working conditions they provided. Special thanks are due to Salvatore Stella who pointed to a mistake in the original proof of
Theorem \ref{logcanbasis} and to Gus Schrader, Alexander Shapiro and Milen Yakimov for valuable discussions.

\section{Preliminaries} \label{sec:prelim}

\subsection{Cluster structures of geometric type and compatible Poisson brackets}
\label{sec:cluster}
Let $\FFF$ be the field of rational functions in $N+M$ independent variables
with rational coefficients. There are $M$  distinguished variables; they are denoted $x_{N+1},\dots,x_{N+M}$ and 
called {\em frozen\/}, or {\em stable\/}.  The $(N+M)$-tuple  $\x=(x_1,\dots,\allowbreak x_{N+M})$ is called a {\em cluster\/}, 
and its elements $x_1,\dots,x_N$ are called {\em cluster variables\/}. The {\it quiver\/} $Q$ is a directed 
multigraph on the vertices $1,\dots,N+M$ corresponding to all variables; the vertices corresponding to frozen 
variables are called frozen. An  edge going from a vertex $i$ to a vertex $j$ is denoted $i\to j$. The pair 
$\Sigma=(\x,Q)$ is called a {\em seed}.

Given a seed as above, the {\em adjacent cluster\/} in direction $k$, $1\le k\le N$, is defined by
$\x'=(\x\setminus\{x_k\})\cup\{x'_k\}$, where the new cluster variable $x'_k$ is given by the {\em exchange relation}
\begin{equation*}
x_kx'_k=\prod_{k\to i}x_i+\prod_{i\to k}x_i.
\end{equation*}

The {\em quiver mutation\/} of $Q$ in direction $k$ is given by the following three steps: (i) for any two-edge 
path $i\to k\to j$ in $Q$, $e(i,j)$ edges $i\to j$ are added, where $e(i,j)$ is the number of two-edge paths $i\to k\to j$;
(ii) every edge $j\to i$ (if it exists) annihilates with an edge $i\to j$; (iii) all edges $i\to k$ and all
edges $k \to i$ are reversed.
The resulting quiver is denoted $Q'=\mu_k(Q)$. It is sometimes convenient to represent the quiver by an
$N\times(N+M)$ integer matrix $B=B(Q)$ called the {\it exchange matrix}, where $b_{ij}$ is the number of arrows 
$i\to j$ in $Q$. Note that the  
principal part of $B$ is skew-symmetric (recall that the principal part of a rectangular matrix  
is its maximal leading square submatrix). 

Given a seed $\Sigma=(\x,Q)$, we say that a seed $\Sigma'=(\x',Q')$ is {\em adjacent\/} to $\Sigma$ (in direction
$k$) if $\x'$ is adjacent to $\x$ in direction $k$ and $Q'=\mu_k(Q)$. Two seeds are {\em mutation equivalent\/} if 
they can be connected by a sequence of pairwise adjacent seeds. 
The set of all seeds mutation equivalent to $\Sigma$ is called the {\it cluster structure\/} 
(of geometric type) in $\FFF$ associated with $\Sigma$ and denoted by $\CC(\Sigma)$; in what follows, 
we usually write  just $\CC$ instead. 

Let $\AA$ be a {\em ground ring\/} satisfying the condition
\[
\Z[x_{N+1},\dots,x_{N+M}]\subseteq\AA\subseteq\Z[x_{N+1}^{\pm1},\dots,x_{N+M}^{\pm1}]
\]
(we write $x^{\pm1}$ instead of $x,x^{-1}$).
Following \cite{FZ1, BFZ}, we associate with $\CC$ two algebras of rank $N$ over $\AA$: 
the {\em cluster algebra\/} $\A=\A(\CC)$, which 
is the $\AA$-subalgebra of $\FF$ generated by all cluster
variables in all seeds in $\CC$, and the {\it upper cluster algebra\/}
$\UU=\UU(\CC)$, which is the intersection of the rings of Laurent polynomials over $\AA$ in cluster variables
taken over all seeds in $\CC$. The famous {\it Laurent phenomenon\/} \cite{FZ2}
claims the inclusion $\A(\CC)\subseteq\UU(\CC)$. Note that originally upper cluster algebras were defined over the ring of Laurent polynomials 
in frozen variables. In \cite{GSVD} we proved that upper cluster algebras over subrings of this ring retain all properties 
of usual upper cluster algebras.
In what follows we assume that the ground ring is the polynomial ring in frozen variables, unless
explicitly stated otherwise. 

Let $V$ be a quasi-affine variety over $\C$, $\C(V)$ be the field of rational functions on $V$, and
$\O(V)$ be the ring of regular functions on $V$. Let $\CC$ be a cluster structure in $\FF$ as above.
Assume that $\{f_1,\dots,f_{N+M}\}$ is a transcendence basis of $\C(V)$. Then the map $\varphi: x_i\mapsto f_i$,
$1\le i\le N+M]$, can be extended to a field isomorphism $\varphi: \FF_\C\to \C(V)$,  
where $\FF_\C=\FF\otimes\C$ is obtained from $\FF$ by extension of scalars.
The pair $(\CC,\varphi)$ is called a cluster structure {\it in\/}
$\C(V)$ (or just a cluster structure {\it on\/} $V$), $\{f_1,\dots,f_{N+M}\}$ is called a cluster in
 $(\CC,\varphi)$.
Occasionally, we omit direct indication of $\varphi$ and say that $\CC$ is a cluster structure on $V$. 
A cluster structure $(\CC,\varphi)$ is called {\it regular\/}
if $\varphi(x)$ is a regular function for any cluster variable $x$. 
The two algebras defined above have their counterparts in $\FF_\C$ obtained by extension of scalars; they are
denoted $\A_\C$ and $\UU_\C$.
If, moreover, the field isomorphism $\varphi$ can be restricted to an isomorphism of 
$\A_\C$ (or $\UU_\C$) and $\O(V)$, we say that 
$\A_\C$ (or $\UU_\C$) is {\it naturally isomorphic\/} to $\O(V)$.

Let $\Poi$ be a Poisson bracket on the ambient field $\FFF$, and $\CC$ be a cluster structure in $\FFF$. 
We say that the bracket and the cluster structure are {\em compatible\/} if, for any 
cluster $\x=(x_1,\dots,x_{N+M})$,  one has $\{x_i,x_j\}=\omega_{ij} x_ix_j$,
where $\omega_{ij}\in\Q$ are constants for all $1\le i,j\le N+M$. The matrix
$\Omega^{\x}=(\omega_{ij})$ is called the {\it coefficient matrix\/}
of $\Poi$ (in the basis $\x$); clearly, $\Omega^{\x}$ is
skew-symmetric. The notion of compatibility  extends to Poisson brackets on $\FF_\C$ without any changes.

 Fix an arbitrary cluster $\x=(x_1,\dots,x_{N+M})$ and define a {\it local toric action\/} of rank $s$ at $\x$ as a map  
\begin{equation}
\x\mapsto \left ( x_i \prod_{\alpha=1}^s q_\alpha^{w_{i\alpha}}\right )_{i=1}^{N+M},\qquad
\q=(q_1,\dots,q_s)\in (\C^*)^s,
\label{toricact}
\end{equation}
where $W=(w_{i\alpha})$ is an integer $(N+M)\times s$ {\it weight matrix\/} of full rank. 
Let $\x'$ be another cluster in $\CC$, then the corresponding local toric action defined by the weight matrix $W'$
is {\it compatible\/} with the local toric action \eqref{toricact} if it commutes with the sequence of cluster transformations that takes $\x$ to $\x'$.
 If local toric actions at all clusters are compatible, they define a {\it global toric action\/} on $\CC$ called 
the $\CC$-extension of the local toric action \eqref{toricact}. 

\subsection{Poisson--Lie groups}
A reductive complex Lie group $\G$ equipped with a Poisson bracket $\Poi$ is called a {\em Poisson--Lie group\/}
if the multiplication map $\G\times \G \ni (X,Y) \mapsto XY \in \G$
is Poisson. Perhaps, the most important class of Poisson--Lie groups
is the one associated with quasitriangular Lie bialgebras defined in terms of  {\em classical R-matrices\/} 
(see, e.~g., \cite[Ch.~1]{CP}, \cite{r-sts} and \cite{Ya} for a detailed exposition of these structures).

Let $\g$ be the Lie algebra corresponding to $\G$ and 
$\Kil$ be an invariant nondegenerate form on $\g$. A classical R-matrix is an element $r\in \g\otimes\g$ that satisfies 
the {\em classical Yang--Baxter equation} ({\it CYBE\/}). 
The Poisson--Lie bracket on $\G$ that corresponds to $r$ can be written as
\begin{equation}\label{sklyabra}
\begin{aligned}
\{f^1,f^2\}_r &= \langle R_+(\nabla^L f^1), \nabla^L f^2 \rangle - \langle R_+(\nabla^R f^1), \nabla^R f^2 \rangle\\
&= \langle R_-(\nabla^L f^1), \nabla^L f^2 \rangle - \langle R_-(\nabla^R f^1), \nabla^R f^2 \rangle,
\end{aligned}
\end{equation} 
where $R_+,R_- \in \End \g$ are given by $\langle R_+ \eta, \zeta\rangle = \langle r, \eta\otimes\zeta \rangle$, 
$-\langle R_- \zeta, \eta\rangle = \langle r, \eta\otimes\zeta \rangle$ for any $\eta,\zeta\in \g$ and  
$\nabla^L$, $\nabla^R$ are the right and the left gradients of functions on $\G$ with respect to $\Kil$ 
defined by
\begin{equation*}
\left\langle \nabla^R f(X),\xi\right\rangle=\left.\frac d{dt}\right|_{t=0}f(Xe^{t\xi}),  \quad
\left\langle \nabla^L f(X),\xi\right\rangle=\left.\frac d{dt}\right|_{t=0}f(e^{t\xi}X)
\end{equation*}
for any $\xi\in\g$, $X\in\G$.

Following \cite{r-sts}, let us recall the construction of the {\em  Drinfeld double}. First, note that CYBE implies that
\begin{equation}\label{g_pm}
\g_+=\Ima(R_+),\qquad \g_-=\Ima(R_-)
\end{equation}
are subalgebras in $\g$. The double of $\g$ is 
$D(\g)=\g  \oplus \g$ equipped with an invariant nondegenerate bilinear form
$$
\langle\langle (\xi,\eta), (\xi',\eta')\rangle\rangle = \langle \xi, \xi'\rangle - \langle \eta, \eta'\rangle. 
$$
Define subalgebras $\D_\pm$ of $D(\g)$ by
\begin{equation}\label{ddeco}
\D_+=\{( \xi,\xi)\: \xi \in\g\}, \quad \D_-=\{ (R_+(\xi),R_-(\xi))\: \xi \in\g\},
\end{equation}
then $\D_\pm$ are isotropic subalgebras of $D(\g)$ and $D(\g)= \D_+ \dot + \D_-$. In other words,
$(D(\g), \D_+, \D_-)$ is {\em a Manin triple}. Then the operator $R_D= \pi_{\D_+} - \pi_{\D_-}$ can be used to define 
a Poisson--Lie structure on $D(\G)=\G\times \G$, the double of the group $\G$, via
\begin{equation}
\{f^1,f^2\}^D_r = \frac{1}{2}\left (\langle\langle R_D(\dnabla^L f^1), \dnabla{^L} f^2 \rangle\rangle 
- \langle\langle R_D(\dnabla^R f^1), \dnabla^R f^2 \rangle\rangle \right),
\label{sklyadouble}
\end{equation}
where $\dnabla^R$ and $\dnabla^L$ are right and left gradients with respect to $\langle\langle \cdot ,\cdot \rangle\rangle$.
Restriction of this bracket to $\G$ identified with the diagonal subgroup of $D(\G)$ (whose Lie algebra is $\D_+$) 
coincides with the Poisson--Lie bracket $\Poi_r$ on $\G$. Let $\DD_-$ be the subgroup of $D(\G)$ that corresponds to $\D_-$
Double cosets of $\DD_-$ in $D(\G)$ play an important role in the description of symplectic leaves in Poisson--Lie
groups $\G$ and $D(\G)$, see \cite{Ya}.

The classification of classical R-matrices for simple complex Lie groups was given by Belavin and Drinfeld in \cite{BD}.
Let $\G$ be a simple complex Lie group, $\Phi$ be the root system associated with its Lie algebra $\g$, $\Phi^+$ be the set of positive roots, and $\Pi\subset \Phi^+$ be the set of positive simple roots. 
A {\em Belavin--Drinfeld triple} $\bfG=(\Gamma_1,\Gamma_2, \gamma)$ (in what follows, a {\em BD triple\/})
consists of two subsets $\Gamma_1,\Gamma_2$ of $\Pi$ and an isometry $\gamma\:\Gamma_1\to\Gamma_2$ nilpotent in the 
following sense: for every $\alpha \in \Gamma_1$ there exists $m\in\mathbb{N}$ such that $\gamma^j(\alpha)\in \Gamma_1$ 
for $j\in [0,m-1]$, but $\gamma^m(\alpha)\notin \Gamma_1$.

 The isometry $\gamma$ yields an isomorphism, also denoted by $\gamma$, between Lie subalgebras $\g_{\Gamma_1}$ 
and $\g_{\Gamma_2}$ that correspond to $\Gamma_1$ and $\Gamma_2$. It is uniquely defined by the property 
$\gamma e_\alpha = e_{\gamma(\alpha)}$ for $\alpha\in \Gamma_1$, where $e_\alpha$ is the Chevalley generator corresponding to the
the root $\alpha$. The isomorphism $\gamma^*\: \g_{\Gamma_2} \to \g_{\Gamma_1}$ is defined as the adjoint to $\gamma$ with respect to the form $\Kil$. 
It is given by $\gamma^* e_{\gamma(\alpha)}=e_{\alpha}$ for $\gamma(\alpha)\in \Gamma_2$.
 Both $\gamma$ and $\gamma^*$ can be extended to maps of $\g$ to itself by applying first the orthogonal
projection on $\g_{\Gamma_1}$ (respectively, on $\g_{\Gamma_2}$) with respect to $\Kil$; clearly, the extended
maps remain adjoint to each other. Note that the restrictions of $\gamma$ and $\gamma^*$ to the positive and the negative nilpotent subalgebras $\n_+$ and $\n_-$ of $\g$ are Lie algebra homomorphisms 
of $\n_+$ and $\n_-$  to themselves, and $\gamma(e_{\pm\alpha})=0$
for all $\alpha\in\Pi\setminus\Gamma_1$.

 By the classification theorem, each classical R-matrix is equivalent to an R-matrix from a {\it Belavin--Drinfeld class\/}
defined by a BD triple $\bfG$. 
Following \cite{ESS}, we write down an expression for the members of this class:
\begin{equation}
\label{r-matrix}
r = \frac 1 2 \Omega_\h + s + \sum_{\alpha} e_{-\alpha}\otimes e_\alpha + 
\sum_{\alpha} e_{-\alpha}\wedge \frac \gamma {1-\gamma} e_\alpha; 
\end{equation}
here the summation is over the set of all positive roots, $\Omega_\h \in \h \otimes \h$ is given by  
$\Omega_\h= \sum h_\alpha \otimes \hat{h}_\alpha$ where $\{h_\alpha\}$ is the standard basis of the Cartan subalgebra $\h$,
$\{\hat h_\alpha\}$ is the dual basis with respect to the restriction of $\Kil$ to $\h$, 
and $s \in \h \wedge \h$ satisfies
\begin{equation}
\label{s-eq}
\left ( (1-\gamma) \alpha \otimes \one \right ) (2 s) = \left ( (1+\gamma) \alpha \otimes \one \right ) \Omega_\h
\end{equation}
for any $\alpha \in \Gamma_1$. Solutions to \eqref{s-eq} form a linear space of dimension $\frac{k_{\bfG}(k_{\bfG}-1)}2$ 
with $k_{\bfG}=|\Pi\setminus\Gamma_1|$. 
More precisely, define
\begin{equation}\label{smalltorus}
\h_\bfG=\{ h\in\h \ : \ \alpha(h)=\beta(h)\ \mbox{if}\ \gamma^j(\alpha)=\beta \quad\text{for some $j$}\}, 
\end{equation}
then $\dim\h_\bfG=k_\bfG$, and if $s'$ is a fixed solution of \eqref{s-eq}, then
every other solution has a form $s=s' + s_0$, where $s_0$ is an arbitrary element of $\h_\bfG\wedge\h_\bfG$.
The subalgebra $\h_\bfG$ defines a torus $\H_\bfG=\exp \h_\bfG$ in $\G$.

Let $\pi_{>}$, $\pi_{<}$ be projections of  
$\g$ onto $\n_+$ and $\n_-$, $\pi_\h$ be the projection onto $\h$. 
It follows from \eqref{r-matrix} that $R_+$ in \eqref{sklyabra} is given by
\begin{equation}
\label{RplusSL}
R_+=\frac1{1-\gamma}\pi_{>}-\frac{\gamma^*}{1-\gamma^*}\pi_{<}+ \left (\frac 1 2 + S\right )\pi_\h,
\end{equation}
where 
$S \in \End \h$ is skew-symmetric with respect to the restriction of $\Kil$ to $\h$ and 
satisfies  $\langle S h, h' \rangle = \langle  s, h \otimes h'\rangle$ for any $h,h' \in \h$  
and conditions  
\begin{equation}
\label{S-eq}
 S (1-\gamma) h_\alpha = \frac 1 2(1+\gamma) h_\alpha
\end{equation}
for any $\alpha \in \Gamma_1$, translated from \eqref{s-eq}.
 
For an R-matrix given by \eqref{r-matrix}, subalgebras $\g_\pm$ from \eqref{g_pm} are contained in parabolic subalgebras
$\pp_\pm$ of $\g$ determined by the BD triple: $\pp_+$ contains $\b_+$ and all the negative root spaces in 
$\g_{\Gamma_1}$, while $\pp_-$ contains $\b_-$ and all the positive root spaces in $\g_{\Gamma_2}$. Then one has
\begin{equation}
\label{parabolics}
\pp_+ = \g_+ \oplus \h_+, \qquad \pp_- = \g_- \oplus \h_-
\end{equation}
with $\h_\pm \subset \h$. An explicit description of subalgebras $\h_\pm$ can be found, e.g., in \cite[Sect.~3.1]{Ya}. 
Let $\ml_\pm$ denote the Levi component of $\pp_\pm $. Then $\ml_+=\g_{\Gamma_1}$, $\ml_-=\g_{\Gamma_2}$, and the Lie algebra isomorphism $\gamma$ described above restricts to  $\ml_+ \cap \g_+ \to \ml_- \cap \g_-$. This allows to describe 
the subalgebra $\D_-$ as 
\begin{multline}\label{d_-}
\D_-=\{ (\xi_+,\xi_-))\: \xi_\pm \in\g_\pm,\ \gamma(\pi_{\ml_+ \cap \g_+} \xi_+)= \pi_{\ml_- \cap \g_-}\xi_- \}\\
\subset \{ (\xi_+,\xi_-))\: \xi_\pm \in\g_\pm,\ \gamma(\pi_{\ml_+} \xi_+)= \pi_{\ml_-}\xi_- \},
\end{multline}
where $\pi_\cdot$ are the projections to the corresponding subalgebras.

In what follows we will use a Poisson bracket on $\G$ that is a generalization of the bracket \eqref{sklyabra}.
Let $r, r'$ be two classical R-matrices, and $R_+, R'_+$ be the corresponding operators, then we write
\begin{equation}
\{f^1,f^2\}_{r,r'} = \langle R_+(\nabla^L f^1), \nabla^L f^2 \rangle - \langle R'_+(\nabla^R f^1), \nabla^R f^2 \rangle.
\label{sklyabragen}
\end{equation} 
By \cite[Proposition 12.11]{r-sts}, the above expression defines a Poisson bracket, which is not Poisson--Lie unless $r=r'$,
in which case $\{f^1,f^2\}_{r,r}$ evidently coincides with $\{f^1,f^2\}_{r}$. The bracket \eqref{sklyabragen} defines a Poisson homogeneous structure on $\G$ with respect to the left and right multiplication by Poisson--Lie groups $(\G,\Poi_r)$ and
$(\G,\Poi_{r'})$, respectively.
The bracket on the Drinfeld double that corresponds to $\{f^1,f^2\}_{r,r'}$ 
is defined similarly to \eqref{sklyadouble} via
\begin{equation}
\{f^1,f^2\}^D_{r,r'} = \frac{1}{2}\left (\langle\langle R_D(\dnabla^L f^1), \dnabla{^L} f^2 \rangle\rangle 
- \langle\langle R'_D(\dnabla^R f^1), \dnabla^R f^2 \rangle\rangle \right).
\label{sklyadoublegen}
\end{equation}

\section{Main results and the outline of the proof}\label{sec:mainres}

\subsection{Combinatorial data and main results} \label{sec:combdata}
In this paper, we only deal with $\g = \sl_n$, and hence 
$\Gamma_1$ and $\Gamma_2$ can be identified with subsets of $[1,n-1]$. We assume that
$\bfG$ is {\it oriented\/}, that is, $i,i+1\in\Gamma_1$ implies $\gamma(i+1)=\gamma(i)+1$.

For any $i\in [1,n]$ put
\[
i_+=\min\{j\in [1,n]\setminus\Gamma_1\: \ j\ge i\}, \qquad
i_-=\max\{j\in [0,n]\setminus\Gamma_1\: \ j<i\}.
\]
The interval $\Delta(i)=[i_-+1,i_+]$ is called the {\it $X$-run\/} of $i$. Clearly, all distinct $X$-runs form a 
partition of $[1,n]$. The $X$-runs are numbered consecutively from left to right. For example, let
$n=7$ and  $\Gamma_1=\{1,2,4\}$, then there are four $X$-runs: $\Delta_1=[1,3]$, $\Delta_2=[4,5]$, 
$\Delta_3=[6,6]$ and $\Delta_4=[7,7]$. Clearly, $\Delta(2)=\Delta_1$, $\Delta(4)=\Delta_2$, etc.

In a similar way, $\Gamma_2$ defines another partition of $[1,n]$ into $Y$-runs $\bar\Delta(i)$.
For example, let in the above example $\Gamma_2=\{1,3,4\}$, then $\bar\Delta_1=[1,2]$, 
$\bar\Delta_2=[3,5]$, $\bar\Delta_3=[6,6]$ and $\bar\Delta_4=[7,7]$.

Runs of length one are called trivial. The map $\gamma$ induces a bijection on the sets of nontrivial $X$-runs and 
$Y$-runs: we say that $\bar\Delta_i=\gamma(\Delta_j)$ if there exists  
$k\in\Delta_j$ such that $\bar\Delta(\gamma(k))=\bar\Delta_i$. The inverse of the bijection $\gamma$ is denoted 
$\gamma^*$ (the reasons for this notation will become clear later). Let in the previous example $\gamma(1)=3, 
\gamma(2)=4, \gamma(4)=1$, then $\bar\Delta_1=\gamma(\Delta_2)$ and $\bar\Delta_2=\gamma(\Delta_1)$.

The {\it BD graph\/} $\BD_\bfG$ is defined as follows. The vertices of $\BD_\bfG$ are two copies of the set of 
positive simple roots identified with $[1,n-1]$. One of the sets is called the {\it upper\/} part of the graph, 
and the other is called the
{\it lower\/} part. A vertex $i\in\Gamma_1$ is connected with an {\it inclined\/} edge to the vertex 
$\gamma(i)\in\Gamma_2$. Finally, vertices $i$ and $n-i$ in the same part are connected with a {\it horizontal\/} edge. 
If $n=2k$ and $i=n-i=k$, the corresponding horizontal edge is a loop. The BD graph for the above example is shown
in~Fig.~\ref{fig:BDgraph} on the left. In the same figure on the right one finds the BD graph for the case of
$SL_6$ with $\Gamma_1=\{1,3,4\}$, $\Gamma_2=\{2,4,5\}$ and $\gamma\: i\mapsto i+1$.

\begin{figure}[ht]
\begin{center}
\includegraphics[height=3.5cm]{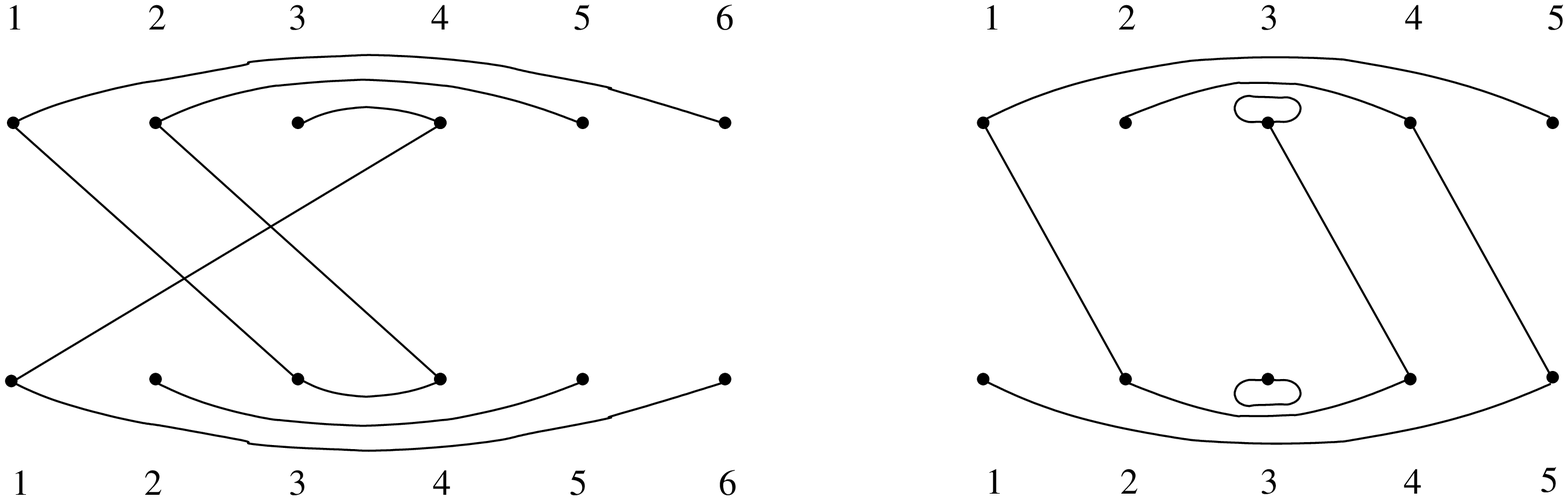}
\caption{BD graphs for aperiodic BD triples}
\label{fig:BDgraph}
\end{center}
\end{figure}

Clearly, there are four possible types of connected components in $\BD_\bfG$: a path, a path with a loop, a path with two loops,
and a cycle. We say that a BD triple $\bfG$ is {\em aperiodic\/} if each component in $\BD_\bfG$ is either a path or a path with a loop, and {\em periodic\/} otherwise. 
In what follows we assume that $\bfG$ is aperiodic. The case of periodic BD triples will be addressed in a separate paper.

\begin{remark}
\label{milen}
Let $w_0$ be the longest permutation in $S_n$.
Observe that horizontal edges in both rows of the BD graph can be seen as a depiction of the action of $\left ( -w_0\right)$ on the set of positive simple roots of $SL_n$.
Thus the BD graph can be used to analyze the properties of the map $w_0 \gamma w_0 \gamma^{-1}$. A map of this kind, with the pair $(w_0, w_0)$ replaced by a pair of elements of the Well group satisfying certain properties dictated by the BD triple in an arbitrary reductive Lie group, was defined in \cite[Sect.~5.1.1]{Ya} and utilized in the description of symplectic 
leaves of the corresponding Poisson--Lie structure.
\end{remark}

The main result of this paper states that the conjecture formulated in \cite{GSVM} holds for oriented aperiodic 
BD triples in $SL_n$. Namely, 

\begin{theorem}
\label{mainth}
For any oriented aperiodic Belavin--Drinfeld triple $\bfG=(\Gamma_1,\Gamma_2,\gamma)$ there exists a cluster structure
$\CC_\bfG$ on $SL_n$ such that

{\rm (i)}
the number of frozen variables is $2k_\bfG$, and the corresponding exchange matrix has a full rank;

{\rm (ii)} $\CC_\bfG$ is regular, and the corresponding upper cluster algebra $\UU_\C(\CC_\bfG)$ 
is naturally isomorphic to $\O(SL_n)$;

{\rm (iii)} the global toric action of $(\mathbb{C}^*)^{2k_\bfG}$ on $\CC_\bfG$ 
is generated by the action
of $\H_\bfG\times \H_\bfG$ on $SL_n$ given by $(H_1, H_2)(X) = H_1 X H_2$;

 {\rm (iv)} for any solution of CYBE that belongs to the Belavin--Drinfeld class specified  by $\bfG$, the corresponding Sklyanin bracket is compatible with $\CC_\bfG$;

{\rm (v)} a Poisson--Lie bracket on $SL_n$ is compatible with $\CC_\bfG$ only if it is a scalar multiple
of the Sklyanin bracket associated with a solution of CYBE that belongs to the Belavin--Drinfeld class specified  by $\bfG$.
\end{theorem}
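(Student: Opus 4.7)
The plan is to assemble the proof from the structural pieces the paper has already indicated, namely the explicit initial seed of Theorem \ref{logcanbasis}, the quiver of Theorem \ref{quiver}, and the inductive construction of Theorem \ref{prototype}. The overall strategy is first to pin down one distinguished seed $\Sigma_\bfG=(\x_\bfG,Q_\bfG)$ whose combinatorics is dictated by the BD graph $\BD_\bfG$, and only then verify, one by one, the five assertions of the theorem. Because aperiodicity guarantees that every component of $\BD_\bfG$ is a path (possibly with one loop), the initial cluster admits a clean description in terms of trailing minors adjusted by the map $\gamma$, which is the sole place where the hypothesis is used essentially.

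First, I would construct $\x_\bfG$ explicitly on $\Mat_n$ following Section~\ref{thebasis}: the entries are certain $\gamma$- and $\gamma^*$-twisted minors, with frozen variables indexed by $\Pi\setminus\Gamma_1$ on one side and $\Pi\setminus\Gamma_2$ on the other, totaling $2k_\bfG$. Compatibility of the resulting collection with the generalized bracket $\{\cdot,\cdot\}_{r,r'}$ from \eqref{sklyabragen} specializes, for $r=r'$, to the Sklyanin bracket. The verification is a direct, if lengthy, computation using the formula \eqref{RplusSL} for $R_+$; the aperiodicity hypothesis ensures that the geometric series $(1-\gamma)^{-1}$ terminates at every vertex, so each pairwise bracket turns out log-canonical. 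This establishes (iv) at the initial seed and, because compatibility is invariant under mutations, on all of $\CC_\bfG$. Attaching the quiver of Theorem~\ref{quiver} and checking that its principal part together with the frozen columns forms a matrix of full rank yields (i).

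The main obstacle will be (ii), the natural isomorphism $\UU_\C(\CC_\bfG)\cong \O(SL_n)$. My approach is to first establish the analogous statement for $\Mat_n$ and then restrict. The inclusion $\UU_\C(\CC_\bfG)\supseteq \O(\Mat_n)$ follows once every matrix entry is shown to be a Laurent polynomial in every cluster, which can be done by exhibiting, for each $(i,j)$, an explicit sequence of mutations from $\Sigma_\bfG$ expressing $x_{ij}$ in terms of the initial cluster. The reverse inclusion, and regularity of all cluster variables, is the deep part. Here the induction of Theorem~\ref{prototype} takes over: one builds $\CC_\bfG^{(n)}$ from $\CC_\bfG^{(n-1)}$ by a controlled sequence of mutations corresponding to adding one row and one column, and argues that each intermediate cluster variable is a regular function. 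The delicate point is handling simultaneously the two frozen blocks associated with $\Gamma_1$ and $\Gamma_2$, and ensuring that the twisted minors at the boundary of the induction remain expressible as polynomials in the matrix entries; the Schwartz-type identities satisfied by the $\gamma$-twisted minors are the key computational ingredient. Passing from $\Mat_n$ to $SL_n$ is then a standard localization at the determinant.

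Finally, (iii) will be established by writing out the weight matrix of the toric action of $\H_\bfG\times\H_\bfG$ on the initial cluster and verifying that it has full rank and commutes with every mutation in $\CC_\bfG$, which reduces to the equality of row and column degrees in each exchange relation. Part (v) follows by combining (i) with a rigidity argument: a compatible Poisson bracket is determined by its coefficient matrix $\Omega^{\x_\bfG}$, which, by full rank of the exchange matrix, is fixed up to a scalar by the compatibility constraint; among Poisson--Lie brackets on $SL_n$ the resulting one-parameter family must, by the Belavin--Drinfeld classification, sit inside the class specified by $\bfG$. The one genuinely new element in the argument, beyond assembling Theorems~\ref{logcanbasis}, \ref{quiver}, and \ref{prototype}, is the careful bookkeeping of the combinatorics attached to $\BD_\bfG$, which is why aperiodicity is precisely the hypothesis that makes the induction go through.
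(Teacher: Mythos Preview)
Your high-level assembly is right---parts (i), (iii), (iv), (v) do follow from Theorems~\ref{logcanbasis} and~\ref{quiver} together with full rank and the rigidity argument of \cite[Theorem~4.1]{GSVM}, essentially as you say. But your plan for (ii) contains a genuine misreading of Theorem~\ref{prototype}, and this matters.

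The induction in the paper is \emph{not} on $n$. Theorem~\ref{prototype} does not build $\CC_\bfG^{(n)}$ from $\CC_\bfG^{(n-1)}$ by adding a row and column; rather, it stays on the same $\Mat_n$ and reduces the Belavin--Drinfeld data. One removes a single root $\alpha$ from $\Gamma_1^\er$ (and $\gammar(\alpha)$ from $\Gamma_2^\er$), obtaining a smaller triple $\tbfGr$, and compares $\CC_{\bfGr,\bfGc}$ with $\CC_{\tbfGr,\bfGc}$ via an explicit birational change of variables $U$ on $\Mat_n$ and a monomial map $T$ between the initial clusters. The induction parameter is $|\Gamma_1^\er|+|\Gamma_1^\ec|$, with base case handled directly. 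An induction on $n$ would force you to relate BD triples in different root systems, and there is no natural map doing that.

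This also explains something your outline misses: the induction \emph{requires} the two-triple generality of Theorem~\ref{genmainth}. Starting from $(\bfG,\bfG)$ and deleting one root from the row triple lands you at $(\tbfGr,\bfG)$ with $\tbfGr\ne\bfG$, so you cannot stay inside the single-triple world. The paper proves Theorem~\ref{genmainth}(ii) first and then specializes. Two smaller corrections: you do not need the Laurent property in every cluster---by \cite[Theorem~3.11]{GSVD} and full rank it suffices to check the initial cluster and all clusters adjacent to it; and aperiodicity is not what makes $(1-\gamma)^{-1}$ terminate (nilpotency of $\gamma$ is part of the definition of a BD triple). Aperiodicity is used to guarantee that the maximal alternating paths in $G_\bfG$ are genuine paths, so that the block matrices $\L$ in Section~\ref{thebasis} are well-defined finite objects.
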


This result was established previously for the Cremmer--Gervais case (given by $\gamma: i\mapsto i+1$ for $1\le i\le n-2$) 
in \cite{GSVMem} and for all cases when $k_\bfG=n-2$ in \cite{Eis1, Eis2}.

In fact, the construction above is a particular case of a more general construction. 
Let $r^{\rm r}$ and  $r^{\rm c}$ be two classical R-matrices that correspond to BD triples 
$\bfGr=(\Gamma_1^\er,\Gamma_2^{\rm r}, \gammar)$ and  $\bfGc=(\Gamma_1^\ec,\Gamma_2^\ec, \gammac)$,  which we call the {\em row} and the {\em column} BD triples, respectively.

Assume that both $\bfGr$ and  $\bfGc$ are oriented.
 Similarly to  the BD graph $\BD_{\bfG}$ for $\bfG$, one can define a  graph $\BD_{\bfGr, \bfGc}$ for the pair 
$(\bfGr, \bfGc)$ as follows. Take $\BD_{\bfGr}$ with all inclined edges directed downwards and 
$\BD_{\bfGc}$ in which all inclined edges are directed upwards. Superimpose these graphs by identifying the corresponding vertices.  In the resulting graph, for every pair of vertices $i, n -i$ in either top or bottom row there are two edges joining them. We give these edges opposite orientations. If $n$ is even, then we retain only one loop at each of the 
two vertices labeled $\frac{n}{2}$. The result is a directed graph $\BD_{\bfGr, \bfGc}$ on $2(n-1)$ vertices. 
For example, consider the case of $GL_5$ with $\bfGr=\left(\{1,2\}, \{2,3\}, 1\mapsto 2, 2\mapsto 3\right)$ and
$\bfGc=\left(\{1,2\}, \{3,4\}, 1\mapsto3, 2\mapsto4\right)$. The corresponding graph $\BD_{\bfGr, \bfGc}$ is shown 
on the left in Fig.~\ref{fig:altpaths}. For horizontal edges, no direction is indicated, which means that they can be traversed in both directions. The graph shown on in Fig.~\ref{fig:altpaths} on the right corresponds to 
the case of $GL_8$ with $\bfGr=\left(\{2,6\}, \{3,7\}, 2\mapsto 3, 6\mapsto7 \right)$ and
$\bfGc=\left(\{2,6\}, \{1,5\}, 6\mapsto1, 2\mapsto5\right)$. 

A directed path in $\BD_{\bfGr, \bfGc}$ is called {\em alternating\/} if horizontal and inclined edges in the path alternate. In particular, an edge is a (trivial) alternating path. 
An alternating path with coinciding endpoints and an even number of edges is called an {\em alternating cycle}. Similarly to
the decomposition of $\BD_{\bfG}$ into connected components, we can decompose the edge set of  $\BD_{\bfGr, \bfGc}$ into a disjoint union of maximal alternating paths and alternating cycles. 
If the resulting collection contains no alternating cycles, we call the pair $(\bfGr, \bfGc)$ {\em aperiodic\/}; clearly,
$(\bfG,\bfG)$ is aperiodic if and only if $\bfG$ is aperiodic. 
For the graph on the left in Fig.~\ref{fig:altpaths}, 
the corresponding maximal paths are $41\bar 2\bar 3 14$, $32\bar 3\bar 2$, $\bar1\bar4 23$, and $\bar4\bar1$ (here vertices in the lower part are marked with a dash for better visualization). None of them is an alternating cycle, so the corresponding pair is aperiodic.
 For the graph on the right in Fig.~\ref{fig:altpaths}, 
the path $62\bar3\bar5 26\bar7\bar1 6$ is an alternating cycle; the edges
$\bar1\bar7$ and $\bar5\bar3$ are trivial alternating paths.

\begin{figure}[ht]
\begin{center}
\includegraphics[height=3.5cm]{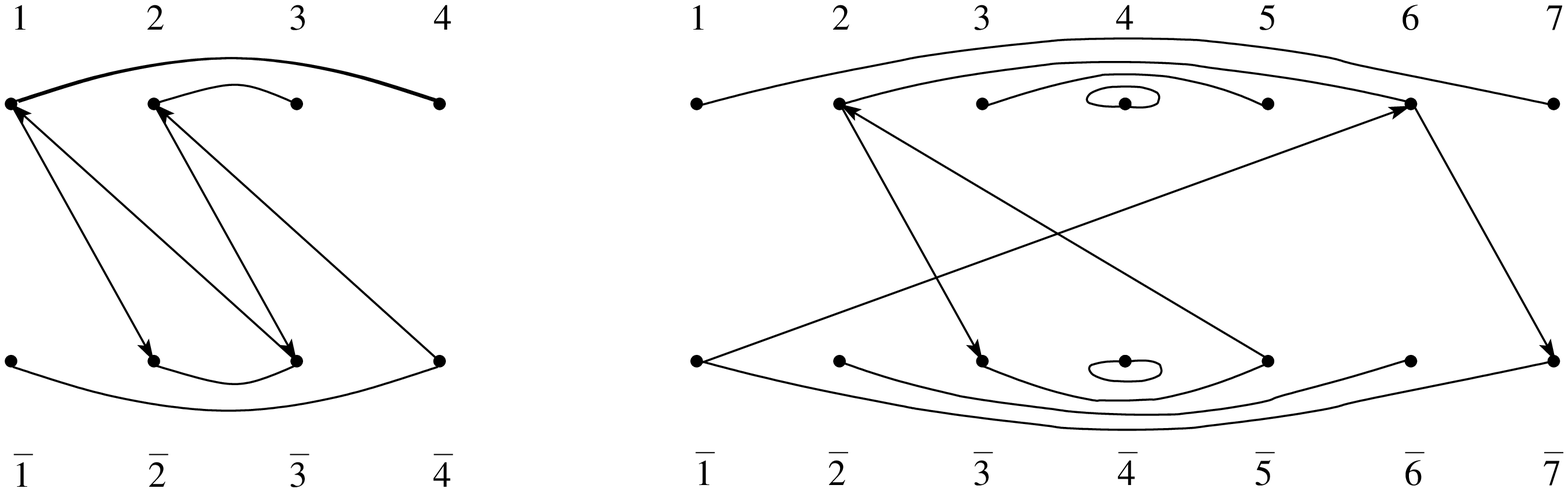}
\caption{Alternating paths and cycles in $\BD_{\bfGr, \bfGc}$}
\label{fig:altpaths}
\end{center}
\end{figure}

The following result generalizes the first two claims of Theorem \ref{mainth}

\begin{theorem}
\label{genmainth}
For any aperiodic pair of oriented Belavin--Drinfeld triples $(\bfGr, \bfGc)$ there exists a cluster structure
$\CC_{\bfGr, \bfGc}$ on $SL_n$ such that

{\rm (i)}
the number of frozen variables is $k_{\bfGr}+k_{\bfGc}$, and the corresponding exchange matrix has a full rank;

{\rm (ii)} $\CC_{\bfGr, \bfGc}$ is regular, and the corresponding upper cluster algebra $\UU_\C(\CC_{\bfGr, \bfGc})$ 
is naturally isomorphic to $\O(SL_n)$.

{\rm (iii)} the global toric action of $(\C^*)^{k_\bfGr+k_\bfGc}$ on $\CC_{\bfGr, \bfGc}$ is generated by the action
of $\H_{\bfGr}\times \H_{\bfGc}$ on $SL_n$ given by $(H_1, H_2)(X) = H_1 X H_2$.

 {\rm (iv)} for any pair of solutions of CYBE that belong to the Belavin--Drinfeld classes specified  by $\bfGr$ and 
$\bfGc$, the corresponding bracket~\eqref{sklyabragen} is compatible with $\CC_{\bfGr, \bfGc}$;

{\rm (v)} a Poisson bracket on $SL_n$ is compatible with $\CC_{\bfGr, \bfGc}$ only if it is a scalar multiple
of the bracket~\eqref{sklyabragen} associated with a pair of solutions of CYBE that belong to the Belavin--Drinfeld classes specified  by $\bfGr$ and $\bfGc$.
\end{theorem}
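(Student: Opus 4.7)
The plan is to construct the cluster structure $\CC_{\bfGr,\bfGc}$ explicitly and then verify the five properties in the order they are stated, following the blueprint of Sections \ref{thebasis}--\ref{sec:regtor}. First I would specify an initial seed: the cluster consists of $n^2-1$ regular functions on $SL_n$, built from the combinatorial data of the graph $\BD_{\bfGr,\bfGc}$, with $k_{\bfGr}+k_{\bfGc}$ of them designated as frozen. The non-frozen variables are certain "twisted" minors obtained by applying $\gammar$ and $\gammac$ (iteratively) to strings of row and column indices that label the edges of the maximal alternating paths in $\BD_{\bfGr,\bfGc}$. The frozen variables correspond to endpoints of these paths, which is precisely why the aperiodicity hypothesis is essential: each alternating cycle would produce a "monodromy" obstruction that collapses the rank of the exchange matrix and forces one out of the ordinary cluster framework.

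Next I would verify that this initial cluster is log-canonical with respect to the bracket \eqref{sklyabragen}; this is the content of Theorem \ref{logcanbasis}. The argument reduces the Poisson bracket of any two cluster variables to a computation of left and right gradients of determinantal functions, which can be carried out using \eqref{parabolics} and the explicit form \eqref{RplusSL} of $R_+$. The combinatorial effect of applying $R_+$ and $R'_+$ is exactly an index-shift along $\Gamma_1^\er$- and $\Gamma_1^\ec$-runs, matching the construction of the twisted minors. I would then build the quiver (Theorem \ref{quiver}) from the incidence of the edges along the paths in $\BD_{\bfGr,\bfGc}$ and show that it has full rank; full rank will follow because each maximal alternating path contributes an independent "direction" corresponding to the infinitesimal action of the $\H_{\bfGr}\times \H_{\bfGc}$-factor. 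Property (iv) for the initial seed is then just the log-canonicity, and for the whole of $\CC_{\bfGr,\bfGc}$ it propagates by the standard fact that compatibility is preserved under mutation when the exchange matrix has full rank.

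The heart of the argument, and the main obstacle, is (ii): regularity of $\CC_{\bfGr,\bfGc}$ together with $\UU_\C(\CC_{\bfGr,\bfGc})\cong \O(SL_n)$. For this I would use a Starfish-type criterion from \cite{GSVb}, which reduces regularity and the isomorphism to: (a) regularity of the $n-1$ unfrozen initial cluster variables and of their images under a single mutation, and (b) coprimality of each mutation pair. Part (b) is combinatorial and follows from the structure of the quiver. Part (a) is proved by induction on $n$ using Theorem \ref{prototype}: one expresses the relevant once-mutated functions on $\Mat_n$ in terms of a cluster structure on a smaller matrix space associated with a pair of BD triples obtained by contracting the outermost $X$- and $Y$-runs of $\bfGr$ and $\bfGc$. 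The delicate points are showing that this contraction preserves both orientation and aperiodicity of the pair, and tracking how twisted minors restrict along the induction; this is where the acyclicity hypothesis really pays off, since a would-be alternating cycle would survive contraction and obstruct the induction.

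Finally, for (iii) I would compute directly that the left-right action of $\H_{\bfGr}\times \H_{\bfGc}$ scales each initial cluster variable by a character matching \eqref{toricact}, observe that the weight matrix has rank $k_{\bfGr}+k_{\bfGc}$ (this was the same rank computation entering (i)), and conclude that the local action extends to a global toric action on $\CC_{\bfGr,\bfGc}$ since the exchange matrix has full rank. For (v), full rank of the exchange matrix also forces the space of Poisson brackets in $\FFF_\C$ compatible with $\CC_{\bfGr,\bfGc}$ to have dimension equal to the number of independent toric parameters, so that every such Poisson-Lie-homogeneous bracket on $SL_n$ must be a scalar multiple of \eqref{sklyabragen} for some choice of $r^\er, r^\ec$ in the specified Belavin--Drinfeld classes.
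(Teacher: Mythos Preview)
Your outline tracks the paper reasonably well through the construction of the initial seed, Theorems \ref{logcanbasis} and \ref{quiver}, and parts (i), (iii), (iv), (v). The genuine gap is in your plan for part (ii).

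You propose to invoke a Starfish-type criterion whose content is regularity of once-mutated variables plus coprimality, and to verify these by \emph{induction on $n$}, passing to ``a smaller matrix space associated with a pair of BD triples obtained by contracting the outermost $X$- and $Y$-runs.'' Neither ingredient matches what is actually needed. The operative criterion is Proposition \ref{regfun}, and its hard condition is (iv): every regular function on $V$ lies in $\UU_\C(\CC)$. Concretely one must show that every matrix entry $x_{ij}$ is a Laurent polynomial in the initial cluster \emph{and in every cluster adjacent to it}. Regularity of once-mutated variables is a separate, much easier check carried out via Desnanot--Jacobi identities (Theorem \ref{regneighbors}); no coprimality argument enters.

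The induction that establishes the Laurent property is on $|\Gamma^\er_1|+|\Gamma^\ec_1|$, not on $n$. One deletes a single root $\alpha$ (for instance the rightmost root of a nontrivial row $X$-run) from $\Gamma_1^\er$, obtaining an aperiodic pair $(\tbfGr,\bfGc)$ on the \emph{same} $SL_n$. Theorem \ref{prototype} then furnishes a birational map $U:\X\to\ZZ$ whose denominators are powers of a single frozen variable $\tilde f_v$, together with a monomial-invertible map $T:\FF\to\tilde\FF$ satisfying $\tilde f\circ T=U\circ f$. The composite $T^{-1}\circ\tilde P\circ U$ (with $\tilde P$ coming from the induction hypothesis) gives the Laurent expression for $x_{ij}$ in the initial cluster; an analogous map $T'$ handles each adjacent cluster $\mu_u(F)$ for $u\ne v$, and the remaining direction $u=v$ is covered by choosing a different root $\alpha$. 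Your proposed contraction to a smaller $n$ does not produce such maps, and there is no evident identification of the twisted minors on $\Mat_n$ with cluster variables on a genuinely smaller matrix space. (Minor point: the number of mutable variables in the initial seed on $SL_n$ is $n^2-1-(k_{\bfGr}+k_{\bfGc})$, not $n-1$.)
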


Following the approach suggested in \cite{GSVMem}, we will construct a cluster
structure on the space $\Mat_n$ of $n\times n$ matrices  and derive the required properties of $\CC_{\bfGr, \bfGc}$
from similar features of the latter cluster structure. Note that in the 
case of $GL_n$ we also obtain a regular cluster structure with the same properties, however, in this case the ring of regular
functions on $GL_n$ is isomorphic to the localization of the upper cluster algebra with
respect to $\det X$, which is equivalent to replacing the ground ring by the corresponding 
localization of the polynomial ring in frozen variables.
In what follows we use the same notation $\CC_{\bfGr, \bfGc}$ for all
three cluster structures and indicate explicitly which one is meant when needed.

\subsection{The basis}\label{thebasis}

Consider connected components of $\BD_\bfG$ for an aperiodic $\bfG$. 
The choice of the endpoint of a component  induces directions of its edges: the first edge is directed from the endpoint, the second one from the head of the first one, and so on. Note that for a path with a loop, each edge except for the loop gets two opposite directions. Consequently, the choice of an endpoint of a component defines  a matrix built of blocks curved out from 
two $n\times n$ matrices of indeterminates $X=(x_{ij})$ and $Y=(y_{ij})$. Each block is defined by a horizontal directed edge, that is, an edge whose head and tail belong to the same part of the graph. The block corresponding to
a horizontal edge $i\to (n-i)$ in the upper part, called an {\em $X$-block\/}, is the submatrix $X_{I}^{J}$ 
with $I=[\alpha,n]$ and $J=[1,\beta]$, where $\alpha=(n-i+1)_-+1$ is the leftmost point of the $X$-run 
containing $n-i+1$, and $\beta=i_+$ is the rightmost point of the $X$-run containing $i$. The entry $(n-i+1,1)$ is called the {\em exit point\/} of the $X$-block.
Similarly, the block corresponding to a horizontal edge $i\to (n-i)$ in the lower part, called a {\em $Y$-block\/}, 
is the submatrix $Y_{\bar I}^{\bar J}$ with $\bar I=[1,\bar\alpha]$ and $\bar J=[\bar\beta,n]$, where $\bar\alpha=i_+$ is the rightmost point of the $Y$-run containing $i$ and $\bar\beta=(n-i+1)_-+1$ is the leftmost point of the 
$Y$-run containing $n-i+1$. The entry $(1,n-i+1)$ is called the {\em exit point\/} of the $Y$-block. 
In the example shown in Fig.~\ref{fig:BDgraph} on the left, the edge $5\to2$ in the upper part defines the 
$X$-block $X_{[1,7]}^{[1,5]}$ with the exit 
point $(3,1)$, the edge $4\to3$ in the lower part defines the $Y$-block $Y_{[1,5]}^{[3,7]}$ with the exit point 
$(1,4)$, and the edge $1\to6$ in the upper part defines the $X$-block $X_{[7,7]}^{[1,3]}$ with the exit point 
$(7,1)$, see the left part of Fig.~\ref{fig:blockmatrix} where the exit points of the blocks are circled.

\begin{figure}[ht]
\begin{center}
\includegraphics[height=4.5cm]{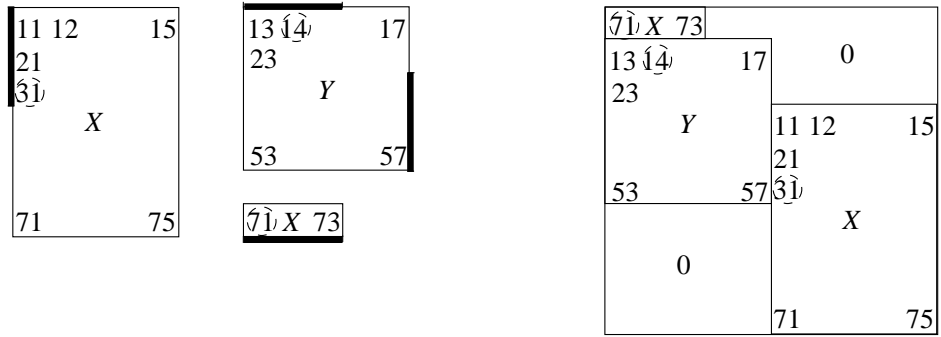}
\caption{Blocks and their gluing}
\label{fig:blockmatrix}
\end{center}
\end{figure}

The number of directed edges is odd  and the blocks of different types alternate; therefore, if this number equals 
$4b-1$, then there are $b$ blocks of each type. If there are $4b-3$ directed edges, there are $b$ blocks of one type and $b-1$ blocks of the other type. By adding at most two dummy blocks with empty sets of rows or columns at the beginning and at the end of the sequence, we may assume that the number of blocks of each type is equal, and that the first block is of $X$-type.
 
The blocks are glued together with the help of inclined edges whose head and tail belong to different parts of the graph. An inclined edge $i\to j$ directed downwards stipulates placing the entry $(j,n)$ of the $Y$-block defined by 
$j\to (n-j)$ immediately to the left of the entry $(i,1)$ of the $X$-block 
defined by $(n-i)\to i$. In other words, the two blocks are glued in such a way that $\Delta(\alpha)$ and 
$\bar\Delta(\bar\alpha)=\gamma(\Delta(\alpha))$ coincide. Similarly, an inclined edge $i\to j$ directed upwards stipulates placing the entry $(n,j)$ of the $X$-block defined by $j\to(n-j)$ immediately above the entry $(1,i)$  of the $Y$-block defined by $(n-i)\to i$. In other words, the two blocks are glued in such a way that $\bar\Delta(\bar\beta)$ and
$\Delta(\beta)=\gamma^*(\bar\Delta(\bar\beta))$ coincide. Clearly, the exit points of all blocks lie on the main diagonal of the resulting matrix. 
For example, the directed path $5\to2\to4\to3\to1\to6$ in the BD graph shown in Fig.~\ref{fig:BDgraph} on the left defines the gluing shown in Fig.~\ref{fig:blockmatrix} on the right. The runs along which the blocks are glued are shown in bold. The same path
traversed in the opposite direction defines a matrix glued from the blocks $X_{[1,7]}^{[1,6]}$, $Y_{[1,5]}^{[3,7]}$ and 
$X_{[6,7]}^{[1,3]}$.

Given an aperiodic pair $(\bfGr, \bfGc)$ and the decomposition of $\BD_{\bfGr, \bfGc}$ into maximal alternating paths, 
the blocks are defined in a similar way.  To each edge $i\to (n-i)$ in the upper part of 
$\BD_{\bfGr, \bfGc}$, assign the block $X_{I}^{J}$ with $I=[\alpha,n]$ and $J=[1,\beta]$, where $\alpha=(n-i+1)_-(\bfGr)+1$  and
$\beta=i_+(\bfGc)$ are defined by $X$-runs exactly as before except with respect to different BD triples  $\bfGr$ and $\bfGc$. 
Similarly, the block corresponding to a horizontal edge $i\to (n-i)$ in the lower part is the submatrix 
$Y_{\bar I}^{\bar J}$ with $\bar I=[1,\bar\alpha]$ and $\bar J=[\bar\beta,n]$, where $\bar\alpha=i_+(\bfGr)$  and $\bar\beta=(n-i+1)_-(\bfGc)+1$ are defined by $Y$-runs. These blocks are glued together in the same fashion as before, except that gluing of 
a $Y$-block to an $X$-block on the left (respectively,  at the bottom) is governed by the row triple $\bfGr$ 
(respectively, the column triple $\bfGc$).
In what follows, we will call $X-$ and $Y-$runs corresponding to $\bfGr$ (respectively, to $\bfGc$)  {\em row\/} (respectively, 
{\em column\/}) runs. 

Let $\L=\L(X,Y)$ denote the matrix glued from $X$- and $Y$-blocks as explained above.
It follows immediately from the construction that if $\L$ is defined by an alternating path 
$i_1\to i_2\to\dots\to i_{2k}$ then it is a square $N(\L)\times N(\L)$ matrix with 
\begin{equation*}
N(\L)=\sum_{j=1}^k i_{2j-1}.
\end{equation*}  
The matrices $\L$ defined by all maximal alternating paths in $\BD_{\bfGr, \bfGc}$ 
form a collection denoted $\bL=\bL_{\bfGr, \bfGc}$ (or $\bL_\bfG$ if $\bfGr=\bfGc=\bfG$). Thus, 
 
(i) each $\L\in\bL$ is a square $N(\L)\times N(\L)$ matrix, 

(ii) for any $1\leq i< j \leq n$, there is a unique pair $(\L \in \bL, s\in [1,N(\L)])$ such that $\L_{ss}=y_{i j}$, and 

(iii) for any $1\leq j< i \leq n$, there exists 
 and a unique pair $(\L \in \bL, s\in [1,N(\L)])$ such that $\L_{ss}=x_{ij}$. 

We thus have a bijection $\J=\J_{\bfGr, \bfGc}$ between 
$[1,n]\times [1,n]\setminus\cup_{i=1}^n(i,i)$ and the set of pairs $\left \{ (\L, s) : \L \in \bL, s\in [1, N(\L)]  \right \}$ that takes a pair $(i,j)$, $i\ne j$, to $(\L(i,j), s(i,j))$. 
We then define
\begin{equation}
\label{f_ij_gen}
{\tt f}_{ij}(X,Y)= \det \L(i,j)_{[s(i,j), N(\L(i,j))]}^{[s(i,j), N(\L(i,j))]}, \quad i\ne j.
\end{equation}
The block of $\L(i,j)$ that contains the entry $(s(i,j),s(i,j))$ is called the {\it leading block\/} of $\ttf_{ij}$. 

Additionally, we define
\begin{equation}
\label{twof_ii}
{\tt f}_{ii}^<(X,Y)=\det X_{[i,n]}^{[i,n]}, \qquad {\tt f}_{ii}^>(X,Y)=\det Y_{[i,n]}^{[i,n]}.
\end{equation}
The leading block of $\ttf_{ii}^<$ is $X$, and the leading block of $\ttf_{ij}^>$ is $Y$.
Note that \eqref{twof_ii} means that $s$ is extended to the diagonal via $s(i,i)=i$, while
$\L(i,i)$ is not defined uniquely: it might denote either $X$ or $Y$. 

Finally, we put $f_{ij}(X) ={\tt f}_{ij}(X,X)$ for $i\ne j$ and $f_{ii}(X) ={\tt f}_{ii}^<(X,X)={\tt f}_{ii}^>(X,X)$, 
and define 
$$
F=F_{\bfGr, \bfGc}=\{ f_{ij}(X) :  i,j\in[1,n]\}.
$$

\begin{theorem}\label{logcanbasis}
Let $(\bfGr,\bfGc)$ be an oriented aperiodic pair of BD triples, 
then the family $F_{\bfGr,\bfGc}$ forms
a log-canonical coordinate system with respect to the Poisson bracket \eqref{sklyabragen} on 
$\Mat_n$ with $r=r^{\er}$ and $r'=r^{\ec}$ given
by \eqref{r-matrix}.
\end{theorem}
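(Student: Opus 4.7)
The natural strategy is to lift the whole computation to the Drinfeld double $D(GL_n)=GL_n\times GL_n$ with the bracket $\{\cdot,\cdot\}^D_{r,r'}$ given by \eqref{sklyadoublegen}; this bracket restricts to $\{\cdot,\cdot\}_{r,r'}$ under the diagonal embedding $X\mapsto(X,X)$. Each $\ttf_{ij}(X,Y)$ is a polynomial on the double, since it is a principal minor of the glued matrix $\L(i,j)$. It therefore suffices to prove that the family $\{\ttf_{ij}\}$ is log-canonical in the double bracket and then set $Y=X$ to obtain the assertion about $F_{\bfGr,\bfGc}$ and $\{\cdot,\cdot\}_{r,r'}$ on $\Mat_n$.

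The first concrete step is to derive closed expressions for the $\D$-gradients $\dnabla^L\ttf_{ij}$ and $\dnabla^R\ttf_{ij}$. Since $\ttf_{ij}=\det\L_I^I$ with $I=[s(i,j),N(\L(i,j))]$, Jacobi's formula expresses these gradients in terms of the adjugate of $\L_I^I$, and the explicit $(X,Y)$ block structure of $\L$ then identifies the support of $\partial\ttf_{ij}/\partial x_{kl}$ and $\partial\ttf_{ij}/\partial y_{kl}$ as lying strictly inside the leading block of $\ttf_{ij}$ and its downstream glued neighbours. Plugging these gradients into \eqref{sklyabragen} and expanding $R_+,R'_+$ according to \eqref{RplusSL}, I would use nilpotency of $\gamma$ (and $\gamma^*$) to replace the twist factors $(1-\gamma)^{-1}\pi_{>}$ and $-\gamma^*(1-\gamma^*)^{-1}\pi_{<}$ by finite geometric series $\sum_k\gamma^k\pi_>$ and $-\sum_k(\gamma^*)^{k+1}\pi_<$. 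Each application of $\gamma$ shifts indices along an inclined edge of $\BD_{\bfGr,\bfGc}$, and the opposite-sign pairing of $\pi_>$ with $\pi_<$ combined with transposition provides the reflection $i\mapsto n-i$ corresponding to a horizontal edge. Thus a single term in the expansion of $\langle R_+(\dnabla^L\ttf_{ij}),\dnabla^L\ttf_{kl}\rangle$ is naturally indexed by an alternating walk in $\BD_{\bfGr,\bfGc}$ starting from the leading block of $\ttf_{ij}$.

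With this combinatorial dictionary in place, the plan is to show that after summing over all such walks and all ways of pairing a walk from $\ttf_{ij}$ against the gradient of $\ttf_{kl}$, all contributions telescope except those that land inside the leading block of $\ttf_{kl}$ itself. The surviving block-diagonal terms produce a Cartan pairing that, together with the $(\tfrac12+S)\pi_\h$ piece of \eqref{RplusSL}, yields an explicit rational constant $\omega_{(ij)(kl)}$ independent of the matrix entries, multiplied by $\ttf_{ij}\ttf_{kl}$. Aperiodicity enters exactly here: an alternating cycle in $\BD_{\bfGr,\bfGc}$ would produce a closed walk that is neither killed by the geometric series truncation nor matched by a partner walk of opposite sign, leaving a non-diagonal residue that would obstruct log-canonicity. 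Conversely, when the pair $(\bfGr,\bfGc)$ is aperiodic, every maximal alternating component is a path with at most two loops, so the walks have a clear start/end structure allowing the cancellations to be organized linearly. Subtracting the analogous $R'_+$-contribution and restricting to the diagonal then completes the proof.

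The main obstacle I anticipate is the bookkeeping in the telescoping step: one must encode the gradient terms as decorated walks on $\BD_{\bfGr,\bfGc}$ and exhibit an explicit sign-reversing involution on the non-diagonal walks, distinguishing carefully between walks that cross between the upper and lower parts of the graph (inclined edges, controlled by $\gamma,\gamma^*$) and those that reflect across it (horizontal edges, controlled by $i\mapsto n-i$). Handling the endpoints of an alternating path and the special paths with loops will require separate book-keeping, and isolating the exact Cartan contribution that defines $\omega_{(ij)(kl)}$ will rely on the equation \eqref{S-eq} satisfied by $S$.
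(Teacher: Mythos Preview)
Your high-level strategy---lift to the double, show $\{\log\ttf^1,\log\ttf^2\}^D$ is constant, then restrict to the diagonal---is exactly what the paper does (Theorem~\ref{logcandouble}). But the mechanism you propose for the actual computation is quite different from the paper's and, as written, misses the key structural input.

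The paper does \emph{not} expand $(1-\gamma)^{-1}$ as a geometric series and organize terms as walks on $\BD_{\bfGr,\bfGc}$. Instead it exploits the invariance properties \eqref{2.1}--\eqref{2.2} and their infinitesimal forms \eqref{infinv1}--\eqref{infinv3}, together with Lemma~\ref{partrace} and Lemma~\ref{twomoreinv}, which encode that each $\ttf_{ij}$ is invariant under a large unipotent group and semi-invariant under tori determined by the BD data. These invariances are what reduce the bracket \eqref{bra} to a finite list of explicit ``$B$-terms'' (Lemmas~\ref{etaletalemma}--\ref{xinaxlemma}), each of which is then shown to be constant by direct block-matrix manipulations (Lemmas~\ref{zone1lemma}, \ref{zone2lemma}). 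Your proposal never invokes these invariances, and without them the raw gradient expansion you describe would not simplify---the ``telescoping'' you hope for is not a cancellation among walk terms but rather a consequence of $\xi_L,\xi_R$ lying in Borel subalgebras and $(\xi_L)_0,(\eta_L)_0$ being constant.

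Two specific errors: first, aperiodicity for a single triple means every component is a path or a path with \emph{one} loop, not ``at most two loops''; a path with two loops is explicitly periodic. Second, aperiodicity does not enter the log-canonicity computation at all---it is a precondition for the \emph{construction} of $\bL$ (maximal alternating paths must exist and cover all off-diagonal positions). Once the matrices $\L$ are given with the block structure of Fig.~\ref{fig:ladder}, the proof of Theorem~\ref{logcandouble} proceeds without further reference to cycles in the BD graph. So your proposed location for ``where aperiodicity enters'' is not where it actually enters.
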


\begin{remark}
A log-canonical coordinate system on $SL_n$ with respect to the same bracket is formed 
by $F_{\bfGr,\bfGc}\setminus\{\det X\}$.
\end{remark}

Although the construction of the family of functions $F_{\bfGr,\bfGc}$ is admittedly {\em ad hoc}, the intuition behind it is
given by the collection $\bL=\bL_{\bfGr, \bfGc}$ that does have an intrinsic meaning. Recall the observation we previously utilized in \cite{GSVMem}: a function serving as a frozen variable in a cluster structure on a Poisson variety has a property that it is log-canonical with every cluster variable in every cluster. The vanishing locus  of such a function foliates into a 
union of non-generic symplectic leaves. On the other hand, in many examples of Poisson varieties supporting a cluster structure, 
the union of generic symplectic leaves forms an open orbit of a certain natural group action. Thus, it makes sense to select 
semi-invariants of this group action as frozen variables. Furthermore, a global toric action on the cluster structure arising 
this way can be described in two equivalent ways: it is generated by an action of a commutative subgroup of the group acting on the underlying Poisson variety or, alternatively, by Hamiltonian flows generated by the frozen variables.

In our current situation, the group action is determined by the BD data $\bfGr$, $\bfGc$. Let $\D_-^\er$ and $\D_-^\ec$ 
be subalgebras defined in \eqref{ddeco} that correspond to $\bfGr$ and $\bfGc$, respectively, and  let 
$\DD_-^\er=\exp (\D_-^\er)$ and $\DD_-^\ec=\exp (\D_-^\ec)$ be the corresponding subgroups of the double. Consider the action 
of $\DD_-^\er\times\DD_-^\ec$ on the double $D(GL_n)$ with $\DD_-^\er$ acting on the left and $\DD_-^\ec$ acting on the right.

\begin{proposition}\label{frozen} 
Let  $\L(X,Y) \in \bL_{\bfGr, \bfGc}$. Then

{\rm (i)} $\det\L(X,Y)$ is a semi-invariant of the action of $\DD_-^\er\times\DD_-^\ec$ described above;

{\rm (ii)} $\det\L(X,X)$ is log-canonical with all matrix entries $x_{ij}$ with respect to the Poisson bracket \eqref{sklyabragen}.
\end{proposition}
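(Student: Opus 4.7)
My plan for part (i) is to obtain an explicit matrix factorization of the form
\[
\L(U_+^\er X V_+^\ec, U_-^\er Y V_-^\ec) = A \cdot \L(X, Y) \cdot B
\]
with block-triangular matrices $A$, $B$ whose determinants define multiplicative characters of $\DD_-^\er \times \DD_-^\ec$. Unwinding \eqref{d_-} and \eqref{parabolics} in matrix language, an element $(U_+^\er, U_-^\er) \in \DD_-^\er$ consists of $U_+^\er \in \PP_+^\er$ block upper triangular with respect to the row $X$-runs of $\bfGr$, $U_-^\er \in \PP_-^\er$ block lower triangular with respect to the row $Y$-runs, and Levi parts matched via $\gammar$; $\DD_-^\ec$ admits an analogous description using column runs and $\gammac$.

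For each $X$-block $X_I^J$, the fact that $I = [\alpha, n]$ is a union of row $X$-runs ending at $n$ and $J = [1, \beta]$ is a union of column $X$-runs starting at $1$, combined with the block-triangular structure, forces
\[
(U_+^\er X V_+^\ec)_I^J = (U_+^\er)_I^I\, X_I^J\, (V_+^\ec)_J^J,
\]
and the analogous identity holds for $Y$-blocks with the block-lower-triangular factors $(U_-^\er)_{\bar I}^{\bar I}$, $(V_-^\ec)_{\bar J}^{\bar J}$. The crux is assembling these per-block factorizations into a single factorization of $\L$. At each gluing point between two neighboring blocks along the alternating path defining $\L$, an inclined edge identifies a run of one block with the corresponding run of the other; the Levi compatibility in the definition of $\DD_-^\er$ (resp.\ $\DD_-^\ec$) forces the relevant triangular factors to agree on that shared run under $\gammar$ (resp.\ $\gammac$). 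This is exactly what is needed to stitch the per-block factors into global block-triangular $A$, $B$. Taking determinants then yields (i), since $\det A$ and $\det B$ are products of Levi-block determinants.

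For part (ii), my plan is to use the semi-invariance from (i) as a differential constraint on the gradients of $F = \det\L$. Differentiating along $\D_-^\er = \{(R_+^\er \xi, R_-^\er \xi) : \xi \in \g\}$ and using the adjoint identity $R_+^* = -R_-$ yields
\[
R_-^\er \nabla^L_X F + R_+^\er \nabla^L_Y F = -\theta^\er F
\]
for some $\theta^\er \in \g$, with an analogous right identity involving $R_\pm^\ec$ and $\theta^\ec$. Setting $Y = X$, $G(X) = F(X, X)$, and combining with $\nabla^L G = \nabla^L_X F + \nabla^L_Y F$ and $R_+ - R_- = I$, I solve for $\nabla^L_X F(X, X) = R_+^\er \nabla^L G + \theta^\er G$ (with symmetric formulas for $\nabla^L_Y F$ and for the right gradients). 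Substituting these together with the cofactor expansion of $\nabla^L G$ dictated by the block structure of $\L$ into \eqref{sklyabragen} and using the linearity of $x_{ij}$ in $X$, I expect the terms to organize so that the non-character contributions cancel, leaving $\{x_{ij}, G\}_{r^\er, r^\ec}$ as a scalar multiple of $x_{ij} G$.

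The main obstacle is the gluing step in part (i): verifying that the per-block factorizations combine coherently along each alternating path in $\BD_{\bfGr, \bfGc}$ requires a careful case analysis at each gluing point, depending on the orientation of the inclined edge and the particular runs involved; the aperiodicity hypothesis is what rules out alternating cycles and guarantees each path can be traversed unambiguously. In part (ii), while the semi-invariance provides the correct structural framework, actually verifying the cancellation to extract the log-canonical coefficient requires a distinct bracket computation that I view as the secondary technical step.
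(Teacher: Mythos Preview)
Your approach to part (i) is essentially the same as the paper's: both arguments obtain semi-invariance from a matrix factorization $\L \mapsto A\L B$, assembled from per-block factorizations using the block-triangular form of elements of $\DD_-^\er$, $\DD_-^\ec$ together with the Levi compatibility at the gluing runs. The paper organizes the argument by block \emph{columns} of $\L$ (and symmetrically block rows), but the content is the same. Your remark that aperiodicity is the ``main obstacle'' is misplaced: aperiodicity is needed only to guarantee that the alternating path defining $\L$ terminates (so that $\L$ exists); once $\L$ is given, the factorization argument uses only its staircase structure and goes through block by block.

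For part (ii) your route diverges from the paper's, and there is a genuine gap. The paper works on the double: semi-invariance plus isotropy of $\D_-$ give $\dnabla^L f\in \D_-\dot+(\D_+\cap(\h\oplus\h))$, whence $R_D\dnabla^L f=-\dnabla^L f+\pi_{\D_+}(\dnabla^L f)_0$; the first term drops by invariance of $\langle\langle\cdot,\cdot\rangle\rangle$, and the remaining Cartan part is shown constant by invoking the identities \eqref{infinv2}, \eqref{infinv3}, \eqref{traceELS} established earlier in Section~\ref{sec:basis}. Your route---restrict to $Y=X$ and solve $R_+^\er\nabla^L G=\nabla^L_X F\big|_{Y=X}-\theta^\er G$ using $R_+-R_-=\Id$---is in fact a legitimate and more self-contained alternative (it avoids the Section~\ref{sec:basis} lemmas entirely). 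But the step you label ``I expect the terms to organize so that the non-character contributions cancel'' is precisely where the argument lives, and your proposed tool---a cofactor expansion of $\nabla^L G$ via the block structure of $\L$---is the wrong one. Substituting your identities into \eqref{sklyabragen} leaves the residual
\[
\langle \nabla^L_X F\big|_{Y=X},\,\nabla^L x_{ij}\rangle-\langle \nabla^R_X F\big|_{Y=X},\,\nabla^R x_{ij}\rangle,
\]
and this vanishes \emph{identically}, for any $F$, by cyclicity of the trace (equivalently, ad-invariance of the form): both terms equal $\Tr(X\,\nabla_X F\,X\,e_{ji})$. No block structure is needed. You should also record that $\theta^\er,\theta^\ec\in\h$, which follows because a character of $\D_-$ must vanish on its derived subalgebra; this is what makes the surviving terms $\langle\theta^\er,\nabla^L x_{ij}\rangle$ and $\langle\theta^\ec,\nabla^R x_{ij}\rangle$ constant multiples of $x_{ij}$.
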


Consequently, we select the subcollection $\{\det\L(X,X): \L\in \bL_{\bfGr, \bfGc}\}\cup\{\det X\}\subset F_{\bfGr, \bfGc}$
as the set of frozen variables.

\subsection{The quiver}\label{thequiver}
Let us choose the family  $F_{\bfGr,\bfGc}$ as the initial cluster for our cluster structure. We now define the 
quiver $Q_{\bfGr,\bfGc}$ that corresponds to this cluster. 

The quiver has $n^2$ vertices labeled $(i,j)$. The function attached to a vertex $(i,j)$ is $f_{ij}$. Any vertex 
except for $(n,n)$ is frozen if and only if its degree is at most three. The vertex $(n,n)$ is never frozen. 
 We will show below that frozen vertices correspond bijectively to the determinants of the matrices 
$\L\in\bL\cup \{X\}$, as suggested by Proposition \ref{frozen}.

\begin{figure}[ht]
\begin{center}
\includegraphics[height=4cm]{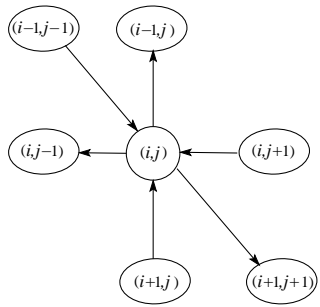}
\caption{The neighborhood of a vertex $(i,j)$, $1<i,j<n$}
\label{fig:ijnei}
\end{center}
\end{figure}

A vertex $(i,j)$ for $1<i<n$, $1<j<n$ has degree six, and its neighborhood looks as shown in Fig.~\ref{fig:ijnei}. 
Here and in what follows, mutable vertices are depicted by circles, frozen vertices by squares, and vertices of unspecified nature by ellipsa.

A vertex $(1,j)$ for $1<j<n$ can have degree two, three, five, or six.
If  $\bfGc$ stipulates both inclined edges $(j-1)\to (k-1)$ and $j\to k$ in the graph $G_{\bfGr,\bfGc}$ for some $k$, 
that is, if $\gammac(k-1)=j-1$ and $\gammac(k)=j$, then the degree of $(1,j)$ in  $Q_{\bfGr,\bfGc}$ equals six, and its
neighborhood looks as shown in Fig.~\ref{fig:1jnei}(a).

If  $\bfGc$ stipulates only the edge $(j-1)\to (k-1)$ as above but not the other one, that is, if $\gammac(k-1)=j-1$ and 
$j\notin \Gamma_2^{\rm c}$,  the degree of $(1,j)$ in  $Q_{\bfGr,\bfGc}$ equals five,  and its
neighborhood looks as shown in Fig.~\ref{fig:1jnei}(b).

If  $\bfGc$ stipulates only the edge $j\to k$ as above but not the other one, that is, if $j-1\notin \Gamma_2^{\rm c}$ and
$\gammac(k)=j$, the degree of $(1,j)$ in  $Q_{\bfGr,\bfGc}$ equals three, and its
neighborhood looks as shown in Fig.~\ref{fig:1jnei}(c).

Finally, if $\bfGc$ does not stipulate any one of the above two inclined edges  in $G_{\bfGr,\bfGc}$, that is, if 
$j-1,j\notin \Gamma_2^{\rm c}$, the degree  of $(1,j)$ in  $Q_{\bfGr,\bfGc}$ equals two, and its
neighborhood looks as shown in Fig.~\ref{fig:1jnei}(d).

\begin{figure}[ht]
\begin{center}
\includegraphics[height=8cm]{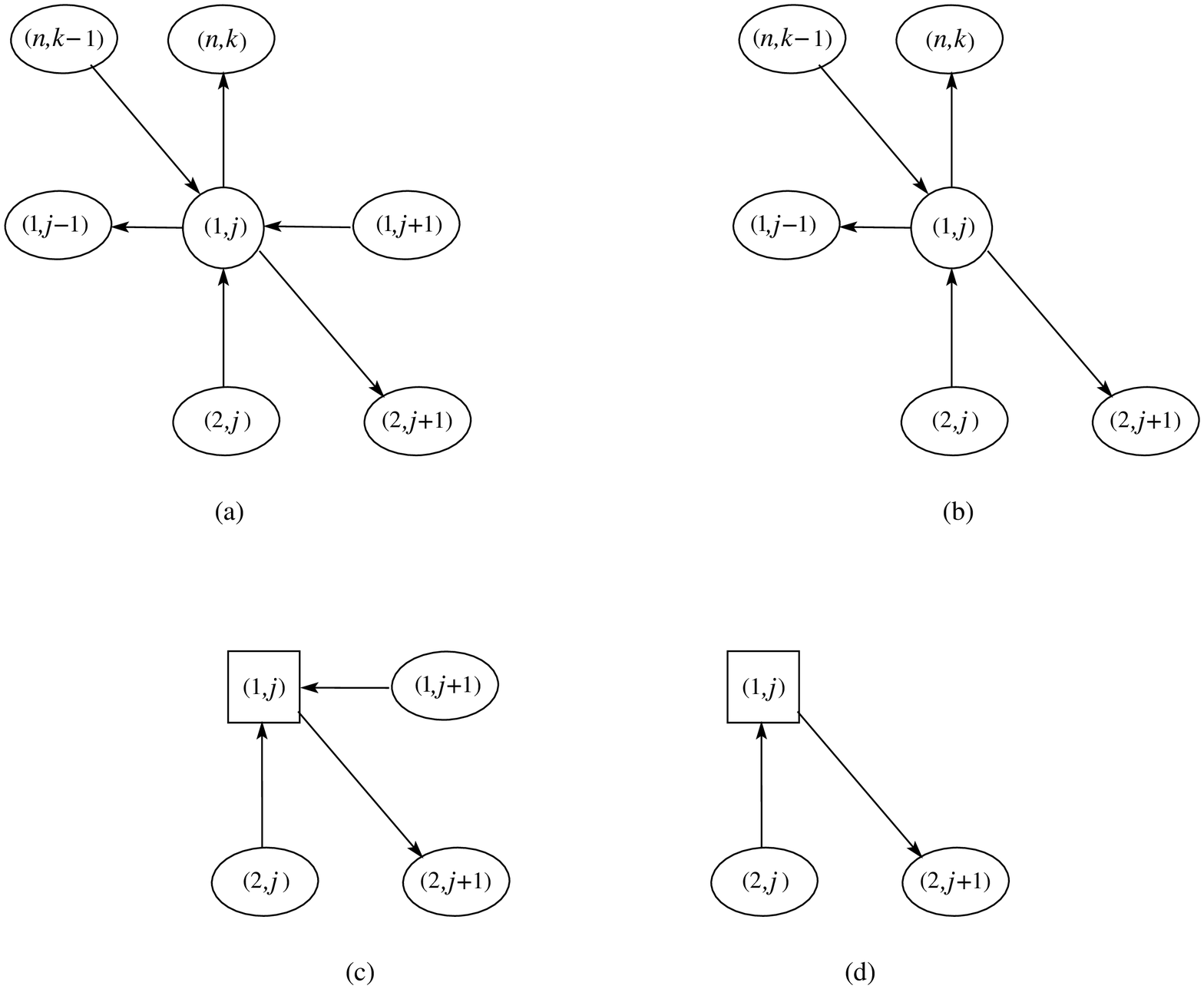}
\caption{Possible neighborhoods of a vertex $(1,j)$, $1<j<n$}
\label{fig:1jnei}
\end{center}
\end{figure}

Similarly, a vertex $(i,1)$ for $1<i<n$ can have degree two, three, five, or six.
If  $\bfGr$ stipulates both inclined edges $(i-1)\to (k-1)$ and $i\to k$ in the graph $G_{\bfGr,\bfGc}$ for some $k$, 
that is, if $\gammar(i-1)=k-1$ and $\gammar(i)=k$, then the degree of $(i,1)$ in  $Q_{\bfGr,\bfGc}$ equals six, and its
neighborhood looks as shown in Fig.~\ref{fig:i1nei}(a).

If  $\bfGr$ stipulates only the edge $(i-1)\to (k-1)$ as above but not the other one, that is, if $\gammar(i-1)=k-1$ and 
$i\notin \Gamma_1^{\rm r}$,  the degree of $(i,1)$ in  $Q_{\bfGr,\bfGc}$ equals five, and its
neighborhood looks as shown in Fig.~\ref{fig:i1nei}(b).

If  $\bfGr$ stipulates only the edge $i\to k$ as above but not the other one, that is, if $i-1\notin \Gamma_1^{\rm r}$ and
$\gammar(i)=k$, the degree of $(i,1)$ in  $Q_{\bfGr,\bfGc}$ equals three, and its
neighborhood looks as shown in Fig.~\ref{fig:i1nei}(c). 

Finally, if $\bfGr$ does not stipulate any one of the above two inclined edges  in $G_{\bfGr,\bfGc}$, that is, if 
$i-1,i\notin \Gamma_1^{\rm r}$, the degree  of $(i,1)$ in  $Q_{\bfGr,\bfGc}$ equals two, and its
neighborhood looks as shown in Fig.~\ref{fig:i1nei}(d).

\begin{figure}[ht]
\begin{center}
\includegraphics[height=8cm]{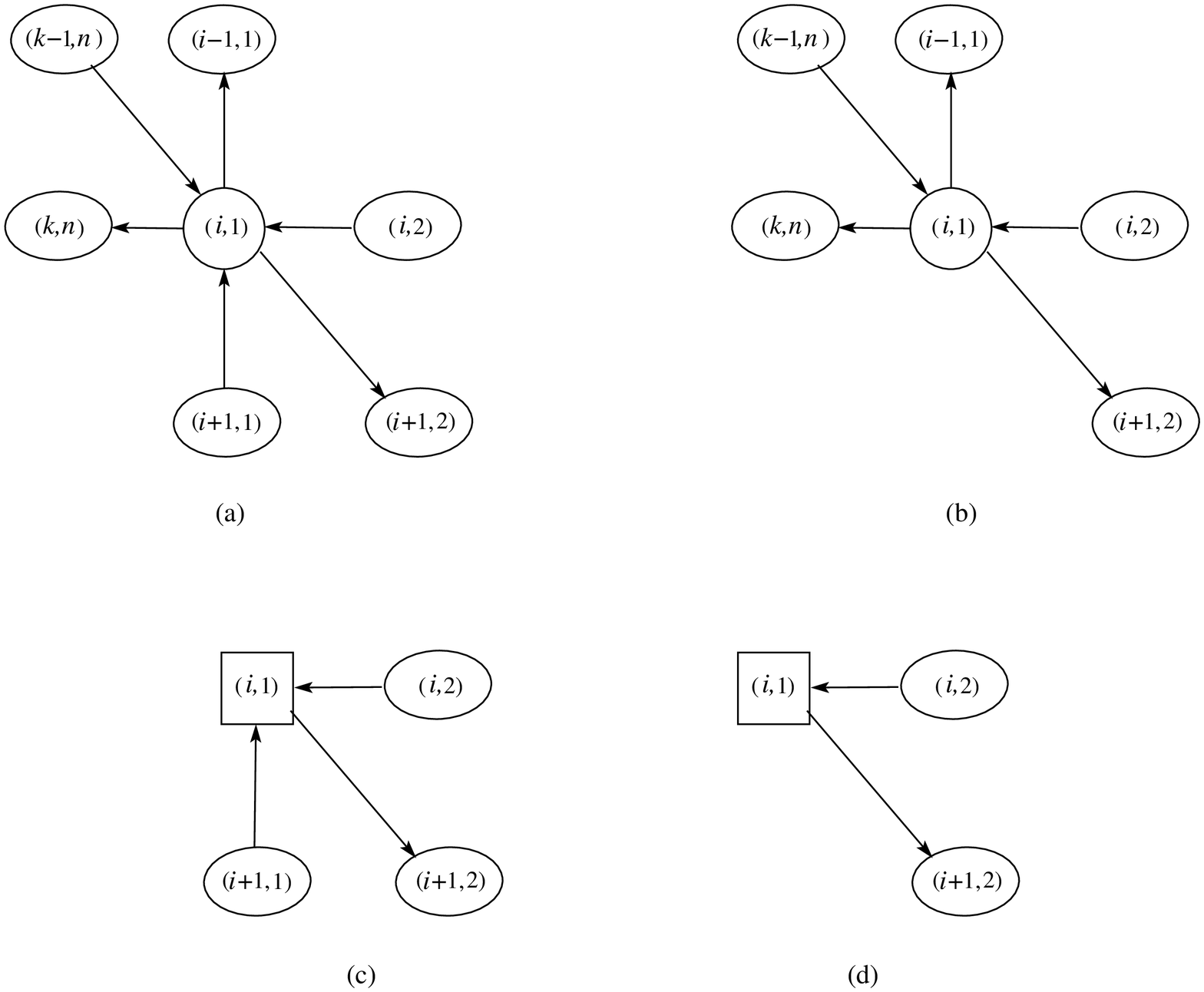}
\caption{Possible neighborhoods of a vertex $(i,1)$, $1<i<n$}
\label{fig:i1nei}
\end{center}
\end{figure}

A vertex $(n,j)$ for $1<j<n$ can have degree four, five, or six.
If  $\bfGc$ stipulates both inclined edges $(k-1)\to (j-1)$ and $k\to j$ in the graph $G_{\bfGr,\bfGc}$ for some $k$, 
that is, if $\gammac(j-1)=k-1$ and $\gammac(j)=k$, then the degree of $(n,j)$ in  $Q_{\bfGr,\bfGc}$ equals six,  and its
neighborhood looks as shown in Fig.~\ref{fig:njnei}(a).

If  $\bfGc$ stipulates only the edge $(k-1)\to (j-1)$ as above but not the other one, that is, if $\gammac(j-1)=k-1$ 
and $j\notin \Gamma_1^{\rm c}$, the degree of $(n,j)$ in  $Q_{\bfGr,\bfGc}$ equals five, and its
neighborhood looks as shown in Fig.~\ref{fig:njnei}(b).

If  $\bfGc$ stipulates only the edge $k\to j$ as above but not the other one, that is, if $j-1\notin \Gamma_1^{\rm c}$ and
$\gammac(j)=k$, the degree of $(n,j)$ in  $Q_{\bfGr,\bfGc}$ equals five as well, and its
neighborhood looks as shown in Fig.~\ref{fig:njnei}(c).

Finally, if $\bfGc$ does not stipulate any one of the above two inclined edges in $G_{\bfGr,\bfGc}$, that is, if 
$j-1,j\notin \Gamma_1^{\rm c}$, the degree  of $(n,j)$ in  $Q_{\bfGr,\bfGc}$ equals four, and its
neighborhood looks as shown in Fig.~\ref{fig:njnei}(d).

\begin{figure}[ht]
\begin{center}
\includegraphics[height=8cm]{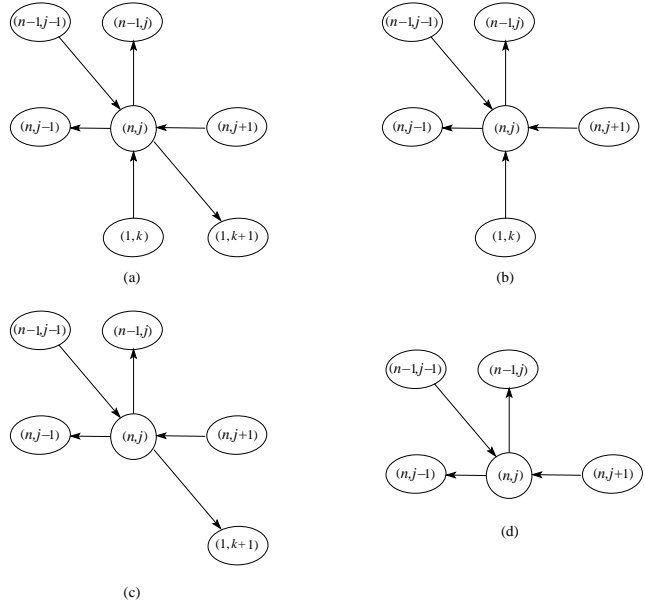}
\caption{Possible neighborhoods of a vertex $(n,j)$, $1<j<n$}
\label{fig:njnei}
\end{center}
\end{figure}

Similarly, a vertex $(i,n)$ for $1<i<n$ can have degree four, five, or six.
If  $\bfGr$ stipulates both inclined edges $(k-1)\to (i-1)$ and $k\to i$ in the graph $G_{\bfGr,\bfGc}$ for some $k$, 
that is, if $\gammar(k-1)=i-1$ and $\gammar(k)=i$, then the degree of $(i,n)$ in  $Q_{\bfGr,\bfGc}$ equals six, and its
neighborhood looks as shown in Fig.~\ref{fig:innei}(a). 

If  $\bfGr$ stipulates only the edge $(k-1)\to (i-1)$ as above but not the other one, that is, if $\gammar(k-1)=i-1$ 
and $i\notin \Gamma_2^{\rm r}$, the degree of $(i,n)$ in  $Q_{\bfGr,\bfGc}$ equals five, and its
neighborhood looks as shown in Fig.~\ref{fig:innei}(b). 

If  $\bfGr$ stipulates only the edge $k\to i$ as above but not the other one, that is, if $i-1\notin \Gamma_2^{\rm r}$ and
$\gammar(k)=i$, the degree of $(i,n)$ in  $Q_{\bfGr,\bfGc}$ equals five as well, and its
neighborhood looks as shown in Fig.~\ref{fig:innei}(c). 

Finally, if $\bfGr$ does not stipulate any one of the above two inclined edges in $G_{\bfGr,\bfGc}$, that is, if 
$i-1,i\notin \Gamma_2^{\rm r}$, the degree  of $(i,n)$ in  $Q_{\bfGr,\bfGc}$ equals four, and its
neighborhood looks as shown in Fig.~\ref{fig:innei}(d).  

\begin{figure}[ht]
\begin{center}
\includegraphics[height=8cm]{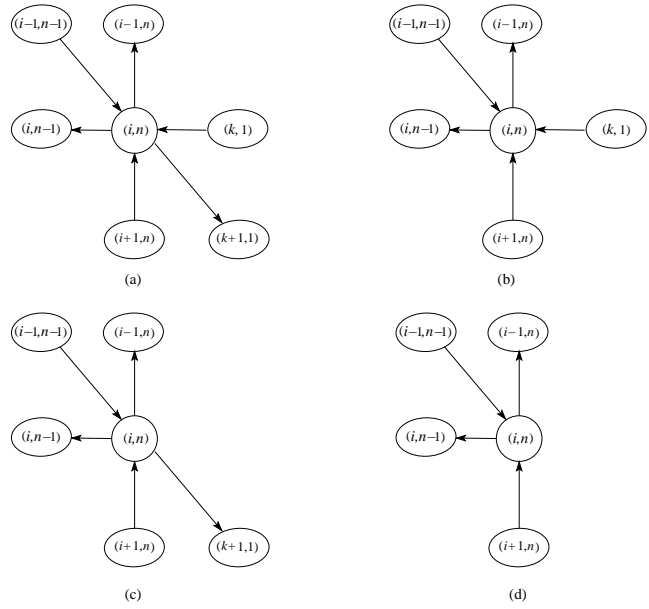}
\caption{Possible neighborhoods of a vertex $(i,n)$, $1<i<n$}
\label{fig:innei}
\end{center}
\end{figure}

The vertex $(1,n)$ can have degree one, two, four, or five. If $\bfGc$ stipulates an inclined edge $(n-1)\to j$ for some $j$,
and $\bfGr$ stipulates an inclined edge $i\to 1$ for some $i$, that is, if $\gammac(j)=n-1$ and $\gammar(i)=1$, then 
the degree of $(1,n)$ in $Q_{\bfGr,\bfGc}$ equals five, and its neighborhood looks as shown in Fig.~\ref{fig:1nnei}(a). 

If only the first of the above two edges is stipulated, that is, if $\gammac(j)=n-1$ and $1\notin \Gamma_2^{\rm r}$,
the degree of $(1,n)$ in $Q_{\bfGr,\bfGc}$ equals four, and its neighborhood looks as shown in Fig.~\ref{fig:1nnei}(b).

If only the second of the above two edges is stipulated, that is, if $\gammar(i)=1$ and $n-1\notin \Gamma_2^{\rm c}$, the degree of $(1,n)$ in $Q_{\bfGr,\bfGc}$ equals two, and its neighborhood looks as shown in Fig.~\ref{fig:1nnei}(c).

Finally, if none of the above two edges is stipulated, that is, if   $1\notin \Gamma_2^{\rm r}$ and 
$n-1\notin \Gamma_2^{\rm c}$, the degree of $(1,n)$ in $Q_{\bfGr,\bfGc}$ equals one, and its neighborhood looks as shown in Fig.~\ref{fig:1nnei}(d).

\begin{figure}[ht]
\begin{center}
\includegraphics[height=8cm]{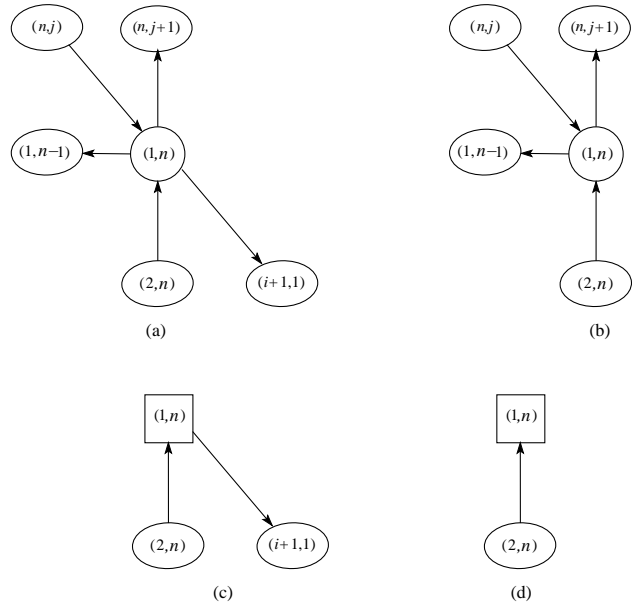}
\caption{Possible neighborhoods of the vertex $(1,n)$}
\label{fig:1nnei}
\end{center}
\end{figure}

Similarly, the vertex $(n,1)$ can have degree one, two, four, or five. If $\bfGr$ stipulates an inclined edge 
$(n-1)\to j$ for some $j$, and $\bfGc$ stipulates an inclined edge $i\to 1$ for some $i$, that is, if $\gammar(n-1)=j$ 
and $\gammac(1)=i$, then the degree of $(n,1)$ in $Q_{\bfGr,\bfGc}$ equals five, and its neighborhood looks as shown 
in Fig.~\ref{fig:n1nei}(a).

If only the first of the above two edges is stipulated, that is, if $\gammar(n-1)=j$ and $1\notin \Gamma_1^{\rm c}$,
the degree of $(n,1)$ in $Q_{\bfGr,\bfGc}$ equals four, and its neighborhood looks as shown 
in Fig.~\ref{fig:n1nei}(b).

If only the second of the above two edges is stipulated, that is, if
$\gammac(1)=i$ and $n-1\notin \Gamma_1^{\rm r}$, the degree of $(n,1)$ in $Q_{\bfGr,\bfGc}$ equals two, and its neighborhood looks as shown in Fig.~\ref{fig:n1nei}(c).

Finally, if none of the above
two edges is stipulated, that is, if   $1\notin \Gamma_1^{\rm c}$ and $n-1\notin \Gamma_1^{\rm r}$, the degree of $(n,1)$
in $Q_{\bfGr,\bfGc}$ equals one, and its neighborhood looks as shown 
in Fig.~\ref{fig:n1nei}(d).

\begin{figure}[ht]
\begin{center}
\includegraphics[height=8cm]{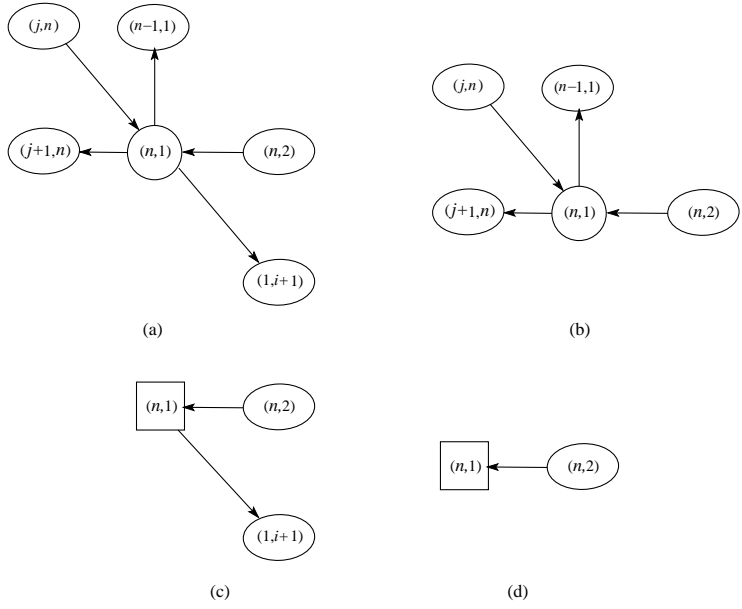}
\caption{Possible neighborhoods of the vertex $(n,1)$}
\label{fig:n1nei}
\end{center}
\end{figure}

The vertex $(n,n)$ can have degree three, four, or five. If $\bfGr$ stipulates an inclined edge 
$i\to(n-1)$ for some $i$, and $\bfGc$ stipulates an inclined edge $j\to(n-1)$ for some $j$, that is, if $\gammar(i)=n-1$ 
and $\gammac(n-1)=j$, then the degree of $(n,n)$ in $Q_{\bfGr,\bfGc}$ equals five, and its neighborhood looks as shown 
in Fig.~\ref{fig:nnnei}(a).

If only one of the above two edges is stipulated, that is, if either  $\gammar(i)=n-1$ and
$n-1\notin \Gamma_1^{\rm c}$, or $\gammac(n-1)=j$ and $n-1\notin \Gamma_2^{\rm r}$,
the degree of $(n,n)$ in $Q_{\bfGr,\bfGc}$ equals four, and its neighborhood looks as shown 
in Fig.~\ref{fig:nnnei}(b,c).

Finally, if none of the above two edges is stipulated, that is, if $n-1\notin \Gamma_1^{\rm c}$ and 
$n-1\notin \Gamma_2^{\rm r}$, 
the degree of $(n,n)$ in $Q_{\bfGr,\bfGc}$ equals three, and its neighborhood looks as shown 
in Fig.~\ref{fig:nnnei}(d).

\begin{figure}[ht]
\begin{center}
\includegraphics[height=8cm]{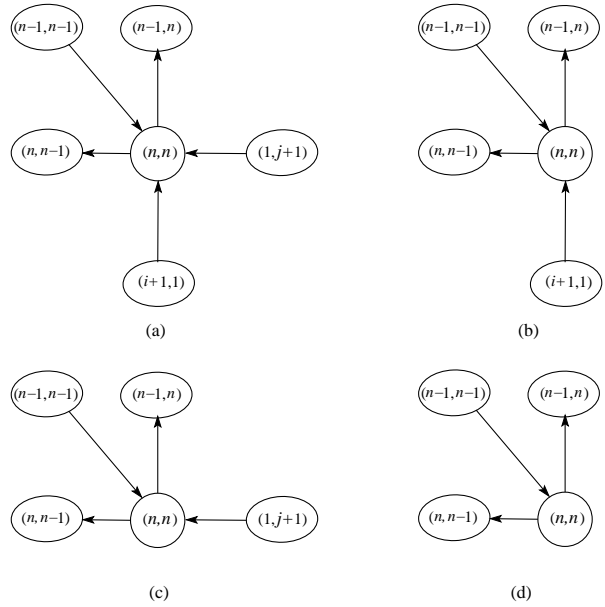}
\caption{Possible neighborhoods of the vertex $(n,n)$}
\label{fig:nnnei}
\end{center}
\end{figure}

Finally, the vertex $(1,1)$ can have degree one, two, or three. If $\bfGr$ stipulates an inclined edge 
$1\to i$ for some $i$, and $\bfGc$ stipulates an inclined edge $1\to j$ for some $j$, that is, if $\gammar(1)=i$ 
and $\gammac(j)=1$, then the degree of $(1,1)$ in $Q_{\bfGr,\bfGc}$ equals three, and its neighborhood looks as shown 
in Fig.~\ref{fig:11nei}(a).

If only one of the above two edges is stipulated, that is, if either  $\gammar(1)=i$ and $1\notin \Gamma_2^{\rm c}$, 
or $\gammac(j)=1$ and $1\notin \Gamma_1^{\rm r}$, the degree of $(n,n)$ in $Q_{\bfGr,\bfGc}$ equals two, and its neighborhood looks as shown in Fig.~\ref{fig:11nei}(b,c). 

If none of the above two edges is stipulated, that is, if 
$1\notin \Gamma_2^{\rm c}$ and $1\notin \Gamma_1^{\rm r}$, the degree of $(1,1)$ in $Q_{\bfGr,\bfGc}$ equals one, 
and its neighborhood looks as shown in Fig.~\ref{fig:11nei}(d).

\begin{figure}[ht]
\begin{center}
\includegraphics[height=6cm]{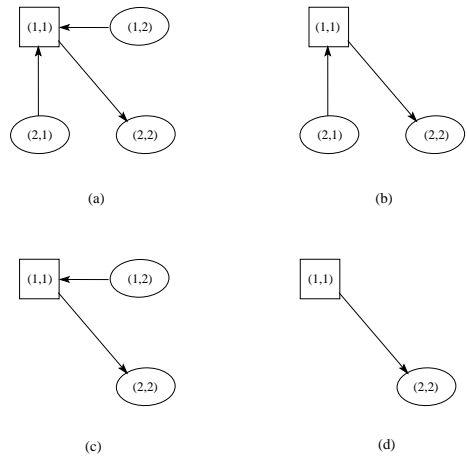}
\caption{Possible neighborhoods of the vertex $(1,1)$}
\label{fig:11nei}
\end{center}
\end{figure}

We can now prove the characterization of frozen vertices mentioned at the beginning of the section.

\begin{proposition}\label{frozenvert}
A vertex $(i,j)$ is frozen in $Q_{\bfGr,\bfGc}$ if and only if $i=j=1$ and $f_{11}=\det X$ or $f_{ij}$ is the restriction 
to the diagonal $X=Y$ of $\det\L$ for some $\L\in\bL_{\bfGr,\bfGc}$.
\end{proposition}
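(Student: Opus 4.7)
The plan is a case-by-case verification based on the explicit neighborhood Figs.~\ref{fig:ijnei}--\ref{fig:11nei} that define $Q_{\bfGr,\bfGc}$, combined with the combinatorics of first blocks of maximal alternating paths in $\BD_{\bfGr,\bfGc}$. A vertex $(i,j)\ne(n,n)$ is frozen iff its degree is at most three, and the claim to prove is that this happens precisely when $(i,j)=(1,1)$ or $s(i,j)=1$ (equivalently, $f_{ij}=\det\L(i,j)|_{X=Y}$ with $\L(i,j)\in\bL_{\bfGr,\bfGc}$).

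First I would dispose of the easy cases. By Fig.~\ref{fig:ijnei}, every interior vertex $(i,j)$ with $1<i,j<n$ has degree six; correspondingly, the entry $\L(i,j)_{s(i,j),s(i,j)}$ sits in the strict interior of some block, so $s(i,j)>1$ and $f_{ij}$ is a proper trailing minor, not $\det\L$. The vertex $(1,1)$ has degree at most three in every subcase of Fig.~\ref{fig:11nei}, and by~\eqref{twof_ii} we have $f_{11}=\det X$; the vertex $(n,n)$ is non-frozen by convention, and $f_{nn}=x_{nn}$ is neither $\det X$ nor the determinant of any $\L\in\bL$. For $(n,j)$ with $1<j<n$ and $(i,n)$ with $1<i<n$, Figs.~\ref{fig:njnei},~\ref{fig:innei} show that the degree is always at least four, so these vertices are never frozen, and correspondingly the entries $x_{nj}$ and $x_{in}$ lie strictly inside blocks and not at their starts.

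The substantive cases are $(i,1)$ and $(1,j)$ for $1<i,j<n$, and the corners $(1,n)$ and $(n,1)$. The plan for each is the same: I would show that the degree-$\le 3$ subcases of the corresponding figure (cases (c) and (d) of Figs.~\ref{fig:i1nei},~\ref{fig:1jnei},~\ref{fig:1nnei},~\ref{fig:n1nei}) are characterized by two independent combinatorial conditions. The first is a \emph{run-start} condition---for instance $i-1\notin\Gamma_1^\er$ for $(i,1)$, or $j-1\notin\Gamma_2^\ec$ for $(1,j)$, and the analogous conditions at the corners---which is equivalent to $(i,j)$ being the exit point of a specific block $B$ of some $\L\in\bL_{\bfGr,\bfGc}$ (the $X$-block $X_{[i,n]}^{[1,\beta]}$ coming from the unique upper horizontal edge $k\to(n-k)$ with $n-k+1=i$, or the analogous $Y$-block). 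The second is the \emph{start-of-path} condition, i.e., the absence of an inclined edge at the far endpoint of the horizontal edge defining $B$, which is equivalent to $B$ being the first block of the alternating path defining $\L$, so that $s(i,j)=1$ and $f_{ij}=\det\L|_{X=Y}$. Each of these two conditions eliminates potential neighbors of $(i,j)$ in the quiver; their combined presence is precisely what reduces the degree from its generic values (five or six) down to two or three.

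Conversely, given $\L\in\bL_{\bfGr,\bfGc}$, I would identify the exit point of its first block as a specific entry $(\alpha,1)$ with $\alpha=(n-k+1)_-(\bfGr)+1$ or $(1,\bar\beta)$ with $\bar\beta=(n-k+1)_-(\bfGc)+1$, where $k\to(n-k)$ is the starting horizontal edge of the alternating path. The maximality of the path forces the required inclined-edge pattern at the starting endpoint to match case (c) or (d) of the corresponding figure, so the vertex is frozen. The main obstacle is the sheer volume of bookkeeping: six separate boundary cases, each with four subcases differentiated by the presence or absence of inclined edges in $\bfGr$ and $\bfGc$, must be traced through the combinatorics of maximal alternating paths, with careful attention to which BD triple governs which boundary of the matrix. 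None of the individual steps is conceptually deep, but the matching must be verified case by case.
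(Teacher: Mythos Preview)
Your approach is essentially the same as the paper's: case analysis from the degree descriptions in Figs.~\ref{fig:ijnei}--\ref{fig:11nei}, reducing to the boundary vertices $(1,j)$ and $(i,1)$ (including the corners), then matching the degree-$\le 3$ cases to the first-block structure of matrices $\L\in\bL_{\bfGr,\bfGc}$. However, you have overcomplicated the key step: your ``two independent combinatorial conditions'' are in fact one and the same. For the vertex $(i,1)$, the relevant $X$-block is the one defined by the upper horizontal edge $(n-i+1)\to(i-1)$; its exit point is always $(i,1)$, so that is not a condition at all. The single condition $i-1\notin\Gamma_1^\er$ does double duty: on the one hand it says that $\alpha=i_-(\bfGr)+1=i$, so the block is $X_{[i,n]}^{[1,\beta]}$ with upper-left entry $x_{i1}$; on the other hand, by the gluing rules this $X$-block has a $Y$-block to its left precisely when there is a downward inclined edge from $i-1$, i.e., when $i-1\in\Gamma_1^\er$. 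Thus $i-1\notin\Gamma_1^\er$ simultaneously forces $\alpha=i$ \emph{and} makes the horizontal edge terminal in its maximal alternating path, so that this block is the uppermost block of its $\L$ and $s(i,1)=1$. The same happens for $(1,j)$ with the condition $j-1\notin\Gamma_2^\ec$.

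The paper's proof exploits this collapse directly. It reads off from the figures that the frozen vertices other than $(1,1)$ are exactly the $(1,j)$ with $j-1\notin\Gamma_2^\ec$ and the $(i,1)$ with $i-1\notin\Gamma_1^\er$, and then observes in one sentence that this condition makes the corresponding horizontal edge the last edge of its maximal alternating path and forces $\bar\beta=j$ (respectively $\alpha=i$), so that the vertex sits at the upper-left corner of $\L$. Once you recognise that your two conditions coincide, your six-case bookkeeping reduces to this short argument.
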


\begin{proof} It follows from the description of the quiver that there are two types of frozen vertices distinct from $(1,1)$:
vertices $(1,j)$ such that $j-1\notin\Gamma^\ec_2$, see Fig.~\ref{fig:1jnei}(c),(d) and Fig.~\ref{fig:1nnei}(c),(d), and
vertices $(i,1)$ such that $i-1\notin\Gamma^\er_1$, see Fig.~\ref{fig:i1nei}(c),(d) and Fig.~\ref{fig:n1nei}(c),(d).

In the first case, the horizontal edge $(n-j+2)\to(j-1)$ in the lower part of $G_{\bfGr,\bfGc}$ is the last edge of a maximal alternating path. Therefore, the $Y$-block defined by this edge is the uppermost block of the matrix $\L$ corresponding to
this path. Consequently, $\bar\beta=(j-1)_-(\Gammac)+1=j$, and hence  $(1,j)$ is indeed the upper left entry of $\L$.

The second case is handled in a similar manner.
\end{proof}

The quiver  $Q_{\bfGr,\bfGc}$ shown in Fig.~\ref {fig:quiver} corresponds to the BD data 
$\bfGr=\left(\{1,2\}, \right.$  $\left. \{2,3\}, 1\mapsto 2, 2\mapsto 3\right)$ and
$\bfGc=\left(\{1,2\}, \{3,4\}, 1\mapsto3, 2\mapsto4\right)$ in $GL_5$. The corresponding graph $\BD_{\bfGr, \bfGc}$ is shown 
on the left in Fig.~\ref{fig:altpaths}. For example, consider the vertex $(1,4)$ and note that $\BD_{\bfGr, \bfGc}$ contains both edges $\bar4\to2$ and $\bar3\to1$. Consequently, the first of the above conditions for the vertices of type
$(1,j)$ holds with $k=2$, and hence $(1,4)$ has outgoing edges $(1,4)\to(5,2)$, $(1,4)\to(2,5)$, and $(1,4)\to(1,3)$, and ingoing edges $(5,1)\to(1,4)$, $(1,5)\to(1,4)$, and $(2,4)\to(1,4)$. Alternatively, consider the vertex $(4,5)$ and note
that $\BD_{\bfGr, \bfGc}$ contains the edge $2\to\bar3$, while $4\notin \Gamma_2^{\rm r}$. Consequently, the second of
the above conditions for the vertices of type $(j,n)$ holds with $k=3$, and hence $(4,5)$ has outgoing edges $(4,5)\to(4,4)$ 
and  $(4,5)\to(3,5)$ and ingoing edges $(3,4)\to(4,5)$, $(3,1)\to (4,5)$, and $(5,5)\to(4,5)$.

\begin{figure}[ht]
\begin{center}
\includegraphics[height=8cm]{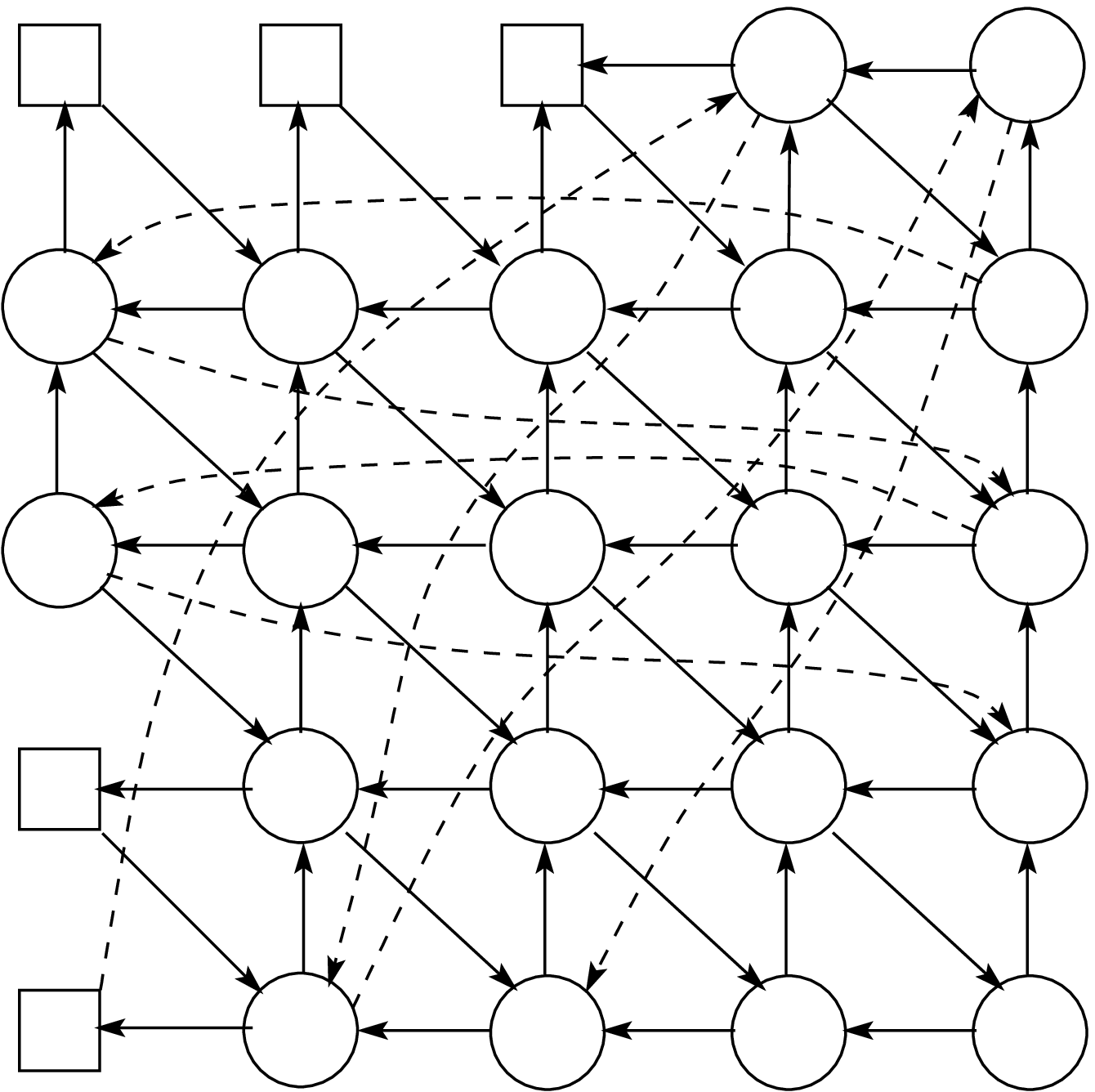}
\caption{An example of the quiver $Q_{\bfGr,\bfGc}$}
\label{fig:quiver}
\end{center}
\end{figure}

\begin{theorem}\label{quiver}
Let $(\bfGr,\bfGc)$ be an oriented aperiodic pair of BD triples, 
then the quiver $Q_{\bfGr,\bfGc}$ defines a cluster structure compatible with
the Poisson bracket \eqref{sklyabragen} on 
$\Mat_n$ with $r=r^{\er}$ and $r'=r^{\ec}$ given
by \eqref{r-matrix}.
\end{theorem}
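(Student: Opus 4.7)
The plan is to verify the compatibility condition by computing the coefficient matrix $\Omega^F = (\omega_{(i,j),(k,l)})$ of \eqref{sklyabragen} in the log-canonical basis $F_{\bfGr,\bfGc}$ supplied by Theorem~\ref{logcanbasis}, and checking that the product $B\,\Omega^F$ has the form $[D \mid 0]$, where $B$ is the exchange matrix of $Q_{\bfGr,\bfGc}$, $D$ is a nonsingular diagonal rational matrix, and the zero block is indexed by the frozen variables listed in Proposition~\ref{frozenvert}. Equivalently, for every mutable vertex $(i,j)$ and every vertex $(r,s)$ one has to establish the identity
\[
\sum_{(k,l)} b_{(i,j),(k,l)}\,\omega_{(r,s),(k,l)} \;=\; \lambda_{(i,j)}\,\delta_{(i,j),(r,s)}
\]
with $\lambda_{(i,j)}\ne 0$. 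Because each row of $B$ has at most six nonzero entries, namely the neighbors of $(i,j)$ depicted in Figures~\ref{fig:ijnei}--\ref{fig:11nei}, this is a strictly local identity that only involves $f_{ij}$, its at most six neighbors, and one arbitrary $f_{rs}$.

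The first technical step is to extract from the proof of Theorem~\ref{logcanbasis} a closed-form expression for $\omega_{(i,j),(k,l)}$ in terms of the positions $(i,j)$, $(k,l)$ and of the blocks of $\L(i,j)$ and $\L(k,l)$. Plugging the decomposition \eqref{RplusSL} of $R_+$ into \eqref{sklyabragen} and using the fact that each $f_{ij}$ is a principal minor whose leading block is a square submatrix of $X$ or $Y$ reduces this to tracking how the projections $\pi_>$, $\pi_<$, $\pi_\h$ act on the rows and columns that index the leading blocks, together with the contributions coming from $\gammar$ and $\gammac$. The answer depends only on the relative position of the leading blocks and on how these blocks are linked by the inclined edges of $\BD_{\bfGr,\bfGc}$.

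Once the coefficients are available, the identity is verified case by case, following the decomposition of the quiver around each vertex in Figures~\ref{fig:ijnei}--\ref{fig:11nei}. For an interior vertex $(i,j)$ with $1<i,j<n$ the six neighbors are arranged symmetrically and the left-hand side of the identity telescopes into a sum whose terms cancel pairwise whenever $(r,s)\ne(i,j)$, leaving a single explicit nonzero $\lambda_{(i,j)}$ for $(r,s)=(i,j)$. For boundary and corner vertices the neighborhood degenerates in a way that is governed precisely by the membership of $i,j,i-1,j-1$ in the subsets $\Gamma^\er_{1,2}$, $\Gamma^\ec_{1,2}$, and in each subcase depicted by the figures the inclined edges inserted by $\bfGr$ or $\bfGc$ contribute exactly the terms needed to replace the missing sides of the generic hexagon.

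The main obstacle is this combinatorial bookkeeping on the boundary and at the corners, where the sixteen possible configurations dictated by the BD data must all be reconciled with the corresponding Poisson coefficients. The aperiodicity hypothesis enters at this step in an essential way: it ensures that the assignment of $f_{ij}$ to a unique pair $(\L,s)$ via the bijection $\J_{\bfGr,\bfGc}$ is well-defined, so that both sides of the local identity are unambiguous, and it guarantees that each inclined edge of $\BD_{\bfGr,\bfGc}$ appears as the correct gluing shift to match the $R_+$ and $R'_+$ contributions of \eqref{sklyabragen}. After all local identities are assembled, the scalars $\lambda_{(i,j)}$ fill in $D$, so that $B\,\Omega^F = [D\mid 0]$, which is the compatibility statement of the theorem.
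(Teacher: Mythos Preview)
Your overall framework is correct and matches the paper's: verify the matrix identity $B\,\Omega^F=[\lambda\one\mid 0]$ (with a single scalar $\lambda$, not a general diagonal $D$), which unwinds to the local identity you wrote, one mutable vertex at a time, using the six-or-fewer neighbors drawn in the figures.

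However, the core technical step in your plan has a genuine gap. You propose to first ``extract a closed-form expression for $\omega_{(i,j),(k,l)}$'' from the proof of Theorem~\ref{logcanbasis} and then plug these into the alternating sum. But that proof does not produce such formulas: it establishes constancy of each bracket by splitting \eqref{bra} into a large number of contributions and showing each is constant \emph{separately}, without assembling the values. The individual $\omega_{(i,j),(k,l)}$ depend on all blocks of both matrices $\L(i,j),\L(k,l)$ (not only the leading ones), and writing them down explicitly is neither done in the paper nor obviously feasible in a form that makes the cancellations you describe transparent. In particular your assertion that for interior vertices the sum ``telescopes into a sum whose terms cancel pairwise'' is precisely what has to be proved, and the proposal offers no mechanism for it.

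What the paper actually does is compute the alternating combination $\Delta_{ij}$ \emph{directly}, without passing through the individual $\omega$'s. It lifts the question to the double $D(GL_n)$, rewrites $\Delta_{ij}$ as $\ttD_{ij}=\ttd_{ij}^1-\ttd_{ij}^2-\ttd_{ij}^3+\ttd_{ij}^4$ where each $\ttd_{ij}^k$ is a difference of two brackets for consecutive trailing minors of the \emph{same} matrix $\L$, and then runs through the decomposition of the bracket established in Section~\ref{sec:basis} term by term. The ``diagonal'' contributions (the first five terms of \eqref{bra}) are shown to contribute zero to $\ttD_{ij}$ via homogeneity and invariance arguments; the four ``non-diagonal'' terms are handled using the explicit block formulas in Lemmas~\ref{etaletalemma}--\ref{xinaxlemma} and~\ref{zone1lemma}--\ref{zone2lemma}, where the pairing into $\ttd_{ij}^k$'s makes most summands cancel and leaves a single $1$ at $(\hat\imath,\hat\jmath)=(i,j)$. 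The boundary and diagonal cases $i=1,n$, $j=1,n$, $i=j$ need separate treatment (including a specific choice of $\ttf^<_{ii}$ versus $\ttf^>_{ii}$) and correction-term bookkeeping when the leading blocks of the seven functions around $(i,j)$ differ. Your outline does not account for any of this machinery; to turn it into a proof you would need either to reproduce these computations or to supply the closed-form $\omega$'s you posit, which is the hard part.
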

  
\begin{remark} 
The quiver that defines a cluster structure compatible with the same bracket on $SL_n$ is obtained from
$Q_{\bfGr,\bfGc}$ by deleting the vertex $(1,1)$.
\end{remark}

\subsection{Outline of the proof}\label{outline}
The proof of Theorem \ref{logcanbasis} is based on lengthy and rather involved calculations. Following the strategy introduced in
\cite{GSVMem}, we consider the bracket \eqref{sklyadoublegen} on the Drinfeld double of $SL_n$ and lift it to a bracket on 
$\Mat_n\times\Mat_n$. The family $F_{\bfGr,\bfGc}$ is obtained as the restriction onto the diagonal $X=Y$ of the family
$\ttF_{\bfGr,\bfGc}$ of functions defined on $\Mat_n\times\Mat_n$ via
$$
{\tt F}={\tt F}_{\bfGr, \bfGc}=\{{\tt f}_{ij}(X,Y) :  i,j\in[1,n], i\ne j\}
 \cup\{{\tt f}_{ii}^<(X,Y), {\tt f}_{ii}^>(X,Y):i\in[1,n]\},
$$
see \eqref{f_ij_gen}, \eqref{twof_ii}. 
The bracket of a pair of functions $f,g\in \ttF_{\bfGr,\bfGc}$ is decomposed into a large number of contributions that 
either vanish, or are proportional to the product $fg$. In the process we repeatedly use invariance properties
of functions in $\ttF_{\bfGr,\bfGc}$ with respect to the right and left action of certain subgroups of the double.  

The proof of Theorem \ref{quiver} is based on the standard characterization of Poisson structures compatible with a given cluster structure, see e.g. \cite[Ch.~4]{GSVb}. Note that the number of frozen variables in $Q_{\bfGr,\bfGc}$ equals 
$1+k_{\bfGr}+k_{\bfGc}$, and that $\det X$ is frozen. As an immediate consequence we get Theorem \ref{genmainth}(i), which for 
$\bfGr=\bfGc$ turns into Theorem \ref{mainth}(i).

The proof of Theorem \ref{genmainth}(iii) is based on the claim that right hand sides of all exchange relations in one cluster
are semi-invariants of the left-right action of $\H_{\bfGr}\times\H_{\bfGc}$, see Lemma~\ref{rlsemi}. It also  involves the regularity check for all clusters adjacent to the initial one, see Theorem~\ref{regneighbors}. 
Theorem \ref{mainth}(iii) follows when $\bfGr=\bfGc$. After this is done, Theorem \ref{mainth}(iv) and (v) 
follow from Theorem \ref{quiver} via \cite[Theorem~4.1]{GSVM}. To get Theorem~\ref{genmainth}(iv) and (v) we 
need a generalization of the latter result to the case of two different tori, which is straightforward.

The central part of the paper is the proof of Theorem \ref{genmainth}(ii) (Theorem \ref{mainth}(ii) then follows in the case 
$\bfGr=\bfGc$). It relies on Proposition~2.1 in \cite{GSVMem}, which is reproduced below for readers' convenience. 

\begin{proposition}\label{regfun}
Let $V$ be a Zariski open subset in $\C^{n+m}$ and $\CC$ be a cluster structure in $\C(V)$  
with $n$ cluster and $m$ frozen variables such that

{\rm(i)} there exists a cluster $(f_1,\dots,f_{n+m})$ in $\CC$ such that $f_i$ is
regular on $V$ for $i\in [1,n+m]$;

{\rm(ii)} any cluster variable $f_k'$ adjacent to $f_k$, $k\in [1,n]$, is regular on $V$;

{\rm(iii)} any frozen variable $f_{n+i}$, $i\in [1,m]$, vanishes at some point of $V$;

{\rm(iv)} each regular function on $V$ belongs to $\UU_\C(\CC)$.

\noindent Then $\CC$ is a regular cluster structure and $\UU_\C(\CC)$ is naturally isomorphic to $\O(V)$.
\end{proposition}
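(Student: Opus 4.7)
The plan is to establish the ring equality $\UU_\C(\CC)=\O(V)$ inside the common ambient field $\C(V)$; combined with the Laurent phenomenon this will immediately yield regularity of $\CC$ and the natural isomorphism via the field map $\varphi$. The inclusion $\O(V)\subseteq\UU_\C(\CC)$ is asserted directly by hypothesis (iv), so the entire content lies in the reverse inclusion.

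For the reverse inclusion, take any $h\in\UU_\C(\CC)$. By definition $h$ is a Laurent polynomial in the initial cluster $(f_1,\dots,f_{n+m})$, and because we have fixed the ground ring to be the polynomial ring in frozen variables, the frozen $f_{n+i}$ appear only with non-negative exponents. Invoking hypothesis (i), the pole divisor of $h$ on $V$ is therefore contained in $\bigcup_{k=1}^{n} D_k$, where $D_k=\{f_k=0\}$. For each fixed mutable direction $k$, the definition of $\UU_\C(\CC)$ also provides a Laurent expansion of $h$ in the adjacent cluster obtained by mutating at $f_k$, in which the replacement variable $f_k'$ is regular on $V$ by hypothesis (ii). Consequently $h$ has no pole at a generic point of $D_k$: any pole of $h$ lying on $D_k$ must also lie on $D_j$ for some $j\neq k$, or on $\{f_k'=0\}\cap D_k$, and both loci have codimension at least two (the latter because $f_k$ and $f_k'$ satisfy an exchange relation forcing their common zero set to be of higher codimension). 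Intersecting over all $k\in[1,n]$ shows that the pole locus of $h$ on $V$ has codimension at least two. Since $V\subset\C^{n+m}$ is Zariski open in affine space and hence smooth, the algebraic Hartogs extension principle yields $h\in\O(V)$.

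Once the ring equality is established, regularity of the cluster structure comes for free: every cluster variable in every seed belongs to $\UU_\C(\CC)$ by the Laurent phenomenon, and is therefore regular on $V$ by the argument just given. Hypothesis (iii) is not used in the codimension-two pole argument itself; rather, it is needed to exclude degenerate situations in which some frozen $f_{n+i}$ would be a unit in $\O(V)$, so that the cluster-theoretic generators faithfully match the generators of $\O(V)$ and the isomorphism is genuinely the map transported from $\varphi$ rather than a parasitic one.

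The main technical step, and the only place where the hypotheses (i) and (ii) are invoked in tandem, is the codimension-two bound on the pole locus. This step is standard once the mutation picture is set up correctly, and the substantive difficulty in using this proposition in the rest of the paper is to actually verify its four hypotheses in each concrete setting; the proof of the proposition itself is essentially bookkeeping around the Hartogs extension.
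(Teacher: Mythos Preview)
This proposition is not proved in the paper; it is reproduced from \cite{GSVMem} (Proposition~2.1 there) for the reader's convenience, so there is no in-paper argument to compare against. Your strategy---show $\UU_\C(\CC)\subseteq\O(V)$ by bounding the pole locus of any $h\in\UU_\C(\CC)$ in codimension at least two and then invoke algebraic Hartogs on the smooth variety $V$---is the standard one and is essentially what is done in the cited reference.

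There is, however, a real gap in your codimension-two step. You assert that $D_j\cap D_k$ has codimension at least two for distinct mutable $j,k$, but this does \emph{not} follow merely from the $f_i$ forming a transcendence basis of $\C(V)$: on $\C^3$ the functions $xy$, $z$, $yz$ are algebraically independent and generate the function field, yet $\{xy=0\}\cap\{yz=0\}$ contains the hyperplane $\{y=0\}$. What is actually needed is that $f_k$ be \emph{coprime} in the UFD $\O(V)$ to each $f_j$ and to $f_k'$, and establishing this coprimality is precisely where the substance of the cited proof lies. The exchange relation you invoke is the right tool for the pair $(f_k,f_k')$, but even there one must check that the right-hand side of the exchange relation shares no irreducible factor with $f_k$ in $\O(V)$, which is a statement about $\O(V)$ and not just about the abstract polynomial ring in the $f_i$. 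Your account of hypothesis~(iii) is also off: over the polynomial ground ring, (iii) is in fact forced by (iv), since a nowhere-vanishing frozen $f_{n+i}$ would put $f_{n+i}^{-1}\in\O(V)\subseteq\UU_\C$, contradicting the fact that frozen variables are not invertible in $\UU_\C$; so (iii) cannot be ``needed'' in the sense you describe.
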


Conditions (i) and (iii) are established via direct observation, and condition (ii) was already discussed above. Therefore, 
the main task is to check condition (iv).  Note that Theorem \ref{genmainth}(i) and Theorem 3.11 in \cite{GSVD} 
imply that it is enough to check that every matrix entry can be written as a Laurent polynomial in the initial cluster and 
in any cluster adjacent to the initial one. In \cite{GSVMem} this goal was achieved by constructing two distinguished sequences of mutations.  Here we suggest a new approach: induction on the total size $|\Gamma^\er_1|+|\Gamma^\ec_1|$. 
Let $\tilde\bfG$ be the BD triple obtained from $\bfG$ by removing a certain root $\alpha$ from $\Gamma_1$ and the corresponding 
root $\gamma(\alpha)$ from $\Gamma_2$. Given an aperiodic pair $(\bfGr, \bfGc)$ with $|\Gamma^\er_1|>0$, we choose $\alpha$ to be 
the rightmost root in an arbitrary nontrivial row $X$-run $\Delta^\er$ and define an aperiodic pair $(\tbfGr,\bfGc)$. 
Since the total size of this pair is smaller, we assume that $\tilde\CC=\CC_{\tbfGr,\bfGc}$ possesses the above mentioned Laurent property. Recall that both $\CC$ and $\tilde \CC$ are cluster structures on the space of regular functions on
$\Mat_n$. To distinguish between them, the matrix entries in the latter are denoted $z_{ij}$; they form
an $n\times n$ matrix $Z=(z_{ij})$.

Let $F =\{ f_{ij}(X) \:  i,j\in[1,n]\}$ and $\tilde F=\{ \tilde f_{ij}(Z) \:  i,j\in[1,n]\}$ 
be initial clusters for $\CC$ and $\tilde \CC$, respectively, and 
$Q$ and $\tilde Q$ be the corresponding quivers. It is easy to see that all maximal alternating paths in 
$\BD_{\Gammar,\Gammac}$ are preserved in $\BD_{\tbfGr,\bfGc}$ except for the path that goes through the directed inclined edge
$\alpha\to \gamma(\alpha)$. The latter one is split into two: the initial segment up to the vertex $\alpha$ and the closing segment
starting with the vertex $\gamma(\alpha)$. Consequently, the only difference between
$Q$ and $\tilde Q$ is that the vertex $v=(\alpha+1,1)$ that corresponds to the endpoint of the initial segment
is mutable in $Q$ and frozen in $\tilde Q$, and that certain three edges incident to $v$ in $Q$ do not exist in $\tilde Q$ 

Let us consider four fields of rational functions in $n^2$ independent variables: 
$\X=\C(x_{11},\dots,x_{nn})$, $\ZZ=\C(z_{11},\dots,z_{nn})$,  $\FF=\C(\fy_{11},\dots,\fy_{nn})$, and 
$\tilde\FF=\C(\tfy_{11},\dots,\allowbreak\tfy_{nn})$.  Polynomial maps $f: \FF\to\X$ and $\tilde f: \tilde\FF\to\ZZ$
are given by $\fy_{ij}\mapsto f_{ij}(X)$ and $\tfy_{ij}\mapsto \tilde f_{ij}(Z)$. By the induction hypothesis, there exists
a map $\tilde P: \ZZ\to\tilde \FF$ that takes $z_{ij}$ to a Laurent polynomial in  variables $\tfy_{\alpha\beta}$ such
that $\tilde f\circ\tilde P=\Id$. Note that the polynomials $\tilde f_{ij}(Z)$ are algebraically independent, and hence
$\tilde f$ is an isomorphism. Consequently, $\tilde P\circ \tilde f=\Id$ as well. 
Our first goal is to build a map $P: \X\to\FF$ that takes $x_{ij}$ to a Laurent
polynomial in variables $\fy_{\alpha\beta}$ and satisfies condition $f\circ P=\Id$.

We start from the following result.

\begin{theorem}\label{prototype}
There exist a birational map $U: \X\to \ZZ$ and an invertible polynomial map 
$T: \FF \to \tilde\FF$ satisfying the following conditions:

a) $\tilde f\circ T=U\circ f$;

b) the denominator of any $U(x_{ij})$ is a power of $\tilde f_v(Z)$;

c) the inverse of $T$ is a monomial transformation.
\end{theorem}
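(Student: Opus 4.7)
The strategy is to read off $U$ from an explicit block-determinant identity induced by the combinatorial change from $\bfGr$ to $\tbfGr$, and then let $T$ be forced by the relation $\tilde f\circ T=U\circ f$ together with $T=\tilde f^{-1}\circ U\circ f$ (which makes sense because $\tilde f$ is a polynomial isomorphism).

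The first step is to analyze what the alternating-path decomposition of $\BD_{\tbfGr,\bfGc}$ looks like relative to that of $\BD_{\bfGr,\bfGc}$. Removing the rightmost root $\alpha$ of a nontrivial row $X$-run breaks exactly one maximal alternating path at the directed inclined edge $\alpha\to\gammar(\alpha)$: the path splits into an initial segment ending at $\alpha$ and a closing segment starting at $\gammar(\alpha)$. Correspondingly, the unique matrix $\L^0\in\bL_{\bfGr,\bfGc}$ built from this path is replaced in $\bL_{\tbfGr,\bfGc}$ by two matrices $\L^{\mathrm{top}}$ and $\L^{\mathrm{bot}}$ glued from the two segments. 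All other $\L\in\bL_{\bfGr,\bfGc}$ appear unchanged in $\bL_{\tbfGr,\bfGc}$. Thus the initial cluster variables $f_{ij}$ and $\tilde f_{ij}$ coincide except precisely for those pairs $(i,j)$ whose leading block sits on the broken path.

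The second step is to construct $U$. Note that $f_v(X)=f_{\alpha+1,1}(X)$ is a principal minor of the glued matrix $\L^0$ that, under the splitting, factors (via a Jacobi-type/Schur complement identity on the glued block matrix at the splitting point) as a product involving $\det\L^{\mathrm{top}}$ evaluated at an adjusted matrix. Guided by this identity, I would define $U$ by $U(x_{ij})=x_{ij}$ for all entries lying outside the rows/columns that participate in the splitting, and modify the remaining entries by a multiplicative factor that is a Laurent monomial in $\tilde f_v(Z)$. The factor is chosen so that when the modified matrix $Z=U(X)$ is substituted into each surviving determinant of $\bL_{\tbfGr,\bfGc}$, one recovers the determinants defining the $f_{ij}(X)$. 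This substitution is a triangular adjustment, so inverting it is explicit, and $U$ is birational with denominators that are powers of $\tilde f_v(Z)$, giving (b).

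The third step is to define $T$ by $T(\fy_{ij})=(\tilde f^{-1}\circ U\circ f)(\fy_{ij})$. By construction, $U\circ f_{ij}$ is a Laurent monomial in the $\tilde f_{ab}$'s (either $\tilde f_{ij}$ itself, for the unaffected indices, or a two- or three-term Laurent monomial in $\tilde f_{ab}$, $\tilde f_v$ coming from the Schur-type identity). Since the $\tilde f_{ab}$ are algebraically independent, $T(\fy_{ij})$ is an explicit Laurent monomial in the $\tfy_{ab}$, and in fact a polynomial because the monomials introduced have nonnegative exponents on nonfrozen variables. Clause (c) then follows because the $\log$-exponent matrix expressing $T(\fy_{ij})$ in terms of $\tfy_{ab}$ is a unimodular integer matrix (it is identity except on a small block corresponding to the affected indices, where the relation is a simple triangular factorization), so inverting it gives a monomial inverse. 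The main obstacle I anticipate is bookkeeping: proving the exact Schur-type identity on the glued block matrix that produces the required Laurent-monomial structure, and checking carefully which rows and columns must be modified in $U$ so that \emph{every} surviving cluster variable $\tilde f_{ij}$ is compatible with the identity. Aperiodicity of $(\tbfGr,\bfGc)$, guaranteed by the choice of $\alpha$ as the rightmost root of a nontrivial run, is what ensures the split path decomposition is again a legitimate alternating-path decomposition and no further collisions occur.
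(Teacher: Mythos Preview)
Your high-level strategy---analyze how the alternating-path decomposition changes, build $U$ explicitly, and let $T$ be forced---matches the paper's. You also correctly identify that exactly one path in $\BD_{\bfGr,\bfGc}$ splits into two, so that only one matrix $\L^0\in\bL$ is affected, and that $T$ should send $\fy_{ij}$ to either $\tfy_{ij}$ or $\tfy_{ij}\tfy_v$ (a single monomial, not ``two- or three-term''). Where your proposal breaks down is the actual construction of $U$.

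You describe $U$ as modifying certain matrix entries ``by a multiplicative factor that is a Laurent monomial in $\tilde f_v(Z)$.'' A diagonal rescaling of this kind cannot work: the structural difference between $\L(X,X)$ and $\tilde\L(Z,Z)$ is that a row is \emph{deleted} from each $Y$-block of the form $Y_{[1,q+k]}^{\bar J}$, not that rows are rescaled. No multiplicative adjustment of individual entries will make the trailing principal minors of the larger matrix equal those of the row-deleted one for \emph{every} $s$. What the paper does instead is set $X=\nuu(Z)Z$ with $\nuu(Z)$ a unipotent upper-triangular matrix built from a seed $\nuu_0(Z)=\one_n+\sum_{\varkappa=1}^{k-1}\alpha_\varkappa(Z)e_{q+\varkappa,q+k}$ by iterating $\exp(\gammar)$. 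The point is that the invariance property \eqref{2.1} then reduces $\ttf_{ij}(\nuu Z,\nuu Z)$ to $\ttf_{ij}(Z,\nuu_0 Z)$, so one only has to arrange that a single row operation on the relevant $Y$-blocks kills the contribution of the deleted row to every trailing minor. The coefficients $\alpha_\varkappa(Z)$ are then determined as ratios of specific minors of $\tilde\L^*$ (with denominator exactly $\tilde f_v(Z)$), and the verification that this single choice works simultaneously for \emph{all} $\L\in\bL$ and all $s$ is the real content: it requires a block-decomposition lemma and a Pl\"ucker-relation argument comparing blocks of different piercing matrices. Your ``Schur-type identity at the splitting point'' does not address this simultaneity, which is where the difficulty lies.
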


Put $P=T^{-1}\circ\tilde P\circ U$; it is a map $\X\to\FF$, and by a) and the induction hypothesis,
\[
P\circ f=T^{-1}\circ\tilde P\circ U\circ f=T^{-1}\circ\tilde P\tilde f\circ T=T^{-1}\circ T=\Id.
\]
For the same reason as above this yields $f\circ P=\Id$. Let us check that $P$ takes $x_{ij}$ to a Laurent polynomial in 
variables $\fy_{\alpha\beta}$. Indeed,  by b), $U$ takes $x_{ij}$ into a rational expression whose denominator is a power 
of $\tilde f_v(Z)$. Consequently, by the induction hypothesis, $\tilde P$ takes the numerator of this expression to a 
Laurent polynomial in $\tfy_{\alpha\beta}$, and the denominator to a power of $\tfy_v$. As a result, $\tilde P\circ U$
takes $x_{ij}$ to a Laurent polynomial in $\tfy_{\alpha\beta}$. Finally, by c), $T^{-1}$ takes this
Laurent polynomial to a Laurent polynomial in $\fy_{\alpha\beta}$, and hence $P$ as above satisfies the required conditions.

 The next goal is to implement a similar construction at all adjacent clusters. 
Fix an arbitrary mutable vertex $u\ne v$ in $Q$; as it was explained above, $u$ remains mutable in $\tilde Q$ as well.
 Let $\mu_u(F)$ and $\mu_u(\tilde F)$ be the clusters obtained from $F$ and $\tilde F$, respectively, via
the mutation in direction $u$, and let $f'_u(X)$ and $\tilde f'_u(Z)$ be cluster variables that replace $f_u(X)$ and 
$\tilde f_u(Z)$ in $\mu_u(F)$ and $\mu_u(\tilde F)$. Replace variables $\fy_{u}$ and $\tfy_{u}$  by new variables 
$\fy'_{u}$ and $\tfy'_{u}$ and define two additional fields of rational functions in $n^2$ variables: 
$\FF'=\C(\fy_{11},\dots, \fy'_{u}, \dots, \fy_{nn})$ and $\tilde\FF'=\C(\tfy_{11},\dots, \tfy'_{u}, \dots, \tfy_{nn})$. 
Similarly to the situation discussed above, there are polynomial isomorphisms $f':\FF'\to\X$ and $\tilde f':\tilde \FF'\to\ZZ$ and a Laurent map $\tilde P':\ZZ\to \tilde \FF'$ such that $\tilde f'\circ\tilde P'=\Id$ 
(the latter exists by the induction hypothesis).

We define a map $T': \FF'\to\tilde \FF'$ via $T'(\fy_{ij})=T(\fy_{ij})$ for $(i,j)\ne u$ and 
$T'(\fy'_u)=\tfy'_u\tfy_v^{\lambda_u}$ for some integer $\lambda_u$ and prove that maps $U$ and $T'$ satisfy the analogs
of conditions a)--c) above. Consequently, the map $P'=(T')^{-1}\circ\tilde P'\circ U$ takes each $x_{ij}$ to a Laurent
polynomial in $\fy_{11},\dots,\fy'_u,\dots, \fy_{nn}$ and satisfies condition $P'\circ f'=\Id$.

Thus, we proved that every matrix entry can be written as a Laurent polynomial in the initial cluster $F$ of $\CC_{\bfGr,\bfGc}$ and in any cluster $\mu_u(F)$ adjacent to it, except for the cluster $\mu_v(F)$. 
To handle this remaining cluster, we pick a different $\alpha$: the rightmost root in another nontrivial row $X$-run (if there are other nontrivial row $X$-runs), 
or the leftmost root of the same row $X$-run (if it differs from the rightmost root), or the rightmost root of an arbitrary nontrivial column $X$-run and an aperiodic pair $(\bfGr,\tbfGc)$ (if $|\Gamma^\ec_1|>0$), and proceed in the same way as above.
Namely, we prove the existence of the analogs of the maps $U$ and $T$ satisfying conditions a)--c) above with a different
distinguished vertex $v$. Consequently, $\mu_v(F)$ is now covered by the above reasoning about adjacent clusters.

 Similarly, if the initial pair $(\bfGr, \bfGc)$ satisfies $|\Gamma^\ec_1|>0$, we apply the same strategy starting with column 
$X$-runs. It follows from the above description that the only case that cannot be treated in this way is $|\Gamma^\er_1|+|\Gamma^\ec_1|=1$. It is considered as the base of induction and treated via direct calculations

We thus obtain an analog of Theorem \ref{genmainth}(ii) for the cluster structure $\CC_{\bfGr,\bfGc}$ on $\Mat_n$. The sought-for 
statement for the cluster structure on $SL_n$ follows from the fact that both $\UU_\C(\CC_{\bfGr,\bfGc})$ and $\O(SL_n)$ are obtained from their $\Mat_n$ counterparts via the restriction to $\det X=1$.

\section{Initial basis}\label{sec:basis}
The goal of this Section is the proof of Theorem \ref{logcanbasis}

\subsection{The bracket}\label{sec:bra} 
In this paper, we only deal with $\g = \sl_n$, and hence  
$\g_{\Gamma_{1}}$ and $\g_{\Gamma_{2}}$
are subalgebras of block-diagonal matrices with nontrivial traceless blocks determined by nontrivial runs of $\Gamma_{1}$ 
and $\Gamma_{2}$, respectively,  
and zeros everywhere else. Each diagonal component is isomorphic to 
$\sl_k$, where $k$ is the size of the corresponding run.  Formula \eqref{sklyabragen}, where $R_+=R_+^\ec$ and 
$R_+'=R_+^\er$ are given by \eqref{RplusSL} with $S$ skew-symmetric and subject to 
conditions \eqref{S-eq}, defines a Poisson bracket on $\G=SL_n$. It will be convenient to write down an extension of
the bracket \eqref{sklyadoublegen} to the double $D(GL_n)$ such that its restriction to the diagonal $X=Y$ is an extension of \eqref{sklyabragen} to $GL_n$ 
(for brevity, in what follows we write $\Poi^D$ instead of $\Poi^D_{r,r'}$). 

To provide an explicit expression for such an extension, we extend the maps $\gamma$ and $\gamma^*$ to the whole $\gl_n$. 
    Namely, $\gamma$ is re-defined as the projection from $\gl_n$ onto the union of diagonal blocks
 specified by $\Gamma_1$, which are then moved 
by the Lie algebra isomorphism between $\g_{\Gamma_{1}}$ and $\g_{\Gamma_{2}}$
to corresponding diagonal blocks specified by $\Gamma_2$. Similarly, the adjoint map $\gamma^*$  acts as the projection to  
$\g_{\Gamma_2}$ followed by the Lie algebra 
isomorphism that moves each  diagonal block of 
$\g_{\Gamma_{2}}$ back to the corresponding diagonal block of $\g_{\Gamma_{1}}$. 
Consequently, 
\begin{equation}
\label{gammaid}
\begin{aligned}
\gamma^*\gamma=\Pi_{\Gamma_1},\qquad \gamma\gamma^*=\Pi_{\Gamma_2},\\
\gamma\gamma^*\gamma=\gamma,\qquad \gamma^*\gamma\gamma^*=\gamma^*,
\end{aligned}
\end{equation}
where $\Pi_{\Gamma_1}$ is the projection to $\g_{\Gamma_{1}}$ and $\Pi_{\Gamma_2}$ is the projection to $\g_{\Gamma_{2}}$.
Note that the restriction of $\gamma$ to $\g_{\Gamma_1}$ is nilpotent, and hence $1-\gamma$ is invertible on the whole $\gl_n$.

We now view $\pi_>$, $\pi_<$ and $\pi_0$ as projections to the upper triangular, lower triangular and diagonal matrices, respectively.
Additionally, define $\pi_{\geq}=\pi_>+\pi_0$, $\pi_\leq=\pi_<+\pi_0$ and for any square matrix $A$ write $A_>$, $A_<$, $A_0$, $A_\geq$, $A_\leq$ instead of $\pi_>A$, $\pi_<A$, $\pi_0A$, $\pi_\geq A$, $\pi_\leq A$, respectively.
Finally, define operators $\nabla_X$ and $\nabla_Y$ via
\[
\nabla_X f=\left(\frac{\partial f}{\partial x_{ji}}\right)_{i,j=1}^n,\qquad
\nabla_Y f=\left(\frac{\partial f}{\partial y_{ji}}\right)_{i,j=1}^n,
\]
and operators
\begin{align*}
E_L=\nabla_X X+ \nabla_Y Y, \quad & \quad E_R=X \nabla_X + Y \nabla_Y,\\
\xi_L=\gammac(\nabla_X X)+\nabla_Y Y,\quad & \quad \xi_R=X\nabla_X+\gammar^*(Y\nabla_Y),\\
\eta_L=\nabla_X X+\gammac^*(\nabla_Y Y), \quad & \quad\eta_R=\gammar(X\nabla_X)+Y\nabla_Y
\end{align*}
via $E_L f=\nabla_X f \cdot X+ \nabla_Y f\cdot Y$, $E_R f=X \nabla_X f + Y \nabla_Y f$, and so on.
The following simple relations will be used repeatedly in what follows:
\begin{equation}
\label{gammarel}
\begin{aligned}
\frac{1} {1 - \gammac} E_L  = \nabla_X X + \frac{1} {1 - \gammac} \xi_L, \qquad&\frac{1} {1 - \gammar} E_R  = 
X \nabla_X  + \frac{1} {1 - \gammar} \eta_R,\\
\frac{1} {1 - \gammac^*} E_L = \nabla_Y Y + \frac{1} {1 - \gammac^*} \eta_L, \qquad&\frac{1} {1 - \gammar^*} E_R  = 
Y \nabla_Y + \frac{1} {1 - \gammar^*} \xi_R,\\
\eta_L=\gammac^*(\xi_L)+\Pi_{\hat\Gamma_1^\ec}(\nabla_X X), \qquad& \eta_R=\gammar(\xi_R)+\Pi_{\hat\Gamma_2^\er}(Y\nabla_Y),
\end{aligned}
\end{equation}
where $\Pi_{\hat\Gamma_{j}^\el}$ is the orthogonal projection complementary to $\Pi_{\Gamma_{j}^\el}$ for $j=1,2$, ${\rm l=r,c}$.

The statement below is a generalization of \cite[Lemma 4.1]{GSVMem}.

\begin{theorem}\label{doublebrack}
The bracket \eqref{sklyadoublegen} on the double $D(GL_n)$ is given by
\begin{multline}
\label{bracket}
\{f^1,f^2\}^D(X,Y)
=\left\langle R^{\ec}_+(E_L f^1),E_L f^2\right\rangle-\left\langle R^{\er}_+(E_R f^1),E_R f^2\right\rangle\\+
\left\langle X\nabla_{X} f^1, Y\nabla_{Y} f^2\right\rangle-\left\langle \nabla_X f^1\cdot X, 
\nabla_Y f^2\cdot Y\right\rangle,
\end{multline}
where 
\begin{multline}
\label{rplusfin}
R^{\el}_+(\zeta)=\frac1{1-\gamma^\el}\zeta_{\ge}-\frac{{\gamma^\el}^*}{1-{\gamma^\el}^*}\zeta_{<}\\-\frac12 \left 
(\frac{\gamma^\el}{1-\gamma^\el} + 
\frac{1}{1 - {\gamma^\el}^*}\right ) \zeta_0 - \frac 1 n \left(\Tr(\zeta)\S^\el - \Tr \left(\zeta\S^\el\right)\one\right)
\end{multline}
with 
\[
\S^\el = \frac 12 \left(\frac 1{1-\gamma^\el} - \frac 1{1 - {\gamma^\el}^*}\right )\one
\]
for $\rm l=r,c$.
\end{theorem}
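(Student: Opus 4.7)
The plan is to extend the derivation of \cite[Lemma~4.1]{GSVMem}, which settled the Poisson--Lie case $r=r'$, to the present Poisson-homogeneous setting with two distinct R-matrices $r^\er$ and $r^\ec$. The proof is in the end a direct unpacking of the abstract formula \eqref{sklyadoublegen} in matrix coordinates on $D(GL_n)\simeq GL_n\times GL_n$: one rewrites the double gradients and the double R-operator explicitly, substitutes, and organizes the resulting inner products so that they assemble into the right-hand side of \eqref{bracket}.

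First, I would express the double gradients $\dnabla^L f$ and $\dnabla^R f$ in terms of $\nabla_X$ and $\nabla_Y$. Since the second slot of $\langle\langle\cdot,\cdot\rangle\rangle$ enters with a minus sign, the $Y$-components of both $\dnabla^L$ and $\dnabla^R$ will carry a sign opposite to their $X$-counterparts. Next, using the Manin-triple decomposition \eqref{ddeco}, I would write the double R-operator $R_D^\el=\pi_{\D_+}-\pi_{\D_-^\el}$ on an arbitrary $(\xi,\eta)\in\gl_n\oplus\gl_n$ by solving $(\xi,\eta)=(u,u)+(R_+^\el v,R_-^\el v)$ for $u,v\in\gl_n$; this reduces $R_D^\el$ to an explicit expression in $R_+^\el$ and $R_-^\el=R_+^\el-\id$ only.

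Substituting the above into \eqref{sklyadoublegen} and splitting each pairing $\langle\langle\cdot,\cdot\rangle\rangle$ into an $X$-part and a sign-flipped $Y$-part, the answer organizes into two groups of contributions. The diagonal contributions, in which the $X$-gradient of $f^1$ pairs with the $X$-gradient of $f^2$ (and likewise for $Y$), assemble with the help of the identities \eqref{gammarel} into the two $R_+^\el$-terms $\langle R_+^\ec(E_L f^1),E_L f^2\rangle$ and $-\langle R_+^\er(E_R f^1),E_R f^2\rangle$ on the first line of \eqref{bracket}. The off-diagonal contributions, in which an $X$-gradient pairs with a $Y$-gradient through the $R_D$-decomposition, collect exactly into the cross terms $\langle X\nabla_X f^1,Y\nabla_Y f^2\rangle-\langle\nabla_X f^1\cdot X,\nabla_Y f^2\cdot Y\rangle$ on the second line.

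Finally, I would derive the explicit form \eqref{rplusfin} of $R_+^\el$ on $\gl_n$ from its $\sl_n$ expression \eqref{RplusSL}. Extending $\gamma^\el$ and ${\gamma^\el}^*$ to $\gl_n$ via \eqref{gammaid} yields the first three summands of \eqref{rplusfin}; the Cartan operator $S$, determined up to $\h_\bfG\wedge\h_\bfG$ by \eqref{S-eq}, is chosen canonically as $\S^\el=\tfrac12\bigl(\tfrac{1}{1-\gamma^\el}-\tfrac{1}{1-{\gamma^\el}^*}\bigr)\one$, which one checks satisfies \eqref{S-eq} by applying $(1-\gamma^\el)$ and using \eqref{gammaid}; the trace correction $-\tfrac{1}{n}\bigl(\Tr(\zeta)\S^\el-\Tr(\zeta\S^\el)\one\bigr)$ is the unique modification keeping $R_+^\el$ compatible with tracelessness on $\sl_n$ while extending it to the center of $\gl_n$. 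I expect the main obstacle to be bookkeeping: the two BD triples appear asymmetrically in \eqref{sklyadoublegen} (one on the left, one on the right), and one must repeatedly invoke \eqref{gammarel} and \eqref{gammaid} on both arguments to rewrite combinations such as $\tfrac{\gamma^\el}{1-\gamma^\el}+\tfrac{1}{1-{\gamma^\el}^*}$ into the pieces appearing in the target formula.
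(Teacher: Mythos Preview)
Your proposal is correct and follows essentially the same route as the paper: the paper also derives \eqref{bracket} by unpacking \eqref{sklyadoublegen} in matrix coordinates exactly as in \cite[Lemma~4.1]{GSVMem} (it simply cites that lemma rather than rewriting the gradient/$R_D$ computation you sketch), and obtains \eqref{rplusfin} by choosing the canonical $S=\tfrac12\bigl(\tfrac{1}{1-\gamma}-\tfrac{1}{1-\gamma^*}\bigr)$, verifying \eqref{S-eq} (this is isolated as a separate lemma), and then computing $\pi^*S\pi(\zeta_0)$ to produce the trace-correction term. One small clarification: the identities \eqref{gammarel} are not actually needed to assemble \eqref{bracket}---that step is pure linear algebra on the Manin triple; \eqref{gammarel} enters only in the subsequent analysis of the bracket, so you can drop that reference from your outline.
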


\begin{proof}
We need to ``tweak'' $R_+$ to extend the bracket \eqref{sklyabragen} to $GL_n$ in such a way that the function $\det$ is a Casimir function. This is guaranteed by requiring that  $R_+$ is extended to an operator on $\gl_n$ which coincides with 
the one given by 
\eqref{RplusSL} on $\sl_n$ and for which  $\one \in \gl_n$ is an eigenvector. 
The latter goal can be achieved by replacing \eqref{RplusSL}  with 
\begin{equation}
\label{RplusGL}
R_+=\frac1{1-\gamma}\pi_{>}-\frac{\gamma^*}{1-\gamma^*}\pi_{<}+ \frac 1 2 \pi_0 +  \pi^* S\pi\pi_0,
\end{equation}
where  $\pi$ is the projection to the space of traceless diagonal matrices given by 
$\pi(\zeta)= \zeta - \frac 1 n \Tr(\zeta)\one$, $\pi^*$ is the adjoint to $\pi$ with respect to the restriction 
of the trace form
to the space of diagonal matrices in $\gl_n$, and $S$ is an operator on this space which is 
skew-symmetric  with respect to the restriction of the trace form  and satisfies \eqref{S-eq}.

The operator $S$ in \eqref{RplusGL} can be selected as follows. 

\begin{lemma} 
\label{tildeS}
The operator
\begin{equation}
\label{Sanswer}
S = \frac 1 2 \left  ( \frac 1 {1-\gamma} - \frac 1 {1 - \gamma^*}     \right )
\end{equation}
with $\gamma, \gamma^*$ understood as acting on the space of diagonal matrices in $\gl_n$ 
is skew-symmetric with respect to the restriction of the trace form to this space and satisfies \eqref{S-eq}.
\end{lemma}

\begin{proof} Rewrite \eqref{Sanswer} as
\[
S =  \frac 1 2 \frac {1+\gamma} {1-\gamma} - \frac 1 2 \left  ( \frac \gamma {1-\gamma} + \frac 1 {1 - \gamma^*}  \right ).
\]
The first term above clearly satisfies \eqref{S-eq}. The second term, multiplied by $(1-\gamma)$ on the right, becomes
\[
 - \frac 1 2 \left  (\gamma + \frac 1 {1 - \gamma^*} (1-\gamma) \right ) = - \frac 1 2  \frac 1 {1 - \gamma^*} \left  (1 - \gamma^*\gamma \right )
\]
and vanishes on  $\h_{\Gamma_1}\subset \h$ spanned by $\h_\alpha, \alpha \in \Gamma_1$.
\end{proof}

We can now compute
\begin{align*}
\pi^* S\pi(\zeta_0) &=  S(\zeta_0) - \frac 1 n \left (\Tr (\zeta) S(\one) + \Tr (S(\zeta_0)) \one\right)\\
&=S(\zeta_0)- \frac 1 n \left (\Tr (\zeta) S(\one) - \Tr (\zeta S(\one)) \one\right)
\end{align*}
and plug into \eqref{RplusGL} taking into account \eqref{Sanswer}, which gives \eqref{rplusfin}.
Expression \eqref{bracket} is obtained from \eqref{sklyadouble} in the same way as formula (4.2) in \cite{GSVMem}.
\end{proof}

\subsection{Handling functions in $\ttF$} 
It will be convenient to carry out all computations in the double with functions in 
${\tt F}_{\bfGr,\bfGc}$, and to retrieve the statements for $F_{\bfGr,\bfGc}$ via the restriction to the diagonal. 

Recall that matrices $\L$ used for the definition of the collection ${\tt F}_{\bfGr,\bfGc}$ are built from 
$X$- and $Y$-blocks, see Section \ref{thebasis}. We will frequently use the following comparison statement,
which is an easy consequence of the definitions, see Fig.~\ref{fig:nesting}.

\begin{proposition}
\label{compar}
Let $X_I^J$, $X_{I'}^{J'}$ be two $X$-blocks and $Y_{\bar I}^{\bar J}$, $Y_{\bar I'}^{\bar J'}$ be two $Y$-blocks.
 
{\rm (i)} If $\beta'<\beta$ (respectively, $\alpha'>\alpha$) then $X_{I'}^{J'}$ fits completely inside $X_I^J$; in particular, $\alpha'\ge\alpha$ (respectively, $\beta'\le\beta$).

{\rm (ii)} If $\bar\beta'>\bar\beta$ (respectively, $\bar\alpha'<\bar\alpha$) then $Y_{\bar I'}^{\bar J'}$ fits completely inside $Y_{\bar I}^{\bar J}$; in particular, $\bar\alpha'\le\bar\alpha$ (respectively, $\bar\beta'\ge\bar\beta$).
\end{proposition}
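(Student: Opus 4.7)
The plan is to unpack Proposition \ref{compar} directly from the definitions of $X$- and $Y$-runs, exploiting the single key fact that $X$-runs (respectively, $Y$-runs) form a partition of $[1,n]$ into disjoint intervals. Recall that an $X$-block $X_{I}^{J}$ coming from an upper horizontal edge $i\to(n-i)$ has $\beta=i_+$ equal to the rightmost point of the $X$-run containing $i$ and $\alpha=(n-i+1)_-+1$ equal to the leftmost point of the $X$-run containing $n-i+1$. Since $X$-runs tile $[1,n]$, two elements either share a run or lie in strictly separated runs, and this dichotomy is what drives the entire argument.

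For part (i), I would first handle the hypothesis $\beta'<\beta$. Writing $\beta=i_+$ and $\beta'=i'_+$, the inequality $i'\le\beta'<\beta$ forces $i$ and $i'$ into distinct $X$-runs, with the run of $i'$ strictly to the left of the run of $i$ (otherwise $i'\in\Delta(i)$ would give $i'_+=i_+$). Hence $i'<i$, so $n-i'+1>n-i+1$; the $X$-run of $n-i'+1$ therefore either coincides with, or lies strictly to the right of, the $X$-run of $n-i+1$. In either case $(n-i'+1)_-\ge(n-i+1)_-$, which is precisely $\alpha'\ge\alpha$. Together with $\beta'<\beta$ this yields $[\alpha',n]\subseteq[\alpha,n]$ and $[1,\beta']\subseteq[1,\beta]$, i.e.\ containment of blocks. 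The second half (if $\alpha'>\alpha$ then $\beta'\le\beta$) is symmetric: $\alpha'>\alpha$ means $(n-i'+1)_->(n-i+1)_-$, so the run of $n-i'+1$ sits strictly to the right of the run of $n-i+1$, forcing $n-i'+1>n-i+1$ and hence $i'<i$, whence $i'_+\le i_+$, i.e.\ $\beta'\le\beta$.

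Part (ii) is the verbatim transcription of the argument with $X$-runs replaced by $Y$-runs and the defining relations $\bar\alpha=i_+$, $\bar\beta=(n-i+1)_-+1$ (now referring to $Y$-runs): the hypothesis $\bar\beta'>\bar\beta$ separates the $Y$-runs of $n-i+1$ and $n-i'+1$, which by the partition property of $Y$-runs translates into the opposite inequality $\bar\alpha'\le\bar\alpha$, and similarly in the reverse direction.

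There is no genuine obstacle here; once the block parameters $\alpha,\beta,\bar\alpha,\bar\beta$ are read purely as leftmost/rightmost endpoints of runs, the proposition becomes the tautology that a partition of $[1,n]$ into intervals is totally ordered. The only thing to be careful about is the edge case where $i$ and $i'$ (or $n-i+1$ and $n-i'+1$) happen to share a run, which is exactly why the conclusions are stated with non-strict inequalities $\alpha'\ge\alpha$ and $\beta'\le\beta$ even when the hypothesis is strict.
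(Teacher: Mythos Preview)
Your proof is correct and follows exactly the approach the paper intends: the paper states only that the proposition ``is an easy consequence of the definitions'' and points to Figure~\ref{fig:nesting}, so you have simply supplied the details behind that remark. The core observation you isolate---that the $X$-runs (respectively $Y$-runs) form an interval partition of $[1,n]$, so distinct runs are totally ordered---is precisely what the figure is meant to convey, and your handling of the non-strict versus strict inequalities (the ``same run'' edge case) is exactly right.
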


\begin{figure}[ht]
\begin{center}
\includegraphics[height=4.5cm]{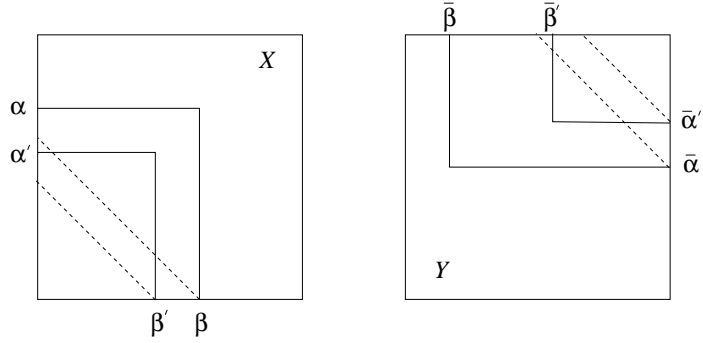}
\caption{Fitting of $X$- and $Y$-blocks}
\label{fig:nesting}
\end{center}
\end{figure}

Consider a matrix $\L$ defined by a maximal alternating path in $\BD_{\bfGr,\bfGc}$.
Let us number the $X$-blocks along the path consecutively, so that the $t$-th $X$-block is 
denoted $X_{I_t}^{J_t}$.
In a similar way we number the $Y$-blocks, so that the $t$-th $Y$-block is denoted 
$Y_{\bar I_t}^{\bar J_t}$.   The glued blocks form a matrix $\L$ so that $\L_{K_t}^{L_t}=X_{I_t}^{J_t}$ and 
$\L_{\bar K_t}^{\bar L_t}=Y_{\bar I_t}^{\bar J_t}$, which we write as
\begin{equation}\label{LLmat}
\L=\sum_{t=1}^s X_{I_t\to K_t}^{J_t\to L_t}+
\sum_{t=1}^s Y_{\bar I_t\to\bar K_t}^{\bar J_t\to\bar L_t}. 
\end{equation}

According to the agreement above, if the $t$-th $X$-block is non-dummy, then the $t$-th $Y$-block lies immediately to the left of it, and if the $t$-th $Y$-block is non-dummy, then the $(t+1)$-th $X$-block lies immediately above it. In more detail,  
all $K_t$'s are disjoint, and the same holds for all $\bar K_t$'s; moreover, $K_t\cap \bar K_{t-1}=\varnothing$. If both
$t$-th blocks are not dummy, put $\Phi_t=K_t\cap\bar K_t$. Then $\Phi_t\ne\varnothing$  corresponds to the nontrivial row runs 
$\Delta(\alpha_t)$ and $\bar\Delta(\bar\alpha_t)=\gamma^{\er}(\Delta(\alpha_t))$ along which the two blocks are glued. 
Consequently, $\Phi_t$ is the uppermost segment in $K_t$ and the lowermost segment in $\bar K_t$. 
If the first block is a dummy $X$-block and $\bar\Delta(\bar\alpha_1)$ is a nontrivial row $Y$-run,
define $\Phi_1$ as the set of rows corresponding to $\bar\Delta(\bar\alpha_1)$; if this $Y$-run is trivial,  put
$\Phi_1=\varnothing$. Similarly, if the last block is a dummy $Y$-block and $\Delta(\alpha_s)$ is a nontrivial row $X$-run,
define $\Phi_s$ as the set of rows corresponding to $\Delta(\alpha_s)$ and put $\bar I_s=\gammar(\Delta(\alpha_s))$; 
if this $X$-run is trivial, put
$\Phi_s=\varnothing$. We put $K_1=\Phi_1$ for a dummy first $X$-block and $\bar K_s=\Phi_s$ for a dummy last $Y$-block
to keep relation $\Phi_t=K_t\cap\bar K_t$ valid for dummy blocks as well.

\begin{figure}[ht]
\begin{center}
\includegraphics[height=6cm]{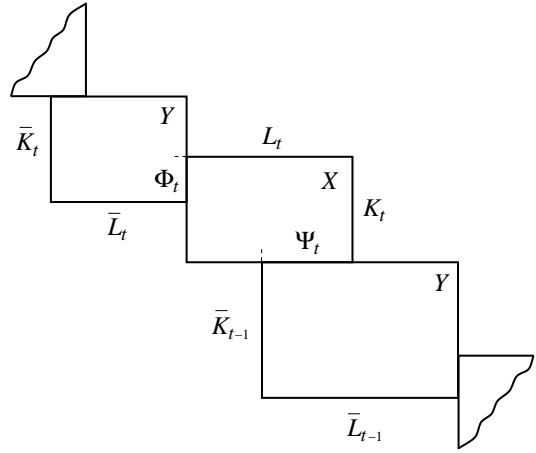}
\caption{The structure of $\L$}
\label{fig:ladder}
\end{center}
\end{figure}

Further, all $L_t$'s are disjoint, and the same holds for all $\bar L_t$'s; 
moreover, $L_t\cap\bar L_t=\varnothing$. For $2\le t\le s$, put $\Psi_t=L_{t}\cap\bar L_{t-1}$, then $\Psi_t\ne\varnothing$ corresponds to the nontrivial column runs $\bar\Delta(\bar\beta_{t-1})$ and 
$\Delta(\beta_t)=\gamma^{\ec*}(\bar\Delta(\bar\beta_{t-1}))$.  
Consequently, $\Psi_t$ is the rightmost segment in $L_t$ and the leftmost segment in $\bar L_{t-1}$. If the first block
is a non-dummy $X$-block and $\Delta(\beta_1)$ is a nontrivial column $X$-run, define $\Psi_1$ as the set of 
columns corresponding to $\Delta(\beta_1)$; if this $X$-run is trivial, or the block is dummy, define 
$\Psi_1=\varnothing$. Similarly, if the last
block is a non-dummy $Y$-block and $\bar\Delta(\bar\beta_s)$ is a nontrivial column $Y$-run, define $\Psi_{s+1}$ as the set of columns corresponding to $\bar\Delta(\bar\beta_s)$ and put $J_{s+1}=\gamma^{\ec*}(\bar\Delta(\bar\beta_{s}))$ (note that 
$J_{s+1}$ does not correspond to any $X$-block of $\L$); 
if this $Y$-run is trivial, or the block is dummy, define $\Psi_{s+1}=\varnothing$. We put $\bar L_0=\Psi_1$ and $L_{s+1}=\Psi_{s+1}$ to keep relation $\Psi_t=L_{t}\cap\bar L_{t-1}$ valid for $1\le t\le s+1$.  
The structure of the obtained matrix $\L$ is shown in Fig.~\ref{fig:ladder}.

It follows from \eqref{LLmat} that the gradients $\nabla_X g$ and $\nabla_Y g$ of a function $g=g(\L)$ can be written as
\begin{equation}\label{naxnay}
\nabla_X g=\sum_{t=1}^s (\nabla_{\L}g)^{K_t\to I_t}_{L_t\to J_t},\qquad
\nabla_Y g=\sum_{t=1}^s (\nabla_{\L}g)^{\bar K_t\to \bar I_t}_{\bar L_t\to \bar J_t}.
\end{equation}
Note that unlike \eqref{LLmat}, 
the blocks in \eqref{naxnay} may overlap.

 Direct computation shows that for $I=[\alpha,n]$, $J=[1,\beta]$, $\bar I=[1,\bar\alpha]$, $\bar J=[\bar\beta,n]$
one has
\begin{equation}\label{xynaxy}
X(\nabla_{\L}g)^{K\to I}_{L\to J}=\begin{bmatrix} 0 & \ast\\ 0 & X_I^J(\nabla_{\L}g)_L^K \end{bmatrix},
\qquad
Y(\nabla_{\L} g)^{\bar K\to \bar I}_{\bar L\to \bar J} =
\begin{bmatrix} Y_{\bar I}^{\bar J}(\nabla_{\L}g)_{\bar L}^{\bar K} & 0\\ \ast & 0 \end{bmatrix}.
\end{equation}
Here and in what follows we denote by an asterisk parts of matrices that are not relevant for further considerations. 
Note that the square block $X_I^J(\nabla_{\L}g)_L^K$ is the diagonal block defined by the index set $I$, whereas the square block $Y_{\bar I}^{\bar J}(\nabla_{\L}g)_{\bar L}^{\bar K}$ is the diagonal block defined by the index set $\bar I$.

Similarly, for $I$, $J$, $\bar I$, $\bar J$ as above,
\begin{equation}\label{naxyxy}
(\nabla_{\L}g)^{K\to I}_{L\to J}\cdot X=\begin{bmatrix}  (\nabla_{\L}g)_L^K \cdot X_I^J & \ast \\ 0 & 0 \end{bmatrix},
\qquad
(\nabla_{\L}g)^{\bar K\to \bar I}_{\bar L\to \bar J}\cdot Y=
\begin{bmatrix} 0 & 0 \\ \ast & (\nabla_{\L}g)_{\bar L}^{\bar K}\cdot Y_{\bar I}^{\bar J} \end{bmatrix},
\end{equation}
and the corresponding square blocks are diagonal blocks defined by the index sets $J$ and $\bar J$, respectively.

Let $N_+,N_-\in GL_n$ be arbitrary unipotent upper- and lower-triangular elements and $T_1,T_2\in H$ be arbitrary
diagonal elements. It is easy to see that the structure of $X$- and $Y$-blocks as defined in Section~\ref{thebasis} and
the way they are glued together, as shown in  
Fig.~\ref{fig:ladder},  imply that
for any $\ttf\in  {\ttF}_{\bfGr,\bfGc}$ one has
\begin{equation}\label{2.1}
\ttf\left(N_+X,\exp(\gammar)(N_+)Y\right)=\ttf\left(X\exp(\gammac^*)(N_-),YN_-\right)=\ttf(X,Y)
\end{equation}
and 
\begin{equation}\label{2.2}
 \ttf\left((T_1X\exp(\gammar^*)(T_2),\exp(\gammac)(T_1)YT_2\right)=a^\ec(T_1)a^\er(T_2) \ttf(X,Y),
\end{equation}
where $a^\ec(T_1)$ and $a^\er(T_2)$ are constants depending only on $T_1$ and $T_2$, respectively.

It will be more convenient to work with the logarithms of the functions $\ttf\in  {\ttF}_{\bfGr,\bfGc}$, instead of the functions $\ttf$ themselves. 
The corresponding infinitesimal form of the invariance properties \eqref{2.1} and~\eqref{2.2} reads: for any 
$\ttf\in  {\ttF}_{\bfGr,\bfGc}$,
\begin{equation}\label{infinv1} 
\left\langle \xi_R\ttg, n_+\right\rangle=
\left\langle \xi_L\ttg, n_-\right\rangle=0
\end{equation}
 and
\begin{equation}\label{infinv2}
(\xi_L\ttg )_0=\text{const},\quad (\xi_R\ttg)_0=\text{const}
\end{equation}
with $\ttg=\log\ttf$.
Additional invariance properties of the functions in ${\ttF}_{\bfGr,\bfGc}$ are given by the following statement.

\begin{lemma}\label{partrace}
For any $\ttf\in  {\ttF}_{\bfGr,\bfGc}$, any  $X$-run $\Delta$ and any $Y$-run $\bar\Delta$,
\begin{align*}
\Tr(\nabla_X\ttg\cdot X)_\Delta^\Delta&= \rm{const}, \qquad \Tr(X\nabla_X\ttg)_\Delta^\Delta =\rm{const},\\
\Tr(\nabla_Y\ttg\cdot Y)_{\bar\Delta}^{\bar\Delta}&= \rm{const}, \qquad
\Tr(Y\nabla_Y\ttg)_{\bar\Delta}^{\bar\Delta}= \rm{const}
\end{align*}
with $\ttg=\log\ttf$.
\end{lemma}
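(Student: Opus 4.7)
The plan is to derive Lemma \ref{partrace} directly from the infinitesimal invariances \eqref{infinv1}--\eqref{infinv2} of $\ttg=\log\ttf$, using a single structural observation: each of the maps $\gammar,\gammac,\gammar^*,\gammac^*$ sends $\gl_n$ into a block-diagonal subalgebra ($\g_{\Gamma_2^\er}$, $\g_{\Gamma_2^\ec}$, $\g_{\Gamma_1^\er}$, $\g_{\Gamma_1^\ec}$, respectively), whose diagonal blocks on nontrivial runs are traceless and whose entries on trivial runs vanish. Since $\gammar,\gammac,\gammar^*,\gammac^*$ preserve the decomposition of each block into Cartan and root parts, this remains true after extracting the diagonal component. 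Consequently, for any matrix $M$ and any run $\Delta$ of the $\Gamma$ matching the chosen $\gamma^{\bullet}$-operator, the partial trace $\Tr(\gamma^{\bullet}(M))_\Delta^\Delta$ equals zero.

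I would first apply this observation to the identity
\[
(\xi_R\ttg)_0=(X\nabla_X\ttg)_0+\bigl(\gammar^*(Y\nabla_Y\ttg)\bigr)_0=C
\]
from \eqref{infinv2}, where $C$ is a constant diagonal matrix. Summing $(i,i)$-entries over $i\in\Delta$ for a row $X$-run $\Delta$, the $\gammar^*$-term contributes zero, and $\Tr(X\nabla_X\ttg)_\Delta^\Delta=\Tr(C)_\Delta^\Delta=\mathrm{const}$. Exactly analogously, $(\xi_L\ttg)_0=\mathrm{const}$ combined with the vanishing of partial traces of $\gammac(\cdot)$-images on column $Y$-runs yields $\Tr(\nabla_Y\ttg\cdot Y)_{\bar\Delta}^{\bar\Delta}=\mathrm{const}$ for every column $Y$-run $\bar\Delta$.

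The remaining cases---column $X$-runs for the $X$-traces, row $Y$-runs for the $Y$-traces, and the dual claims involving $\Tr(\nabla_X\ttg\cdot X)_\Delta^\Delta$ and $\Tr(Y\nabla_Y\ttg)_{\bar\Delta}^{\bar\Delta}$---are handled by the same template, each following from a supplementary identity obtained by differentiating the semi-invariance \eqref{2.2} in one torus factor at a time. Each such identity has the form (pure term)$_0+(\gamma^{\bullet}\text{-image})_0=\mathrm{const}$, and one selects the identity in which the $\gamma^{\bullet}$-image lies in the subalgebra whose blocks are traceless on the run in question. The main obstacle is therefore purely combinatorial bookkeeping: one has to match each of the eight (trace, run-type) combinations with the identity in which the compensating term drops out upon partial summation. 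Once this matching is fixed, every case reduces to a one-line trace calculation, and the constant on the right-hand side can be read off from the block structure of the matrix $\L$ defining $\ttf$.
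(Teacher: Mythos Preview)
Your argument is different from the paper's and has a real gap.  The paper proves the lemma by a direct block-structure computation: it shows that scaling the rows of $X$ indexed by $\Delta$ by $z$ can be implemented on $\L$ as left and right multiplication by diagonal matrices (using a multiply-rows/divide-columns trick to handle the rows shared with a $Y$-block), so every trailing principal minor scales by an integer power of $z$.

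Your idea---read off the partial traces from $(\xi_R\ttg)_0=\mathrm{const}$ and $(\xi_L\ttg)_0=\mathrm{const}$, using that $\gammar^*$-images (resp.\ $\gammac$-images) have vanishing partial trace over row $X$-runs (resp.\ column $Y$-runs)---is clean and correctly yields two of the four equalities: $\Tr(X\nabla_X\ttg)_\Delta^\Delta$ for row $X$-runs and $\Tr(\nabla_Y\ttg\cdot Y)_{\bar\Delta}^{\bar\Delta}$ for column $Y$-runs.  The gap is in the other two.  Formula \eqref{2.2} has exactly two torus parameters $T_1,T_2$; differentiating in each produces exactly one diagonal identity, and these are precisely the two $\xi$-identities \eqref{infinv2} you have already consumed.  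There are no further ``supplementary identities'' hiding in \eqref{2.2}.  What you would need for $\Tr(\nabla_X\ttg\cdot X)_\Delta^\Delta$ over column $X$-runs and $\Tr(Y\nabla_Y\ttg)_{\bar\Delta}^{\bar\Delta}$ over row $Y$-runs are the $\eta$-identities $(\eta_L\ttg)_0=\mathrm{const}$ and $(\eta_R\ttg)_0=\mathrm{const}$, but in the paper those appear only as \eqref{infinv3}, i.e.\ as \emph{consequences} of Lemma~\ref{partrace} via Lemma~\ref{twomoreinv}; invoking them here is circular.  You can break the circle by writing down two additional torus semi-invariances (for instance, that $\ttf(XT,\,Y\exp(\gammac)(T))$ and $\ttf(\exp(\gammar^*)(T)X,\,TY)$ are characters times $\ttf$) and checking them from the block structure of $\L$; but that check is exactly the kind of direct argument the paper carries out, so the deductive shortcut disappears.

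A smaller point: your list of ``remaining cases'' includes ``column $X$-runs for the $X$-traces,'' which reads as asserting $\Tr(X\nabla_X\ttg)_\Delta^\Delta=\mathrm{const}$ for a column $X$-run $\Delta$.  That is not true in general (take $n=3$, $\bfGr=(\{1\},\{2\},1\mapsto2)$, $\bfGc$ trivial, $\ttf=\det\L$ for the unique nontrivial $\L$, and $\Delta=\{2\}$).  Each of the four equalities holds only for the run type matched to the side on which the gradient acts---left gradients with row runs, right gradients with column runs---and both the paper's proof and its subsequent uses of the lemma respect this pairing.
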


\begin{proof}
Consider for example the second equality above. Let $\one_\Delta$ denote the diagonal $n\times n$ matrix whose entry $(j,j)$
equals $1$ if $j\in\Delta$ and $0$ otherwise. Condition $\Tr(X\nabla_X\ttg)_\Delta^\Delta=a_\Delta$ for an integer constant
$a_\Delta$ is the infinitesimal version of the equality 
\begin{equation}\label{globalpt}
\ttf((\one_n+(z-1)\one_\Delta)X,Y)=z^{a_\Delta}\ttf(X,Y). 
\end{equation}

To establish
the latter, recall that $\ttf(X,Y)$ is a principal minor of a matrix $\L\in\bL$. Clearly, $\ttf((\one_n+(z-1)\one_\Delta)X,Y)$
represents the same principal minor in the matrix $\L(z)$ obtained from $\L$ via multiplying by $z$ every submatrix 
$\L^{L_t}_{R_t}$ such that the row set $R_t$ corresponds to the $X$-run $\Delta$. There are two types of such submatrices: 
those for which $R_t$ lies strictly below $\Phi_t$ and those for which $R_t$ coincides with $\Phi_t$ (the latter might
happen only when the run $X$ is nontrivial). To perform the above operation on each submatrix of the first type it suffices to multiply $\L$ on the left by the diagonal matrix having $z$ in all 
positions corresponding to $R_t$ and $1$ in all other positions. To handle a submatrix of the second type, we multiply by $z$ all rows of $\L$ starting from the first one and ending at the lowest row in $\bar K_t$, and divide by $z$ all columns starting from
the first one and ending at the rightmost column in $\bar L_t$, see Fig.~\ref{fig:ladder}. Clearly, this is equivalent to the left
multiplication of $\L$ by a diagonal matrix whose entries are either $z$ or $1$ and the right multiplication of $\L$ by a diagonal matrix whose entries are either $z^{-1}$ or $1$. Consequently, every principal minor of $\L(z)$ is an integer power of $z$ times the corresponding minor of $\L$, and \eqref{globalpt} follows.

A similar reasoning shows that the remaining three equalities in the statement of the lemma hold as well.  
 \end{proof}

Furthermore, the following statement holds true.

\begin{lemma}\label{twomoreinv}
For any $\ttf\in  {\ttF}_{\bfGr,\bfGc}$, 
\begin{equation}\label{pigammac}
\begin{aligned}
\Pi_{\hat\Gamma_1^\el} (\nabla_X\ttg\cdot X)_0 &= \rm{const}, \qquad \Pi_{\hat\Gamma_1^\el} (X\nabla_X\ttg)_0 =\rm{const},\\
\Pi_{\hat\Gamma_2^\el} (\nabla_Y\ttg\cdot Y)_0  &= \rm{const},\qquad \Pi_{\hat\Gamma_2^\el} (Y\nabla_Y\ttg)_0  =\rm{const}
\end{aligned}
\end{equation}
with $\ttg=\log\ttf$ and $\el=\ec,\er$.
\end{lemma}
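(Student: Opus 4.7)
The plan is to derive Lemma \ref{twomoreinv} as an immediate corollary of Lemma \ref{partrace}, once the restriction of the complementary projection $\Pi_{\hat\Gamma_j^\el}$ to diagonal matrices has been made explicit.

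First, I would recall that $\g_{\Gamma_j^\el}\subset\sl_n$ consists of block-diagonal matrices supported on the nontrivial runs of the relevant type (row or column, $X$- or $Y$-, according to the values of $j$ and $\el$), whose block on each nontrivial run $\Delta$ is traceless, with zero entries on all trivial runs. Since $\Pi_{\hat\Gamma_j^\el}$ is the orthogonal projection complementary to $\Pi_{\Gamma_j^\el}$ with respect to the trace form, it follows that for any diagonal matrix $D\in\gl_n$
\[
\Pi_{\hat\Gamma_j^\el}(D)=\sum_{\Delta\ \text{nontrivial}}\frac{\Tr D_\Delta^\Delta}{|\Delta|}\,\one_\Delta+\sum_{\Delta\ \text{trivial}}D_\Delta^\Delta,
\]
where $\one_\Delta$ is the diagonal matrix with $1$'s in the positions indexed by $\Delta$ and $0$'s elsewhere, and the sums run over the appropriate runs. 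In particular, $\Pi_{\hat\Gamma_j^\el}(D)$ is controlled entirely by the scalars $\Tr D_\Delta^\Delta$.

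I would then substitute $D=(\nabla_X\ttg\cdot X)_0$, $(X\nabla_X\ttg)_0$, $(\nabla_Y\ttg\cdot Y)_0$ and $(Y\nabla_Y\ttg)_0$ into the identity above in turn. By the four assertions of Lemma \ref{partrace}, every trace $\Tr D_\Delta^\Delta$ that arises in this way is a constant independent of $X$ and $Y$, so each of the four matrices on the left-hand side of \eqref{pigammac} is a constant, which is precisely what Lemma \ref{twomoreinv} asserts.

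I do not anticipate a substantive obstacle: the only point deserving scrutiny is that every trace needed in the decomposition is indeed covered by Lemma \ref{partrace}. This is guaranteed by the alignment of the row (respectively, column) index sets of the $X$- and $Y$-blocks of $\L$ with the row runs of $\bfGr$ (respectively, column runs of $\bfGc$), which is exactly the structural feature invoked in the derivation of the semi-invariance \eqref{globalpt} and hence applies uniformly to runs of both types.
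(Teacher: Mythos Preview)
Your proposal is correct and follows essentially the same approach as the paper: the paper's proof writes down the identical decomposition
\[
\Pi_{\hat\Gamma_1^\el}(\zeta)=\sum_{\Delta}\frac1{|\Delta|}\Tr (\zeta_\Delta^\Delta)\one_\Delta
\]
(summed over all runs, trivial and nontrivial alike, which for trivial runs reduces to your second summand) and then invokes Lemma~\ref{partrace} term by term. Your explicit separation of trivial runs and your closing remark about the alignment of row/column index sets with the respective runs are a bit more detailed than what the paper records, but the argument is the same.
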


\begin{proof}
Same as in the proof of Lemma \ref{partrace}, we will only focus on the second equality in \eqref{pigammac}, since
the other three can be treated in a similar way. 

For any diagonal matrix $\zeta$ we have
\begin{equation}\label{complproj}
\Pi_{\hat\Gamma_1^\el}(\zeta)=\sum_{\Delta}\frac1{|\Delta|}\Tr (\zeta_\Delta^\Delta)\one_\Delta,
\end{equation}
where the sum is taken over all $X$-runs. Let $\zeta=(X\nabla_X\ttg)_0$, then by Lemma~\ref{partrace} all terms in the sum above are constant.
\end{proof}

\begin{corollary}
{\rm(i)} For any $\ttf^i\in  {\ttF}_{\bfGr,\bfGc}$,
\begin{equation}\label{traces}
\begin{aligned}
\Tr(\nabla_X\ttg\cdot X)=\rm{const}, \qquad \Tr(X\nabla_X\ttg)=\rm{const},\\ 
\Tr(\nabla_Y\ttg\cdot Y)=\rm{const}, \qquad \Tr(Y\nabla_Y\ttg)=\rm{const}
\end{aligned}
\end{equation}
with $\ttg=\log\ttf$.

{\rm(ii)} For any $\ttf\in  {\ttF}_{\bfGr,\bfGc}$, 
\begin{equation}\label{infinv3}
(\eta_L\ttg)_0=\rm{const},\qquad (\eta_R\ttg)_0=\rm{const}
\end{equation}
with $\ttg=\log\ttf$.
\end{corollary}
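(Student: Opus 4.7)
For part (i), the key observation is that the collection of $X$-runs forms a partition of $[1,n]$, and likewise for the $Y$-runs. Consequently,
\begin{equation*}
\Tr(X\nabla_X \ttg) \;=\; \sum_\Delta \Tr(X\nabla_X \ttg)_\Delta^\Delta,
\end{equation*}
where the sum ranges over all $X$-runs. Each summand on the right is a constant by Lemma \ref{partrace}, so the full trace is constant. The remaining three equalities in \eqref{traces} are established by exactly the same argument (the last two by partitioning into $Y$-runs instead).

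For part (ii), the plan is to exploit the third line of the identities \eqref{gammarel}, which applied to $\ttg$ read
\begin{equation*}
\eta_L \ttg \;=\; \gammac^*(\xi_L \ttg) + \Pi_{\hat\Gamma_1^\ec}(\nabla_X \ttg \cdot X), \qquad \eta_R \ttg \;=\; \gammar(\xi_R \ttg) + \Pi_{\hat\Gamma_2^\er}(Y \nabla_Y \ttg).
\end{equation*}
Each of the operators $\gammac^*$, $\gammar$, $\Pi_{\hat\Gamma_1^\ec}$, $\Pi_{\hat\Gamma_2^\er}$ is built from block-diagonal projections composed with a Cartan-preserving isomorphism, so each commutes with the projection $\pi_0$ onto the diagonal: applied to strictly upper- or lower-triangular matrices they produce either zero or another strictly triangular matrix, and they send diagonal matrices to diagonal matrices (cf.\ \eqref{complproj}). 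Thus
\begin{equation*}
(\eta_L \ttg)_0 \;=\; \gammac^*\bigl((\xi_L \ttg)_0\bigr) + \Pi_{\hat\Gamma_1^\ec}\bigl((\nabla_X\ttg\cdot X)_0\bigr),
\end{equation*}
and an analogous decomposition holds for $(\eta_R \ttg)_0$. The first summand is constant by \eqref{infinv2}, since $\gammac^*$ carries constant diagonal matrices to constant diagonal matrices; the second is constant directly by \eqref{pigammac}. Combining yields \eqref{infinv3}.

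There is no substantive obstacle here: both parts are essentially bookkeeping consequences of Lemma \ref{partrace}, Lemma \ref{twomoreinv}, and the algebraic identity \eqref{gammarel}. The one point worth verifying explicitly is the commutation of $\gamma^{\el*}$ and $\Pi_{\hat\Gamma_j^\el}$ with $\pi_0$, which is immediate from the block-diagonal description of $\g_{\Gamma_j^\el}$ recalled at the start of Section~\ref{sec:bra} together with the formula \eqref{complproj} derived in the proof of Lemma \ref{twomoreinv}.
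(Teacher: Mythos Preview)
Your proof is correct and follows essentially the same approach as the paper. For part (ii) the arguments are identical: both invoke the last line of \eqref{gammarel} together with \eqref{infinv2} and Lemma~\ref{twomoreinv}. For part (i) there is a cosmetic difference---you sum Lemma~\ref{partrace} directly over all runs, while the paper instead takes the trace of the identity in Lemma~\ref{twomoreinv} using $\Tr\zeta=\Tr\Pi_{\hat\Gamma_1^\el}(\zeta)$---but since Lemma~\ref{twomoreinv} is itself proved by summing Lemma~\ref{partrace} via \eqref{complproj}, the two routes unwind to the same computation.
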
 

\begin{proof} (i) Follows immediately form Lemma \ref{twomoreinv} and equality 
$\Tr\zeta=\Tr\Pi_{\hat\Gamma_1^\el}(\zeta)=\Tr\Pi_{\hat\Gamma_2^\el}(\zeta)$ for any $\zeta$
and $\el=\ec,\er$.

(ii) Follows immediately form Lemma \ref{twomoreinv} and~\eqref{infinv2} via the last two relations in~\eqref{gammarel}.
\end{proof}

\subsection{Proof of Theorem \ref{logcanbasis}: first steps}

Theorem \ref{logcanbasis} is an immediate corollary of the following result.

\begin{theorem}
\label{logcandouble}
For any $\ttf^1, \ttf^2\in  {\ttF}_{\bfGr,\bfGc}$, the bracket $\{\log\ttf^1,\log\ttf^2\}^D$
is constant.
\end{theorem}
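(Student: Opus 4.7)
The plan is to prove Theorem \ref{logcandouble} by expanding the bracket \eqref{bracket} applied to $\ttg^i=\log\ttf^i$ via the block-decomposition formulas \eqref{naxnay}--\eqref{naxyxy}, and showing that every resulting contribution is constant in $(X,Y)$. My starting point is to substitute the formula \eqref{rplusfin} for $R_+^\er$ and $R_+^\ec$ into the first two terms of \eqref{bracket}, and then use the identities \eqref{gammarel} to rewrite $\frac{1}{1-\gammac}(E_L\ttg^1)_{\ge}$ as $(\nabla_X\ttg^1\cdot X)_\ge+\frac{1}{1-\gammac}(\xi_L\ttg^1)_\ge$, with analogous substitutions for $E_R$ and for the $(\cdot)_<$-projections. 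This separates each of the first two terms of \eqref{bracket} into four types of pieces: a $\nabla_X X$- or $X\nabla_X$-piece that will eventually merge with the remaining two terms of \eqref{bracket}; two $\xi$-pieces; a purely diagonal piece; and the explicit trace-correction from \eqref{rplusfin}.

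Next I would dispose of the easy pieces. The pairings involving $\xi_L, \xi_R, \eta_L, \eta_R$ vanish by \eqref{infinv1}, because in each such pairing one factor is supported in a Borel subalgebra while the other lies in the opposite nilpotent radical. The trace-correction contributions from \eqref{rplusfin} are constant by \eqref{traces}. The diagonal pieces reduce to linear combinations of $(\xi_L\ttg)_0, (\xi_R\ttg)_0, (\eta_L\ttg)_0, (\eta_R\ttg)_0$ paired with one another or with the $\Pi_{\hat\Gamma_1^\el}$, $\Pi_{\hat\Gamma_2^\el}$ projections of diagonal parts, all of which are constant by \eqref{infinv2}, \eqref{infinv3} and Lemma \ref{twomoreinv}.

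What remains is a combination of the leftover $\nabla_X X$- and $X\nabla_X$-pieces with the two cross terms $\langle X\nabla_X\ttg^1, Y\nabla_Y\ttg^2\rangle - \langle\nabla_X\ttg^1\cdot X,\nabla_Y\ttg^2\cdot Y\rangle$ of \eqref{bracket}. Using \eqref{naxnay}, \eqref{xynaxy}, \eqref{naxyxy}, I would expand each of these as a double sum over pairs of blocks $B^1\subset\L^1$ and $B^2\subset\L^2$ (where $\L^i$ is the matrix associated to $\ttf^i$). The block-triangularity shown in \eqref{xynaxy}, \eqref{naxyxy} forces most pairings to vanish; only a finite list survives, dictated by the relative positions of the block index sets $I_t, J_t, \bar I_t, \bar J_t$ of $\L^1$ and $\L^2$, controlled by Proposition \ref{compar}.

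The main obstacle, and the heart of the argument, is to show that every surviving block-pairing yields a constant. For each such pair I expect the contribution to rearrange into a trace of a suitable diagonal block of $\nabla_\L\ttg\cdot X_I^J$ or $Y_{\bar I}^{\bar J}\nabla_\L\ttg$, which is constant by Lemma \ref{partrace}. Aperiodicity enters essentially at this stage: it guarantees that $\gamma^\el$ is nilpotent on $\g_{\Gamma_1^\el}$, so that $(1-\gamma^\el)^{-1}$ and $(1-\gamma^{\el*})^{-1}$ act via finite geometric sums that ``telescope'' precisely along the alternating path defining $\L$. This telescoping is what pairs the $R_+^\el$-contributions against the cross terms, so that their sum collapses onto diagonal traces governed by Lemmas \ref{partrace} and \ref{twomoreinv}. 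Once this cancellation is verified for all pairs $(\ttf^1,\ttf^2)\in\ttF_{\bfGr,\bfGc}\times\ttF_{\bfGr,\bfGc}$, organized according to whether the two leading blocks belong to the same alternating path or to distinct ones, Theorem \ref{logcandouble} follows, and the restriction to $X=Y$ yields Theorem \ref{logcanbasis}.
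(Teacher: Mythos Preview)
Your broad outline matches the paper's approach---expand \eqref{bracket} via \eqref{rplusfin} and \eqref{gammarel}, peel off the constant diagonal pieces using the invariance properties, and analyze what remains block by block---but the proposal has a genuine gap at the hard step and contains two technical errors that would derail the argument as written.

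First, the errors. The pairings involving $\eta_L,\eta_R$ do \emph{not} vanish by \eqref{infinv1}: that relation only says $\xi_L\ttg\in\b_-$ and $\xi_R\ttg\in\b_+$, while $\eta_L\ttg,\eta_R\ttg$ have nontrivial strictly upper and lower parts. In the paper's derivation these survive as the last four terms of \eqref{bra}, namely $\langle(\eta_L^1)_<,(\eta_L^2)_>\rangle$, $\langle(\eta_R^1)_\ge,(\eta_R^2)_\le\rangle$, $\langle\gammac^*(\xi_L^1)_\le,\gammac^*(\nabla_Y^2 Y)\rangle$, $\langle\gammar(\xi_R^1)_\ge,\gammar(X\nabla_X^2)\rangle$, and they are the entire difficulty. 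Second, aperiodicity is not what makes $(1-\gamma^\el)^{-1}$ a finite sum: the nilpotency of $\gamma$ on $\n_\pm$ is part of the definition of a BD triple and holds in all cases. Aperiodicity is a combinatorial condition on the graph $G_{\bfGr,\bfGc}$ that guarantees each maximal alternating path has endpoints, so that the matrices $\L\in\bL$ are well-defined; it does not enter the cancellation mechanism you describe.

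Now the gap. The claim that each surviving block-pairing ``rearranges into a trace of a suitable diagonal block, constant by Lemma~\ref{partrace}'' is where the real work lies, and the paper spends the bulk of the proof on it. One first passes to ``ringed'' versions $\cgamma,\cgamma^*$ of the maps (Section~\ref{simplega} and Lemma~\ref{difference}), which drop the traceless normalization and make the block projections tractable; you do not mention this device, and without it the $\Pi_{\Gamma_i^\el}$ projections are awkward. Then each of the four surviving terms is computed explicitly (Lemmas~\ref{etaletalemma}--\ref{xinaxlemma}) by introducing injections $\rho,\sigma$ between the block index sets of $\L^1$ and $\L^2$ governed by Proposition~\ref{compar}, producing a list of ``$B$-terms'' \eqref{bad1}--\eqref{bad6} indexed by the blocks of $\L^2$. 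These do \emph{not} individually reduce to diagonal traces; rather, they cancel against one another in pairs across the four expressions, and the residual pieces are shown constant only after a lengthy case analysis (Lemmas~\ref{zone1lemma}--\ref{zone2lemma}) that tracks the relative positions of the exit points and the block sequences \eqref{blocks} of $\L^1$ and $\L^2$. Your ``telescoping'' picture does not capture this cross-cancellation, and Lemma~\ref{partrace} alone is far from sufficient.
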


The proof of the theorem is given in this and the following sections. It comprises a number of explicit formulas for the objects involved.

\subsubsection{Explicit expression for the bracket} 
Let us derive an explicit expression for $\{\log\ttf^1,\log\ttf^2\}^D$. To indicate that an operator is applied to a function 
$\log\ttf^i$, $i=1,2$, we add $i$ as an upper index of the corresponding operator, so that $\nabla^1_X X=\nabla_X \log\ttf^1\cdot X$, $E_L^2=E_L \log\ttf^2$, etc.  

Let
\begin{equation}
\label{R0}
R_0(\zeta)=-\frac12 \left (\frac{\gamma}{1-\gamma} + 
\frac{1}{1 - \gamma^*}\right ) \zeta_0 - \frac 1 n \left(\Tr(\zeta)\S - \Tr \left(\zeta\S\right)\one\right),
\end{equation}
for  $\zeta\in\gl_n$, cf.~\eqref{rplusfin}; clearly, $R_0(\zeta)$ is a diagonal matrix. 

\begin{proposition} For any $\ttf^1, \ttf^2\in  {\ttF}_{\bfGr,\bfGc}$,
\begin{multline}\label{bra}
\{\log\ttf^1,\log\ttf^2\}^D\\=\left\langle R_0^\ec(E_L^1),E_L^2\right\rangle -\left\langle R_0^\er(E_R^1),E_R^2\right\rangle 
+ \left\langle ( \xi_L^1)_0 ,  \frac{1}{1-\gammac^*}  (\eta_L^2)_0 \right\rangle\\- \left\langle ( \eta_R^1)_0 , \frac{1}{1-\gammar^*} (\xi_R^2)_0 \right\rangle+ \left\langle \Pi_{\hat\Gamma_2^\ec}( \xi_L^1)_0 , \Pi_{\hat\Gamma_2^\ec}(\nabla_Y^2 Y)_0   \right\rangle 
\\
-\left\langle(\eta_L^1)_{<},(\eta_L^2)_{>}\right\rangle-
\left\langle(\eta_R^1)_{\ge},(\eta_R^2)_{\le}\right\rangle+
\left\langle\gammac^*(\xi_L^1)_{\le},\gammac^*(\nabla_Y^2 Y)\right\rangle+
\left\langle\gammar(\xi_R^1)_{\ge},\gammar(X\nabla_X^2)\right\rangle.
\end{multline}
\end{proposition}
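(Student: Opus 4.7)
The plan is to derive \eqref{bra} by direct algebraic manipulation of \eqref{bracket}, without invoking the specific invariance properties of the functions $\ttf^i$; this is a pure reformulation identity. Substituting \eqref{rplusfin}, one splits each $R_+^\el$ as
\[
R_+^\el(\zeta) = \frac{1}{1-\gamma^\el}\zeta_\ge - \frac{{\gamma^\el}^*}{1-{\gamma^\el}^*}\zeta_< + R_0^\el(\zeta).
\]
The diagonal contributions from $R_0^\ec(E_L^1)$ and $R_0^\er(E_R^1)$ immediately yield the first two terms on the right of \eqref{bra}, so the entire task reduces to handling the off-diagonal pieces and the cross terms $\langle X\nabla_X^1,Y\nabla_Y^2\rangle-\langle\nabla_X^1 X,\nabla_Y^2 Y\rangle$.

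The key structural observation is that $\gamma^\el$ and ${\gamma^\el}^*$ act blockwise on $\gl_n$ and therefore commute with the projections $\pi_>,\pi_<,\pi_0$; consequently, operators of the form $\frac{1}{1-\gamma^\el}\pi_\ge$ etc.\ can be rewritten as $\pi_\ge\frac{1}{1-\gamma^\el}$, allowing me to apply the identities \eqref{gammarel} in the form
\[
\frac{1}{1-\gammac}E_L = \nabla_X X + \frac{1}{1-\gammac}\xi_L, \qquad \frac{1}{1-\gammac^*}E_L = \nabla_Y Y + \frac{1}{1-\gammac^*}\eta_L,
\]
together with $\frac{{\gamma^\el}^*}{1-{\gamma^\el}^*} = \frac{1}{1-{\gamma^\el}^*}-1$, to replace the projected $E_L,E_R$ by sums involving $\nabla_X X,\nabla_Y Y$ and $\xi_L,\eta_L,\xi_R,\eta_R$. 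After this substitution the off-diagonal part of $\langle R_+^\ec(E_L^1),E_L^2\rangle$ becomes a linear combination of inner products whose first factor lies in $\{(\nabla_X^1 X)_\ge, (\nabla_X^1 X)_<, (\xi_L^1)_\ge, (\eta_L^1)_<\}$, and analogously for the $R_+^\er$ side.

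Next I would collect all the terms in which neither factor contains a $\xi$ or $\eta$: these are the ones involving $\nabla_X^1 X,\nabla_Y^1 Y$ paired with $E_L^2,E_R^2$. After re-expanding $E_L^2,E_R^2$, these combine with the cross terms $\langle X\nabla_X^1,Y\nabla_Y^2\rangle-\langle\nabla_X^1 X,\nabla_Y^2 Y\rangle$ of \eqref{bracket} to cancel, leaving only terms that feed $\xi/\eta$ data into at least one factor. The remaining terms then sort themselves into two groups. The strictly off-diagonal inner products yield $-\langle(\eta_L^1)_<,(\eta_L^2)_>\rangle$ and $-\langle(\eta_R^1)_\ge,(\eta_R^2)_\le\rangle$, using adjointness $\langle\gamma A,B\rangle=\langle A,\gamma^* B\rangle$ and the identities $\eta_L=\gammac^*(\xi_L)+\Pi_{\hat\Gamma_1^\ec}(\nabla_X X)$ and $\eta_R=\gammar(\xi_R)+\Pi_{\hat\Gamma_2^\er}(Y\nabla_Y)$ from \eqref{gammarel} to bring factors into the $\eta$ or $\gamma^*\xi$ form, producing additionally the terms $\langle\gammac^*(\xi_L^1)_\le,\gammac^*(\nabla_Y^2 Y)\rangle$ and $\langle\gammar(\xi_R^1)_\ge,\gammar(X\nabla_X^2)\rangle$. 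The $0$-components separated off at the previous step recombine, via the same identity $\eta_L=\gammac^*(\xi_L)+\Pi_{\hat\Gamma_1^\ec}(\nabla_X X)$ applied on the diagonal, into the three diagonal terms $\langle(\xi_L^1)_0,\frac{1}{1-\gammac^*}(\eta_L^2)_0\rangle$, $-\langle(\eta_R^1)_0,\frac{1}{1-\gammar^*}(\xi_R^2)_0\rangle$, and $\langle\Pi_{\hat\Gamma_2^\ec}(\xi_L^1)_0,\Pi_{\hat\Gamma_2^\ec}(\nabla_Y^2 Y)_0\rangle$, where the complementary projection appears because $\eta_L-\gammac^*\xi_L$ lies in $\g_{\hat\Gamma_1^\ec}$.

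The only genuine obstacle is bookkeeping: expanding $E_L=\nabla_X X+\nabla_Y Y$ and $E_R=X\nabla_X+Y\nabla_Y$ in the off-diagonal pieces produces on the order of a dozen inner products with various projections and $\gamma/\gamma^*$ decorations, and one must carefully shuttle these operators across $\langle\cdot,\cdot\rangle$ by adjointness and regroup using the relations in \eqref{gammarel} to match \eqref{bra} exactly. No properties specific to ${\ttF}_{\bfGr,\bfGc}$ are used; the identity holds for any pair of smooth functions $\ttf^1,\ttf^2$.
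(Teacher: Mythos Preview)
Your claim that the identity \eqref{bra} ``holds for any pair of smooth functions $\ttf^1,\ttf^2$'' is false, and this is not merely a cosmetic slip: the paper's derivation uses the invariance property~\eqref{infinv1} (i.e.\ $\xi_L^i\in\b_-$ and $\xi_R^i\in\b_+$) in an essential way, and without it the formula simply does not hold. Concretely, after writing $R_+^\ec(E_L^1)-\nabla_X^1 X = R_0^\ec(E_L^1)+\frac{1}{1-\gammac}(\xi_L^1)_{\ge}-\frac{1}{1-\gammac^*}(\eta_L^1)_{<}$, the paper replaces $(\xi_L^1)_{\ge}$ by $(\xi_L^1)_0$ because $(\xi_L^1)_{>}=0$; and in handling $\langle\frac{1}{1-\gammac^*}(\eta_L^1)_{<},E_L^2\rangle$ it drops $\langle(\eta_L^1)_{<},\frac{1}{1-\gammac}\xi_L^2\rangle$ because $(\xi_L^2)_{>}=0$. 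These are precisely the steps that produce the diagonal terms $\langle(\xi_L^1)_0,\frac{1}{1-\gammac^*}(\eta_L^2)_0\rangle$ etc.\ on the right of \eqref{bra}. For generic functions the discarded pieces do not vanish and do not cancel against anything else.

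A quick sanity check in the trivial case $\gammac=\gammar=0$, $n=2$, with $\ttg^1=\log y_{21}$, $\ttg^2=\log y_{12}$: the left side of \eqref{bra} computed from \eqref{bracket} equals $0$, while the right side equals $-\langle(Y\nabla_Y^1)_{\ge},(Y\nabla_Y^2)_{\le}\rangle=-\dfrac{y_{11}y_{22}}{y_{12}y_{21}}\neq 0$. So \eqref{bra} is not a universal identity; it is specific to functions satisfying~\eqref{infinv1}. Your outline therefore has a genuine gap at the step where you assert that the non-$\xi/\eta$ terms ``combine with the cross terms \dots\ to cancel'': some of the residual terms involve $(\xi_L^1)_{>}$ and $(\xi_L^2)_{>}$, and it is only the invariance of the functions in $\ttF_{\bfGr,\bfGc}$ that kills them.
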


\begin{proof} 
 First, it follows from Theorem \ref{doublebrack} that 
\begin{equation}
\label{bracket1}
\{\log\ttf^1,\log\ttf^2\}^D
=\left\langle R^\ec_+(E_L^1)-\nabla_X^1 X ,E_L^2\right\rangle-\left\langle R^\er_+(E_R^1) - X\nabla_{X}^1,E_R^2\right\rangle.
\end{equation}
By \eqref{gammarel} and \eqref{R0}, 
\begin{align*}
R^\ec_+(E_L^1) - \nabla_X^1 X  
&= R^\ec_0(E_L^1) + \frac{1}{1-\gammac} ( \xi_L^1)_{\ge} - \frac{1}{1-\gammac^*} (\eta_L^1)_{<}\\ 
&= R^\ec_0(E_L^1) + \frac{1}{1-\gammac} ( \xi_L^1)_0 - \frac{1}{1-\gammac^*}(\eta_L^1)_{<}; 
\end{align*}
the second equality holds since $\xi_L^1 \in \b_-$ by \eqref{infinv1}. Similarly,
\begin{equation}\label{term1left}
\begin{aligned}
R^\er_+(E_R^1) -  X \nabla_X^1  
&= R^\er_0(E_R^1) + \frac{1}{1-\gammar} ( \eta_R^1)_{\ge} - \frac{1}{1-\gammar^*} (\xi_R^1)_{<}\\
 &= R^\er_0(E_R^1) + \frac{1}{1-\gammar} ( \eta_R^1)_{\ge}; 
\end{aligned}
\end{equation}
the second equality holds since $\xi_R^1 \in \b_+$ by \eqref{infinv1}.

Consequently, the first term in \eqref{bracket1} is equal to
\begin{equation}
\label{term1}
\left\langle R_0^\ec(E_L^1), E_L^2 \right\rangle +  \left\langle \frac{1}{1-\gammac} ( \xi_L^1)_0 ,  E_L^2 \right\rangle\\ 
- \left\langle\frac{1}{1-\gammac^*} (\eta_L^1)_{<},  E_L^2\right\rangle.
\end{equation}
The second term in \eqref{term1} can be re-written via \eqref{gammarel} as
\begin{multline*}
 \left\langle \frac{1}{1-\gammac} ( \xi_L^1)_0 ,  E_L^2 \right\rangle 
 = \left\langle ( \xi_L^1)_0 , \nabla_Y^2  Y  + \frac{1}{1-\gammac^*}  \eta_L^2 \right\rangle\\
 = \left\langle ( \xi_L^1)_0 ,  \frac{1}{1-\gammac^*}  \eta_L^2\right\rangle + 
\left\langle \Pi_{\hat\Gamma_2^\ec}(\xi_L^1)_0, \Pi_{\hat\Gamma_2^\ec}(\nabla_Y^2  Y)   \right\rangle+
\left\langle \Pi_{\Gamma_2^\ec}(\xi_L^1)_0, \Pi_{\Gamma_2^\ec}(\nabla_Y^2  Y)   \right\rangle\\
 = \left\langle ( \xi_L^1)_0,  \frac{1}{1-\gammac^*}  (\eta_L^2)_0 \right\rangle + 
\left\langle \Pi_{\hat\Gamma_2^\ec}(\xi_L^1)_0, \Pi_{\hat\Gamma_2^\ec}(\nabla_Y^2  Y)_0   \right\rangle
 + \left\langle \gammac^*( \xi_L^1)_0 , \gammac^* (\nabla_Y^2  Y)  \right\rangle, 
\end{multline*}
where the last equality follows from \eqref{gammaid}.

We re-write the third term in \eqref{term1} as
\begin{multline*}
\left\langle(\eta_L^1)_{<}, \frac{1}{1-\gammac} E_L^2\right\rangle =
\left\langle (\eta_L^1)_{<},  \nabla_X^2  X + \frac{1}{1-\gammac} \xi_L^2\right\rangle = \left\langle(\eta_L^1)_{<},  
\nabla_X^2  X )\right\rangle\\
= \left\langle (\eta_L^1)_{<},  \eta_L^2 \right\rangle - 
\left\langle (\eta_L^1)_{<},  \gammac^* (\nabla_Y^2  Y )\right\rangle
= \left\langle (\eta_L^1)_{<},  \eta_L^2 \right\rangle - \left\langle \gammac^* (\xi_L^1)_{<},  \gammac^* (\nabla_Y^2  Y )\right\rangle,
\end{multline*}
where the second equality follows from \eqref{infinv1}, and the last equality, from \eqref{gammarel} and
$\left\langle \Pi_{\hat\Gamma_1^\ec}(A),\gammac^*(B)\right\rangle=0$ for any $A, B$.

Similarly, the second term in in \eqref{bracket1} is equal to
\begin{multline}\label{term2}
\left\langle R^\er_0(E_R^1), E_R^2\right\rangle +  \left\langle\frac{1}{1-\gammar}(\eta_R^1)_{\ge}, E_R^2\right\rangle \\
=\left\langle R_0^\er(E_R^1), E_R^2 \right\rangle +  \left\langle ( \eta_R^1)_{\ge} , Y \nabla_Y^2 \right\rangle
+ \left\langle ( \eta_R^1)_0 , \frac{1}{1-\gammar^*} (\xi_R^2)_0 \right\rangle\\
=\left\langle R_0^\er(E_R^1), E_R^2 \right\rangle 
+ \left\langle ( \eta_R^1)_0 , \frac{1}{1-\gammar^*} (\xi_R^2)_0 \right\rangle +  \left\langle ( \eta_R^1)_{\ge} , \eta_R^2 \right\rangle -
\left\langle \gammar (\xi_R^1)_{\ge},  \gammar(X\nabla_X^2   )\right\rangle.
\end{multline}

Combining \eqref{term1}, \eqref{term2} and plugging the result into \eqref{bracket1}, we obtain  \eqref{bra} 
as required.
\end{proof}

\subsubsection{Diagonal contributions}
Note that the third, the fourth and the fifth terms in \eqref{bra}
are constant due to  \eqref{infinv2} and  \eqref{pigammac}. The first two terms are handled by the
following statement.

\begin{lemma}
\label{R0const}
The quantities $\left\langle R_0(E_L^1), E_L^2 \right\rangle$ and  $\left\langle R_0(E_R^1), E_R^2 \right\rangle$ are constant for any $\ttf^1, \ttf^2\in  {\ttF}_{\bfGr,\bfGc}$.
\end{lemma}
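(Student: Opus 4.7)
The plan is to exploit self-adjointness of the operator
$$T^\el := \frac{\gamma^\el}{1-\gamma^\el}+\frac{1}{1-(\gamma^\el)^*}$$
acting on diagonal matrices. First I would check that the trace-based terms in $R_0^\el$ drop out: because $\g_{\Gamma_1^\el}$ consists of block-diagonal matrices with traceless blocks, the projection $\Pi_{\Gamma_1^\el}$ kills $\one$ (on each run the traceless part of the identity is zero), so $\gamma^\el(\one)=(\gamma^\el)^*(\one)=0$ and hence $\S^\el=\tfrac12\bigl(\tfrac{1}{1-\gamma^\el}-\tfrac{1}{1-(\gamma^\el)^*}\bigr)\one=0$. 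Consequently $R_0^\el(\zeta)=-\tfrac12 T^\el(\zeta_0)$. A direct cancellation then shows that $T^\el$ is self-adjoint with respect to the trace form: $T^\el-(T^\el)^*=\tfrac{\gamma^\el-1}{1-\gamma^\el}+\tfrac{1-(\gamma^\el)^*}{1-(\gamma^\el)^*}=-1+1=0$.

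The next step is to reduce the diagonal part $(E_L\ttg)_0$ to quantities already known to be constant. Writing $a=(\nabla_X\ttg\cdot X)_0$, $b=(\nabla_Y\ttg\cdot Y)_0$ and using the definitions of $\xi_L$ and $\eta_L$, I would verify by direct expansion the identity
$$T^\ec\bigl((E_L\ttg)_0\bigr)=\frac{1}{1-\gammac}(\xi_L\ttg)_0+\frac{1}{1-\gammac^*}(\eta_L\ttg)_0,$$
the point being that the discrepancy collapses to $\tfrac{\gammac-1}{1-\gammac}b+\tfrac{1-\gammac^*}{1-\gammac^*}b=0$. By \eqref{infinv2} and \eqref{infinv3} the right-hand side is constant, hence so is $T^\ec((E_L\ttg)_0)$. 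The symmetric identity $T^\er((E_R\ttg)_0)=\tfrac{1}{1-\gammar}(\eta_R\ttg)_0+\tfrac{1}{1-\gammar^*}(\xi_R\ttg)_0$ is handled in the same way. Thus $R_0^\ec(E_L^1)=-\tfrac12 T^\ec((E_L^1)_0)$ and $R_0^\er(E_R^1)$ are constant diagonal matrices lying in $\Ima T^\ec$ and $\Ima T^\er$ respectively.

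The lemma then follows from a short linear-algebra observation. As $X,Y$ vary, $T^\ec((E_L^2)_0)$ stays fixed, so the variation of $(E_L^2)_0$ is confined to $\ker T^\ec$. Self-adjointness gives $\Ima T^\ec=(\ker T^\ec)^\perp$, and therefore the pairing
$$\langle R_0^\ec(E_L^1),E_L^2\rangle=\Tr\bigl(R_0^\ec(E_L^1)\cdot(E_L^2)_0\bigr)$$
is unaffected when $(E_L^2)_0$ is shifted within its $\ker T^\ec$-coset; the same argument with $T^\er$ handles $\langle R_0^\er(E_R^1),E_R^2\rangle$.

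The place where one could slip up is the vanishing $\S^\el=0$; were it not zero, the trace contributions in $R_0^\el$ would have to be controlled separately via \eqref{traces}, which is routine but an additional bookkeeping step. Apart from this, the argument is a clean structural use of self-adjointness, with no combinatorial input from the Belavin--Drinfeld graph.
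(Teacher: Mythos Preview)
Your argument is correct and takes a genuinely different, more structural route than the paper's.

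Both approaches start from the same identity \eqref{for_frozen}, namely
\[
\Bigl(\tfrac{\gammac}{1-\gammac}+\tfrac{1}{1-\gammac^*}\Bigr)(E_L)_0
=\tfrac{1}{1-\gammac}(\xi_L)_0+\tfrac{1}{1-\gammac^*}(\eta_L)_0,
\]
and both use \eqref{infinv2} and \eqref{infinv3} to conclude that $T^\ec\bigl((E_L^i)_0\bigr)$ is constant for $i=1,2$. The divergence is in what happens next. The paper expands $\langle T^\ec(E_L^1)_0,(E_L^2)_0\rangle$ further via \eqref{for_frozen} into the five terms of \eqref{R0L}, and then needs the extra fact \eqref{pigammac} to handle the residual piece $\langle\Pi_{\hat\Gamma_1}(\eta_L^1)_0,\Pi_{\hat\Gamma_1}(\nabla_X^2 X)_0\rangle$. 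You instead observe that $T^\ec$ is self-adjoint, hence $\Ima T^\ec=(\ker T^\ec)^\perp$; since $R_0^\ec(E_L^1)=-\tfrac12 T^\ec((E_L^1)_0)$ is a \emph{fixed} element of $\Ima T^\ec$ while $(E_L^2)_0$ varies only within a coset of $\ker T^\ec$, the pairing is automatically constant. This is a clean orthogonality argument that short-circuits the explicit bookkeeping in \eqref{R0L}.

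Your observation that $\S^\el=0$ is also correct (the extended $\gamma^\el$ first projects onto the \emph{traceless} block subalgebra $\g_{\Gamma_1^\el}$, which kills $\one$), and the paper simply does not note this simplification: it instead proves in \eqref{traceELS} that the $\S^\el$-terms are constant, which is harmless but redundant. In terms of logical dependencies the two proofs are on par, since \eqref{infinv3}, which you invoke, already relies on \eqref{pigammac}; what your argument buys is conceptual clarity and the avoidance of a second explicit expansion. The paper's more hands-on computation has the minor advantage that the individual constant terms are displayed explicitly, which could be useful if one later needed their values.
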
 

\begin{proof} 
Let us start with 
\begin{multline}
\label{R0L12}
\left\langle R_0(E_L^1), E_L^2 \right\rangle = - \frac 1 2  \left\langle   \left (   \frac {\gamma}{1-\gamma} + \frac{1} {1 - \gamma^*}  \right )  (E_L^1)_0  ,  E_L^2 \right\rangle\\
 - \frac 1 n \left (   \Tr ( E_L^1 ) \Tr(E_L^2\S) - \Tr(E_L^1\S) \Tr ( E_L^2 )   \right ),
\end{multline}
where $\gamma=\gammac$.
First, note that
\begin{multline}\label{traceELS}
\Tr ( E_L^i\S) = \left \langle E_L^i,\left (\frac {1}{1-\gamma} - \frac{1} {1 - \gamma^*}\right)\one\right \rangle =
\Tr\left ( \left (   \frac {1}{1-\gamma^*} - \frac{1} {1 - \gamma}  \right )  E_L^i \right ) \\
= \Tr\left(\frac {1}{1-\gamma^*}\eta_L^i - \frac{1} {1 - \gamma} \xi_L^i + \nabla_Y^i Y  - \nabla_X^i X \right ) =\text{const} 
\end{multline}
for $i=1,2$ by \eqref{gammarel},  \eqref{infinv2}, \eqref{traces} and \eqref{infinv3}.  
Thus, the terms in the second line in \eqref{R0L12} are constant.

Next, by \eqref{gammarel},
\begin{equation}\label{for_frozen}
\begin{aligned}
 \left (   \frac {\gamma}{1-\gamma} + \frac{1} {1 - \gamma^*}  \right ) E_L &= \frac {1}{1-\gamma} \xi_L 
+ \frac{1} {1 - \gamma^*}\eta_L,\\
\left\langle\frac1{1-\gamma}\xi_L^1,E_L^2\right\rangle&= 
\left\langle \xi_L^1,\nabla_Y^2 Y+\frac1{1-\gamma^*}\eta_L^2\right\rangle,\\
\left\langle\frac1{1-\gamma^*}\eta_L^1,E_L^2\right\rangle&= 
\left\langle \eta_L^1,\nabla_X^2 X+\frac1{1-\gamma}\xi_L^2\right\rangle,\\
\end{aligned}
\end{equation}
and hence
\begin{multline}
\label{R0L}
\left\langle  \left (   \frac {\gamma}{1-\gamma} + \frac{1} {1 - \gamma^*}  \right )(E_L^1)_0 , E_L^2 \right\rangle \\
= \left\langle  (\xi_L^1)_0, \nabla_Y^2 Y+ \frac{1} {1 - \gamma^*}\eta_L^2 \right\rangle + \left\langle (\eta_L^1)_0, \nabla_X^2 X + \frac{1} {1 - \gamma}\xi_L^2 \right\rangle \\
= \left\langle  (\xi_L^1)_0,  \frac{1} {1 - \gamma^*}(\eta_L^2)_0 \right\rangle 
+  \left\langle  (\eta_L^1)_0,  \frac{1} {1 - \gamma}(\xi_L^2)_0  \right\rangle 
+ \left\langle  (\xi_L^1)_0, (\xi_L^2 )_0  \right\rangle\\
 + \left\langle  (\eta_L^1)_0, \nabla_X^2 X \right\rangle -
\left\langle  (\xi_L^1)_0, \gamma (\nabla_X^2 X) \right\rangle.
\end{multline}
Each of the three first terms in \eqref{R0L} is constant by \eqref{infinv2} and \eqref{infinv3}. Note that by \eqref{gammaid}, 
\[
\left\langle  (\xi_L^1)_0, \gamma (\nabla_X^2 X)\right\rangle 
= \left\langle  \gamma^*\gamma (\nabla_X^1 X)_0  +  \gamma^*(\nabla_Y^1 Y)_0, \nabla_X^2 X \right\rangle 
=\left\langle  \Pi_{\Gamma_1}(\eta_L^1)_0 , \nabla_X^2 X \right\rangle
\]
with $\Gamma_1=\Gamma_1^\ec$, and so the last two terms in \eqref{R0L} combine into
\[
\left\langle  \Pi_{\hat\Gamma_1}(\eta_L^1)_0 , \Pi_{\hat\Gamma_1}(\nabla_X^2 X)_0\right\rangle,
\]
which is constant by  \eqref{pigammac}.
 
 Similarly,
 \begin{multline}
\label{R0R12}
\left\langle R_0(E_R^1), E_R^2 \right\rangle = - \frac 1 2  \left\langle   \left (   \frac {\gamma}{1-\gamma} + \frac{1} {1 - \gamma^*}  \right )  (E_R^1)_0  ,  E_R^2 \right\rangle\\
 - \frac 1 n \left (   \Tr ( E_R^1 ) \Tr (E_R^2\S) - \Tr(E_R^1 \S) \Tr ( E_R^2 )   \right ) 
\end{multline}
with $\gamma=\gammar$. As before, 
\begin{multline*}
\Tr (E_R^i\S) = \left \langle E_R^i ,  \left (\frac {1}{1-\gamma} - \frac{1} {1 - \gamma^*}\right )\one  \right \rangle \\
= \Tr\left ( \frac {1}{1-\gamma^*}\xi_R^i - \frac{1} {1 - \gamma}    \eta_R^i + Y \nabla_Y^i   - X \nabla_X^i  \right ) = \text{const} 
\end{multline*}
for $i=1,2$, 
and
\begin{multline*}
\left\langle  \left(\frac {\gamma}{1-\gamma} + \frac{1} {1 - \gamma^*}\right)(E_R^1)_0 , E_R^2 \right\rangle \\
= \left\langle  (\eta_R^1)_0, Y\nabla_Y^2 + \frac{1} {1 - \gamma^*}\xi_R^2 \right\rangle + \left\langle  (\xi_R^1)_0, 
X\nabla_X^2 + \frac{1} {1 - \gamma}\eta_R^2  \right\rangle \\
= \left\langle  (\eta_R^1)_0,  \frac{1} {1 - \gamma^*}(\xi_R^2)_0 \right\rangle 
+ \left\langle  (\xi_R^1)_0,  \frac{1} {1 - \gamma}(\eta_R^2)_0  \right\rangle 
+ \left\langle  (\xi_R^1)_0, (\xi_R^2 )_0  \right\rangle\\
+ \left\langle  (\eta_R^1)_0, Y\nabla_Y^2  \right\rangle 
- \left\langle  (\xi_R^1)_0, \gamma^* (Y\nabla_Y^2) \right\rangle.
\end{multline*}
Each of the three first terms above is constant by \eqref{infinv2} and \eqref{infinv3}, while
\[
\left\langle  (\eta_R^1)_0, Y\nabla_Y^2  \right\rangle -
\left\langle  (\xi_R^1)_0, \gamma^* (Y\nabla_Y^2) \right\rangle=
\left\langle  \Pi_{\hat\Gamma_2}(\eta_R^1)_0 , \Pi_{\hat\Gamma_2}(Y\nabla_Y^2)_0 \right\rangle = \text{const}
\]
with $\Gamma_2=\Gamma_2^\er$. Thus, the right hand side of \eqref{R0R12} is constant as well, and we are done.
\end{proof}

\subsubsection{Simplified version of the maps $\gamma$ and $\gamma^*$}\label{simplega}
To proceed further, we define more ``accessible'' versions of the maps $\gamma$ and $\gamma^*$. Recall that
$\g_{\Gamma_{1}}$ and $\g_{\Gamma_{2}}$ defined above are subalgebras of block-diagonal matrices with nontrivial 
traceless blocks determined by nontrivial runs of $\Gamma_{1}$ and $\Gamma_{2}$, respectively, and zeros everywhere else. 
Each diagonal component is isomorphic to $\sl_k$, where $k$ is the size of the corresponding run.   
To modify the definition of $\gamma$, we first modify each nontrivial diagonal block in $\g_{\Gamma_{1}}$ and 
$\g_{\Gamma_{2}}$ from $\sl_k$ to $\Mat_k$ by dropping the tracelessness condition. Next, $\cgamma$ is defined as the projection from $\Mat_n$ onto the union of diagonal blocks specified by $\Gamma_1$, which are then moved 
to corresponding diagonal blocks specified by $\Gamma_2$. Similarly, the adjoint map $\cgamma^*$  acts as the projection to  
$\Mat_{\Gamma_2}$ followed by a map that moves each  diagonal block of $\Mat_{\Gamma_{2}}$ back to the corresponding diagonal block of $\Mat_{\Gamma_{1}}$. Consequently, ringed analogs of relations \eqref{gammaid} remain valid  with $\cPi_{\Gamma_1}$ 
understood as the orthogonal projection to $\Mat_{\Gamma_{1}}$ and $\cPi_{\Gamma_2}$ as the orthogonal projection to 
$\Mat_{\Gamma_{2}}$. Further, we define $\cxi_L$, $\cxi_R$, $\ceta_L$ and $\ceta_R$ with $\cgammar$ and $\cgammac$ replacing
$\gammar$ and $\gammac$ and note that the ringed versions of the last two relations in \eqref{gammarel} remain valid 
with $\cPi_{\hat\Gamma_1}$ and $\cPi_{\hat\Gamma_2}$ being  orthogonal projections complementary to $\cPi_{\Gamma_1}$ and
$\cPi_{\Gamma_2}$, respectively. Observe that the ringed versions of the other four relations in \eqref{gammarel} are no
longer true, since $1-\cgamma$ and $1-\cgamma^*$ might be non-invertible.

It is easy to see that $\cgamma$ and $\cgamma^*$ differ from $\gamma$ and $\gamma^*$, respectively, only on the diagonal. 
Consequently, invariance properties \eqref{2.1} and \eqref{infinv1} remain valid in ringed versions.
Further, the ringed version of the invariance property \eqref{2.2} remains valid as well, albeit with different 
constants $a^\ec(T_1)$ and $a^\er(T_2)$, which yields the ringed version of \eqref{infinv2}. Ringed relations \eqref{pigammac}
also hold true: indeed, the sum in \eqref{complproj} is now taken only over trivial $X$-runs.
As a corollary, we restore ringed versions of relations \eqref{infinv3}.

Recall that to complete the proof of Theorem \ref{logcandouble}, it remains to consider the four last terms in \eqref{bra}.
The following observation plays a crucial role in handling these terms. 

\begin{lemma}\label{difference}
For each one of the last four terms in \eqref{bra}, the difference between the initial and the ringed version is constant.
\end{lemma}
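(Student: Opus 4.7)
The plan is to exploit the fact that the difference between the unringed operators $\gamma, \gamma^*$ and their ringed counterparts $\cgamma, \cgamma^*$ is supported entirely on the diagonal, and moreover depends linearly on the traces of the $\Gamma_1$- and $\Gamma_2$-blocks of the input. Concretely, I would first observe that for any matrix $A$,
\[
(\cgamma - \gamma)(A) = \sum_{\Delta} \frac{1}{|\Delta|}\Tr(A|_{\Delta}^{\Delta})\,\one_{\gamma(\Delta)},
\qquad
(\cgamma^* - \gamma^*)(A) = \sum_{\bar\Delta} \frac{1}{|\bar\Delta|}\Tr(A|_{\bar\Delta}^{\bar\Delta})\,\one_{\gamma^*(\bar\Delta)},
\]
where the sums are taken over nontrivial runs. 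Combined with Lemma \ref{partrace}, this immediately yields that $\xi_L^i - \cxi_L^i$, $\xi_R^i - \cxi_R^i$, $\eta_L^i - \ceta_L^i$, $\eta_R^i - \ceta_R^i$ are \emph{constant} diagonal matrices.

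For the first term $-\langle(\eta_L^1)_<,(\eta_L^2)_>\rangle$: since the difference $\eta_L^i - \ceta_L^i$ is diagonal, the strictly triangular projections $(\cdot)_<$ and $(\cdot)_>$ agree in the ringed and unringed versions, so the difference of the two expressions is identically zero.

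For the second term $-\langle(\eta_R^1)_{\ge},(\eta_R^2)_{\le}\rangle$: decompose the pairing as $\langle(\eta_R^1)_>,(\eta_R^2)_<\rangle+\langle(\eta_R^1)_0,(\eta_R^2)_0\rangle$. The first piece is unchanged by passage to the ringed version (as in the previous term), and the second piece is constant by \eqref{infinv3} (and its ringed analog), so the difference is constant.

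For the third term $\langle\gammac^*(\xi_L^1)_{\le},\gammac^*(\nabla_Y^2 Y)\rangle$: using $\gammac^*\gammac=\Pi_{\Gamma_1^\ec}$ from \eqref{gammaid}, I rewrite $\gammac^*(\xi_L^1) = \Pi_{\Gamma_1^\ec}(\nabla_X^1 X) + \gammac^*(\nabla_Y^1 Y)$, and similarly for the ringed version. By the opening observation and Lemma \ref{partrace}, the differences $C_1 := \gammac^*(\xi_L^1) - \cgammac^*(\cxi_L^1)$ and $C_2 := \gammac^*(\nabla_Y^2 Y) - \cgammac^*(\nabla_Y^2 Y)$ are both constant diagonal matrices supported on $\Gamma_1^\ec$-block indices. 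Expanding $\langle\cgammac^*(\cxi_L^1)+C_1,\cgammac^*(\nabla_Y^2 Y)+C_2\rangle$ produces the ringed version plus three cross terms; $\langle C_1,C_2\rangle$ is trivially constant, $\langle\cgammac^*(\cxi_L^1),C_2\rangle$ reduces to pairing $C_2$ with the diagonal $\cgammac^*((\cxi_L^1)_0)$, which is constant by the ringed \eqref{infinv2}, and $\langle C_1,\cgammac^*(\nabla_Y^2 Y)\rangle$ collapses, via $C_1 = \sum_\Delta c_\Delta\one_\Delta$, to a sum of $\Gamma_2^\ec$-block traces of $\nabla_Y^2 Y$, which is constant by Lemma \ref{partrace}. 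The fourth term is handled by the same argument with the roles of $L\leftrightarrow R$, $\gammac\leftrightarrow\gammar$, and $Y\leftrightarrow X$ swapped.

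The main obstacle is the bookkeeping for the third and fourth terms: one must verify that the cross-term pairings between constant diagonal matrices supported on $\Gamma_1$-blocks and the non-constant expressions $\cgammac^*(\nabla_Y^2 Y)$, $\cgammar(X\nabla_X^2)$ extract only block traces or $\hat\Gamma$-projections that are already known to be constant via Lemma \ref{partrace}, \eqref{pigammac}, and \eqref{infinv2}. Once these combinatorial identifications are checked, the proof reduces to a direct bilinear expansion.
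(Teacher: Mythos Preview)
Your proof is correct and follows essentially the same approach as the paper: both exploit that $\gamma-\cgamma$ and $\gamma^*-\cgamma^*$ are diagonal operators whose output is a linear combination of block traces, then invoke Lemma~\ref{partrace}. Your treatment of the second term is slightly slicker (invoking \eqref{infinv3} and its ringed analogue directly rather than expanding the diagonal pairing into four summands), and your handling of the third and fourth terms is more explicit than the paper's one-line ``treated in the same way as the second term,'' but the underlying mechanism is identical.
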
 

\begin{proof} Equality 
$\left\langle (\eta^1_L)_{<},(\eta_L^2)_{>}\right\rangle=\left\langle (\ceta^1_L)_{<},(\ceta_L^2)_{>}\right\rangle$ 
is trivial, since $\gamma^*$ and $\cgamma^*$ coincide on $\n_+$ and $\n_-$. 

For the second of the four terms, we have to consider the difference 
\begin{multline*}
\left\langle (\ceta^1_R)_{0},(\ceta_R^2)_{0}\right\rangle-
\left\langle (\eta^1_R)_{0},(\eta_R^2)_{0}\right\rangle= 
\left\langle \cgammar(X\nabla_X^1)_{0}-\gammar(X\nabla_X^1)_{0},(Y\nabla_Y^2)_{0}\right\rangle\\+
\left\langle (Y\nabla_Y^1)_{0},\cgammar(X\nabla_X^2)_{0}-\gammar(X\nabla_X^2)_{0}\right\rangle\\+
\left\langle (\cgammar-\gammar)(X\nabla_X^1)_{0},\cgammar(X\nabla_X^2)_{0}\right\rangle+
\left\langle \gammar(X\nabla_X^1)_{0},(\cgammar-\gammar)(X\nabla_X^2)_{0}\right\rangle.
\end{multline*}
The first summand in the right hand side above  equals
\[
\sum_\Delta\frac1{|\Delta|}\Tr(X\nabla_X^1)_\Delta^\Delta\Tr(Y\nabla_Y^2)_{\gammar(\Delta)}^{\gammar(\Delta)},
\]
where the sum is taken over all nontrivial row $X$-runs. By Lemma \ref{partrace}, each factor in this expression is constant, 
and hence the same holds true for the whole sum. The remaining three summands can be treated in a similar way. 

The remaining two terms in \eqref{bra} are treated in the same way as the second term.
\end{proof}

Based on Lemma \ref{difference}, from now on we proceed with the ringed versions of the last four terms in \eqref{bra}.

\subsubsection{Explicit expression for $\left\langle (\ceta^1_L)_{<},(\ceta_L^2)_{>}\right\rangle$}\label{etaletasec}
Let $\ttf^i$ be the $l^i\times l^i$ trailing minor of $\L^i$, then
\begin{equation}\label{lnal}
\L^i\nabla_{\L}^i=\begin{bmatrix} 0 & \ast \\ 0 & \one_{l^i} \end{bmatrix}, \qquad
\nabla_{\L}^i\L^i =\begin{bmatrix} 0 & 0 \\ \ast & \one_{l^i}\end{bmatrix}.
\end{equation}

Denote $\hat l^i=N(\L^i)-l^i+1$.
From now on we assume without loss of generality that 
\begin{equation}
\label{blockp}
\hat l^1\in L^1_p\cup\bar L^1_{p-1}.
\end{equation}

Consider the fixed block $X_{I^1_p}^{J^1_p}$ in $\L^1$ and an arbitrary block $X_{I^2_t}^{J^2_t}$ in $\L^2$. 
If $\beta_p^1>\beta_t^2$ then, by Proposition \ref{compar}(i) the second block fits completely inside the first one.  
This defines an injection $\rho$ of the subsets $K^2_t$ and $L^2_t$ of rows and columns of the matrix 
$\L^2$ into the subsets $K^1_p$ and $L^1_p$ of rows and columns of the matrix $\L^1$. Put
\begin{align}
\label{bad1}
\badB_t&=-\left\langle \lgrado_{\rho(\Phi_t^2)}^{\rho(\Phi_t^2)}
\Ld_{\Phi_t^2}^{{\bar L}_t^2}\nald_{\bar L_t^2}^{\Phi_t^2}\right\rangle,\\
\label{bad2}
\badC_t&=\left\langle \gradlo_{\rho(\Psi_t^2)}^{\rho(\Psi_t^2)}
\nald_{\Psi_t^2}^{{\bar K}_{t-1}^2}\Ld_{{\bar K}^2_{t-1}}^{\Psi_t^2}
\right\rangle,\\
\label{bad1eq}
\badBp_t&=
\left\langle\gradlo_{\Psi_p^1}^{L_p^1\setminus\Psi_p^1}
\nald_{L_t^2\setminus\Psi_t^2}^{K_t^2}\Ld_{K_t^2}^{\Psi_t^2}\right\rangle.
\end{align}

\begin{lemma}\label{etaletalemma} 
{\rm (i)} Expression $\left\langle (\ceta^1_L)_{<},(\ceta_L^2)_{>}\right\rangle$ 
is given by
 \begin{multline}\label{etaleta}
\left\langle (\ceta^1_L)_{<},(\ceta_L^2)_{>}\right\rangle=\sum_{\beta_t^2< \beta_p^1}\left(\badB_t+\badC_t\right)+
\sum_{\beta_t^2= \beta_p^1}\badBp_t\\
+\sum_{\beta_t^2< \beta_p^1}\left(
\left\langle \lgrado_{\rho(K_t^2)}^{\rho(K_t^2)}\lgradd_{K_t^2}^{K_t^2}\right\rangle-
\left\langle \gradlo_{\rho(L_t^2)}^{\rho(L_t^2)}
\gradld_{L_t^2}^{L_t^2}\right\rangle\right)
\end{multline}
if $\hat l^1\in L_p^1$, and vanishes otherwise.

{\rm (ii)} Both summands in the last sum in \eqref{etaleta} are constant.
\end{lemma}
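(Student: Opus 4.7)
The plan is to expand $\ceta_L^i = \nabla_X^i X + \cgammac^*(\nabla_Y^i Y)$ using the block-by-block formulas \eqref{naxnay}--\eqref{naxyxy}, and then to match the strictly below-diagonal entries of $\ceta_L^1$ against the strictly above-diagonal entries of $\ceta_L^2$. Each block of $\L^i$ contributes nonzero entries to $\ceta_L^i$ in a single diagonal square block of the ambient $n\times n$ matrix, with $Y$-block contributions further redistributed by $\cgammac^*$ onto the $X$-block-diagonal positions prescribed by $\Gamma_1^\ec$. Only entries supported inside the trailing $l^i\times l^i$ minor survive the pairing, and within that minor one must sort the contributions into strict upper and strict lower triangular parts.

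For the vanishing case $\hat l^1\in\bar L_{p-1}^1$, the trailing minor of $\L^1$ starts inside a $Y$-block whose diagonal square block sits at rows/columns $\bar J_{p-1}^1$ lying to the right of $\hat l^1$. A direct bookkeeping on \eqref{naxyxy} and \eqref{lnal} shows that after truncation to the trailing block, every surviving entry of $\nabla_X^1 X$ and of $\cgammac^*(\nabla_Y^1 Y)$ lies on or strictly above the diagonal, since the $\cgammac^*$-shift from $Y$-block to $X$-block positions maps inside the upper-triangular region of the trailing minor. Therefore $(\ceta_L^1)_< = 0$ on the trailing minor and the pairing vanishes.

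Assume now $\hat l^1\in L_p^1$ and fix an $X$-block $X_{I_t^2}^{J_t^2}$ of $\L^2$. By Proposition~\ref{compar}(i), only the sub-cases $\beta_t^2<\beta_p^1$ and $\beta_t^2=\beta_p^1$ contribute to the pairing with $(\ceta_L^1)_<$. In the first sub-case the block sits strictly inside $X_{I_p^1}^{J_p^1}$ via the injection $\rho$, and its diagonal square block has three geometrically distinct edges: the top edge $\Phi_t^2$, glued to the preceding $Y$-block, produces $\badB_t$; the right edge $\Psi_t^2$, glued to the following $Y$-block, produces $\badC_t$; the interior yields the trace-like terms $\langle\lgrado\lgradd\rangle - \langle\gradlo\gradld\rangle$. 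In the boundary sub-case $\beta_t^2=\beta_p^1$ the two $X$-blocks share a right boundary rather than nest strictly, producing the cross-term $\badBp_t$. Contributions from $Y$-blocks of $\L^2$ are transported by $\cgammac^*$ into the same $X$-block-diagonal positions and absorbed into the same expressions, which is why the final answer is indexed by $t$ alone. For claim~(ii), \eqref{lnal} identifies $\L^i\nabla_\L^i$ and $\nabla_\L^i\L^i$ with the identity on the trailing $l^i\times l^i$ block and zero outside, so the two remaining brackets reduce to traces of $\nabla_X^2 X$ or $X\nabla_X^2$ over full row or column runs of $\L^2$ that lie inside both trailing blocks; these are constants by Lemma~\ref{partrace}.

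The main obstacle will be the detailed bookkeeping in the third paragraph: one must correctly identify which entries of $\nabla_X^i X$ and $\cgammac^*(\nabla_Y^i Y)$ land strictly below or above the diagonal after truncation to the trailing minor, and verify that all cross-terms between the $X$- and $Y$-block contributions of $\L^1$ and $\L^2$ combine cleanly into the three families $\badB_t$, $\badC_t$, $\badBp_t$ without producing spurious residual contributions at the top boundary of $\L^1$ or at the gluing runs $\Phi_t^1$, $\Psi_t^1$ of $\L^1$.
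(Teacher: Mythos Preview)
Your proposal misses the key structural step that makes the computation tractable, and as a result your vanishing argument is incorrect.

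The paper does \emph{not} compute $(\ceta_L^1)_<$ directly. Instead it first decomposes the pairing orthogonally as
\[
\left\langle (\ceta^1_L)_{<},(\ceta_L^2)_{>}\right\rangle=
\left\langle \cPi_{\Gamma_1^\ec}\left((\ceta^1_L)_{<}\right),\cPi_{\Gamma_1^\ec}\left((\ceta_L^2)_{>}\right)\right\rangle+
\left\langle \cPi_{\hat\Gamma_1^\ec}\left((\ceta^1_L)_{<}\right),\cPi_{\hat\Gamma_1^\ec}\left((\ceta_L^2)_{>}\right)\right\rangle .
\]
The first summand vanishes \emph{identically} because $\cPi_{\Gamma_1^\ec}(\ceta_L^2)=\cgammac^*(\cxi_L^2)$ and $(\cxi_L^2)_{>}=0$ by the ringed version of \eqref{infinv1}. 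For the second summand one uses $\cPi_{\hat\Gamma_1^\ec}(\cgammac^*(\nabla_Y^i Y))=0$, so that $\cPi_{\hat\Gamma_1^\ec}(\ceta_L^i)=\cPi_{\hat\Gamma_1^\ec}(\nabla_X^i X)$ and the $Y$-blocks disappear entirely rather than being ``absorbed''.

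Your claim that $(\ceta_L^1)_{<}=0$ when $\hat l^1\in\bar L_{p-1}^1\setminus\Psi_p^1$ is false. For any $t<p$ the diagonal $\Psi_t^1$-block of $\nabla_X^1 X$ is $(\nabla_\L^1)_{\Psi_t^1}^{K_t^1}(\L^1)_{K_t^1}^{\Psi_t^1}$, which by \eqref{compli} equals $\one-(\nabla_\L^1)_{\Psi_t^1}^{\bar K_{t-1}^1}(\L^1)_{\bar K_{t-1}^1}^{\Psi_t^1}$ and is generically not upper triangular. These strictly-lower contributions sit precisely in the nontrivial column $X$-run blocks $\Delta(\beta_t^1)$, i.e.\ in the range of $\cPi_{\Gamma_1^\ec}$; they are eliminated only by the invariance argument on the $\Gamma_1^\ec$-projected part, not by any triangularity argument. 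Likewise, in the non-vanishing case, your plan to ``absorb'' $\cgammac^*(\nabla_Y^2 Y)$ into the $X$-block terms cannot work as stated: these contributions again live in $\Gamma_1^\ec$-blocks and are killed by the same projection-plus-invariance mechanism, not by combining with $\nabla_X^2 X$. Without the $\cPi_{\Gamma_1^\ec}\oplus\cPi_{\hat\Gamma_1^\ec}$ splitting you would have to carry all these cross-terms through the entire block-by-block matching, and there is no indication in your outline of how they would cancel.
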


\begin{remark} Since $\left\langle A_1A_2\dots,A^1A^2\dots\right\rangle=\Tr(A_1A_2\dots A^1A^2\dots)$, 
here and in what follows we omit the comma and write just $\left\langle A_1A_2\dots A^1A^2\dots\right\rangle$
whenever $A_1, A_2, \dots$ and $A^1, A^2, \dots$ are matrices given by explicit expressions.
\end{remark}

\begin{proof}
 First of all, write
\begin{equation}\label{firstterm}
\left\langle (\ceta^1_L)_{<},(\ceta_L^2)_{>}\right\rangle=
\left\langle \cPi_{\Gamma_1}\left((\ceta^1_L)_{<}\right),\cPi_{\Gamma_1}\left((\ceta_L^2)_{>}\right)\right\rangle+ 
\left\langle \cPi_{\hat\Gamma_1}\left((\ceta^1_L)_{<}\right),\cPi_{\hat\Gamma_1}\left((\ceta_L^2)_{>}\right)\right\rangle
\end{equation}
with $\Gamma_1=\Gamma_1^\ec$.

It follows from the ringed version of \eqref{gammaid} that for $i=1,2$,
\begin{equation}\label{pigamma}
\cPi_{\Gamma_1}(\ceta_L^i)=\cgamma^*(\cxi_L^i)
\end{equation}
with $\cgamma=\cgammac$. Consequently, 
\[
\left\langle \cPi_{\Gamma_1}\left((\ceta^1_L)_{<}\right),\cPi_{\Gamma_1}\left((\ceta_L^2)_{>}\right)\right\rangle=
\left\langle \cPi_{\Gamma_1}\left((\ceta^1_L)_{<}\right),\cgamma^*\left((\cxi_L^2)_{>}\right)\right\rangle=0
\]
via the ringed version of \eqref{infinv1}.

Note that $\cPi_{\hat\Gamma_1}\left(\cgamma^*(\nabla^i_Y Y)\right)=0$ by the definition of $\cgamma^*$, 
therefore $\cPi_{\hat\Gamma_1}(\ceta_L^i)= \cPi_{\hat\Gamma_1}(\nabla_X^i X)$.

Let us compute $\nabla_X^i X$. Taking into account \eqref{naxnay} and \eqref{naxyxy}, we get
\begin{multline*}
\nabla_X^i X=\sum_{t=1}^{s^i}\begin{bmatrix}
(\nabla_{\L}^i)_{L_t^i}^{K_t^i}X_{I_t^i}^{J_t^i} & (\nabla_{\L}^i)_{L_t^i}^{K_t^i}X_{I_t^i}^{\hat J_t^i}\\0 & 0\end{bmatrix}\\=
\sum_{t=1}^{s^i}\begin{bmatrix}
(\nabla_{\L}^i\L^i)_{L_t^i}^{L_t^i\setminus\Psi_t^i} & (\nabla_{\L}^i)_{L_t^i}^{K_t^i}\L_{K_t^i}^{\Psi_t^i} & 
(\nabla_{\L}^i)_{L_t^i}^{K_t^i}X_{I_t^i}^{\hat J_t^i}\\
0 & 0 & 0\end{bmatrix},
\end{multline*}
where $\hat J_t^i=[1,n]\setminus J_t^i$. 
The latter equality follows from the fact that in columns $L_t^i\setminus\Psi_t^i$ all nonzero entries of $\L^i$  belong to the block $(\L^i)_{K_t^i}^{L_t^i}=X_{I_t^i}^{J_t^i}$, whereas in columns $\Psi_t^i$ nonzero entries of $\L^i$ belong also to the block $(\L^i)_{\bar K_{t-1}^i}^{\bar L_{t-1}^i}=Y_{\bar I_{t-1}^i}^{\bar J_{t-1}^i}$, see Fig.~\ref{fig:ladder}.
In more detail,
\begin{equation}\label{naxx}
\nabla_X^i X=\sum_{t=1}^{s^i}\begin{bmatrix}
(\nabla_{\L}^i\L^i)_{L_t^i\setminus\Psi_t^i}^{L_t^i\setminus\Psi_t^i} & 
(\nabla_{\L}^i)_{L_t^i\setminus\Psi_t^i}^{K_t^i}(\L^i)_{K_t^i}^{\Psi_t^i} & 
(\nabla_{\L}^i)_{L_t^i\setminus \Psi_t^i}^{K_t^i}X_{I_t^i}^{\hat J_t^i}\\
(\nabla_{\L}^i\L^i)_{\Psi_t^i}^{L_t^i\setminus\Psi_t^i} & (\nabla_{\L}^i)_{\Psi_t^i}^{K_t^i}(\L^i)_{K_t^i}^{\Psi_t^i} & 
(\nabla_{\L}^i)_{\Psi_t^i}^{K_t^i}X_{I_t^i}^{\hat J_t^i}\\
0 & 0 & 0 \end{bmatrix}.
\end{equation}

Note that the upper left block in \eqref{naxx} is lower triangular by \eqref{lnal}. Besides, the projection of the middle block onto $\hat\Gamma_1$ vanishes, since it corresponds to the diagonal block defined by the nontrivial $X$-run 
$\Delta(\beta_t^i)$ (or is void if $t=1$ and $\Psi_1^i=\varnothing$). 

It follows from the explanations above and
\eqref{lnal} that the contribution of the $t$-th summand in \eqref{naxx} to $\cPi_{\hat\Gamma_1}\left((\ceta^1_L)_{<}\right)$ vanishes, unless $t=p$. Moreover, if $\hat l^1\in \bar L_{p-1}^1\setminus\Psi_p^1$, it vanishes for $t=p$
as well. So, in what follows we assume that $\hat l^1\in L_p^1$. In this case \eqref{naxx} yields 
\begin{equation}\label{pigalo}
\cPi_{\hat\Gamma_1}\left((\ceta_L^1)_{<}\right)=\cPi_{\hat\Gamma_1}\begin{bmatrix}
\left((\nabla_{\L}^1\L^1)_{L_p^1}^{L_p^1}\right)_{<} & 0 \\ 0 & 0\end{bmatrix}.
\end{equation}

On the other hand,
\begin{equation}\label{pigaup}
\cPi_{\hat\Gamma_1}\left((\ceta_L^2)_{>}\right)=\sum_{t=1}^{s^2}\begin{bmatrix}
0 & (\nabla_{\L}^2)_{L_t^2\setminus\Psi_t^2}^{K_t^2}(\L^2)_{K_t^2}^{\Psi_t^2} & 
(\nabla_{\L}^2)_{L_t^2\setminus \Psi_t^2}^{K_t^2}X_{I_t^2}^{\hat J^2_t}\\
0 & 0 & 
(\nabla_{\L}^2)_{\Psi_t^2}^{K_t^2}X_{I_t^2}^{\hat J^2_t}\\
0 & 0 & 0 \end{bmatrix},
\end{equation}
where the $t$-th summand corresponds to the $t$-th $X$-block of $\L^2$.

If $\beta_p^1<\beta_t^2$, then the contribution of the $t$-th summand in \eqref{pigaup}
to the second term in \eqref{firstterm} vanishes by \eqref{pigalo}, since in this case 
$J^1_p\subseteq J^2_t\setminus\Delta(\beta_t^2)$, which means that the upper left block in
\eqref{pigalo} fits completely within the zero upper left block in \eqref{pigaup}.

 Assume that $\beta_p^1>\beta_t^2$. Then, to the contrary, $J^2_t\subseteq J^1_p\setminus
\Delta(\beta_p^1)$, and hence  
$\rho(L_t^2)\subseteq L_p^1\setminus\Psi_p^1$. Note that by \eqref{pigalo}, to compute the second term in \eqref{firstterm}  one can replace $\hat J_t^2$ in \eqref{pigaup} by $J_p^1\setminus J_t^2$. So, using the above injection $\rho$, one can rewrite the two upper blocks at the $t$-th summand of $\cPi_{\hat\Gamma_1}\left((\ceta^2_L)_{>}\right)$ in \eqref{pigaup} as one block
\[
\nald_{L_t^2\setminus \Psi_t^2}^{K_t^2}\Lo_{\rho(K_t^2)}^{L_p^1\setminus \rho(L_t^2\setminus \Psi_t^2)},
\]
and the remaining nonzero block in the same summand as
\[
\nald_{\Psi_t^2}^{K_t^2}(\L^1)_{\rho(K_t^2)}^{L_p^1\setminus \rho(L_t^2)}.
\]
The corresponding blocks of $\cPi_{\hat\Gamma_1}\left((\ceta^1_L)_{<}\right)$ in \eqref{pigalo} are
\[
\gradlo_{L_p^1\setminus \rho(L_t^2\setminus \Psi_t^2)}^{\rho(L_t^2\setminus\Psi_t^2)}=
\nalo_{L_p^1\setminus \rho(L_t^2\setminus \Psi_t^2)}^{K_p^1}\Lo_{K_p^1}^{\rho(L_t^2\setminus\Psi_t^2)}
\]
and
\[
\gradlo_{L_p^1\setminus \rho(L_t^2)}^{\rho(\Psi_t^2)}=
\nalo_{L_p^1\setminus \rho(L_t^2)}^{K_p^1}\Lo_{K_p^1}^{\rho(\Psi_t^2)}.
\]
The equalities follow from the fact that all nonzero entries in the columns $\rho(L_t^2)$ of $\L^1$ belong to the $X$-block,
see Fig.~\ref{fig:ladder}.

 The contribution of the first blocks in each pair can be rewritten as
\begin{equation}\label{cont1}
\left\langle \Lo_{\rho(K_t^2)}^{L_p^1\setminus \rho(L_t^2\setminus \Psi_t^2)}
\nalo_{L_p^1\setminus \rho(L_t^2\setminus \Psi_t^2)}^{K_p^1}
\Lo_{K_p^1}^{\rho(L_t^2\setminus\Psi_t^2)}
\nald_{L_t^2\setminus \Psi_t^2}^{K_t^2}\right\rangle.
\end{equation}
Recall that $\rho(K_t^2)\subseteq K_p^1$. If the inclusion is strict, then immediately
\begin{multline}\label{trick1}
\Lo_{\rho(K_t^2)}^{L_p^1\setminus \rho(L_t^2\setminus \Psi_t^2)}
\nalo_{L_p^1\setminus \rho(L_t^2\setminus \Psi_t^2)}^{K_p^1}\\
=\lgrado_{\rho(K_t^2)}^{K_p^1}-
\Lo_{\rho(K_t^2)}^{\rho\left(L_t^2\setminus \Psi_t^2\right)}
\nalo_{\rho\left(L_t^2\setminus \Psi_t^2\right)}^{K_p^1}\\
=\lgrado_{\rho(K_t^2)}^{K_p^1}-
\Ld_{K_t^2}^{L_t^2\setminus \Psi_t^2}
\nalo_{\rho\left(L_t^2\setminus \Psi_t^2\right)}^{K_p^1}.
\end{multline}
Otherwise there is an additional term
\[
-\Lo_{K_p^1}^{{\bar L}_p^1}\nalo_{{\bar L}_p^1}^{K_p^1}
\]
in the right hand side of \eqref{trick1}. However, for the same reason as above, 
\[
\nalo_{{\bar L}_p^1}^{K_p^1}\Lo_{K_p^1}^{\rho(L_t^2\setminus\Psi_t^2)}=
\gradlo_{{\bar L}_p^1}^{\rho(L_t^2\setminus\Psi_t^2)}.
\]
Note that $\rho(L_t^2\setminus\Psi_t^2)\subset L_p^1$, and ${\bar L}_p^1$ lies strictly to the left of $L_p^1$, see
Fig.~\ref{fig:ladder}. Consequently, by \eqref{lnal}, the latter submatrix vanishes. Therefore, the additional term does not contribute to \eqref{cont1}.

To find the contribution of the second term in \eqref{trick1} to \eqref{cont1}, note that
\begin{equation}
\label{tfor1}
\nalo_{\rho\left(L_t^2\setminus \Psi_t^2\right)}^{K_p^1}\Lo_{K_p^1}^{\rho(L_t^2\setminus\Psi_t^2)}=
\gradlo_{\rho(L_t^2\setminus\Psi_t^2)}^{\rho(L_t^2\setminus\Psi_t^2)}
\end{equation}
and
\[
\nald_{L_t^2\setminus \Psi_t^2}^{K_t^2}\Ld_{K_t^2}^{L_t^2\setminus \Psi_t^2}=
\gradld_{L_t^2\setminus \Psi_t^2}^{L_t^2\setminus \Psi_t^2}
\]
for the same reason as above, and hence the contribution in question equals 
\[
-\left\langle \gradld_{L_t^2\setminus \Psi_t^2}^{L_t^2\setminus \Psi_t^2}
\gradlo_{\rho(L_t^2\setminus\Psi_t^2)}^{\rho(L_t^2\setminus\Psi_t^2)}\right\rangle=
\text{const}
\]
by \eqref{lnal}.

 Similarly to \eqref{cont1}, \eqref{trick1}, the contribution of the second blocks in each pair can be rewritten as 
\begin{equation}\label{cont2}
\left\langle \lgrado_{\rho(K_t^2)}^{K_p^1}-
\Lo_{\rho(K_t^2)}^{\rho(L_t^2)}\nalo_{\rho(L_t^2)}^{K_p^1},
\Lo_{K_p^1}^{\rho(\Psi_t^2)}\nald_{\Psi_t^2}^{K_t^2}\right\rangle.
\end{equation}
As in the previous case, and additional term arises if $\rho(K_t^2)= K_p^1$, and its contribution to
\eqref{cont2} vanishes.

Note that by \eqref{lnal}, one has
\[
\lgrado_{\rho(K_t^2)}^{K_p^1}\Lo_{K_p^1}^{\rho(L_t^2\setminus\Psi_t^2)}=
\lgrado_{\rho(K_t^2)}^{\rho(K_t^2)}\Lo_{\rho(K_t^2)}^{\rho(L_t^2\setminus\Psi_t^2)}
\]
and
\[
\lgrado_{\rho(K_t^2)}^{K_p^1}\Lo_{K_p^1}^{\rho(\Psi_t^2)}=
\lgrado_{\rho(K_t^2)}^{\rho(K_t^2)}\Lo_{\rho(K_t^2)}^{\rho(\Psi_t^2)},
\]
hence the total contribution of the first terms in \eqref{trick1} and \eqref{cont2} equals
\begin{multline}\label{tfor2}
\left\langle \lgrado_{\rho(K_t^2)}^{\rho(K_t^2)},
\Lo_{\rho(K_t^2)}^{\rho(L_t^2\setminus\Psi_t^2)}\nald_{L_t^2\setminus \Psi_t^2}^{K_t^2}+
\Lo_{\rho(K_t^2)}^{\rho(\Psi_t^2)}\nald_{\Psi_t^2}^{K_t^2}\right\rangle\\
=\left\langle \lgrado_{\rho(K_t^2)}^{\rho(K_t^2)},
\Ld_{K_t^2}^{L_t^2\setminus\Psi_t^2}\nald_{L_t^2\setminus \Psi_t^2}^{K_t^2}+
\Ld_{K_t^2}^{\Psi_t^2}\nald_{\Psi_t^2}^{K_t^2}\right\rangle\\
=\left\langle \lgrado_{\rho(K_t^2)}^{\rho(K_t^2)},
\lgradd_{K_t^2}^{K_t^2}-U_t\nald_{\bar L_t^2}^{K_t^2}
\right\rangle,
\end{multline}
where
\[
U_t=\begin{bmatrix} \Ld_{\Phi_t^2}^{{\bar L}_t^2} \\ 0\end{bmatrix}.
\]
Note that 
\[
\left\langle \lgrado_{\rho(K_t^2)}^{\rho(K_t^2)}
\lgradd_{K_t^2}^{K_t^2}\right\rangle=\text{const}
\]
by \eqref{lnal}, which gives the first summand 
in the last sum in \eqref{etaleta}. The remaining term equals
\begin{multline*}
-\left\langle \lgrado_{\rho(K_t^2)}^{\rho(K_t^2)}U_t\nald_{\bar L_t^2}^{K_t^2}\right\rangle=
-\left\langle \lgrado_{\rho(K_t^2)}^{\rho(\Phi_t^2)}
\Ld_{\Phi_t^2}^{{\bar L}_t^2}\nald_{\bar L_t^2}^{K_t^2}\right\rangle\\
=-\left\langle \lgrado_{\rho(\Phi_t^2)}^{\rho(\Phi_t^2)}
\Ld_{\Phi_t^2}^{{\bar L}_t^2}\nald_{\bar L_t^2}^{\Phi_t^2}\right\rangle,
\end{multline*}
which coincides with the expression for $\badB_t$ in \eqref{bad1};
the last equality above follows from \eqref{lnal}.

It remains to compute the contribution of the second term in \eqref{cont2}. Similarly to \eqref{tfor1}, we have
\[
\nalo_{\rho\left(L_t^2\right)}^{K_p^1}\Lo_{K_p^1}^{\rho(\Psi_t^2)}=
\gradlo_{\rho(L_t^2)}^{\rho(\Psi_t^2)}.
\]
On the other hand, similarly to \eqref{tfor2}, we have
\[
\nald_{\Psi_t^2}^{K_t^2}\Ld_{K_t^2}^{L_t^2}=
\gradld_{\Psi_t^2}^{L_t^2}-\nald_{\Psi_t^2}^{{\bar K}_{t-1}^2}V_t,
\]
where
\[
V_t=\begin{bmatrix} 0 & \Ld_{{\bar K}^2_{t-1}}^{\Psi_t^2}\end{bmatrix}.
\]
As before, we use \eqref{lnal} to get 
\[
-\left\langle
\gradlo_{\rho(L_t^2)}^{\rho(\Psi_t^2)}
\gradld_{\Psi_t^2}^{L_t^2}\right\rangle=
-\left\langle\gradlo_{\rho(\Psi_t^2)}^{\rho(\Psi_t^2)}
\gradld_{\Psi_t^2}^{\Psi_t^2}\right\rangle=\text{const},
\]
which together with the contribution of the second term in \eqref{trick1} computed above yields the second summand 
in the last sum in \eqref{etaleta}.
The remaining term is given by 
\begin{equation*}
\left\langle \gradlo_{\rho(L_t^2)}^{\rho(\Psi_t^2)}
\nald_{\Psi_t^2}^{{\bar K}_{t-1}^2}V_t
\right\rangle=
\left\langle \gradlo_{\rho(\Psi_t^2)}^{\rho(\Psi_t^2)} 
\nald_{\Psi_t^2}^{{\bar K}_{t-1}^2}\Ld_{{\bar K}^2_{t-1}}^{\Psi_t^2}
\right\rangle,
\end{equation*}
which coincides with the expression for $\badC_t$ in \eqref{bad2}.

Assume now that $\beta_p^1=\beta_t^2$ and hence $J_p^1=J_t^2$. In this case 
the blocks $X_{I^2_t}^{J^2_t}$ and $X_{I^1_p}^{J^1_p}$ have the same width, and one of them lies inside the other, but the direction of the inclusion may vary, and hence $\rho$ is not defined. 

Note that by \eqref{pigalo}, to compute the second term in \eqref{firstterm} in this case, one can omit the columns 
$\hat J_t^2$ in \eqref{pigaup}, 
and hence the contribution in question equals
\begin{equation*}
\left\langle\gradlo_{\Psi_p^1}^{L_p^1\setminus\Psi_p^1}
\nald_{L_t^2\setminus\Psi_t^2}^{K_t^2}\Ld_{K_t^2}^{\Psi_t^2}\right\rangle,
\end{equation*} 
which coincides with the expression for $\badBp_t$ in \eqref{bad1eq}.
\end{proof}

\subsubsection{Explicit expression for $\left\langle (\ceta_R^1)_{\ge},(\ceta_R^2)_{\le}\right\rangle$}\label{etaretasec}
Recall that $\hat l^1\in L_p^1\cup \bar L_{p-1}^1$ by \eqref{blockp}. Consequently, $\hat l^1\in K_p^1\cup\bar K_{p-1}^1$;
more exactly, either $\hat l^1\in K_p^1\setminus \Phi_p^1$, or
\begin{equation}\label{blockq}
\hat l^1\in \bar K_q^1\quad\text{with $q=p$ or $q=p-1$},
\end{equation}
see Fig.~\ref{fig:ladder}.
Consider a fixed block $Y_{\bar I^1_q}^{\bar J^1_q}$ in $\L^1$ and an arbitrary block $Y_{\bar I^2_t}^{\bar J^2_t}$ in $\L^2$. 
If $\bar\alpha_q^1> \bar\alpha_t^2$ then, by Proposition \ref{compar}(ii) the second block fits completely inside 
the first one.  This defines an injection $\sigma$ of the subsets $\bar K^2_t$ and $\bar L^2_t$ of rows and columns of the matrix $\L^2$ into the subsets $\bar K^1_{q}$ and $\bar L^1_{q}$ of rows and columns of the matrix $\L^1$. Put
\begin{align}\label{bad3}
\badD_t&=-\left\langle \gradlo_{\sigma(\Psi_{t+1}^2)}^{\sigma(\Psi_{t+1}^2)}
\nald^{ K_{t+1}^2}_{\Psi_{t+1}^2}\Ld_{ K_{t+1}^2}^{\Psi_{t+1}^2}\right\rangle,\\
\label{bad4}
\badE_t&=\left\langle \lgrado^{\sigma(\Phi_t^2)}_{\sigma(\Phi_t^2)}
\Ld_{\Phi_t^2}^{L_t^2}\nald^{\Phi_t^2}_{L_{t}^2}
\right\rangle,\\
\label{bad3eq}
\badDp_t&=\left\langle\lgrado^{\Phi_q^1}_{\bar K_q^1\setminus\Phi_q^1}
\Ld^{\bar L_t^2}_{\Phi_t^2}\nald^{\bar K_t^2\setminus\Phi_t^2}_{\bar L_t^2}\right\rangle.
\end{align}

\begin{lemma}\label{etaretalemma} 
{\rm (i)} Expression $\left\langle (\ceta_R^1)_{\ge},(\ceta_R^2)_{\le}\right\rangle$ 
is given by
 \begin{multline}\label{etareta}
\left\langle (\ceta_R^1)_{\ge},(\ceta_R^2)_{\le}\right\rangle=\left\langle (\ceta_R^1)_{0},(\ceta_R^2)_{0}\right\rangle
+\sum_{\bar\alpha_t^2 < \bar\alpha_q^1}\left(\badD_t+\badE_t\right)+
\sum_{ \bar\alpha_t^2=\bar\alpha_q^1}\badDp_t\\
+\sum_{\bar\alpha_t^2 < \bar\alpha_q^1}\left(
\left\langle \gradlo^{\sigma(\bar L_t^2)}_{\sigma(\bar L_t^2)}
\gradld^{\bar L_t^2}_{\bar L_t^2}\right\rangle
-\left\langle \lgrado^{\sigma(\bar K_t^2)}_{\sigma(\bar K_t^2)}
\lgradd^{\bar K_t^2}_{\bar K_t^2}\right\rangle\right)
\end{multline}
if $\hat l^1\in\bar K_q^1$, and equals 
$\left\langle (\ceta_R^1)_{0},(\ceta_R^2)_{0}\right\rangle$ otherwise. 

{\rm (ii)} The first term  and both summands in the last sum in the right hand side of \eqref{etareta} are constant.
\end{lemma}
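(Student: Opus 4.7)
The plan is to mirror the argument of Lemma \ref{etaletalemma}, adjusted to the fact that now we work on the right: we use $\ceta_R, \cxi_R$, the triple $\bfGr$ in place of $\bfGc$, and the comparison of $Y$-blocks of $\L^1$ and $\L^2$ rather than $X$-blocks. I will first extract the diagonal contribution $\left\langle (\ceta_R^1)_0,(\ceta_R^2)_0\right\rangle$ and set it aside; its constancy follows immediately from the ringed version of \eqref{infinv3}. For the off-diagonal part, I will split
\[
\left\langle (\ceta_R^1)_{>},(\ceta_R^2)_{<}\right\rangle = \left\langle \cPi_{\Gamma_2}(\ceta_R^1)_{>},\cPi_{\Gamma_2}(\ceta_R^2)_{<}\right\rangle + \left\langle \cPi_{\hat\Gamma_2}(\ceta_R^1)_{>},\cPi_{\hat\Gamma_2}(\ceta_R^2)_{<}\right\rangle
\]
with $\Gamma_2=\Gamma_2^\er$, and observe that the first summand vanishes because the ringed version of \eqref{pigamma}, namely $\cPi_{\Gamma_2}(\ceta_R^i)=\cgammar(\cxi_R^i)$, combined with the ringed version of \eqref{infinv1} (which places $\cxi_R^i$ in $\b_+$) kills it.

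The second summand is the main computation. Since $\cPi_{\hat\Gamma_2}\cgammar=0$, we have $\cPi_{\hat\Gamma_2}(\ceta_R^i)=\cPi_{\hat\Gamma_2}(Y\nabla_Y^i)$, and \eqref{xynaxy} together with the shape of $\L^i$ in Fig.~\ref{fig:ladder} yields an expansion of $Y\nabla_Y^i$ analogous to \eqref{naxx}, where the nonzero rows are organized by the $Y$-blocks $Y_{\bar I_t^i}^{\bar J_t^i}$ and where the principal diagonal blocks aligned with the nontrivial $Y$-run $\bar\Delta(\bar\alpha_t^i)$ are killed by $\cPi_{\hat\Gamma_2}$. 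By \eqref{lnal} together with assumption \eqref{blockp}, only the block indexed by $q$ (as in \eqref{blockq}) survives in $\cPi_{\hat\Gamma_2}(\ceta_R^1)_{>}$, and the whole expression vanishes unless $\hat l^1\in\bar K_q^1$; otherwise we are reduced to a pairing of a single upper-triangular block in $\L^1$ against the sum of $Y$-block contributions from $\L^2$.

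Proposition \ref{compar}(ii) then dictates three cases depending on the relation between $\bar\alpha_q^1$ and $\bar\alpha_t^2$. If $\bar\alpha_t^2>\bar\alpha_q^1$, the $Y$-block of $\L^1$ fits inside that of $\L^2$ and the pairing vanishes for the same reason as in the case $\beta_t^2<\beta_p^1$ of Lemma \ref{etaletalemma}. If $\bar\alpha_t^2<\bar\alpha_q^1$, the injection $\sigma$ is defined and the calculation proceeds exactly as in \eqref{cont1}--\eqref{tfor2} but with the roles of rows and columns and of upper/lower triangularity swapped: using \eqref{lnal} twice to simplify products of $\nabla_{\L}$ against $\L$, extracting constant parts $\langle\lgrado_{\sigma(\bar K_t^2)}^{\sigma(\bar K_t^2)}\lgradd_{\bar K_t^2}^{\bar K_t^2}\rangle$ and $\langle\gradlo_{\sigma(\bar L_t^2)}^{\sigma(\bar L_t^2)}\gradld_{\bar L_t^2}^{\bar L_t^2}\rangle$, and collecting remainders that organize themselves into $\badD_t$ and $\badE_t$ as in \eqref{bad3}, \eqref{bad4}. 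Finally, when $\bar\alpha_t^2=\bar\alpha_q^1$ (so $\bar I_q^1=\bar I_t^2$) the two $Y$-blocks coincide in height, $\sigma$ is not defined, but the argument that produced $\badBp_t$ in Lemma \ref{etaletalemma} goes through verbatim and yields $\badDp_t$ from \eqref{bad3eq}. Part (ii) follows from \eqref{lnal} exactly as in Lemma \ref{etaletalemma}.

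The main obstacle is purely notational: one must be consistent about which index set ($\bar K_t$, $\bar L_t$, $\Phi_t$, or $\Psi_t$) plays the role of which variable in the left-sided argument, since the swap from $(\ceta_L,<,>)$ to $(\ceta_R,\ge,\le)$ transposes the geometric picture in Fig.~\ref{fig:ladder}. In particular, one must verify carefully that the ``additional term'' analogous to the one appearing after \eqref{trick1} when $\sigma$ maps onto the whole $\bar K_q^1$ again fails to contribute, now using that $L_q^1$ lies strictly above $\bar L_q^1$ rather than strictly to the right of $L_p^1$ as before. Once this bookkeeping is done, the formula \eqref{etareta} is assembled by summing the $t$-contributions.
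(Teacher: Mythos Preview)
Your plan is correct and follows the paper's own proof essentially step for step: the diagonal split, the vanishing of the $\cPi_{\Gamma_2^\er}$ piece via $\cPi_{\Gamma_2^\er}(\ceta_R^i)=\cgammar(\cxi_R^i)$ and \eqref{infinv1}, the reduction of $\cPi_{\hat\Gamma_2^\er}(\ceta_R^1)_{>}$ to the single block indexed by $q$ (vanishing unless $\hat l^1\in\bar K_q^1$), and the three-case analysis on $\bar\alpha_t^2$ versus $\bar\alpha_q^1$ producing $\badD_t,\badE_t$, the constant summands, and $\badDp_t$ respectively. One small correction to your bookkeeping remark: the vanishing of the ``additional term'' (the analogue of what follows \eqref{trick1}) is governed by row sets, not column sets --- the relevant fact is that $K_q^1$ lies strictly below $\bar K_q^1\setminus\Phi_q^1$, which forces $\lgrado^{K_q^1}_{\sigma(\bar K_t^2\setminus\Phi_t^2)}=0$ by the upper-triangularity in \eqref{lnal}, rather than a statement about $L_q^1$ and $\bar L_q^1$.
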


\begin{proof}
 Clearly, 
$\left\langle (\ceta_R^1)_{\ge},(\ceta_R^2)_{\le}\right\rangle=
\left\langle (\ceta_R^1)_{0},(\ceta_R^2)_{0}\right\rangle+
\left\langle (\ceta_R^1)_{>},(\ceta_R^2)_{<}\right\rangle$.  
The first term on the right is constant by the ringed version of \eqref{infinv3}, so in what follows we only look at the second term.
Similarly to \eqref{firstterm}, we have
\begin{equation}\label{secondterm}
\left\langle (\ceta^1_R)_{>},(\ceta_R^2)_{<}\right\rangle=
\left\langle \cPi_{\Gamma_2}\left((\ceta^1_R)_{>}\right),\cPi_{\Gamma_2}\left((\ceta_R^2)_{<}\right)\right\rangle+ 
\left\langle \cPi_{\hat\Gamma_2}\left((\ceta^1_R)_{>}\right),\cPi_{\hat\Gamma_2}\left((\ceta_R^2)_{<}\right)\right\rangle
\end{equation}
with $\Gamma_2=\Gamma_2^\er$.

It follows from the ringed version of \eqref{gammaid} that for $i=1,2$,
\begin{equation}\label{pigamma2}
\cPi_{\Gamma_2}(\ceta_R^i)=\cgamma(\cxi_R^i)
\end{equation}
with $\cgamma=\cgammar$. Consequently, 
\[
\left\langle \cPi_{\Gamma_2}\left((\ceta^1_R)_{>}\right),\cPi_{\Gamma_2}\left((\ceta_R^2)_{<}\right)\right\rangle=
\left\langle \cPi_{\Gamma_2}\left((\ceta^1_R)_{>}\right),\cgamma\left((\cxi_R^2)_{<}\right)\right\rangle=0
\]
via the ringed version of \eqref{infinv1}.

 Note that $\cPi_{\hat\Gamma_2}\left(\cgamma(X\nabla^i_X)\right)=0$ by the definition of $\cgamma$, therefore
$\cPi_{\hat\Gamma_2}(\ceta_R^i)= \cPi_{\hat\Gamma_2}(Y\nabla_Y^i)$.

Let us compute $Y\nabla_Y^i$. Taking into account \eqref{naxnay} and \eqref{xynaxy}, we get
\begin{equation*}
Y\nabla_Y^i =\sum_{t=1}^{s^i}\begin{bmatrix}
Y_{\bar I_t^i}^{\bar J_t^i}(\nabla_{\L}^i)_{\bar L_t^i}^{\bar K_t^i} & 0 \\
Y_{\hat{\bar I}_t^i}^{\bar J_t^i}(\nabla_{\L}^i)_{\bar L_t^i}^{\bar K_t^i} & 0\end{bmatrix}=
\sum_{t=1}^{s^i}\begin{bmatrix}
(\L\nabla_{\L}^i)^{\bar K_t^i}_{\bar K_t^i\setminus\Phi_t^i} & 0\\
(\L^i)^{\bar L_t^i}_{\Phi_t^i}(\nabla_{\L}^i)^{\bar K_t^i}_{\bar L_t^i} & 0\\
Y_{\hat{\bar I}_t^i}^{\bar J_t^i}(\nabla_{\L}^i)_{\bar L_t^i}^{\bar K_t^i} & 0\end{bmatrix},
\end{equation*}
where $\hat{\bar I}_t^i=[1,n]\setminus \bar I_t^i$;
the latter equality follows from the fact that in rows $\bar K_t^i\setminus\Phi_t^i$ all nonzero entries of $\L^i$  belong to the block $(\L^i)_{\bar K_t^i}^{\bar L_t^i}=Y_{\bar I_t^i}^{\bar J_t^i}$, whereas in rows 
$\Phi_t^i$ nonzero entries of $\L^i$ belong also to the block $(\L^i)_{K_{t}^i}^{L_{t}^i}=X_{I_{t}^i}^{J_{t}^i}$,
see Fig.~\ref{fig:ladder}.
In more detail,
\begin{equation}\label{ynay}
Y\nabla_Y^i=\sum_{t=1}^{s^i}\begin{bmatrix}
(\L^i\nabla_{\L}^i)^{\bar K_t^i\setminus\Phi_t^i}_{\bar K_t^i\setminus\Phi_t^i} & 
(\L^i\nabla_{\L}^i)^{\Phi_t^i}_{\bar K_t^i\setminus\Phi_t^i} & 0\\
(\L^i)^{\bar L_t^i}_{\Phi_t^i}(\nabla_{\L}^i)^{\bar K_t^i\setminus\Phi_t^i}_{\bar L_t^i} & 
(\L^i)^{\bar L_t^i}_{\Phi_t^i}(\nabla_{\L}^i)^{\Phi_t^i}_{\bar L_t^i} & 0 \\
Y^{\bar J_t^i}_{\hat{\bar I}^i_t} (\nabla_{\L}^i)^{\bar K_t^i\setminus \Phi_t^i}_{\bar L_t^i} & 
Y^{\bar J_t^i}_{\hat{\bar I}^i_t}(\nabla_{\L}^i)^{\Phi_t^i}_{\bar L_t^i} & 0\end{bmatrix}.
\end{equation}

Note that the upper left block in \eqref{ynay} is upper triangular by \eqref{lnal}. Besides, the projection of the middle block onto $\hat\Gamma_2$ vanishes, since for $\Phi_t^i\ne\varnothing$, the middle block corresponds to the diagonal block defined by the nontrivial $Y$-run $\bar\Delta(\bar\alpha_t^i)$.

Recall that  $\hat l^1\in K_p^1\cup\bar K_{p-1}^1$, therefore
by \eqref{lnal}, the contribution of the $t$-th summand in \eqref{ynay} to $\cPi_{\hat\Gamma_2}\left((\ceta_R^1)_{>}\right)$ vanishes, unless $t\ne q$, where $q$ is either $p$ or $p-1$. Moreover, if 
$\hat l^1\in K_p^1\setminus\Phi_p^1$, this contribution vanishes for $t=q$ as well, see Fig.~\ref{fig:ladder}. So, in what follows $\hat l^1\in \bar K_q^1$, in which case 
\begin{equation}\label{pigalo2}
\cPi_{\hat\Gamma_2}\left((\ceta_R^1)_{>}\right)=\cPi_{\hat\Gamma_2}
\begin{bmatrix} \left((\L^1\nabla_{\L}^1)^{\bar K_q^1}_{\bar K_q^1}\right)_{>} 
& 0\\0 & 0 \end{bmatrix}.
\end{equation}
On the other hand, 
\begin{equation}\label{pigaup2}
\cPi_{\hat\Gamma_2}\left((\ceta_R^2)_{<}\right)=\sum_{t=1}^{s^2}
\begin{bmatrix} 0 & 0 & 0 \\
(\L^2)^{\bar L_t^2}_{\Phi_t^2}(\nabla_{\L}^2)^{\bar K_t^2\setminus\Phi_t^2}_{\bar L_t^2} & 0 & 0 \\
Y^{\bar J_t^2}_{\hat{\bar I}^2_t} (\nabla_{\L}^2)^{\bar K_t^2\setminus \Phi_t^2}_{\bar L_t^2} & 
Y^{\bar J_t^2}_{\hat{\bar I}^2_t}(\nabla_{\L}^2)^{\Phi_t^2}_{\bar L_t^2} & 0\end{bmatrix},
\end{equation}
where the $t$-th summand corresponds to the $t$-th $Y$-block in $\L^2$.

If $\bar\alpha_q^1< \bar\alpha_t^2$, then the contribution of the $t$-th summand in 
\eqref{pigaup2} to the second term in \eqref{secondterm} vanishes by \eqref{pigalo2}, since in this case $\bar I^1_{q}\subseteq \bar I^2_t\setminus\bar\Delta(\bar\alpha^2_t)$.

Assume that $\bar\alpha_q^1> \bar\alpha_t^2$. Then, to the contrary, $\bar I^2_t\subseteq 
\bar I^1_{q}\setminus\bar\Delta(\bar\alpha_q^1)$, and hence $\sigma(\bar K_t^2)\subseteq \bar K_q^1\setminus \Phi_q^1$. 
Note that by \eqref{pigalo2}, to compute the second term in \eqref{secondterm}, one can replace $\hat{\bar I}_t^2$ 
in \eqref{pigaup2} by $\bar I_{q}^1\setminus \bar I_t^2$. So, using the above injection $\sigma$, one can rewrite the two upper blocks at the $t$-th summand of $\cPi_{\hat\Gamma_2}\left((\ceta^2_R)_{<}\right)$ in \eqref{pigaup2} as one block
\[
\Lo^{\sigma(\bar L_t^2)}_{\bar K_{q}^1\setminus \sigma(\bar K_t^2\setminus\Phi_t^2)}
\nald^{\bar K_t^2\setminus \Phi_t^2}_{\bar L_t^2},
\]
and the remaining nonzero block in the same summand as
\[
\Lo^{\sigma(\bar L_t^2)}_{\bar K_{q}^1\setminus \sigma(\bar K_t^2)}
\nald^{\Phi_t^2}_{\bar L_t^2}.
\]
The corresponding blocks of $\cPi_{\hat\Gamma_2}\left((\ceta^1_R)_{>}\right)$ in \eqref{pigalo2} are
\[
\lgrado^{\bar K_{q}^1\setminus \sigma(\bar K_t^2\setminus\Phi_t^2)}
_{\sigma(\bar K_t^2\setminus\Phi_t^2)}=
\Lo^{\bar L_{q}^1}_{\sigma(\bar K_t^2\setminus\Phi_t^2)}
\nalo^{\bar K_{q}^1\setminus \sigma(\bar K_t^2\setminus\Phi_t^2)}_{\bar L_{q}^1}
\]
and
\[
\lgrado^{\bar K_{q}^1\setminus \sigma(\bar K_t^2)}_{\sigma(\Phi_t^2)}=
\Lo^{\bar L_{q}^1}_{\sigma(\Phi_t^2)}
\nalo^{\bar K_{q}^1\setminus \sigma(\bar K_t^2)}_{\bar L_{q}^1}.
\]
The equalities follow from the fact that all nonzero entries in the rows $\sigma(\bar K_t^2)$ of $\L^1$ belong to the $Y$-block, see Fig.~\ref{fig:ladder}.

 The contribution of the first blocks in each pair can be rewritten as
\begin{equation}\label{cont12}
\left\langle 
\nalo^{\bar K_{q}^1\setminus \sigma(\bar K_t^2\setminus\Phi_t^2)}_{\bar L_{q}^1}
\Lo^{\sigma(\bar L_t^2)}_{\bar K_{q}^1\setminus \sigma(\bar K_t^2\setminus\Phi_t^2)}
\nald^{\bar K_t^2\setminus \Phi_t^2}_{\bar L_t^2}
\Lo^{\bar L_{q}^1}_{\sigma(\bar K_t^2\setminus\Phi_t^2)}
\right\rangle.
\end{equation}
Recall that $\sigma(\bar L_t^2)\subseteq \bar L_{q}^1$. If the inclusion is strict, then immediately
\begin{multline}\label{trick12}
\nalo^{\bar K_{q}^1\setminus \sigma(\bar K_t^2\setminus\Phi_t^2)}_{\bar L_{q}^1}
\Lo^{\sigma(\bar L_t^2)}_{\bar K_{q}^1\setminus \sigma(\bar K_t^2\setminus\Phi_t^2)}\\
=\gradlo^{\sigma(\bar L_t^2)}_{\bar L_{q}^1}-
\nalo^{\sigma\left(\bar K_t^2\setminus \Phi_t^2\right)}_{\bar L_{q}^1}
\Lo^{\sigma(\bar L_t^2)}_{\sigma\left(\bar K_t^2\setminus \Phi_t^2\right)}\\
=\gradlo^{\sigma(\bar L_t^2)}_{\bar L_{q}^1}-
\nalo^{\sigma\left(\bar K_t^2\setminus \Phi_t^2\right)}_{\bar L_{q}^1}
\Ld^{\bar L_t^2}_{\bar K_t^2\setminus \Phi_t^2}.
\end{multline}
Otherwise there is an additional term
\[
-\nalo^{K_{q}^1}_{\bar L_{q}^1}\Lo^{\bar L_{q}^1}_{K_{q}^1}
\]
in the right hand of \eqref{trick12}. However, for the same reason as those discussed during the treatment of \eqref{cont1}, 
\[
\Lo^{\bar L_{q}^1}_{\sigma(\bar K_t^2\setminus\Phi_t^2)}
\nalo^{K_{q}^1}_{\bar L_{q}^1}=
\lgrado^{K_{q}^1}_{\sigma(\bar K_t^2\setminus\Phi_t^2)}.
\]
Note that $\sigma(\bar K_t^2\setminus\Phi_t^2)\subseteq \bar K_{q}^1\setminus\Phi_q^1$ and $K_{q}^1$ lies strictly below  $\bar K_{q}^1\setminus\Phi_q^1$, see Fig.~\ref{fig:ladder}. Hence by \eqref{lnal} the above submatrix vanishes, and 
the additional term does not contribute to \eqref{cont12}.

To find the contribution of the second term in \eqref{trick12} to \eqref{cont12}, note that
\begin{equation}\label{tfor3}
\Lo^{\bar L_{q}^1}_{\sigma(\bar K_t^2\setminus\Phi_t^2)}
\nalo^{\sigma\left(\bar K_t^2\setminus \Phi_t^2\right)}_{\bar L_{q}^1}=
\lgrado^{\sigma(\bar K_t^2\setminus\Phi_t^2)}_{\sigma(\bar K_t^2\setminus\Phi_t^2)}
\end{equation}
and
\[
\Ld^{\bar L_t^2}_{\bar K_t^2\setminus \Phi_t^2}
\nald^{\bar K_t^2\setminus \Phi_t^2}_{\bar L_t^2}=
\lgradd^{\bar K_t^2\setminus \Phi_t^2}_{\bar K_t^2\setminus \Phi_t^2},
\]
and hence the contribution in question equals 
\[
-\left\langle \lgradd^{\bar K_t^2\setminus \Phi_t^2}_{\bar K_t^2\setminus \Phi_t^2}
\lgrado^{\sigma(\bar K_t^2\setminus\Phi_t^2)}_{\sigma(\bar K_t^2\setminus\Phi_t^2)}\right\rangle=
\text{const}
\]
by \eqref{lnal}.

 Similarly to \eqref{cont2}, the contribution of the second blocks in each pair above can be rewritten as 
\begin{equation}\label{cont22}
\left\langle \gradlo^{\sigma(\bar L_t^2)}_{\bar L_{q}^1}-
\nalo^{\sigma(\bar K_t^2)}_{\bar L_{q}^1}
\Lo^{\sigma(\bar L_t^2)}_{\sigma(\bar K_t^2)},
\nald^{\Phi_t^2}_{\bar L_t^2}\Lo^{\bar L_{q}^1}_{\sigma(\Phi_t^2)}\right\rangle.
\end{equation}
As in the previous case, an additional term arises if $\sigma(\bar L_t^2)= \bar L_{q}^1$, and its contribution to
\eqref{cont22} vanishes.

 To find the total contribution of the first terms in \eqref{trick12} and \eqref{cont22}, note that by \eqref{lnal}, in this computation one can replace the row set $\bar L_q^1$
of $\L^1\nabla_{\L^1}$ with $\sigma(\bar L_t^2)$. Therefore,  the contribution in question equals
\begin{multline}\label{tfor4}
\left\langle \gradlo^{\sigma(\bar L_t^2)}_{\sigma(\bar L_t^2)},
\nald^{\bar K_t^2\setminus \Phi_t^2}_{\bar L_t^2}
\Lo^{\sigma(\bar L_t^2)}_{\sigma(\bar K_t^2\setminus\Phi_t^2)}
+\nald^{\Phi_t^2}_{\bar L_t^2}\Lo^{\sigma(\bar L_t^2)}_{\sigma(\Phi_t^2)}
\right\rangle\\
=\left\langle \gradlo^{\sigma(\bar L_t^2)}_{\sigma(\bar L_t^2)},
\nald^{\bar K_t^2\setminus \Phi_t^2}_{\bar L_t^2}
\Ld^{\bar L_t^2}_{\bar K_t^2\setminus\Phi_t^2}+
\nald^{\Phi_t^2}_{\bar L_t^2}\Ld^{\bar L_t^2}_{\Phi_t^2}
\right\rangle\\
=\left\langle \gradlo^{\sigma(\bar L_t^2)}_{\sigma(\bar L_t^2)},
\gradld^{\bar L_t^2}_{\bar L_t^2}-
\nald^{K_{t+1}^2}_{\bar L_t^2}W_t
\right\rangle,
\end{multline}
where
\[
W_t=\begin{bmatrix} \Ld_{K_{t+1}^2}^{\Psi_{t+1}^2} & 0\end{bmatrix}.
\]
Note that 
\[
\left\langle \gradlo^{\sigma(\bar L_t^2)}_{\sigma(\bar L_t^2)}
\gradld^{\bar L_t^2}_{\bar L_t^2}\right\rangle=\text{const}
\]
by \eqref{lnal}, which gives the first summand in the last sum in \eqref{etareta}. 
The remaining term is given by
\begin{equation*}
-\left\langle \gradlo_{\sigma(\bar L_t^2)}^{\sigma(\bar L_t^2)}
\nald^{ K_{t+1}^2}_{\bar L_t^2}W_t\right\rangle=
-\left\langle \gradlo_{\sigma(\Psi_{t+1}^2)}^{\sigma(\Psi_{t+1}^2)}
\nald^{ K_{t+1}^2}_{\Psi_{t+1}^2}\Ld_{ K_{t+1}^2}^{\Psi_{t+1}^2}\right\rangle,
\end{equation*}
which coincides with the expression for $\badD_t$  in \eqref{bad3}.

It remains to compute the contribution of the second term in \eqref{cont22}. Similarly to
\eqref{tfor3}, we have
\[
\Lo^{\bar L_{q}^1}_{\sigma(\Phi_t^2)}
\nalo^{\sigma\left(\bar K_t^2\right)}_{\bar L_{q}^1}=
\lgrado^{\sigma(\bar K_t^2)}_{\sigma(\Phi_t^2)}.
\]
On the other hand, similarly to \eqref{tfor4}, we have
\[
\Ld^{\bar L_t^2}_{\bar K_t^2}\nald^{\Phi_t^2}_{\bar L_t^2}=
\lgradd^{\Phi_t^2}_{\bar K_t^2}-
Z_t\nald^{\Phi_t^2}_{{ L}_{t}^2},
\]
where
\[
Z_t=\begin{bmatrix} 0 \\ \Ld^{{L}^2_{t}}_{\Phi_t^2}\end{bmatrix}.
\]
Using \eqref{lnal} once again, we get
\[
-\left\langle
\lgrado^{\sigma(\bar K_t^2)}_{\sigma(\Phi_t^2)}
\lgradd^{\Phi_t^2}_{\bar K_t^2}\right\rangle=
-\left\langle\lgrado^{\sigma(\Phi_t^2)}_{\sigma(\Phi_t^2)}
\lgradd^{\Phi_t^2}_{\Phi_t^2}\right\rangle=\text{const},
\]
which together with the contribution of the second term in \eqref{trick12} computed above yields the second summand in
the last sum in \eqref{etareta}. 
The remaining term is given by
\begin{equation*}
\left\langle \lgrado^{\sigma(\bar K_t^2)}_{\sigma(\Phi_t^2)} 
Z_t\nald^{\Phi_t^2}_{{L}_{t}^2}\right\rangle=
\left\langle \lgrado^{\sigma(\Phi_t^2)}_{\sigma(\Phi_t^2)}
\Ld_{\Phi_t^2}^{L_t^2}\nald^{\Phi_t^2}_{L_{t}^2}
\right\rangle,
\end{equation*}
which coincides with the expression for $\badE_t$ in \eqref{bad4}.

 Assume now that $\bar\alpha_t^2=\bar\alpha_q^1$ and hence $\bar I^2_t= \bar I^1_{q}$. 
In this case the blocks $Y^{\bar J^2_t}_{\bar I^2_t}$ and  
$Y^{\bar J^1_{q}}_{\bar I^1_{q}}$ have the same height, and one of them lies inside the other, but the direction of the inclusion may vary, and hence $\sigma$ is not defined.

 Note that by \eqref{pigalo2}, to compute the second term in \eqref{secondterm} in this case, one can omit the rows 
$\hat{\bar I}_t^2$ in \eqref{pigaup2}, and hence the contribution in question equals
\begin{equation*}
\left\langle\lgrado^{\Phi_q^1}_{\bar K_q^1\setminus\Phi_q^1}
\Ld^{\bar L_t^2}_{\Phi_t^2}\nald^{\bar K_t^2\setminus\Phi_t^2}_{\bar L_t^2}\right\rangle,
\end{equation*}
which coincides with the expression for  $\badDp_t$ in \eqref{bad3eq}.
\end{proof}

\subsubsection{Explicit expression for $\left\langle\cgamma^{\ec*}(\cxi_L^1)_{\le},\cgamma^{\ec*}(\nabla_Y^2 Y)\right\rangle$}
\label{xinaysection}
Assume that $p$ and $q$ are defined by \eqref{blockp} and \eqref{blockq}, respectively, and let $\sigma$ be the injection
of $\bar K_t^2$ and $\bar L_t^2$ into $\bar K_q^1$ and $\bar L_q^1$, respectively, defined at the beginning of
Section \ref{etaretasec}. Put
\begin{equation}\label{bad5}
\badF_{t}=\left\langle \gradlo_{\sigma(\Psi_{t+1}^2)}^{\sigma(\Psi_{t+1}^2)}
\nald^{\bar K_{t}^2}_{\Psi_{t+1}^2}\Ld_{\bar K_{t}^2}^{\Psi_{t+1}^2}\right\rangle.
\end{equation}

\begin{lemma}\label{xinaylemma} 
{\rm (i)} Expression $\left\langle\cgamma^{\ec*}(\cxi_L^1)_{\le},\cgamma^{\ec*}(\nabla_Y^2 Y)\right\rangle$ 
is given by
 \begin{multline}\label{xinay}
\left\langle\cgamma^{\ec*}(\cxi_L^1)_{\le},\cgamma^{\ec*}(\nabla_Y^2 Y)\right\rangle=
\sum_{\beta_t^2\le \beta_p^1 }\badC_t+\sum_{ \bar\beta_{t}^2>\bar\beta_{p-1}^1 }\badF_{t}\\
+\sum_{u=1}^{p} \sum_{t=1}^{s^2}
\left\langle(\nabla_{\L}^1\L^1)_{L_u^1\to J_u^1}^{L_u^1\to J_u^1},
\cgamma^{\ec*}(\nabla_{\L}^2\L^2)_{\bar L_{t}^2\setminus\Psi_{t+1}^2\to \bar J_{t}^2\setminus\bar\Delta(\bar\beta_{t}^2)}
^{\bar L_{t}^2\setminus\Psi_{t+1}^2\to \bar J_{t}^2\setminus\bar\Delta(\bar\beta_{t}^2)}\right\rangle\\
+\sum_{u=1}^{p-1} \sum_{t=1}^{s^2}
\left\langle
(\nabla_{\L}^1\L^1)_{\bar L_{u}^1\setminus\Psi_{u+1}^1\to \bar J_{u}^1\setminus\bar\Delta(\bar\beta_{u}^1)}
^{\bar L_{u}^1\setminus\Psi_{u+1}^1\to \bar J_{u}^1\setminus\bar\Delta(\bar\beta_{u}^1)},
\cPi_{\Gamma_2^\ec}(\nabla_{\L}^2\L^2)_{\bar L_{t}^2\setminus\Psi_{t+1}^2\to \bar J_{t}^2\setminus\bar\Delta(\bar\beta_{t}^2)}
^{\bar L_{t}^2\setminus\Psi_{t+1}^2\to \bar J_{t}^2\setminus\bar\Delta(\bar\beta_{t}^2)}\right\rangle\\
+\sum_{t=1}^{s^2}\left(|\{u<p: \beta_u^1\ge \beta_{t+1}^2\}|+ |\{u< p: \bar\beta_{u-1}^1< \bar\beta_t^2\}|  \right)
\left\langle\nald_{\Psi_{t+1}^2}^{\bar K_{t}^2}\Ld_{\bar K_{t}^2}^{\Psi_{t+1}^2}\right\rangle,
\end{multline}
where 
$\badC_t$ is given by \eqref{bad2} with $\rho(\Phi_t^2)$ replaced by $\Phi_p^1$ for $\beta_p^1= \beta_t^2$,  and
$\badF_{t}$ is given by \eqref{bad5}.

{\rm (ii)} Each summand in the last three sums in \eqref{xinay} is constant.
\end{lemma}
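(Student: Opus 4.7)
The plan is to run the strategy used in Lemmas \ref{etaletalemma} and \ref{etaretalemma}: expand both arguments of the pairing in terms of $\nabla^i_{\L}\L^i$ and $\L^i\nabla^i_{\L}$, and then extract the non-constant contributions from the mutual inclusion relations of $X$- and $Y$-blocks governed by Proposition \ref{compar}. First, using $\cxi_L^1 = \cgammac(\nabla_X^1 X) + \nabla_Y^1 Y$ together with the ringed identity $\cgamma^{\ec*}\cgammac = \cPi_{\Gamma_1^\ec}$ from \eqref{gammaid}, I would rewrite
\[
\cgamma^{\ec*}(\cxi_L^1) = \cPi_{\Gamma_1^\ec}(\nabla_X^1 X) + \cgamma^{\ec*}(\nabla_Y^1 Y).
\]
The first summand can be read off from \eqref{naxx}; after the $(\cdot)_{\le}$ truncation and the $\cPi_{\Gamma_1^\ec}$ projection only the lower-triangular diagonal sub-blocks $(\nabla^1_{\L}\L^1)_{L^1_u\to J^1_u}^{L^1_u\to J^1_u}$ with $u\le p$ survive. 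For the second summand I would derive the $Y$-analogue of \eqref{naxx} from \eqref{naxnay} and \eqref{naxyxy}; its key pieces are the upper-triangular diagonal block $(\nabla^i_{\L}\L^i)_{\bar L_t^i\setminus\Psi_{t+1}^i}^{\bar L_t^i\setminus\Psi_{t+1}^i}$ and the $\Psi_{t+1}^i$-column block $(\nabla_{\L}^i)_{\Psi_{t+1}^i}^{\bar K_t^i}(\L^i)_{\bar K_t^i}^{\Psi_{t+1}^i}$, which lies in the diagonal block associated with the non-trivial column $Y$-run $\bar\Delta(\bar\beta_t^i)\subset\Gamma_2^\ec$ and hence is relocated by $\cgamma^{\ec*}$ into $\cgamma^{\ec*}(\bar\Delta(\bar\beta_t^i))\subset\Gamma_1^\ec$.

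Second, the expression for $\cgamma^{\ec*}(\nabla_Y^2 Y)$ is obtained by the same formulas, and the pairing decomposes into contributions indexed by pairs $(u,t)$ of blocks of $\L^1$ and $\L^2$. Proposition \ref{compar} forces each such pair into one of four cases with respect to column ranges: strict inclusion of the column range of the $t$-th $\L^2$-block inside that of the $u$-th $\L^1$-block, equality of the rightmost columns (happening only for $u=p$), the opposite strict inclusion, or disjointness. The last two cases produce zero contribution after the $(\cdot)_\le$ truncation. The first case, applied to pairings of an $X$-block of $\L^1$ against the relocated middle block of a $Y$-summand of $\L^2$, produces $\badC_t$ with $\beta_t^2<\beta_p^1$ through telescoping identities parallel to \eqref{trick1}--\eqref{tfor2}; the second case produces the same expression with $\rho(\Phi_t^2)$ replaced by $\Phi_p^1$, extending the first sum in \eqref{xinay} to $\beta_t^2\le\beta_p^1$. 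Similarly, pairings of a $Y$-block of $\L^1$ (coming from the second summand of $\cgamma^{\ec*}(\cxi_L^1)$) with the middle block of $\cgamma^{\ec*}(\nabla_Y^2 Y)$ produce $\badF_t$ whenever $\bar\beta_t^2>\bar\beta_{p-1}^1$. The upper-triangular diagonal sub-blocks of $\nabla_Y^1 Y$ and the lower-triangular diagonal sub-blocks of $\nabla_X^1 X$, paired against the analogous pieces of $\cgamma^{\ec*}(\nabla_Y^2 Y)$, generate the three remaining sums in \eqref{xinay}.

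Finally, for part (ii), each of these three sums is an inner product of full diagonal square blocks of $\nabla^i_{\L}\L^i$ indexed by the entire column range of a block of $\L^i$; by \eqref{lnal} such products reduce to traces of identity sub-matrices, which are constant. The counting coefficient in the last sum of \eqref{xinay} is precisely the number of blocks of $\L^1$ whose column range strictly contains $\bar\Delta(\bar\beta_t^2)$, split according to whether the containing block is of $X$- or $Y$-type, which accounts for the two terms in the cardinality. The main obstacle throughout will be the careful bookkeeping of these geometric cases under the action of $\cgamma^{\ec*}$, which shifts $\Gamma_2^\ec$-indexed columns into $\Gamma_1^\ec$-indexed ones and therefore forces a distinct matching combinatorics from that of Lemma \ref{etaletalemma}; the underlying manipulations, however, are a direct though tedious elaboration of those already carried out there.
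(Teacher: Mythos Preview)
Your approach is essentially that of the paper. The paper begins by invoking \eqref{pigamma} to rewrite the left argument as $\cPi_{\Gamma_1^\ec}(\ceta_L^1)_{\le}$; since $\ceta_L^1=\nabla_X^1X+\cgamma^{\ec*}(\nabla_Y^1Y)$ and the image of $\cgamma^{\ec*}$ already lies in the $\Gamma_1^\ec$-blocks, this is exactly your decomposition $\cPi_{\Gamma_1^\ec}(\nabla_X^1X)+\cgamma^{\ec*}(\nabla_Y^1Y)$. The paper then derives \eqref{ganayy} for $\cgamma^{\ec*}(\nabla_Y^iY)$ and the combined formula \eqref{leftt} for the left argument, after which the case analysis by block inclusion proceeds as you describe: the $u=p$ pairing of the $X$-part against the relocated $\Psi$-block of $\nabla_Y^2Y$ yields $\badC_t$, the $u=p$ pairing of the $Y$-part (with $\cgamma^{\ec*}$ dropped on both sides) yields $\badF_t$, the second term of \eqref{ganayy} against \eqref{leftt} yields the third and fourth sums, and the $u<p$ identity blocks against the first term of \eqref{ganayy} yield the counting sum. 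One technical point you gloss over is \eqref{compli}, which combines the $\Psi_t$-piece of $\nabla_X^iX$ with the relocated $\Psi_t$-piece of $\cgamma^{\ec*}(\nabla_Y^iY)$ into the clean block $(\nabla_{\L}^i\L^i)_{\Psi_t^i}^{\Psi_t^i}$; this is what makes the first sum in \eqref{leftt} carry the full column set $L_u^1$ rather than $L_u^1\setminus\Psi_u^1$.

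There is one genuine gap in your justification of part (ii). Your argument that the last three sums are constant because they are ``inner products of full diagonal square blocks of $\nabla_{\L}^i\L^i$'' and hence ``reduce to traces of identity sub-matrices'' via \eqref{lnal} is correct for the third and fourth sums (both factors are lower-triangular with $0/1$ diagonal, so $\Tr(AB)=\sum_iA_{ii}B_{ii}$), but fails for the fifth. The term $\bigl\langle\nald_{\Psi_{t+1}^2}^{\bar K_{t}^2}\Ld_{\bar K_{t}^2}^{\Psi_{t+1}^2}\bigr\rangle$ is not an inner product of two such blocks: it is a single trace of a \emph{partial} product, and \eqref{lnal} alone does not make it constant. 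The paper handles this by observing that this trace is exactly the quantity controlled by Lemma~\ref{partrace} (it is the $\bar\Delta(\bar\beta_t^2)$-block trace of $\nabla_Y\ttg\cdot Y$ read through the $\L$-presentation), and invokes that lemma's scaling argument. You should do the same.
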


\begin{proof}
 Recall that by \eqref{pigamma}, this term can be rewritten as 
$\left\langle\cPi_{\Gamma_1}(\ceta^1_L)_\le, \cgamma^*(\nabla_Y^2 Y)\right\rangle$ with $\Gamma_1=\Gamma_1^\ec$ and 
$\cgamma=\cgammac$. 

Note that $\nabla_X^i X$ has been already computed in \eqref{naxx}. Let us compute $\cgamma^*(\nabla_Y^i Y)$. Taking into account \eqref{naxnay} and \eqref{naxyxy}, we get
\begin{equation*}
\cgamma^*(\nabla_Y^i Y)=\sum_{t=2}^{s^i+1}\cgamma^*\begin{bmatrix} 0 & 0\\ \ast &
(\nabla_{\L}^i)_{\bar L_{t-1}^i}^{\bar K_{t-1}^i}Y_{\bar I_{t-1}^i}^{\bar J_{t-1}^i} \end{bmatrix}=
\sum_{t=2}^{s^i+1}\cgamma^*\begin{bmatrix} 0 & 0\\ \ast & 
(\nabla_{\L}^i)_{\Psi_t^i}^{\bar K_{t-1}^i}(\L^i)_{\bar K_{t-1}^i}^{\bar L_{t-1}^i}\\
\ast & (\nabla_{\L}^i\L^i)_{\bar L_{t-1}^i\setminus\Psi_t^i}^{\bar L_{t-1}^i}\end{bmatrix};
\end{equation*}
the latter equality is similar to the one used in the derivation of the expression for $\nabla_X^i X$ in the proof of Lemma
\ref{etaletalemma}. In more detail,
\begin{multline}\label{ganayy}
\cgamma^*(\nabla_Y^i Y)=\\
\sum_{t=2}^{s^i+1}\cgamma^*\begin{bmatrix} 0 & 0 & 0 \\
0 & (\nabla_{\L}^i)_{\Psi_t^i}^{\bar K_{t-1}^i}(\L^i)_{\bar K_{t-1}^i}^{\Psi_t^i} & 0 \\
0 & 0 & 0\end{bmatrix}+
\sum_{t=2}^{s^i+1}\cgamma^*\begin{bmatrix} 0 & 0 \\
0  &(\nabla_{\L}^i\L^i)_{\bar L_{t-1}^i\setminus\Psi_t^i}^{\bar L_{t-1}^i\setminus\Psi_t^i}\end{bmatrix}.
\end{multline}

Note that the diagonal block in the first term in \eqref{ganayy} corresponds to the nontrivial column $Y$-run 
$\bar\Delta(\bar\beta_{t-1}^i)$, unless $t=s^i+1$ and $\Psi_{s^i+1}^i=\varnothing$. Therefore,
$\cgamma^*$ moves it to the diagonal block corresponding to the nontrivial column $X$-run $\Delta(\beta_t^i)$ occupied by  
$(\nabla_{\L}^i)_{\Psi_t^i}^{K_t^i}(\L^i)_{K_t^i}^{\Psi_t^i}$ in \eqref{naxx}. 
 Consequently, the resulting diagonal block in $\ceta_L^i$ is equal to
\begin{equation}\label{compli}
(\nabla_{\L}^i)_{\Psi_t^i}^{K_t^i}(\L^i)_{K_t^i}^{\Psi_t^i}+(\nabla_{\L}^i)_{\Psi_t^i}^{\bar K_{t-1}^i}
(\L^i)_{\bar K_{t-1}^i}^{\Psi_t^i}=(\nabla_{\L}^i\L^i)_{\Psi_t^i}^{\Psi_t^i}
\end{equation}
for $1\le t\le s^i+1$; note that the first term in the left hand side of \eqref{compli} vanishes
for $t=s^i+1$, and the second term vanishes for $t=1$.

Further, the projection $\cPi_{\Gamma_1}$ of the second block in the first row of \eqref{naxx} vanishes.
Summing up and applying \eqref{lnal}, we get
\begin{equation}\label{leftt}
\cPi_{\Gamma_1}(\ceta_L^1)_\le=
\sum_{u=1}^{s^1+1} \cPi_{\Gamma_1}
\begin{bmatrix} (\nabla_{\L}^1\L^1)_{L_u^1}^{L_u^1} & 0\\ 0 & 0\end{bmatrix}+
\sum_{u=2}^{s^1+1} \cgamma^*\begin{bmatrix} 0 & 0\\ 0 &
(\nabla_{\L}^1\L^1)_{\bar L_{u-1}^1\setminus\Psi_u^1}^{\bar L_{u-1}^1\setminus\Psi_u^1}\end{bmatrix}.
\end{equation}

Recall that $\hat l^1\in L_p^1\cup \bar L_{p-1}^1$ by \eqref{blockp}. 
Therefore, for any $u>p$ both terms in \eqref{leftt} vanish. Consequently, 
by the ringed version of \eqref{gammaid}, the contribution of the second term in expression \eqref{ganayy} for the second function to the final result equals
\begin{multline*}
\sum_{u=1}^{p} \sum_{t=1}^{s^2}
\left\langle
(\nabla_{\L}^1\L^1)_{L_u^1\to J_u^1}^{L_u^1\to J_u^1},
\cgamma^*(\nabla_{\L}^2\L^2)_{\bar L_{t}^2\setminus\Psi_{t+1}^2\to \bar J_{t}^2\setminus\bar\Delta(\bar\beta_{t}^2)}
^{\bar L_{t}^2\setminus\Psi_{t+1}^2\to \bar J_{t}^2\setminus\bar\Delta(\bar\beta_{t}^2)}\right\rangle\\
+\sum_{u=1}^{p-1} \sum_{t=1}^{s^2}
\left\langle
(\nabla_{\L}^1\L^1)_{\bar L_{u}^1\setminus\Psi_{u+1}^1\to \bar J_{u}^1\setminus\bar\Delta(\bar\beta_{u}^1)}
^{\bar L_{u}^1\setminus\Psi_{u+1}^1\to \bar J_{u}^1\setminus\bar\Delta(\bar\beta_{u}^1)},
\cPi_{\Gamma_2}
(\nabla_{\L}^2\L^2)_{\bar L_{t}^2\setminus\Psi_{t+1}^2\to \bar J_{t}^2\setminus\bar\Delta(\bar\beta_{t}^2)}
^{\bar L_{t}^2\setminus\Psi_{t+1}^2\to \bar J_{t}^2\setminus\bar\Delta(\bar\beta_{t}^2)}\right\rangle,
\end{multline*}
which yields the third and the fourth sums in \eqref{xinay}. Note that each summand in both sums is constant by \eqref{lnal}.

Further, for any $u<p$, the nonzero blocks in both terms in \eqref{leftt} are just identity matrices by \eqref{lnal}. Hence, the corresponding contribution of the first term in expression 
\eqref{ganayy} for the second function to the final result equals 
\begin{equation}\label{trace}
\sum_{t=1}^{s^2}\left(|\{u<p: \beta_u^1\ge \beta_{t+1}^2\}|+ |\{u<p: \bar\beta_{u-1}^1< \bar\beta_t^2\}|  \right)
\left\langle\nald_{\Psi_{t+1}^2}^{\bar K_{t}^2}\Ld_{\bar K_{t}^2}^{\Psi_{t+1}^2}\right\rangle,
\end{equation}
which yields the fifth sum in \eqref{xinay}. It follows immediately from the proof of Lemma \ref{partrace}
that the trace $\left\langle(\nabla_{\L})_{\Psi_{t+1}}^{\bar K_{t}}\L_{\bar K_{t}}^{\Psi_{t+1}}\right\rangle$ is a constant. 

Finally, let $u=p$. 
Let us find the contribution of the first term in \eqref{leftt}. From now on we are looking at the $t$-th summand in the first term of \eqref{ganayy} for the second function. 
If $\beta_p^1<\beta_t^2$ then the contribution of this summand vanishes for the same size considerations
as in the proof of Lemma \ref{etaletalemma}.

If $\beta_p^1>\beta_t^2$ then the contribution in question equals
\begin{equation*}
\left\langle\gradlo_{\rho(\Psi_t^2)}^{\rho(\Psi_t^2)} 
\nald_{\Psi_t^2}^{\bar K_{t-1}^2}\Ld^{\Psi_t^2}_{\bar K_{t-1}^2}\right\rangle,
\end{equation*}
which coincides with $\badC_t$ given by \eqref{bad2} and yields the first sum in \eqref{xinay}.

If $\beta_p^1=\beta_t^2$ then the contribution in question remains the same as in the previous case with $\rho(\Phi_t^2)$ replaced by $\Phi_p^1$.

Let us find the contribution of the second term in \eqref{leftt}. Note that $\cgamma^*$ enters both the second term in 
\eqref{leftt} and the first term in \eqref{ganayy}, consequently, we can drop it in the former and replace by $\cPi_{\Gamma_2}$ in the latter, which effectively means that
$\cgamma^*$ is simultaneously dropped in both terms.

From now on we are looking at the  $t$-th summand in the first term of \eqref{ganayy}. However, since we have dropped 
$\cgamma^*$, this means that we are comparing the $(t-1)$-st $Y$-block in $\L^2$ 
with the $(p-1)$-st $Y$-block in  $\L^1$.  If $\bar\beta_{p-1}^1\geq \bar\beta_{t-1}^2$ 
then the contribution of this summand vanishes for the same size considerations as before.

If $\bar\beta_{p-1}^1< \bar\beta_{t-1}^2$, then the contribution in question equals
\begin{equation*}
\left\langle\gradlo_{\sigma(\Psi_{t}^2)}^{\sigma(\Psi_{t}^2)}
\nald_{\Psi_{t}^2}^{\bar K_{t-1}^2}\Ld^{\Psi_{t}^2}_{\bar K_{t-1}^2}\right\rangle,
\end{equation*}
which coincides with $\badF_{t-1}$ given by \eqref{bad5}, and hence yields the second sum in \eqref{xinay}.
 \end{proof}

\subsubsection{Explicit expression for $\left\langle\cgamma^{\er}(\cxi_R^1)_{\ge},\cgamma^{\er}(X\nabla_X^2 )\right\rangle$}
\label{xinaxsection}
Assume that $p$, $q$, and $\sigma$ are the same as in Section \ref{xinaysection} and $\rho$ be the injection of $K_t^2$ and 
$L_t^2$ into $K_p^1$ and $L_p^1$, respectively, defined at the beginning of Section \ref{etaletasec}. Put
\begin{equation}\label{bad6}
\badG_t=\left\langle\lgrado_{\rho(\Phi_t^2)}^{\rho(\Phi_t^2)} 
\Ld_{\Phi_t^2}^{L_{t}^2}\nald^{\Phi_t^2}_{L_{t}^2}\right\rangle.
\end{equation}

\begin{lemma}\label{xinaxlemma} 
{\rm (i)} Expression $\left\langle\cgamma^{\er}(\cxi_R^1)_{\ge},\cgamma^{\er}(X\nabla_X^2 )\right\rangle$ 
is given by
 \begin{multline}\label{xinax}
\left\langle\cgamma^{\er}(\cxi_R^1)_{\ge},\cgamma^{\er}(X\nabla_X^2 )\right\rangle=
\sum_{\bar\alpha_t^2\le \bar\alpha_{p-1}^1}\badE_t
+\sum_{\bar\alpha_t^2\le \bar\alpha_p^1}\badE_t
+\sum_{\alpha_t^2>\alpha_p^1}\badG_{t}\\
+\sum_{u=1}^{p} \sum_{t=1}^{s^2}
\left\langle(\L^1\nabla_{\L}^1)_{\bar K_u^1\to \bar I_u^1}^{\bar K_u^1\to \bar I_u^1},
\cgammar(\L^2\nabla_{\L}^2)_{K_{t}^2\setminus\Phi_{t}^2\to I_{t}^2\setminus\Delta(\alpha_{t}^2)}
^{K_{t}^2\setminus\Phi_{t}^2\to I_{t}^2\setminus\Delta(\alpha_{t}^2)}
\right\rangle\\
+\sum_{u=1}^{p} \sum_{t=1}^{s^2}
\left\langle
(\L^1\nabla_{\L}^1)_{K_{u}^1\setminus\Phi_{u}^1\to I_{u}^1\setminus\Delta(\alpha_{u}^1)}
^{K_{u}^1\setminus\Phi_{u}^1\to I_{u}^1\setminus\Delta(\alpha_{u}^1)},
\cPi_{\Gamma_1^\er}(\L^2\nabla_{\L}^2)_{K_{t}^2\setminus\Phi_{t}^2\to I_{t}^2\setminus\Delta(\alpha_{t}^2)}
^{K_{t}^2\setminus\Phi_{t}^2\to I_{t}^2\setminus\Delta(\alpha_{t}^2)}
\right\rangle\\
+\sum_{t=1}^{s^2}\left(|\{u<p-1: \bar\alpha_u^1\ge \bar\alpha_t^2\}|+ |\{u< p: \alpha_u^1< \alpha_t^2\}|  \right)
\left\langle\Ld^{L_{t}^2}_{\Phi_t^2}\nald^{\Phi_t^2}_{L_{t}^2}\right\rangle,
\end{multline}
where 
$\badE_t$ is given by \eqref{bad4} with 
$\sigma(\Phi_t^2)$ replaced by $\Phi_q^1$ for $\bar\alpha_q^1=\bar\alpha_t^2$,  and $\badG_t$ is given by
\eqref{bad6}.

{\rm (ii)} Each summand in the last three sums in \eqref{xinay} is constant.
\end{lemma}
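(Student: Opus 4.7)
The proof runs in close parallel to Lemma \ref{xinaylemma}, with the roles of left/right gradients and of the row/column Belavin--Drinfeld triples interchanged. The first step is to apply $\cPi_{\Gamma_2^\er}(\ceta_R^1) = \cgammar(\cxi_R^1)$ from \eqref{pigamma2} to rewrite the expression as $\left\langle\cPi_{\Gamma_2^\er}(\ceta_R^1)_{\ge},\cgammar(X\nabla_X^2)\right\rangle$. We then compute explicit expressions for both factors. The decomposition of $X\nabla_X^i$ follows from \eqref{naxnay} and \eqref{xynaxy} (yielding the lower-right diagonal block structure dual to \eqref{naxx}); applying $\cgammar$ projects onto row $X$-runs $\Delta(\alpha_t^i)$ and transports each resulting piece to the corresponding row $Y$-run positions $\bar\Delta(\bar\alpha_t^i)$, producing a decomposition dual to \eqref{ganayy}. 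The analog of \eqref{compli}, in which $(\L^i\nabla_{\L}^i)_{\Phi_t^i}^{\Phi_t^i}$ is a single identity matrix by \eqref{lnal}, holds automatically. Dually to \eqref{leftt}, the expression $\cPi_{\Gamma_2^\er}(\ceta_R^1)_\ge$ receives contributions from two sources: the $\bar K_u^1$-rows from $Y\nabla_Y^1$ directly, and the $K_u^1 \setminus \Phi_u^1$-rows of $\cgammar(X\nabla_X^1)$ after $\cPi_{\Gamma_2^\er}$.

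With both decompositions in hand, the contributions pair up $(u,t)$-wise, organized by the position of $\hat l^1$, which by \eqref{blockp} and \eqref{blockq} lies in $\bar K_q^1$ with $q \in \{p-1, p\}$ whenever the expression does not automatically vanish. Proposition \ref{compar}(ii), together with Proposition \ref{compar}(i) applied to the $X$-blocks seen through $\cgammar$, assigns each pair to one of several regimes. The strict-inequality cases $\bar\alpha_t^2 < \bar\alpha_q^1$ give rise to the injection $\sigma$ and produce $\badE_t$-type contributions; the appearance of two such sums, with bounds at $\bar\alpha_{p-1}^1$ and $\bar\alpha_p^1$, reflects the two $Y$-block-flavored contributions to $\cPi_{\Gamma_2^\er}(\ceta_R^1)_\ge$ noted above. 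The case $\alpha_t^2 > \alpha_p^1$ uses the injection $\rho$ and yields the single $\badG_t$-sum, from matching the $p$-th $X$-block of $\L^1$ (via the $\cgammar$-shifted part) against the $\Phi_t^2$-glued rows of $\cgammar(X\nabla_X^2)$. Equality cases $\bar\alpha_t^2 = \bar\alpha_q^1$ adapt the equality argument from Lemma \ref{xinaylemma}(i) verbatim, replacing $\sigma(\Phi_t^2)$ by $\Phi_q^1$ in the resulting $\badE_t$. The remaining pairs produce the third and fourth sums of \eqref{xinax} (products of $(\L^1\nabla_\L^1)$- and $(\L^2\nabla_\L^2)$-type diagonal blocks, coupled via the ringed version of \eqref{gammaid}), or, when the surviving identity diagonal blocks of the \eqref{leftt}-analog meet the non-identity factor $\Ld^{L_t^2}_{\Phi_t^2}\nald^{\Phi_t^2}_{L_t^2}$, the trace fifth sum, whose combinatorial multiplicity counts the valid $u < p$. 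Claim (ii) then follows exactly as in Lemma \ref{xinaylemma}(ii): the third and fourth sums factor as inner products of $(\nabla_\L\L)$- and $(\L\nabla_\L)$-type diagonal blocks that by \eqref{lnal} are identity matrices (up to restrictions tracked by the $\cgammar$ and $\cPi_{\Gamma_1^\er}$ projections), while the fifth sum is a partial trace of the kind controlled by Lemma \ref{partrace}.

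The main obstacle is keeping straight the combinatorics of which $(u,t)$-pairs contribute which bad term, and in particular accounting for why two $\badE_t$-sums appear (at $\bar\alpha_{p-1}^1$ and $\bar\alpha_p^1$) rather than a single one as in Lemma \ref{xinaylemma}. This doubling is forced by the asymmetry between $\cgammar$ (which carries row $X$-runs to row $Y$-runs, so its image appears alongside $Y\nabla_Y^1$ in $\cPi_{\Gamma_2^\er}(\ceta_R^1)_\ge$) and $\cgammac^*$ (whose image, being in $\g_{\Gamma_1^\ec}$, sat opposite $\nabla_X^1 X$ in the $\cPi_{\Gamma_1^\ec}(\ceta_L^1)_\le$ side of the previous lemma): here both surviving pieces of $\cPi_{\Gamma_2^\er}(\ceta_R^1)_\ge$ match against $\cgammar(X\nabla_X^2)$ through $\sigma$, with index ranges shifted according to whether the relevant $Y$-block of $\L^1$ is the $(p-1)$-st or $p$-th. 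Verifying this bookkeeping, and in particular that the triangular restriction ${}_\ge$ correctly selects the indicated combinatorial counts in the fifth sum, is where the argument diverges most visibly --- though not in substance --- from the $\cgammac^*$-based proof of Lemma \ref{xinaylemma}.
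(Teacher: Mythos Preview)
Your proposal follows the paper's approach closely: rewrite via \eqref{pigamma2}, compute $\cgammar(X\nabla_X^i)$ dually to \eqref{ganayy}, decompose $\cPi_{\Gamma_2^\er}(\ceta_R^1)_\ge$ as in \eqref{leftt2}, and pair up $(u,t)$ contributions. The overall architecture is right.

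However, your explanation of why two $\badE_t$-sums appear is incorrect. You attribute the doubling to the two terms in the decomposition of $\cPi_{\Gamma_2^\er}(\ceta_R^1)_\ge$ (the $\bar K_u^1$-piece from $Y\nabla_Y^1$ and the $K_u^1\setminus\Phi_u^1$-piece from $\cgammar(X\nabla_X^1)$), claiming both ``match against $\cgammar(X\nabla_X^2)$ through $\sigma$.'' That is not what happens. Both $\badE_t$-sums come from the \emph{first} term of \eqref{leftt2} alone---the $\bar K_u^1$-piece---evaluated at the two values $u=p-1$ and $u=p$. The \emph{second} term at $u=p$ is what produces $\badG_t$, after one drops $\cgammar$ simultaneously from it and from the first summand of \eqref{gaxnax} (so the comparison is between $X$-blocks of $\L^1$ and $\L^2$ directly, via $\rho$, not via $\sigma$). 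The structural reason this differs from Lemma~\ref{xinaylemma} is the index alignment: in \eqref{leftt} the two terms involve $L_u^1$ and $\bar L_{u-1}^1$ (a shift), so both become nontrivial only at the single value $u=p$; in \eqref{leftt2} the two terms involve $\bar K_u^1$ and $K_u^1\setminus\Phi_u^1$ (no shift), so the first term is nontrivial at both $u=p-1$ and $u=p$ while the second is nontrivial only at $u=p$. This yields three nontrivial contributions (two $\badE_t$-sums and one $\badG_t$-sum) rather than the two ($\badC_t$ and $\badF_t$) of the previous lemma.
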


\begin{proof}
Recall that by \eqref{pigamma2}, this term can be rewritten as 
$\left\langle\cPi_{\Gamma_2}(\ceta^1_R)_\ge, \cgamma(X\nabla_X^2)\right\rangle$ with $\Gamma_2=\Gamma_2^\er$ and
$\cgamma=\cgammar$. 

Note that $Y\nabla_Y^i$ has been already computed in \eqref{ynay}. Let us compute $\cgamma(X\nabla_X^i)$. Taking into account \eqref{naxnay} and \eqref{xynaxy}, we get
\begin{equation}\label{gaxnax}
\cgamma(X\nabla_X^i)=\\
\sum_{t=1}^{s^i}\cgamma\begin{bmatrix} 0 & 0 & 0 \\
0 & (\L^i)^{ L_{t}^i}_{\Phi_t^i}(\nabla_{\L}^i)^{\Phi_t^i}_{L_{t}^i} & 0 \\
0 & 0 & 0\end{bmatrix}+
\sum_{t=1}^{s^i}\cgamma\begin{bmatrix} 0 & 0 \\
0  &(\L^i\nabla_{\L}^i)_{K_{t}^i\setminus\Phi_t^i}^{K_{t}^i\setminus\Phi_t^i}\end{bmatrix},
\end{equation}
similarly to \eqref{ganayy}.

Note first that the diagonal block in the first term in \eqref{gaxnax} corresponds to 
the nontrivial row $X$-run $\Delta(\beta_t^i)$, unless $t=1$ and the first $X$-block is dummy, or $t=s^i$ and 
$\Phi_{s^i}=\varnothing$. Hence, $\cgamma$ moves it to the diagonal block 
corresponding to the nontrivial row $Y$-run $\bar\Delta(\bar\beta_t^i)$ occupied by 
$(\L^i)^{\bar L_t^i}_{\Phi_t^i}(\nabla_{\L}^i)^{\Phi_t^i}_{\bar L_t^i}$ in \eqref{ynay}. Consequently, the resulting diagonal block in $\ceta_R^i$ is equal to
\begin{equation}\label{compli2}
(\L^i)^{\bar L_t^i}_{\Phi_t^i}(\nabla_{\L}^i)^{\Phi_t^i}_{\bar L_t^i}+
(\L^i)^{L_{t}^i}_{\Phi_t^i}(\nabla_{\L}^i)^{\Phi_t^i}_{L_{t}^i}=
(\L^i\nabla_{\L}^i)_{\Phi_t^i}^{\Phi_t^i}
\end{equation}
(if the first $X$-block is dummy and $\Phi_1^i\ne\varnothing$, the second term in the left hand side vanishes;
for $\Phi_{t}^i=\varnothing$ relation \eqref{compli2}  holds trivially with all three terms void).

Moreover, the projection $\cPi_{\Gamma_2}$ of the second block in the first column of \eqref{ynay} vanishes. Summing up and applying \eqref{lnal}, we get
\begin{equation}\label{leftt2}
\cPi_{\Gamma_2}(\ceta_R^1)_\ge=
\sum_{u=1}^{s^1} \cPi_{\Gamma_2}\begin{bmatrix} (\L^1\nabla_{\L}^1)^{\bar K_u^1}_{\bar K_u^1} & 0\\ 0 & 0
\end{bmatrix}+
\sum_{u=1}^{s^1} \cgamma\begin{bmatrix} 0 & 0\\ 0 &
(\L^1\nabla_{\L}^1)_{K_{u}^1\setminus\Phi_u^1}^{K_{u}^1\setminus\Phi_u^1}\end{bmatrix}.
\end{equation}

Recall that $\hat l^1\in K_p\cup \bar K_{p-1}$, see Section \ref{etaretasec}. 
Therefore, for any $u>p$ both terms in \eqref{leftt2} vanish. Therefore,
the contribution of the second term in \eqref{gaxnax} to the final result equals
\begin{multline*}
\sum_{u=1}^{p} \sum_{t=1}^{s^2}
\left\langle(\L^1\nabla_{\L}^1)_{\bar K_u^1\to \bar I_u^1}^{\bar K_u^1\to \bar I_u^1},
\cgamma(\L^2\nabla_{\L}^2)_{K_{t}^2\setminus\Phi_{t}^2\to I_{t}^2\setminus\Delta(\alpha_{t}^2)}
^{K_{t}^2\setminus\Phi_{t}^2\to I_{t}^2\setminus\Delta(\alpha_{t}^2)}
\right\rangle\\
+\sum_{u=1}^{p} \sum_{t=1}^{s^2}
\left\langle
(\L^1\nabla_{\L}^1)_{K_{u}^1\setminus\Phi_{u}^1\to I_{u}^1\setminus\Delta(\alpha_{u}^1)}
^{K_{u}^1\setminus\Phi_{u}^1\to I_{u}^1\setminus\Delta(\alpha_{u}^1)},
\cPi_{\Gamma_1}(\L^2\nabla_{\L}^2)_{K_{t}^2\setminus\Phi_{t}^2\to I_{t}^2\setminus\Delta(\alpha_{t}^2)}
^{K_{t}^2\setminus\Phi_{t}^2\to I_{t}^2\setminus\Delta(\alpha_{t}^2)}
\right\rangle,
\end{multline*}
which yields the fourth and the fifth sums in \eqref{xinax}. Note that each summand in both sums is constant by \eqref{lnal}.

For any $u<p-1$, the nonzero blocks in both terms in \eqref{leftt2} are just identity matrices by \eqref{lnal}. Therefore, the corresponding contribution of the first term of
\eqref{gaxnax} for the second function to the final result equals 
\begin{equation*}
\sum_{t=1}^{s^2}\left(|\{u<p-1: \bar\alpha_u^1\ge \bar\alpha_t^2\}|+ |\{u< p-1: \alpha_u^1< \alpha_t^2\}|  \right)
\left\langle\Ld^{L_{t}^2}_{\Phi_t^2}\nald^{\Phi_t^2}_{L_{t}^2}\right\rangle,
\end{equation*}
which is similar to \eqref{trace} and is constant for the same reason.

Further, let $u=p-1$. Then the nonzero block in the second term in\eqref{leftt2} is again an identity matrix, and hence
the inequality $u<p-1$ in the second term above is replaced by $u<p$, which yields the last sum in \eqref{xinax}.

Let us find the contribution of the first term in \eqref{leftt2}. From now on we are looking at the summation index $t$ 
in \eqref{gaxnax} for the second function; recall that it corresponds to the $t$-th $Y$-block. If 
$\bar\alpha_{p-1}^1<\bar\alpha_t^2$ then the contribution of this summand vanishes for the size considerations, similarly to the proof of Lemma \ref{xinaylemma}.
 If $\bar\alpha_{p-1}^1>\bar\alpha_t^2$, then the contribution in question equals
\begin{equation*}\label{qcontr}
\left\langle\lgrado_{\sigma(\Phi_t^2)}^{\sigma(\Phi_t^2)}
\Ld_{\Phi_t^2}^{L_{t}^2}\nald^{\Phi_t^2}_{L_{t}^2}\right\rangle,
\end{equation*}
which coincides with $\badE_t$ given by \eqref{bad4}.
If $\bar\alpha_{p-1}^1=\bar\alpha_t^2$ then the contribution in question remains the same as in the previous case with 
$\sigma(\Phi_t^2)$ replaced by $\Phi_{p-1}^1$. 
Consequently, we get the first sum in \eqref{xinax}.

Finally, let $u=p$. Then the first term in \eqref{leftt2} is treated exactly as in the case $u=p-1$, which gives the 
second sum in \eqref{xinax}. 

Let us find the contribution of the second term in 
\eqref{leftt2}. Note that $\cgamma$ enters both the second term in \eqref{leftt2} and the first term in \eqref{gaxnax}, consequently, we can drop it in the former and replace by $\cPi_{\Gamma_1}$ in the latter, which effectively means that
$\cgamma$ is simultaneously dropped in both terms.

From now on we are looking at the summation index $t$ in \eqref{gaxnax} for the second function. However, since we have 
dropped $\cgamma$, this means that we are comparing the $t$-th $X$-block in $\L^2$ 
with the $p$-th $X$-block in  $\L^1$. If 
$\alpha_p^1\geq \alpha_t^2$ then the contribution of the $t$-th term in \eqref{gaxnax} vanishes for the size considerations.

If $\alpha_p^1<\alpha_t^2$ then the contribution in question equals
\begin{equation*}
\left\langle\lgrado_{\rho(\Phi_t^2)}^{\rho(\Phi_t^2)} 
\Ld_{\Phi_t^2}^{L_{t}^2}\nald^{\Phi_t^2}_{L_{t}^2}\right\rangle,
\end{equation*}
which coincides with the expression \eqref{bad6} for $\badG_t$ and yields the third sum in \eqref{xinax}.
\end{proof}

\subsection{Proof of Theorem \ref{logcanbasis}: final steps}

Let us find the total contribution of all $B$-terms in the right hand side of \eqref{etaleta}, \eqref{etareta},
\eqref{xinay} and \eqref{xinax}. Recall that $\hat l^1$ lies in rows $K^1_p\cup \bar K^1_{p-1}$ and columns 
$L^1_p\cup \bar L^1_{p-1}$. We consider the following two cases.

\subsubsection{Case 1: $\hat l^1$ lies in rows $K^1_p$ and columns $L^1_p$}\label{case1} 
Note that under these conditions, the matrix $\gradlo_{\sigma(\Psi_{t+1}^2)}^{\sigma(\Psi_{t+1}^2)}$
in the expression \eqref{bad3} for $\badD_t$ in \eqref{etareta} vanishes, since rows and columns 
$\sigma(\Psi_{t+1}^2)$ lie strictly above and to the left of $\hat l^1$. Besides, the matrix 
$\lgrado_{\bar K_p^1\setminus \Phi_p^1}^{\Phi_p^1}$ in the expression \eqref{bad3eq} for $\badDp_t$ in \eqref{etareta}
vanishes as well. Indeed, the column  $\Lo_{\bar K_p^1\setminus \Phi_p^1}^j$ vanishes 
if $j$ lies to the right of $\bar L_p$. On the other hand, the $i$-th row of $\nabla_\L^1$ vanishes if $i$ lies above the intersection of the main diagonal with the vertical line corresponding to the right endpoint of $\bar L_p$. 

Finally, for any $t$ such that $\beta_p^1>\beta_t^2$,
the contributions of the term $\badC_t$ given by \eqref{bad2} in \eqref{etaleta} and \eqref{xinay} cancel each other.
Similarly, for any $t$ such that  $\bar\alpha_p^1>\bar\alpha_t^2$,
the contributions of the term $\badE_t$ given by \eqref{bad4} in \eqref{etareta} and \eqref{xinax} cancel each other as 
well. Taking into account that $\bar\alpha_p^1=\bar\alpha_t^2$ is equivalent to $\alpha_p^1=\alpha_t^2$, we can
rewrite the remaining terms as
\begin{multline}\label{zone1}
\sum \{\badG_t-\badB_t :\ \beta_p^1>\beta_t^2, \alpha_p^1<\alpha_t^2\}
+\sum \{\badE_t-\badB_t :\  \beta_p^1>\beta_t^2, \alpha_p^1=\alpha_t^2\}\\
+\sum \{\badE_t :\  \beta_p^1<\beta_t^2, \alpha_p^1=\alpha_t^2\}
+\sum \{\badC_t-\badBp_t :\ \beta_p^1=\beta_t^2, \alpha_p^1>\alpha_t^2\}\\
+\sum \{\badC_t-\badBp_t+\badG_t :\ \beta_p^1=\beta_t^2, \alpha_p^1<\alpha_t^2\}
+\sum \{\badC_t-\badBp_t+\badE_t :\  \beta_p^1=\beta_t^2, \alpha_p^1=\alpha_t^2\}\\
+\sum \{\badF_t :\  \bar\beta_{p-1}^1<\bar\beta_t^2\}
+\sum \{\badE_t :\  \bar\alpha_{p-1}^1\ge\bar\alpha_t^2\},
\end{multline}
where $\badB_t$,  $\badBp_t$, $\badG_t$, and $\badF_t$ are given by \eqref{bad1}, \eqref{bad1eq}, \eqref{bad6}, 
and \eqref{bad5}, respectively.

\begin{lemma}\label{zone1lemma}
{\rm (i)} Expression \eqref{zone1} is given by
\begin{multline*}
\sum_{{{\beta_t^2<\beta_p^1}\atop{\alpha_t^2>\alpha_p^1}}}
\left\langle\lgrado_{\rho(\Phi_t^2)}^{\rho(\Phi_t^2)}\lgradd_{\Phi_t^2}^{\Phi_t^2}\right\rangle
+\sum_{{{\beta_t^2\ne\beta_p^1}\atop{\alpha_t^2=\alpha_p^1}}}
\left\langle\lgrado_{\Phi_p^1}^{\Phi_p^1}\lgradd_{\Phi_t^2}^{\Phi_t^2}\right\rangle\\
+\sum_{{\beta_t^2=\beta_p^1}\atop{\alpha_t^2<\alpha_p^1}}
\left\langle \Ld_{\bar K_{t-1}^2}^{L_t^2}\nald_{L_t^2}^{\bar K_{t-1}^2}\right\rangle
+\sum_{{\beta_t^2=\beta_p^1}\atop{\alpha_t^2\ge\alpha_p^1}}
\left\langle\gradlo_{L_p^1}^{L_p^1}\gradld_{L_t^2}^{L_t^2}\right\rangle\\
-\sum_{{\beta_t^2=\beta_p^1}\atop{\alpha_t^2\ge \alpha_p^1}}
\left\langle\lgrado_{\rho(K_t^2\setminus\Phi_t^2)}^{\rho(K_t^2\setminus\Phi_t^2)}
\lgradd_{K_t^2\setminus\Phi_t^2}^{K_t^2\setminus\Phi_t^2}\right\rangle
+{\sum_{{\beta_t^2=\beta_p^1}\atop{\alpha_t^2=\alpha_p^1}}}^\ea
\left\langle \Ld_{\bar K_{t-1}^2}^{\Psi_t^2}\nald_{\Psi_t^2}^{\bar K_{t-1}^2}\right\rangle\\
+{\sum_{{\beta_t^2=\beta_p^1}\atop{\alpha_t^2=\alpha_p^1}}}^\ea
\left\langle\lgrado_{K_p^1}^{K_p^1}\lgradd_{K_t^2}^{K_t^2}\right\rangle
-{\sum_{{{\beta_t^2=\beta_p^1}\atop{\alpha_t^2=\alpha_p^1}}}}^\ea
\left\langle\gradlo_{L_p^1}^{L_p^1}\gradld_{L_t^2}^{L_t^2}\right\rangle\\
+\sum_{\bar\beta_t^2>\bar\beta_{p-1}^1}\left\langle\Ld_{\bar K_t^2}^{\Psi_{t+1}^2}\nald_{\Psi_{t+1}^2}^{\bar K_t^2}\right\rangle
+\sum_{\bar\alpha_t^2\le \bar\alpha_{p-1}^1}\left\langle\Ld_{\Phi_t^2}^{L_t^2}\nald_{L_t^2}^{\Phi_t^2}\right\rangle, 
\end{multline*}
where $\sum^\ea$ is taken over the cases when the exit point of $X_{I_t^2}^{J_t^2}$ lies above  
the exit point of $X_{I_p^1}^{J_p^1}$. 

{\rm (ii)} Each summand in the expression above is a constant.
\end{lemma}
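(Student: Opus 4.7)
The plan is to process the eight partial sums in \eqref{zone1} by matching pairs of $B$-terms that share a common projection of the gradient and then using the fundamental identity \eqref{lnal} to collapse each such pair to one of the constant quantities listed in part (i). The guiding principle, already used repeatedly in Sections~\ref{etaletasec}--\ref{xinaxsection}, is that $\lgradd_A^A$ and $\gradld_A^A$ are identity matrices on the leading $l^2\times l^2$ block and zero elsewhere, so that any expression that assembles to one of these forms is automatically constant.

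First I would handle the generic off-diagonal cases ($\beta_t^2\ne\beta_p^1$, $\alpha_t^2\ne\alpha_p^1$). When $\beta_t^2<\beta_p^1$ and $\alpha_t^2>\alpha_p^1$, the block $X_{I_t^2}^{J_t^2}$ fits strictly inside $X_{I_p^1}^{J_p^1}$, so the injection $\rho$ restricts to $\Phi_t^2$. Using the block decomposition $\Ld_{\Phi_t^2}^{L_t^2}\nald_{L_t^2}^{\Phi_t^2}+\Ld_{\Phi_t^2}^{\bar L_t^2}\nald_{\bar L_t^2}^{\Phi_t^2}=\lgradd_{\Phi_t^2}^{\Phi_t^2}$, the combination $\badG_t-\badB_t$ equals $\langle \lgrado_{\rho(\Phi_t^2)}^{\rho(\Phi_t^2)}\,\lgradd_{\Phi_t^2}^{\Phi_t^2}\rangle$. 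The analogous decomposition when $\alpha_t^2=\alpha_p^1$ but $\beta_t^2\ne\beta_p^1$ folds $\badE_t-\badB_t$ (or $\badE_t$ alone, when $\beta_t^2<\beta_p^1$ is absent) into $\langle \lgrado_{\Phi_p^1}^{\Phi_p^1}\,\lgradd_{\Phi_t^2}^{\Phi_t^2}\rangle$, since $\sigma(\Phi_t^2)=\Phi_p^1$ in this situation.

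Next I would treat the vertically aligned case $\beta_t^2=\beta_p^1$. Here $\badBp_t$ supplies the off-diagonal column contribution that combines with $\badC_t$ through the splitting $L_t^2=(L_t^2\setminus\Psi_t^2)\sqcup\Psi_t^2$; after writing $\nald_{L_t^2}^{\bar K_{t-1}^2}\Ld_{\bar K_{t-1}^2}^{L_t^2}$ as $\gradld_{L_t^2}^{L_t^2}-\nald_{L_t^2}^{K_t^2}\Ld_{K_t^2}^{L_t^2}$ (with $\Ld_{K_t^2}^{L_t^2}$ further decomposed along $\Psi_t^2$), one gets the sum $\langle\Ld_{\bar K_{t-1}^2}^{L_t^2}\nald_{L_t^2}^{\bar K_{t-1}^2}\rangle+\langle\gradlo_{L_p^1}^{L_p^1}\gradld_{L_t^2}^{L_t^2}\rangle-\langle\lgrado_{\rho(K_t^2\setminus\Phi_t^2)}^{\rho(K_t^2\setminus\Phi_t^2)}\lgradd_{K_t^2\setminus\Phi_t^2}^{K_t^2\setminus\Phi_t^2}\rangle$ (the last term appearing because $\badG_t$ is also included when $\alpha_t^2\neq\alpha_p^1$), matching the third through fifth lines of the target. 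When in addition $\alpha_t^2=\alpha_p^1$ and the exit point of $X_{I_t^2}^{J_t^2}$ lies strictly above that of $X_{I_p^1}^{J_p^1}$ (this is the meaning of $\sum^{\ea}$), one picks up the extra $\badE_t$, which contributes the three restricted-diagonal terms in the sixth and seventh lines.

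Finally I would deal with the two remaining sums involving $\badF_t$ (under $\bar\beta_t^2>\bar\beta_{p-1}^1$) and $\badE_t$ (under $\bar\alpha_t^2\le\bar\alpha_{p-1}^1$), both of which survive untouched from \eqref{etareta} and \eqref{xinay}: in these ranges $\sigma$ sends $\Psi_{t+1}^2$ or $\Phi_t^2$ into rows/columns that lie outside the support of $\lgrado$ or $\gradlo$ relevant to Case 1, so the gradient factor becomes the identity on the appropriate block and the expressions reduce to $\langle\Ld_{\bar K_t^2}^{\Psi_{t+1}^2}\nald_{\Psi_{t+1}^2}^{\bar K_t^2}\rangle$ and $\langle\Ld_{\Phi_t^2}^{L_t^2}\nald_{L_t^2}^{\Phi_t^2}\rangle$ respectively. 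Part (ii) then follows in every line: each summand is a trace of a product of blocks of $\L\nabla_\L$ (or $\nabla_\L\L$), which by \eqref{lnal} differ from identity matrices only on a fixed leading block, so the trace is an integer independent of $\L^1,\L^2$. The main obstacle is purely bookkeeping: there are a dozen interacting cases distinguished by the relative positions of the four pairs $(\alpha_p^1,\alpha_t^2)$, $(\beta_p^1,\beta_t^2)$, $(\bar\alpha_{p-1}^1,\bar\alpha_t^2)$, $(\bar\beta_{p-1}^1,\bar\beta_t^2)$, and one must check that the restrictions on $\rho$ and $\sigma$ are compatible with the block decompositions used in each collapsing step; the analysis is routine once the correct case division is in place.
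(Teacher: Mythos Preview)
Your outline is right for the first term of \eqref{zone1} and for the last two terms, and your description of part~(ii) is the correct philosophy. But the heart of the computation, the sums in \eqref{zone1} with $\beta_p^1=\beta_t^2$ (the fourth, fifth and sixth), is where the real work lies, and your sketch does not contain the key step.

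Take the fourth sum, $\badC_t-\badBp_t$ with $\alpha_t^2<\alpha_p^1$. Here $X_{I_p^1}^{J_p^1}$ sits \emph{inside} $X_{I_t^2}^{J_t^2}$, so the injection $\rho$ goes the wrong way and you cannot simply identify the two $\Psi$-blocks as you suggest. After the first reduction you are left (cf.~\eqref{bmic}) with an expression $\left\langle\gradlo_{\Psi_p^1}^{L_p^1}\nald_{L_t^2}^{K_t^2}\Ld_{K_t^2}^{\Psi_t^2}\right\rangle$, and to reduce this to the trace $\left\langle\Ld_{\bar K_{t-1}^2}^{L_t^2}\nald_{L_t^2}^{\bar K_{t-1}^2}\right\rangle$ you must follow the two staircase matrices $\L^1$ and $\L^2$ \emph{backward} through the chain of preceding blocks $\{Y_{\bar I_{p-1}^1}^{\bar J_{p-1}^1}, X_{I_{p-1}^1}^{J_{p-1}^1},\dots\}$ and $\{Y_{\bar I_{t-1}^2}^{\bar J_{t-1}^2}, X_{I_{t-1}^2}^{J_{t-1}^2},\dots\}$ until they first disagree. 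There are four ways this can happen (the chains branch at a $Y$-block, branch at an $X$-block, or one is a prefix of the other), and in each case a different auxiliary injection appears and a different tail term has to be shown to vanish. This block-chain comparison is the substantive part of the argument and is not reducible to the single-block splittings you invoke.

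The sixth sum, $\badC_t-\badBp_t+\badE_t$ with $\alpha_t^2=\alpha_p^1$, is harder still: the blocks $X_{I_p^1}^{J_p^1}$ and $X_{I_t^2}^{J_t^2}$ now coincide, and the four cases above each split further according to whether the exit point of $X_{I_t^2}^{J_t^2}$ lies above or below that of $X_{I_p^1}^{J_p^1}$. This is precisely what produces the $\sum^{\ea}$ terms in the statement, and the two subcases give genuinely different contributions (the ``above'' subcase yields the extra constants in the seventh and eighth terms of the statement, while the ``below'' subcase cancels against analogous pieces). Your remark that ``the analysis is routine once the correct case division is in place'' is optimistic: the case division \emph{is} the argument. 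Also, your treatment of the third sum in \eqref{zone1} ($\badE_t$ alone, $\beta_p^1<\beta_t^2$) skips the point that an extra cross term survives and must be shown to vanish by a separate chain of identifications; it does not collapse directly to $\left\langle\lgrado_{\Phi_p^1}^{\Phi_p^1}\lgradd_{\Phi_t^2}^{\Phi_t^2}\right\rangle$.
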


\begin{proof}
To find the first term in \eqref{zone1} note that for any fixed $t$ satisfying the corresponding conditions one has 
\begin{multline}\label{motive}
\badG_t-\badB_t=
\left\langle\lgrado_{\rho(\Phi_t^2)}^{\rho(\Phi_t^2)} 
\Ld_{\Phi_t^2}^{L_{t}^2}\nald_{L_t^2}^{\Phi_t^2}\right\rangle+
\left\langle\lgrado_{\rho(\Phi_t^2)}^{\rho(\Phi_t^2)} 
\Ld_{\Phi_t^2}^{\bar L_{t}^2}\nald_{\bar L_{t}^2}^{\Phi_t^2}\right\rangle\\
=\left\langle\lgrado_{\rho(\Phi_t^2)}^{\rho(\Phi_t^2)} 
\lgradd_{\Phi_t^2}^{\Phi_t^2}\right\rangle=\text{const}
\end{multline}
via \eqref{compli2} and \eqref{lnal}, which yields the first term in the statement of the lemma.

Similarly, to treat the second term in \eqref{zone1} we note that under the corresponding conditions
\begin{multline}\label{termtwo}
\badE_t-\badB_t=
\left\langle\lgrado_{\Phi_p^1}^{\Phi_p^1} 
\Ld_{\Phi_t^2}^{L_{t}^2}\nald_{L_{t}^2}^{\Phi_t^2}\right\rangle+
\left\langle\lgrado_{\Phi_p^1}^{\Phi_p^1} 
\Ld_{\Phi_t^2}^{\bar L_{t}^2}\nald_{\bar L_t^2}^{\Phi_t^2}\right\rangle\\
=\left\langle\lgrado_{\Phi_p^1}^{\Phi_p^1} 
\lgradd_{\Phi_t^2}^{\Phi_t^2}\right\rangle=\text{const}
\end{multline}
via \eqref{compli2} and \eqref{lnal}. 

To find the contribution of the third term in \eqref{zone1}, rewrite it as
\begin{equation*}
\left\langle
\lgrado_{\Phi_p^1}^{\Phi_p^1} \lgradd_{\Phi_t^2}^{\Phi_t^2}
\right\rangle-
\left\langle
\lgrado_{\Phi_p^1}^{\Phi_p^1}\Ld_{\Phi_t^2}^{\bar L_t^2}
\nald_{\bar L_t^2}^{\Phi_t^2}
\right\rangle
\end{equation*}
and note that  the second term equals
\begin{equation}\label{3inone}
-\left\langle
\Lo_{\Phi_p^1}^{L_p^1}
\nalo_{L_p^1}^{\Phi_p^1}
\Ld_{\Phi_t^2}^{\bar L_t^2}
\nald_{\bar L_t^2}^{\Phi_t^2}
\right\rangle,
\end{equation}
since $\nalo_{\bar L_p^1}^{\Phi_p^1}$ vanishes. 
Further, the block 
$X_{I_p^1}^{J_p^1}$ is contained completely inside the block $X_{I_t^2}^{J_t^2}$. We denote by $\rho$ the corresponding injection, so $\Lo_{\Phi_p^1}^{L_p^1}=\Ld_{\Phi_t^2}^{\rho(L_p^1)}$. Therefore,
\eqref{3inone} can be written as
\[
\left\langle
\nalo_{L_p^1}^{\Phi_p^1}
\Ld_{\Phi_t^2}^{\bar L_t^2}
\nald_{\bar L_t^2}^{K_t^2\setminus\Phi_t^2}
\Ld_{K_t^2\setminus\Phi_t^2}^{\rho(L_p^1)}
\right\rangle,
\]
where we used the fact that
\[
\nald_{\bar L_t^2}^{\Phi_t^2}
\Ld_{\Phi_t^2}^{\rho(L_p^1)}+
\nald_{\bar L_t^2}^{K_t^2\setminus\Phi_t^2}
\Ld_{K_t^2\setminus\Phi_t^2}^{\rho(L_p^1)}=
\gradld_{\bar L_t^2}^{\rho(L_p^1)}=0.
\]
Finally, $\Ld_{K_t^2\setminus\Phi_t^2}^{\rho(L_p^1)}=\Lo_{K_p^1\setminus\Phi_p^1}^{L_p^1}$,
and
\[
\Lo_{K_p^1\setminus\Phi_p^1}^{L_p^1}
\nalo_{L_p^1}^{\Phi_p^1}=
\lgrado_{K_p^1\setminus\Phi_p^1}^{\Phi_p^1}=0,
\]
hence \eqref{3inone} vanishes, and the contribution in question is given by the same expression as in \eqref{termtwo}, and
thus yields the second term in the statement of the lemma. 

To find the fourth term in \eqref{zone1} note that for any fixed $t$ satisfying the corresponding conditions we get
\begin{multline}\label{bmicinit}
\badC_t-\badBp_t\\=\left\langle\gradlo_{\Psi_p^1}^{\Psi_p^1}
\nald^{\bar K_{t-1}^2}_{\Psi_t^2}\Ld^{\Psi_t^2}_{\bar K_{t-1}^2}\right\rangle-
\left\langle\gradlo_{\Psi_p^1}^{L_p^1\setminus\Psi_p^1}
\nald^{K_t^2}_{L_{t}^2\setminus\Psi_t^2}\Ld^{\Psi_t^2}_{K_{t}^2}\right\rangle.
\end{multline}
Applying \eqref{compli} to the first expression and using the equality
\[
\gradlo_{\Psi_p^1}^{L_p^1\setminus\Psi_p^1} 
\nald^{K_t^2}_{L_{t}^2\setminus\Psi_t^2}+
\gradlo_{\Psi_p^1}^{\Psi_p^1}
\nald^{K_{t}^2}_{\Psi_t^2}=
\gradlo_{\Psi_p^1}^{L_p^1}\nald^{K_{t}^2}_{L_t^2}
\]
we get
\begin{equation}\label{bmic}
\badC_t-\badBp_t=\left\langle\gradlo_{\Psi_p^1}^{\Psi_p^1}
\gradld_{\Psi_t^2}^{\Psi_t^2}\right\rangle-
\left\langle\gradlo_{\Psi_p^1}^{L_p^1} 
\nald^{K_t^2}_{L_{t}^2}\Ld^{\Psi_t^2}_{K_{t}^2}\right\rangle.
\end{equation}
Clearly, the first term above is a constant. 

Note that $\alpha_p^1>\alpha_t^2$, and hence the block 
$X_{I_p^1}^{J_p^1}$ is contained completely inside the block $X_{I_t^2}^{J_t^2}$, which means, in particular, that $p>1$.
Consider two sequences of blocks
\begin{equation}\label{blocks}
\{Y_{\bar I_{p-1}^1}^{\bar J_{p-1}^1}, X_{I_{p-1}^1}^{J_{p-1}^1}, Y_{\bar I_{p-2}^1}^{\bar J_{p-2}^1}, \dots\}\quad\text{and}
\quad\{Y_{\bar I_{t-1}^2}^{\bar J_{t-1}^2}, X_{I_{t-1}^2}^{J_{t-1}^2}, Y_{\bar I_{t-2}^2}^{\bar J_{t-2}^2}, \dots\}.
\end{equation}

There are four possibilities:

(i) there exists a pair of blocks $Y_{\bar I_{p-m}^1}^{\bar J_{p-m}^1}$ and 
$Y_{\bar I_{t-m}^2}^{\bar J_{t-m}^2}$ such that $\bar J_{p-m}^1=\bar J_{t-m}^2$, 
$\bar I_{p-m}^1\ne \bar I_{t-m}^2$, 
and the subsequences of blocks to the left of $Y_{\bar I_{p-m}^1}^{\bar J_{p-m}^1}$ and $Y_{\bar I_{t-m}^2}^{\bar J_{t-m}^2}$ coincide;

(ii) there exists a pair of blocks $X_{I_{p-m}^1}^{J_{p-m}^1}$ and $X_{I_{t-m}^2}^{J_{t-m}^2}$ such that 
$I_{p-m}^1=I_{t-m}^2$, $J_{p-m}^1\ne J_{t-m}^2$,
and the subsequences of blocks to the left of 
$X_{I_{p-m}^1}^{J_{p-m}^1}$ and $X_{I_{t-m}^2}^{J_{t-m}^2}$ coincide;

(iii) the first sequence is a proper subsequence of the second one;

(iv) the second sequence is a proper subsequence of the first one, or is empty.

{\it Case\/} (i): Clearly, this can be possible only if $\bar I_{t-m}^2\subset \bar I_{p-m}^1$, see Fig.~\ref{fig:chainy}
where  blocks $X_{I_{k}^i}^{J_{k}^i}$ and $Y_{\bar I_{k}^i}^{\bar J_{k}^i}$ are for brevity denoted $X_k^i$ and 
$Y_k^i$, respectively.

\begin{figure}[ht]
\begin{center}
\includegraphics[height=7cm]{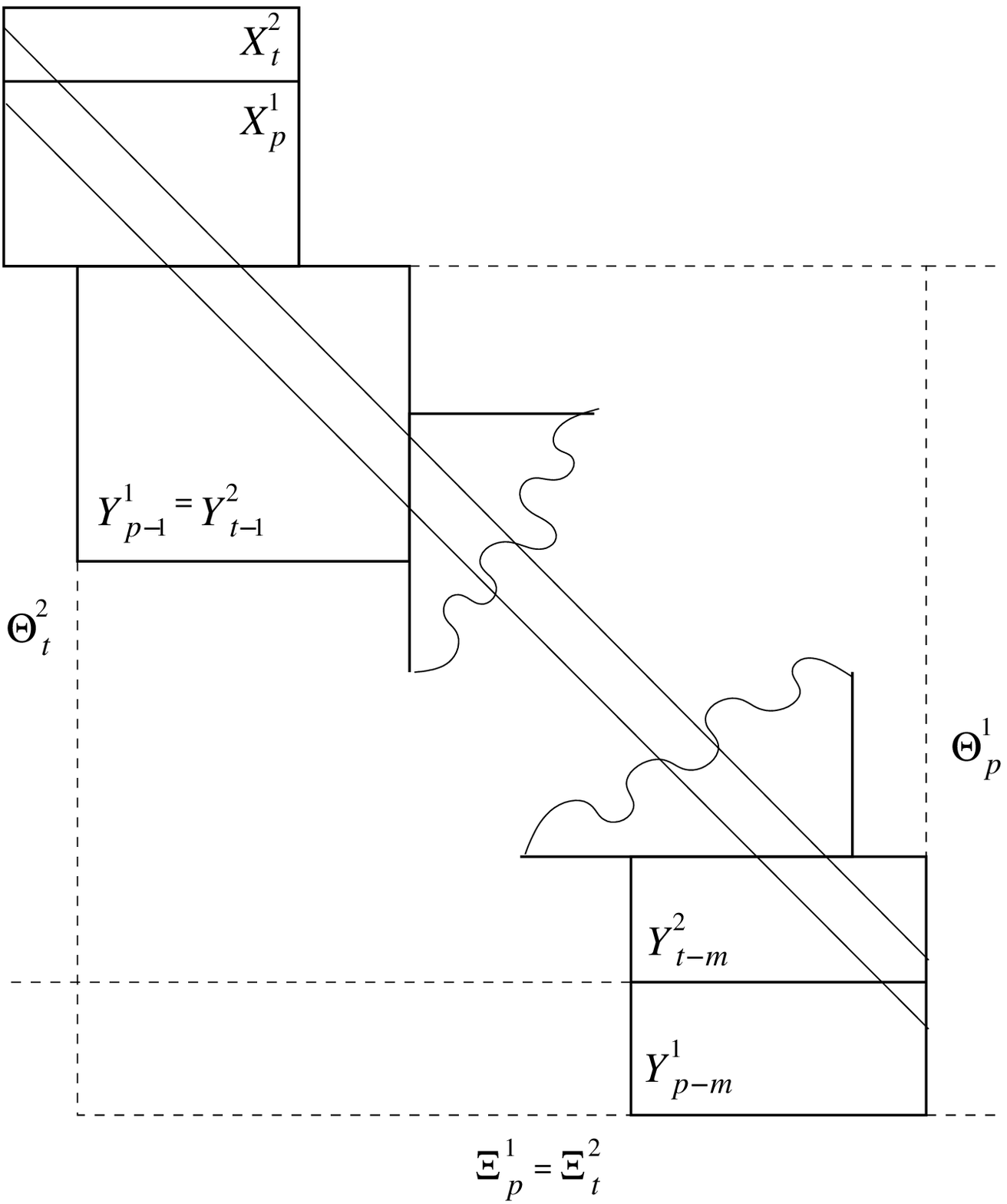}
\caption{Case (i)}
\label{fig:chainy}
\end{center}
\end{figure}

Denote 
\begin{equation}\label{thetaxi}
\Theta_{r}^i=\bigcup_{j=1}^{m-1}(\bar K_{r-j}^i\cup K_{r-j}^i)\cup \bar K_{r-m}^i, \qquad
\Xi_{r}^i=\bigcup_{j=1}^{m-1} (\bar L_{r-j}^i\cup L_{r-j}^i)\cup \bar L_{r-m}^i. 
\end{equation}
Note that the matrix $\Ld_{\Theta_{t}^2}^{\Xi_{t}^2}$ coincides with a proper submatrix of 
$\Lo_{\Theta_{p}^1}^{\Xi_{p}^1}$; we denote the corresponding injection $\sigma$ (it can be considered as an analog of the injection $\sigma$ defined in Section \ref{etaretasec}). Clearly,
\begin{equation}\label{interim}
\nald_{L_t^2}^{K_t^2}\Ld_{K_t^2}^{\Psi_t^2}=
\gradld_{L_t^2}^{\Psi_t^2}- \nald_{L_t^2}^{\Theta_{t}^2}\Ld_{\Theta_{t}^2}^{\Psi_t^2}.
\end{equation}
The contribution of the first term in \eqref{interim} to the second term in \eqref{bmic} equals
\[
-\left\langle\gradlo_{\Psi_p^1}^{L_p^1}\gradld_{L_t^2}^{\Psi_t^2}\right\rangle=
-\left\langle\gradlo_{\Psi_p^1}^{\Psi_p^1}\gradld_{\Psi_t^2}^{\Psi_t^2}\right\rangle
\]
and cancels the contribution of the first term in \eqref{bmic} computed above. 

To find the contribution of the second term in \eqref{interim} to the second term in \eqref{bmic}
note that
\begin{equation}\label{superdec}
\gradlo_{\Psi_p^1}^{L_p^1}=
\nalo_{\Psi_p^1}^{K_p^1\cup\Theta_{p}^1}
\Lo_{K_p^1\cup\Theta_{p}^1}^{L_p^1},
\end{equation}
so the contribution in question equals
\begin{equation}\label{crosscases}
\left\langle\nald_{L_t^2}^{\Theta_t^2}\Ld_{\Theta_t^2}^{\Psi_t^2}
\nalo_{\Psi_p^1}^{K_p^1\cup\Theta_{p}^1}\Lo_{K_p^1\cup\Theta_{p}^1}^{L_p^1}\right\rangle.
\end{equation}
Taking into account that $\Ld_{\Theta_t^2}^{\Psi_t^2}=\Lo_{\sigma(\Theta_t^2)}^{\Psi_p^1}$, 
$\Ld_{\Theta_t^2}^{\Xi_t^2\setminus\Psi_t^2}=\Lo_{\sigma(\Theta_t^2)}^{\Xi_p^1\setminus\Psi_p^1}$
and that
\begin{equation}\label{twoterm}
\Lo_{\sigma(\Theta_t^2)}^{\Psi_p^1}\nalo_{\Psi_p^1}^{K_p^1\cup\Theta_{p}^1}=
 \lgrado_{\sigma(\Theta_t^2)}^{K_p^1\cup\Theta_{p}^1}-
\Lo_{\sigma(\Theta_t^2)}^{\Xi_p^1\setminus\Psi_p^1}
\nalo_{\Xi_p^1\setminus\Psi_p^1}^{K_p^1\cup\Theta_p^1},
\end{equation}
this contribution can be rewritten as
\begin{multline*}
\left\langle \nald_{L_t^2}^{\Theta_t^2}
\lgrado_{\sigma(\Theta_t^2)}^{K_p^1\cup\Theta_{p}^1}
\Lo_{K_p^1\cup\Theta_{p}^1}^{L_p^1}\right\rangle\\
-\left\langle \nald_{L_t^2}^{\Theta_t^2}
\Ld_{\Theta_t^2}^{\Xi_t^2\setminus\Psi_t^2}
\nalo_{\Xi_p^1\setminus\Psi_p^1}^{K_p^1\cup\Theta_p^1}
\Lo_{K_p^1\cup\Theta_{p}^1}^{L_p^1}\right\rangle.
\end{multline*}
Next, by\eqref{lnal}, 
\[
\nald_{L_t^2}^{\Theta_t^2}
\Ld_{\Theta_t^2}^{\Xi_t^2\setminus\Psi_t^2}=
\gradld_{L_t^2}^{\Xi_t^2\setminus\Psi_t^2}=0,
\]
since the columns $L_t^2$ lie to the left of $\Xi_t^2\setminus\Psi_t^2$.

Finally, by \eqref{lnal},
\[
\lgrado_{\sigma(\Theta_t^2)}^{K_p^1\cup\Theta_{p}^1}=
\begin{bmatrix} 0& \one & 0\end{bmatrix},
\]
where the unit block occupies the rows and the columns $\sigma(\Theta_t^2)$. Therefore, the remaining contribution equals
\[
\left\langle \nald_{L_t^2}^{\Theta_t^2}
\Lo_{\sigma(\Theta_t^2)}^{L_p^1}\right\rangle=
\left\langle \Ld_{\Theta_t^2}^{L_t^2}\nald_{L_t^2}^{\Theta_t^2}\right\rangle=
\left\langle \Ld_{\bar K_{t-1}^2}^{L_t^2}\nald_{L_t^2}^{\bar K_{t-1}^2}
\right\rangle,
\]
which is a constant via Lemma \ref{partrace} and yields the third term in the statement of the lemma.

{\it Case\/} (ii): Clearly, this can be possible only if $J_{p-m}^1\subset J_{t-m}^2$, see Fig.~\ref{fig:chainx} where we use 
the same convention as in Fig.~\ref{fig:chainy}.

\begin{figure}[ht]
\begin{center}
\includegraphics[height=7cm]{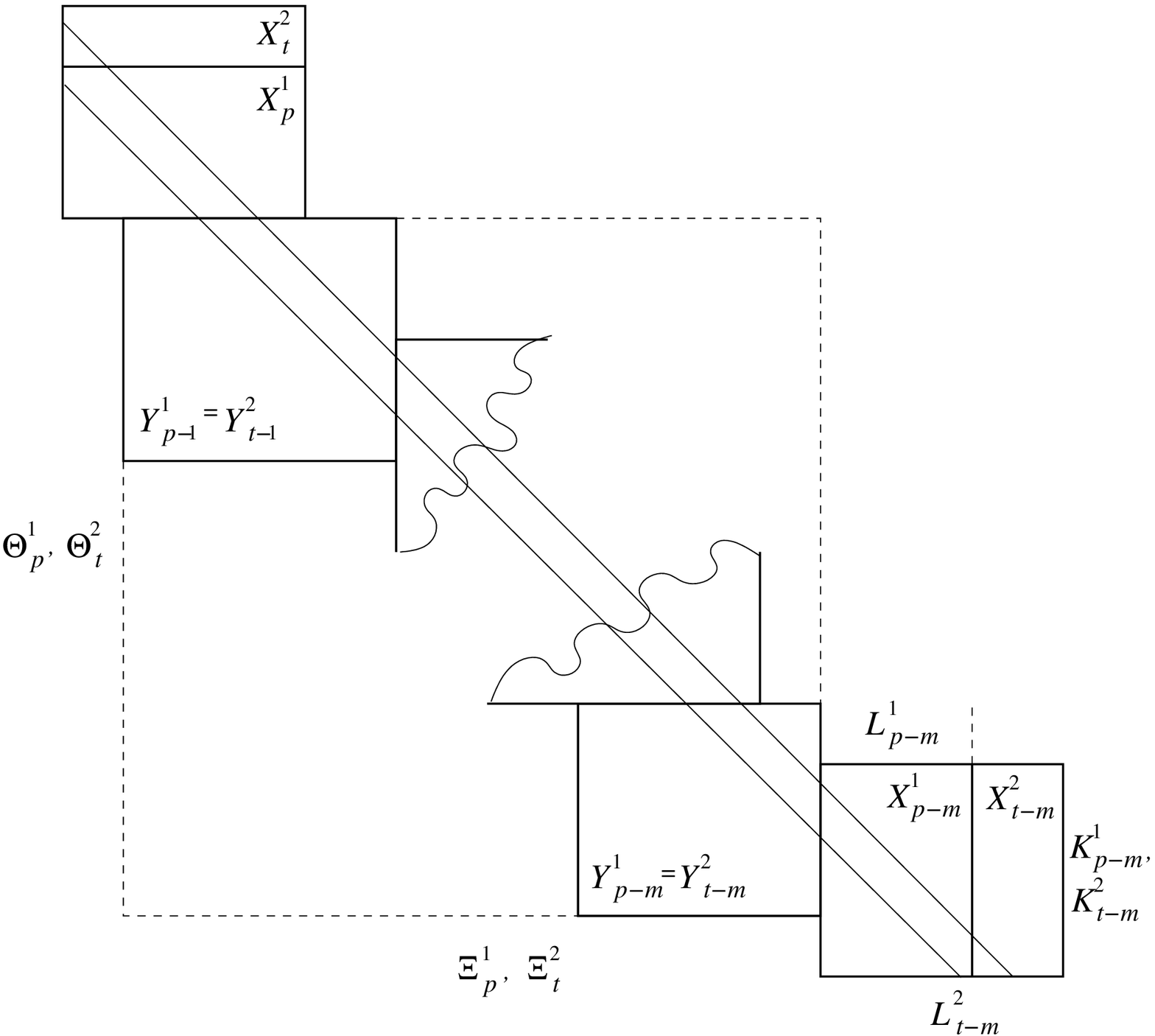}
\caption{Case (ii)}
\label{fig:chainx}
\end{center}
\end{figure}

Let $\Theta_r^i$ and $\Xi_r^i$ be defined by \eqref{thetaxi}.
Note that the matrix
$\Lo_{\Theta_{p}^1\cup K_{p-m}^1}^{L_{p-m}^1}$ coincides with a proper submatrix of 
$\Ld_{\Theta_{t}^2\cup K_{t-m}^2}^{L_{t-m}^2}$; we denote the corresponding 
injection $\rho$ (in a sense, it can be considered as an analog of the injection $\rho$ defined in Section \ref{etaletasec}; however, it acts in the opposite direction).
Clearly, $\rho(\Theta_p^1\cup  K_{p-m}^1)=\Theta_t^2\cup K_{t-m}^2$.
Similarly to \eqref{twoterm}, we have
\begin{multline*}
\Lo_{\Theta_p^1}^{\Psi_p^1}\nalo_{\Psi_p^1}^{K_p^1\cup\Theta_{p}^1}\\=
 \lgrado_{\Theta_p^1}^{K_p^1\cup\Theta_{p}^1}-
\Lo_{\Theta_p^1}^{\Xi_p^1\setminus\Psi_p^1}
\nalo_{\Xi_p^1\setminus\Psi_p^1}^{K_p^1\cup\Theta_p^1}
-\Lo_{\Theta_p^1}^{L_{p-m}^1}
\nalo_{L_{p-m}^1}^{K_p^1\cup\Theta_p^1}.
\end{multline*}
The first two terms in the right hand side of this equation are treated exactly as in Case (i) and yield the same contribution. 
The third term yields 
\begin{equation*}
-\left\langle \nald_{L_t^2}^{\Theta_t^2}
\Ld_{\Theta_t^2}^{\rho(L_{p-m}^1)}
\nalo_{L_{p-m}^1}^{K_p^1\cup\Theta_p^1}
\Lo_{K_p^1\cup\Theta_{p}^1}^{L_p^1}\right\rangle
\end{equation*}
since $\Lo_{\Theta_p^1}^{L_{p-m}^1}=\Ld_{\Theta_t^2}^{\rho(L_{p-m}^1)}$. To proceed further, note
that
\[
\nald_{L_t^2}^{\Theta_t^2}
\Ld_{\Theta_t^2}^{\rho(L_{p-m}^1)}=\gradld_{L_t^2}^{\rho(L_{p-m}^1)}-
\nald_{L_t^2}^{K_{t-m}^2\setminus\Phi_{t-m}^2}
\Ld_{K_{t-m}^2\setminus\Phi_{t-m}^2}^{\rho(L_{p-m}^1)}.
\]
The first term on the right hand side vanishes, since $\nabla_{\L}\L$ is lower triangular, and columns
$L_t^2$ lie to the left of $\rho(L_{p-m}^1)$. The second yields
\begin{multline*}
\left\langle \nald_{L_t^2}^{K_{t-m}^2\setminus\Phi_{t-m}^2}
\Lo_{K_{p-m}^1\setminus\Phi_{p-m}^1}^{L_{p-m}^1}
\nalo_{L_{p-m}^1}^{K_p^1\cup\Theta_p^1}
\Lo_{K_p^1\cup\Theta_{p}^1}^{L_p^1}\right\rangle\\
=\left\langle \nald_{L_t^2}^{K_{t-m}^2\setminus\Phi_{t-m}^2}
\lgrado_{K_{p-m}^1\setminus\Phi_{p-m}^1}^{K_p^1\cup\Theta_p^1}
\Lo_{K_p^1\cup\Theta_{p}^1}^{L_p^1}\right\rangle
\end{multline*}
via $\Ld_{K_{t-m}^2\setminus\Phi_{t-m}^2}^{\rho(L_{p-m}^1)}=\Lo_{K_p^1\cup\Theta_{p}^1}^{L_{p-m}^1}$.
Finally, $\lgrado_{K_{p-m}^1\setminus\Phi_{p-m}^1}^{K_p^1\cup\Theta_p^1}$ vanishes, since $\L\nabla_{\L}$ is upper triangular, and rows $K_{p-m}^1\setminus\Phi_{p-m}^1$ lie below $K_p^1\cup\Theta_p^1$.

{\it Case\/} (iii): This case is only possible if the last block in the first sequence is of type $Y$, see 
Fig.~\ref{fig:chains} on the left. Assuming that this block is $Y_{\bar I_{p-m}^1}^{\bar J_{p-m}^1}$, we proceed exactly as in Case (ii) with $L_{p-m}^1=\varnothing$ and get the same contribution.

\begin{figure}[ht]
\begin{center}
\includegraphics[height=6.5cm]{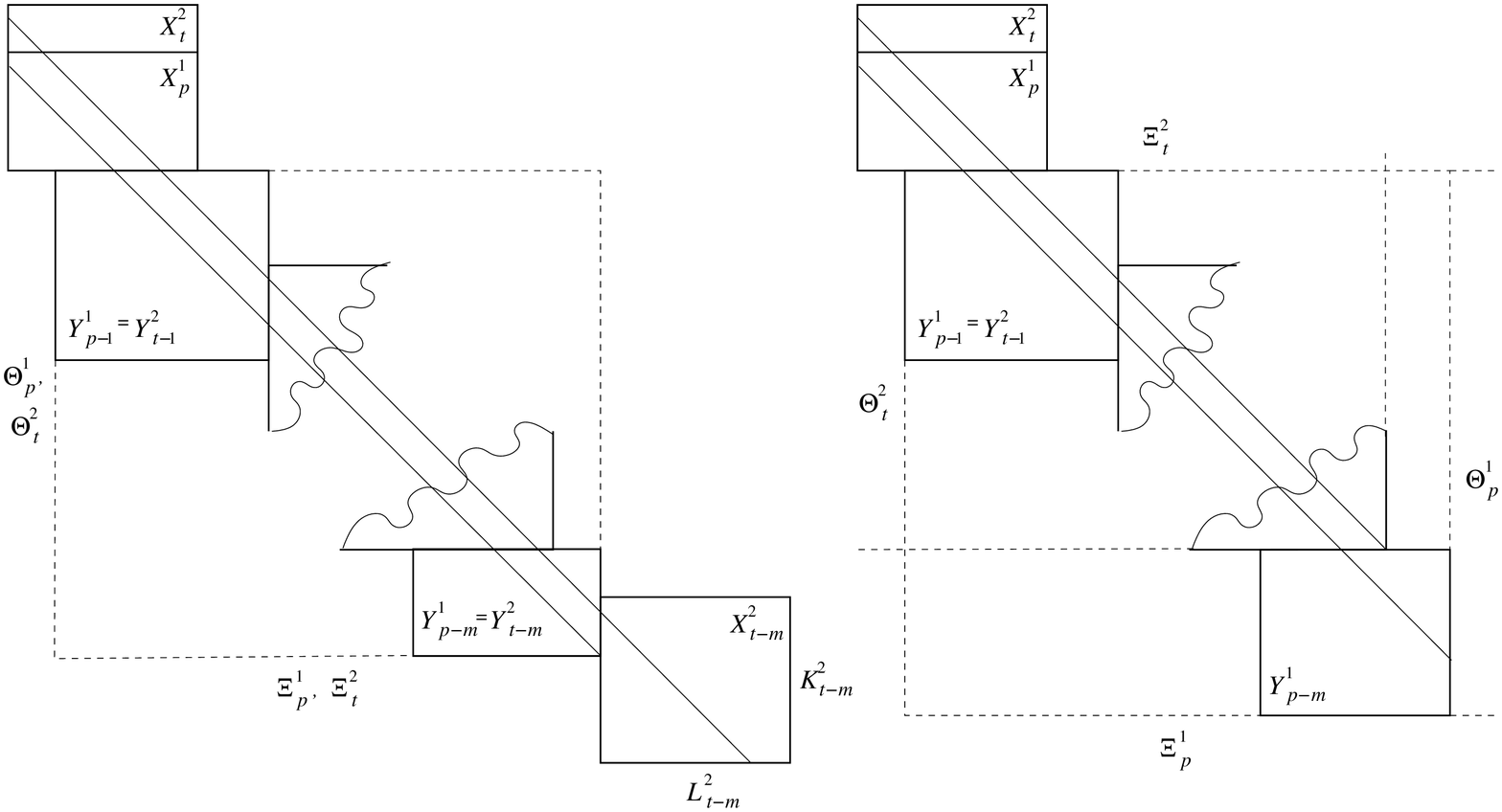}
\caption{Cases (iii) and (iv)}
\label{fig:chains}
\end{center}
\end{figure}

{\it Case\/} (iv): This case is only possible if the last block in the second sequence is of type $X$, see 
Fig.~\ref{fig:chains} on the right. 
Assuming that this block is $X_{I_{t-m+1}^2}^{J_{t-m+1}^2}$, we proceed exactly as in Case (i) with
$\bar K_{t-m}^2=\varnothing$ and get the same contribution.

To treat the fifth sum in \eqref{zone1}, note that $\alpha_p^1<\alpha_t^2$ implies that the block 
$X_{I_t^2}^{J_t^2}$ is contained completely inside the block $X_{I_p^1}^{J_p^1}$. 
Therefore, injection $\rho$ can be defined as in Section \ref{etaletasec}; moreover, $\rho(\Psi_t^2)=\Psi_p^1$ and 
$\rho(L_t^2)=L_p^1$, since $\beta_p^1=\beta_t^2$. Consequently, the block $Y_{\bar I_{p-1}^1}^{\bar J_{p-1}^1}$ is
contained completely inside the block $Y_{\bar I_{t-1}^2}^{\bar J_{t-1}^2}$, and injection $\sigma$  can be defined 
as in Section \ref{etaretasec}.

We proceed similarly to the previous case and arrive at
\begin{multline}\label{bmic0}
\badC_t-\badBp_t+\badF_t=\left\langle\gradlo_{\Psi_p^1}^{\Psi_p^1}
\gradld_{\Psi_t^2}^{\Psi_t^2}\right\rangle\\
-\left\langle\gradlo_{\Psi_p^1}^{L_p^1} 
\nald^{K_t^2}_{L_{t}^2}\Ld^{\Psi_t^2}_{K_{t}^2}\right\rangle+
\left\langle\lgrado_{\rho(\Phi_t^2)}^{\rho(\Phi_t^2)} 
\Ld_{\Phi_t^2}^{L_{t}^2}\nald_{L_t^2}^{\Phi_t^2}\right\rangle.
\end{multline}

Clearly,
$\gradlo_{\Psi_p^1}^{L_p^1}=\nalo_{\Psi_p^1}^{K_p^1\cup\bar K_{p-1}^1}
\Lo_{K_p^1\cup\bar K_{p-1}^1}^{L_p^1}$,
so the second term in \eqref{bmic0} equals
\begin{multline}\label{bmic1}
-\left\langle\Lo_{K_p^1\cup\bar K_{p-1}^1}^{L_p^1} 
\nald^{K_t^2}_{L_t^2}\Lo^{\Psi_p^1}_{\rho(K_{t}^2)}\nalo_{\Psi_p^1}^{K_p^1\cup\bar K_{p-1}^1} \right\rangle\\
=\left\langle\Lo_{K_p^1\cup\bar K_{p-1}^1}^{L_p^1} 
\nald^{K_t^2}_{L_{t}^2}\Lo_{\rho(K_t^2)}^{L_p^1\setminus\Psi_p^1}
\nalo_{L_p^1\setminus\Psi_p^1}^{K_p^1\cup\bar K_{p-1}^1} \right\rangle\\
-\left\langle\Lo_{K_p^1\cup\bar K_{p-1}^1}^{L_p^1} 
\nald^{K_t^2}_{L_{t}^2}\lgrado_{\rho(K_t^2)}^{K_p^1\cup\bar K_{p-1}^1}
\right\rangle.
\end{multline}

The first term in \eqref{bmic1} equals
\begin{multline*}
\left\langle\Lo_{K_p^1\cup\bar K_{p-1}^1}^{L_p^1} 
\nald^{K_t^2}_{L_{t}^2}\Ld_{K_t^2}^{L_t^2\setminus\Psi_t^2}
\nalo_{L_p^1\setminus\Psi_p^1}^{K_p^1\cup\bar K_{p-1}^1} \right\rangle\\
=\left\langle\gradld^{L_t^2\setminus\Psi_t^2}_{L_{t}^2}
\gradlo_{L_p^1\setminus\Psi_p^1}^{L_p^1}\right\rangle
=\left\langle\gradld^{L_t^2\setminus\Psi_t^2}_{L_{t}^2\setminus\Psi_t^2}
\gradlo_{L_p^1\setminus\Psi_p^1}^{L_p^1\setminus\Psi_p^1}\right\rangle
=\text{const},
\end{multline*}
which together with the contribution of the first term in \eqref{bmic0}  yields the fourth term in the statement 
of the lemma for $\alpha_t^2>\alpha_p^1$.

By~\eqref{lnal}, the matrix $\lgrado_{\rho(K_t^2)}^{K_p^1\setminus\rho(K_t^2)}$ vanishes. Next, we use injection 
$\sigma$ mentioned above to write $\Lo_{\rho(K_t^2)\cup\bar K_{p-1}^1}^{L_p^1}= \Ld_{K_t^2\cup \sigma(\bar K_{p-1}^1)}^{L_t^2}$,
and hence  the second term in \eqref{bmic1} can be written as
\begin{multline}\label{bmic2}
-\left\langle\lgrado_{\rho(K_t^2)}^{\rho(K_t^2)\cup\bar K_{p-1}^1}
\Ld_{K_t^2\cup \sigma(\bar K_{p-1}^1)}^{L_t^2} 
\nald^{K_t^2}_{L_{t}^2}\right\rangle\\
=-\left\langle\lgrado_{\rho(K_t^2)}^{\rho(K_t^2)\cup\bar K_{p-1}^1}
\lgradd_{K_t^2\cup\sigma(\bar K_{p-1}^1)}^{K_t^2}\right\rangle\\
+\left\langle\lgrado_{\rho(K_t^2)}^{\rho(K_t^2)\cup\bar K_{p-1}^1}
\Ld_{K_t^2\cup \sigma(\bar K_{p-1}^1)}^{\bar L_t^2} 
\nald^{K_t^2}_{\bar L_{t}^2}\right\rangle\\
+\left\langle\lgrado_{\rho(K_t^2)}^{\rho(K_t^2)\cup\bar K_{p-1}^1}
\Ld_{K_t^2\cup \sigma(\bar K_{p-1}^1)}^{\bar L_{t-1}^2\setminus\Psi_t^2} 
\nald^{K_t^2}_{\bar L_{t-1}^2\setminus\Psi_t^2}\right\rangle.
\end{multline}

By \eqref{lnal}, the first term in \eqref{bmic2} equals
\begin{equation}\label{bmic21}
-\left\langle\lgrado_{\rho(K_t^2)}^{\rho(K_t^2)}
\lgradd_{K_t^2}^{K_t^2}\right\rangle=\text{const}.
\end{equation}
 
Recall that the matrix $\Ld_{(K_t^2\setminus\Phi_t^2)\cup \sigma(\bar K_{p-1}^1)}^{\bar L_t^2}$ vanishes, and so 
the second term in \eqref{bmic2} can be rewritten as 
\[
\left\langle\lgrado_{\rho(K_t^2)}^{\rho(\Phi_t^2)}\Ld_{\Phi_t^2}^{\bar L_t^2} 
\nald^{K_t^2}_{\bar L_{t}^2}\right\rangle=
\left\langle\lgrado_{\rho(\Phi_t^2)}^{\rho(\Phi_t^2)}\Ld_{\Phi_t^2}^{\bar L_t^2} 
\nald^{\Phi_t^2}_{\bar L_{t}^2}\right\rangle
\]
by \eqref{lnal}. Taking into account the third term in \eqref{bmic0}, we get exactly the same contribution 
as in \eqref{motive}, which together with \eqref{bmic21} yields the fifth term in the statement of the lemma
for $\alpha_t^2>\alpha_p^1$.

To treat the third term in \eqref{bmic2} note that 
\[
\lgrado_{\rho(K_t^2)}^{\rho(K_t^2)\cup\bar K_{p-1}^1}=
\Lo_{\rho(K_t^2)}^{L_p^1}\nalo_{L_p^1}^{\rho(K_t^2)\cup\bar K_{p-1}^1}
\]
 and that the matrix
$\Ld_{K_t^2}^{\bar L_{t-1}^2\setminus\Psi_t^2}$ vanishes. Consequently, the term in question equals
\begin{multline*}
\left\langle\Lo_{\rho(K_t^2)}^{L_p^1}\nalo_{L_p^1}^{\rho(K_t^2)\cup\bar K_{p-1}^1}
\Ld_{K_t^2\cup \sigma(\bar K_{p-1}^1)}^{\bar L_{t-1}^2\setminus\Psi_t^2}
\nald^{K_t^2}_{\bar L_{t-1}^2\setminus\Psi_t^2}\right\rangle\\
=\left\langle\Lo_{\rho(K_t^2)}^{L_p^1}\nalo_{L_p^1}^{\bar K_{p-1}^1}
\Lo_{\bar K_{p-1}^1}^{\bar L_{p-1}^2\setminus\Psi_p^1}
\nald^{K_t^2}_{\bar L_{t-1}^2\setminus\Psi_t^2}\right\rangle,
\end{multline*}
since $\Ld_{\sigma(\bar K_{p-1}^1)}^{\bar L_{t-1}^2\setminus\Psi_t^2} =\Lo_{\bar K_{p-1}^1}^{\bar L_{p-1}^2\setminus\Psi_p^1}$.
The obtained expression vanishes since 
\[
\nalo_{L_p^1}^{\bar K_{p-1}^1}
\Lo_{\bar K_{p-1}^1}^{\bar L_{p-1}^2\setminus\Psi_p^1}=\gradlo_{L_p^1}^{\bar L_{p-1}^2\setminus\Psi_p^1}
\]
vanishes by \eqref{lnal}.

Further, consider the sixth term in \eqref{zone1}. Using \eqref{bmic} we arrive at
\begin{multline}\label{bmicmie}
\badC_t-\badBp_t+\badE_t=\left\langle\gradlo_{\Psi_p^1}^{\Psi_p^1}
\gradld_{\Psi_t^2}^{\Psi_t^2}\right\rangle\\
-\left\langle\gradlo_{\Psi_p^1}^{L_p^1} 
\nald^{K_t^2}_{L_{t}^2}\Ld^{\Psi_t^2}_{K_{t}^2}\right\rangle
+\left\langle\lgrado_{\Phi_p^1}^{\Phi_p^1} 
\Ld_{\Phi_t^2}^{L_{t}^2}\nald_{L_{t}^2}^{\Phi_t^2}\right\rangle.
\end{multline}
Clearly, the first term in \eqref{bmicmie} is a constant.

Note that the blocks $X_{I_p^1}^{J_p^1}$ and $X_{I_t^2}^{J_t^2}$ coincide. Similarly to the analysis above, 
we consider two nonempty sequences of blocks \eqref{blocks} 
(the cases $p=1$ or $t=1$ are trivial). We have the same four possibilities as before, and, additionally,

(v) the sequences coincide. 

Each one of the possibilities (i)--(iv) is further split into two: 

a) the exit point of $X_{I_t^2}^{J_t^2}$ lies below the exit point of $X_{I_p^1}^{J_p^1}$;

b) the exit point of $X_{I_t^2}^{J_t^2}$ lies above the exit point of $X_{I_p^1}^{J_p^1}$. 

{\it Case\/} (ia): Clearly, this can be possible only if $\bar I_{p-m}^1\subset \bar I_{t-m}^2$, see Fig.~\ref{fig:chainya}.

\begin{figure}[ht]
\begin{center}
\includegraphics[height=7cm]{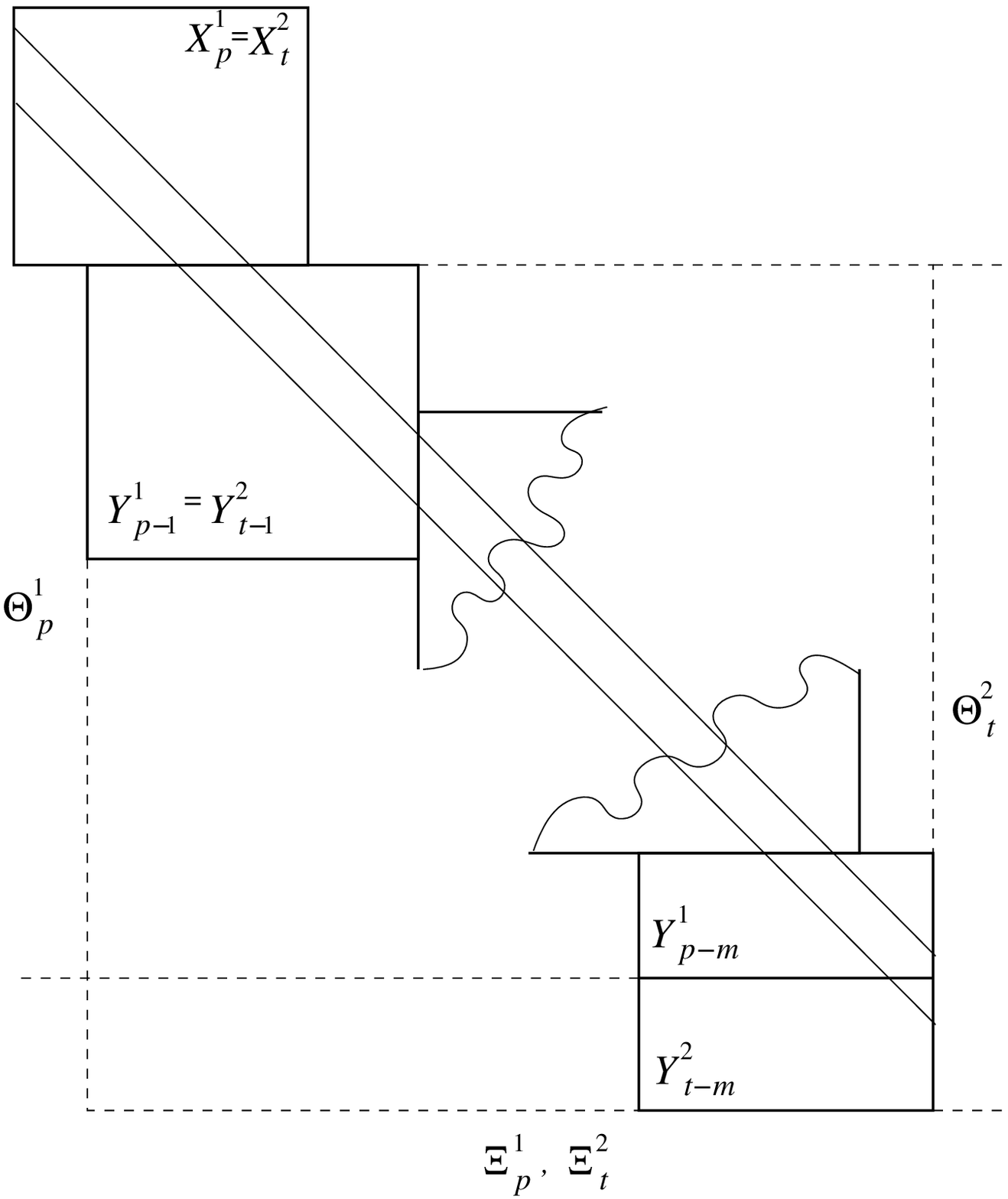}
\caption{Case (ia)}
\label{fig:chainya}
\end{center}
\end{figure}

Define $\Theta_r^i$ and $\Xi_r^i$ in the same way as in \eqref{thetaxi}. Using equalities \eqref{superdec} and 
$\Ld_{K_t^2}^{\Psi_t^2}=\Lo_{K_p^1}^{\Psi_p^1}$, 
we rewrite the second term in \eqref{bmicmie} as
\begin{multline*}
-\left\langle\nald^{K_t^2}_{L_{t}^2}
\lgrado_{K_p^1}^{K_p^1\cup\Theta_p^1}
\Lo_{K_p^1\cup\Theta_p^1}^{L_p^1}\right\rangle\\
+\left\langle\nald^{K_t^2}_{L_{t}^2}
\Lo_{K_p^1}^{L_p^1\setminus\Psi_p^1}
\nalo_{L_p^1\setminus\Psi_p^1}^{K_p^1\cup\Theta_p^1}
\Lo_{K_p^1\cup\Theta_p^1}^{L_p^1}\right\rangle\\
+\left\langle\nald^{K_t^2}_{L_{t}^2}
\Lo_{K_p^1}^{\bar L_p^1}
\nalo_{\bar L_p^1}^{K_p^1\cup\Theta_p^1}
\Lo_{K_p^1\cup\Theta_p^1}^{L_p^1}\right\rangle.
\end{multline*}
Note that $\Lo_{K_p^1}^{L_p^1\setminus\Psi_p^1}=\Ld_{K_t^2}^{L_t^2\setminus\Psi_t^2}$ and
\begin{equation*}
\begin{aligned}
\nalo_{L_p^1\setminus\Psi_p^1}^{K_p^1\cup\Theta_p^1}
\Lo_{K_p^1\cup\Theta_p^1}^{L_p^1}&=
\gradlo_{L_p^1\setminus\Psi_p^1}^{L_p^1},\\
\nald^{K_t^2}_{L_{t}^2}
\Ld_{K_t^2}^{L_t^2\setminus\Psi_t^2}&=
\gradld^{L_t^2\setminus\Psi_t^2}_{L_t^2},
\end{aligned}
\end{equation*}
hence the second term in the expression above equals 
\[
\left\langle\gradlo_{L_p^1\setminus\Psi_p^1}^{L_p^1}\gradld^{L_t^2\setminus\Psi_t^2}_{L_t^2}\right\rangle=
\left\langle\gradlo_{L_p^1\setminus\Psi_p^1}^{L_p^1\setminus\Psi_p^1}
\gradld^{L_t^2\setminus\Psi_t^2}_{L_t^2\setminus\Psi_t^2}\right\rangle=\text{const},
\]
which together with the first term in \eqref{bmicmie} yields the eighth term in the statement of the lemma, as well as the fourth term for $\alpha_t^2=\alpha_p^1$.

Finally, 
$\nalo_{\bar L_p^1}^{K_p^1\cup\Theta_p^1}$ vanishes since the columns $\bar L_p^1$ are strictly to the left of 
$K_p^1\cup\Theta_p^1$, so the third term in the expression above vanishes.

Note that
\begin{multline*}
\lgrado_{K_p^1}^{K_p^1\cup\Theta_p^1}
\Lo_{K_p^1\cup\Theta_p^1}^{L_p^1}\\=
\lgrado_{K_p^1}^{\Phi_p^1}
\Lo_{\Phi_p^1}^{L_p^1}+
\lgrado_{K_p^1}^{K_p^1\setminus\Phi_p^1}
\Lo_{K_p^1\setminus\Phi_p^1}^{L_p^1}+
\lgrado_{K_p^1}^{\Theta_p^1}
\Lo_{\Theta_p^1}^{L_p^1}.
\end{multline*}
By \eqref{lnal}, $\lgrado_{K_p^1\setminus\Phi_p^1}^{\Phi_p^1}$ vanishes; 
besides, $\Ld_{\Phi_t^2}^{L_{t}^2}=\Lo_{\Phi_p^1}^{L_{p}^1}$. Hence
\begin{equation*}
-\left\langle\lgrado_{K_p^1}^{\Phi_p^1}
\Lo_{\Phi_p^1}^{L_{p}^1}\nald_{L_{t}^2}^{K_t^2}\right\rangle
=-\left\langle\lgrado_{\Phi_p^1}^{\Phi_p^1} 
\Ld_{\Phi_t^2}^{L_{t}^2}\nald_{L_{t}^2}^{\Phi_t^2}\right\rangle,
\end{equation*}
that is, the first term in the equation above cancels the third term in \eqref{bmicmie}. 
Further, $\Lo_{K_p^1\setminus\Phi_p^1}^{L_{p}^1}=\Ld_{K_t^2\setminus\Phi_t^2}^{L_{t}^2}$ and
\[
\Ld_{K_t^2\setminus\Phi_t^2}^{L_{t}^2}\nald_{L_{t}^2}^{K_t^2}=
\lgradd_{K_t^2\setminus\Phi_t^2}^{K_t^2},
\]
and hence
\begin{multline}\label{foroneb}
-\left\langle\lgrado_{K_p^1}^{K_p^1\setminus\Phi_p^1} 
\Lo_{K_p^1\setminus\Phi_p^1}^{L_{p}^1}\nald_{L_{t}^2}^{K_t^2}\right\rangle
=-\left\langle\lgrado_{K_p^1}^{K_p^1\setminus\Phi_p^1} 
\lgradd_{K_{t}^2\setminus\Phi_t^2}^{K_t^2}\right\rangle\\
=-\left\langle\lgrado_{K_p^1\setminus\Phi_p^1}^{K_p^1\setminus\Phi_p^1} 
\lgradd_{K_{t}^2\setminus\Phi_t^2}^{K_t^2\setminus\Phi_t^2}\right\rangle
=\text{const}.
\end{multline}

The remaining contribution of \eqref{bmicmie} equals
\begin{equation}\label{fortwoa}
-\left\langle\lgrado_{K_p^1}^{\Theta_p^1} 
\Lo_{\Theta_p^1}^{L_{p}^1}\nald_{L_{t}^2}^{K_t^2}\right\rangle=
-\left\langle\lgrado_{\Phi_p^1}^{\Theta_p^1} 
\Lo_{\Theta_p^1}^{\Psi_{p}^1}\nald_{\Psi_{t}^2}^{\Phi_t^2}\right\rangle,
\end{equation}
since the deleted columns and rows of $\L^1\nabla_{\L}^1$ and $\L^1$ vanish. 

Next we use the injection $\sigma$ (similar to the one used in Case (i) above but acting in the opposite direction) to rewrite 
$\Lo_{\Theta_p^1}^{\Psi_{p}^1}=\Ld_{\sigma(\Theta_p^1)}^{\Psi_t^2}$, and to write
\[
\Ld_{\sigma(\Theta_p^1)}^{\Psi_t^2}\nald_{\Psi_{t}^2}^{\Phi_t^2}=
\lgradd_{\sigma(\Theta_p^1)}^{\Phi_t^2}-
\Ld_{\sigma(\Theta_p^1)}^{\Xi_t^2\setminus\Psi_t^2}
\nald_{\Xi_t^2\setminus\Psi_{t}^2}^{\Phi_t^2},
\]
which transforms the above contribution into
\[
-\left\langle\lgrado_{\Phi_p^1}^{\Theta_p^1}
\lgradd_{\sigma(\Theta_p^1)}^{\Phi_t^2}\right\rangle+
\left\langle\lgrado_{\Phi_p^1}^{\Theta_p^1}
\Ld_{\sigma(\Theta_p^1)}^{\Xi_t^2\setminus\Psi_t^2}
\nald_{\Xi_t^2\setminus\Psi_{t}^2}^{\Phi_t^2}\right\rangle.
\]
Clearly, the first term above vanishes since $\lgradd_{\sigma(\Theta_p^1)}^{\Phi_t^2}=0$. 
The second one vanishes since 
\begin{equation}\label{fortwoaa}
\lgrado_{\Phi_p^1}^{\Theta_p^1}=
\Lo_{\Phi_p^1}^{L_p^1\cup\bar L_p^1}
\nalo_{L_p^1\cup\bar L_p^1}^{\Theta_p^1},
\end{equation}
$\Ld_{\sigma(\Theta_p^1)}^{\Xi_t^2\setminus\Psi_t^2}=\Lo_{\Theta_p^1}^{\Xi_p^1\setminus\Psi_p^1}$ and
\[
\nalo_{L_p^1\cup\bar L_p^1}^{\Theta_p^1}
\Lo_{\Theta_p^1}^{\Xi_p^1\setminus\Psi_p^1}=
\gradlo_{L_p^1\cup\bar L_p^1}^{\Xi_p^1\setminus\Psi_p^1}=0.
\]

{\it Case\/} (ib): Clearly, this can be possible only if $\bar I_{t-m}^2\subset \bar I_{p-m}^1$, 
cf.~Fig.~\ref{fig:chainy}.
We proceed exactly as in Case (ia), retaining the definitions of 
$\Theta_r$ and $\Xi_r$, and arrive at \eqref{fortwoa}. 
As a result, we obtain two contributions similar to those obtained in Case (ia): one is similar to the 
eighth term in the statement of the lemma and is given by
\begin{equation}\label{tbdes}
{\sum_{{\beta_p^1=\beta_t^2}\atop{\alpha_p^1=\alpha_t^2}}}^\ea
\left\langle\gradlo_{L_p^1}^{L_p^1}\gradld_{L_t^2}^{L_t^2}\right\rangle,
\end{equation} 
while the other together with \eqref{foroneb} yields the fifth term in the statement of the lemma
for $\alpha_t^2=\alpha_p^1$.

Next, we note that 
$\lgrado_{\Phi_p^1}^{\Theta_p^1}=\Lo_{\Phi_p^1}^{L_p^1}\nalo_{L_p^1}^{\Theta_p^1}$, since
$\nalo_{\bar L_p^1}^{\Theta_p^1}=0$. Applying $\Lo_{\Phi_p^1}^{L_p^1}=\Ld_{\Phi_t^2}^{L_t^2}$, 
we arrive at
\[
-\left\langle\nalo_{L_p^1}^{\Theta_p^1}\Lo_{\Theta_p^1}^{\Psi_p^1}\nald_{\Psi_t^2}^{\Phi_t^2}
\Ld_{\Phi_t^2}^{L_t^2}\right\rangle.
\]
Note that
\begin{equation}\label{kern}
\nald_{\Psi_t^2}^{\Phi_t^2}\Ld_{\Phi_t^2}^{L_t^2}=\gradld_{\Psi_t^2}^{L_t^2}-
\nald_{\Psi_t^2}^{K_t^2\setminus\Phi_t^2}\Ld_{K_t^2\setminus\Phi_t^2}^{L_t^2}
-\nald_{\Psi_t^2}^{\Theta_t^2}\Ld_{\Theta_t^2}^{L_t^2}.
\end{equation}

To treat the first term in \eqref{kern}, we use an analog of \eqref{compli} and get
\[
-\left\langle \gradlo_{L_p^1}^{\Psi_p^1} \gradld_{\Psi_t^2}^{L_t^2} \right\rangle+
\left\langle \nalo_{L_p^1}^{K_p^1}\Lo_{K_p^1}^{\Psi_p^1} \gradld_{\Psi_t^2}^{L_t^2}\right\rangle.
\]
Clearly, the first term above equals 
\begin{equation}\label{tbdes1}
-\left\langle \gradlo_{\Psi_p^1}^{\Psi_p^1} \gradld_{\Psi_t^2}^{\Psi_t^2} \right\rangle=\text{const}.
\end{equation}

The second term above can be rewritten as
\[
\left\langle \nalo_{L_p^1}^{K_p^1}\Ld_{K_t^2}^{\Psi_t^2}\nald_{\Psi_t^2}^{K_t^2\cup\Theta_t^2}
\Ld_{K_t^2\cup\Theta_t^2}^{L_t^2}\right\rangle.
\]
Next, we write
\begin{equation}\label{kern1}
\Ld_{K_t^2}^{\Psi_t^2}\nald_{\Psi_t^2}^{K_t^2\cup\Theta_t^2}=
\lgradd_{K_t^2}^{K_t^2\cup\Theta_t^2}-\Ld_{K_t^2}^{L_t^2\setminus\Psi_t^2}
\nald_{L_t^2\setminus\Psi_t^2}^{K_t^2\cup\Theta_t^2}-
\Ld_{K_t^2}^{\bar L_t^2}\nald_{\bar L_t^2}^{K_t^2\cup\Theta_t^2}.
\end{equation}
The contribution of the first term in \eqref{kern1} can be written as
\begin{multline*}
\left\langle \lgradd_{K_t^2}^{K_t^2\cup\Theta_t^2}\Lo_{K_p^1\cup\sigma(\Theta_t^2)}^{L_p^1}
\nalo_{L_p^1}^{K_p^1}\right\rangle\\
=-\left\langle \lgradd_{K_t^2}^{K_t^2\cup\Theta_t^2}
\Lo_{K_p^1\cup\sigma(\Theta_t^2)}^{\Xi_p^1\setminus\Psi_p^1}
\nalo_{\Xi_p^1\setminus\Psi_p^1}^{K_p^1}\right\rangle\\
+\left\langle \lgradd_{K_t^2}^{K_t^2\cup\Theta_t^2}
\lgrado_{K_p^1\cup\sigma(\Theta_t^2)}^{K_p^1}\right\rangle,
\end{multline*}
where injection $\sigma$ is defined as in Case (i) above. 
The second term above equals
\[
 \left\langle \lgradd_{K_t^2}^{K_t^2}\lgrado_{K_p^1}^{K_p^1}\right\rangle=\text{const},
\]
and yields the seventh term in the statement of the lemma,
while the first term equals
\[
 -\left\langle \Ld_{K_t^2}^{L_t^2\cup \bar L_t^2}\nald_{L_t^2\cup \bar L_t^2}^{K_t^2\cup\Theta_t^2}
\Ld_{K_t^2\cup\Theta_t^2}^{\Xi_t^2\setminus\Psi_t^2}
\nalo_{\Xi_p^1\setminus\Psi_p^1}^{K_p^1}\right\rangle
\]
and vanishes, since
\[
\nald_{L_t^2\cup \bar L_t^2}^{K_t^2\cup\Theta_t^2}
\Ld_{K_t^2\cup\Theta_t^2}^{\Xi_t^2\setminus\Psi_t^2}=
\gradld_{L_t^2\cup \bar L_t^2}^{\Xi_t^2\setminus\Psi_t^2}=0
\]
by \eqref{lnal}.

The contribution of the second term in \eqref{kern1} equals
\begin{multline*}
-\left\langle \nalo_{L_p^1}^{K_p^1}\Lo_{K_p^1}^{L_p^1\setminus\Psi_p^1}
\nald_{L_t^2\setminus\Psi_t^2}^{K_t^2\cup\Theta_t^2}
\Ld_{K_t^2\cup\Theta_t^2}^{L_t^2}\right\rangle\\
=-\left\langle \gradlo_{L_p^1}^{L_p^1\setminus\Psi_p^1}
\gradld_{L_t^2\setminus\Psi_t^2}^{L_t^2}\right\rangle
=-\left\langle \gradlo_{L_p^1\setminus\Psi_p^1}^{L_p^1\setminus\Psi_p^1}
\gradld_{L_t^2\setminus\Psi_t^2}^{L_t^2\setminus\Psi_t^2}\right\rangle
=\text{const}
\end{multline*}
and together with \eqref{tbdes1} cancels the contribution of \eqref{tbdes}.

The contribution of the third term in \eqref{kern1} equals
\[
-\left\langle \nalo_{L_p^1}^{K_p^1}\Ld_{K_t^2}^{\bar L_t^2}
\nald_{\bar L_t^2}^{K_t^2\cup\Theta_t^2}\Ld_{K_t^2\cup\Theta_t^2}^{L_t^2}\right\rangle
\]
and vanishes, since
\[
\nald_{\bar L_t^2}^{K_t^2\cup\Theta_t^2}\Ld_{K_t^2\cup\Theta_t^2}^{L_t^2}=
\gradld_{\bar L_t^2}^{L_t^2}=0
\]
by \eqref{lnal}.

The contribution of the second term in \eqref{kern} equals
\[
\left\langle \Lo_{K_p^1\setminus\Phi_p^1}^{L_p^1}
\nalo_{L_p^1}^{\Theta_p^1}\Lo_{\Theta_p^1}^{\Psi_p^1}
\nald_{\Psi_t^2}^{K_t^2\setminus\Phi_t^2}\right\rangle
\]
and vanishes, since
\[
\Lo_{K_p^1\setminus\Phi_p^1}^{L_p^1}
\nalo_{L_p^1}^{\Theta_p^1}=\lgrado_{K_p^1\setminus\Phi_p^1}^{\Theta_p^1}=0;
\]
the latter equality follows from the fact 
$\lgrado_{(K_p^1\setminus\Phi_p^1)\cup\Theta_p^1}^{(K_p^1\setminus\Phi_p^1)\cup\Theta_p^1}=\one$.

The contribution of the third term in \eqref{kern} equals
\[
\left\langle \Lo_{\sigma(\Theta_t^2)}^{L_r^1}\nalo_{L_p^1}^{\Theta_p^1}
\Lo_{\Theta_p^1}^{\Psi_p^1}\nald_{\Psi_t^2}^{\Theta_t^2}\right\rangle
\]
via $\Ld_{\Theta_t^2}^{L_t^2}=\Lo_{\sigma(\Theta_t^2)}^{L_r^1}$. Note that
\[
\Lo_{\sigma(\Theta_t^2)}^{L_p^1}\nalo_{L_p^1}^{\Theta_p^1}=
\lgrado_{\sigma(\Theta_t^2)}^{\Theta_p^1}=
\begin{bmatrix} \one & 0 \end{bmatrix},
\]
and hence $\Lo_{\sigma(\Theta_t^2)}^{L_p^1}\nalo_{L_p^1}^{\Theta_p^1}
\Lo_{\Theta_p^1}^{\Psi_p^1}=\Ld_{\Theta_t^2}^{\Psi_t^2}$. Consequently, the contribution in question
equals 
\[
-\left\langle \Ld_{\Theta_t^2}^{\Psi_t^2}\nald_{\Psi_t^2}^{\Theta_t^2}\right\rangle=
-\left\langle \Ld_{\bar K_{t-1}^2}^{\Psi_t^2}\nald_{\Psi_t^2}^{\bar K_{t-1}^2}\right\rangle,
\]
which is a constant by Lemma \ref{partrace} yielding the sixth term in the statement of the lemma.

{\it Case\/} (iia): Clearly, this can be possible only if $J_{t-m}^2\subset J_{p-m}^1$, see Fig.~\ref{fig:chainxa}. 

\begin{figure}[ht]
\begin{center}
\includegraphics[height=7cm]{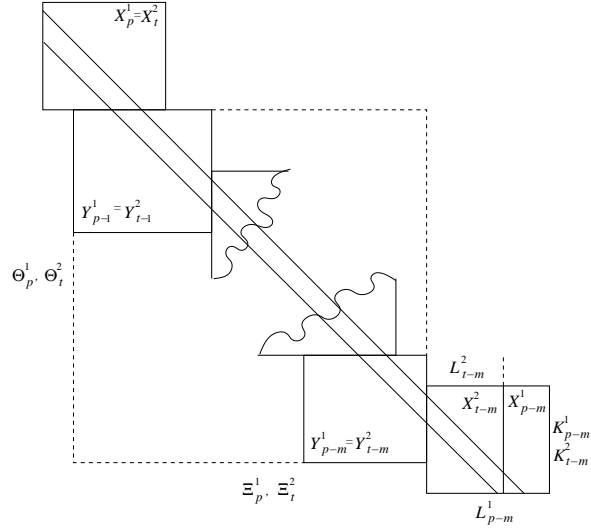}
\caption{Case (iia)}
\label{fig:chainxa}
\end{center}
\end{figure}

We proceed exactly as in Case (ia), retaining the definitions of 
$\Theta_r^i$ and $\Xi_r^i$, and arrive at \eqref{fortwoa}. 
Next, we apply  $\Lo_{\Theta_p^1}^{\Psi_{p}^1}=\Ld_{\Theta_t^2}^{\Psi_t^2}$, and note that
\begin{equation*}
\Ld_{\Theta_t^2}^{\Psi_t^2}\nald_{\Psi_{t}^2}^{\Phi_t^2}=
\lgradd_{\Theta_t^2}^{\Phi_t^2}-
\Ld_{\Theta_t^2}^{\Xi_t^2\setminus\Psi_t^2}
\nald_{\Xi_t^2\setminus\Psi_{t}^2}^{\Phi_t^2}-
\Ld_{\Theta_t^2}^{L_{t-m}^2}
\nald_{L_{t-m}^2}^{\Phi_t^2}.
\end{equation*}
Consequently, \eqref{fortwoa} can be written as a sum of three terms. The first two are treated exactly as in Case (ia)
and yield the same contribution.  With the help of \eqref{fortwoaa}, the third term can be rewritten as
\[
\left\langle\Lo_{\Phi_p^1}^{L_p^1\cup \bar L_p^1} \nalo_{L_p^1\cup \bar L_p^1}^{\Theta_p^1}
\Ld_{\Theta_t^2}^{L_{t-m}^2}\nald_{L_{t-m}^2}^{\Phi_t^2}\right\rangle.
\]
Next, we use the injection $\rho$ (similar to the one defined in Section~\ref{etaletasec}) to write 
$\Ld_{\Theta_t^2}^{L_{t-m}^2}=
\Lo_{\Theta_p^1}^{\rho(L_{t-m}^2)}$, which together with
\[
\nalo_{L_p^1\cup \bar L_p^1}^{\Theta_p^1}\Lo_{\Theta_p^1}^{\rho(L_{t-m}^2)}+
\nalo_{L_p^1\cup \bar L_p^1}^{K_{p-m}^1\setminus\Phi_{p-m}^1}
\Lo_{K_{p-m}^1\setminus\Phi_{p-m}^1}^{\rho(L_{t-m}^2)}=
\gradlo_{L_p^1\cup \bar L_p^1}^{\rho(L_{t-m}^2)}=0
\]
transforms the third term into
\[
-\left\langle\Lo_{\Phi_p^1}^{L_p^1\cup \bar L_p^1}
\nalo_{L_p^1\cup \bar L_p^1}^{K_{p-m}^1\setminus\Phi_{p-m}^1}
\Lo_{K_{p-m}^1\setminus\Phi_{p-m}^1}^{\rho(L_{t-m}^2)}\nald_{L_{t-m}^2}^{\Phi_t^2}\right\rangle.
\]
Finally, we use $\Lo_{K_{p-m}^1\setminus\Phi_{p-m}^1}^{\rho(L_{t-m}^2)}=
\Ld_{K_{t-m}^2\setminus\Phi_{t-m}^2}^{L_{t-m}^2}$ and
\[
\Ld_{K_{t-m}^2\setminus\Phi_{t-m}^2}^{L_{t-m}^2}\nald_{L_{t-m}^2}^{\Phi_t^2}=
\lgradd_{K_{t-m}^2\setminus\Phi_{t-m}^2}^{\Phi_t^2}=0
\]
to make sure that the contribution of this term vanishes.

{\it Case\/} (iib): Clearly, this can be possible only if $J_{p-m}^1\subset J_{t-m}^2$, cf.~Fig.~\ref{fig:chainx}. 
We proceed exactly as in Case (ib), with the only difference: the contribution of the first term in \eqref{kern1} contains an additional term
\[
 \left\langle
\Ld_{K_t^2}^{L_t^2\cup\bar L_t^2}\nald_{L_t^2\cup\bar L_t^2}^{K_t^2\cup\Theta_t^2}
\Lo_{K_p^1\cup\Theta_p^1}^{L_{p-m}^1}\nalo_{L_{p-m}^1}^{K_p^1}\right\rangle,
\]
which vanishes since $\Lo_{K_p^1\cup\Theta_p^1}^{L_{p-m}^1}=\Ld_{K_t^2\cup\Theta_t^2}^{\rho(L_{p-m}^1)}$ 
and
\[
\nald_{L_t^2\cup\bar L_t^2}^{K_t^2\cup\Theta_t^2}\Ld_{K_t^2\cup\Theta_t^2}^{\rho(L_{p-m}^1)}=
\gradld_{L_t^2\cup\bar L_t^2}^{\rho(L_{p-m}^1)}=0.
\]

{\it Case\/} (iiia):  This case is only possible if the last block in the first sequence is of type $X$, 
see Fig.~\ref{fig:chainsa} on the right. Assuming that this block is $X_{I_{p-m+1}^1}^{J_{p-m+1}^1}$, we proceed exactly as in Case (ia) with
$\bar K_{p-m}^1=\varnothing$ and get the same contribution.

\begin{figure}[ht]
\begin{center}
\includegraphics[height=6.5cm]{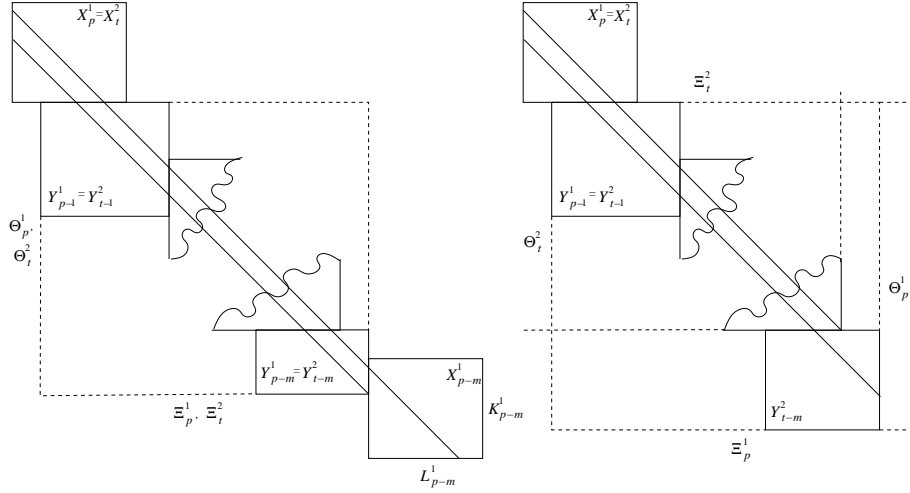}
\caption{Cases (iiia) and (iva)}
\label{fig:chainsa}
\end{center}
\end{figure}

{\it Case\/} (iiib): This case is only possible if the last block in the first sequence is of type $Y$, 
cf.~Fig.~\ref{fig:chains}. 
Assuming that this block is $Y_{\bar I_{p-m}^1}^{\bar J_{p-m}^1}$, we proceed exactly as in Case (iib) with 
$L_{p-m}^1=\varnothing$ and get the same contribution.

{\it Case\/} (iva): This case is only possible if the last block in the second sequence is of type $Y$, 
see Fig.~\ref{fig:chainsa} on the left. 
Assuming that this block is $Y_{\bar I_{t-m}^2}^{\bar J_{t-m}^2}$, we proceed exactly as in Case (iia) with 
$L_{t-m}^2=\varnothing$ and get the same contribution.
 
{\it Case\/} (ivb): This case is only possible if the last block in the second sequence is of type $X$, 
cf.~Fig.~\ref{fig:chains}. Assuming that this block is $X_{I_{t-m+1}^2}^{J_{t-m+1}^2}$, we proceed exactly as in Case (ib) with
$\bar K_{t-m}^2=\varnothing$ and get the same contribution.

{\it Case\/} (v): This case is only possible if the exit points of $X_{I_t^2}^{J_t^2}$ and $X_{I_p^1}^{J_p^1}$ coincide. 
The last block in both sequences is either of type $Y$ or of type $X$. In the former case we proceed as in Case (iva), and in the latter case, as in Case (iiia).

The last two terms in the statement of the lemma are obtained from the last two terms in \eqref{zone1} by taking into
account that $\lgrado_{\sigma(\Phi_t^2)}^{\sigma(\Phi_t^2)}$ in the expression \eqref{bad4} for $\badE_t$ and
$\gradlo_{\sigma(\Psi_{t+1}^2)}^{\sigma(\Psi_{t+1}^2)}$ in the expression \eqref{bad5} for $\badF_t$ are unit matrices, 
since in both cases $\sigma$ is an injection into the block $Y_{I_{p-1}^1}^{J_{p-1}^1}$. The remaining traces are treated in
the same way as in \eqref{trace}.
\end{proof}

\subsubsection{Case 2: $\hat l^1$ lies in rows $\bar K^1_{p-1}$ and columns $\bar L^1_{p-1}$}\label{case2} 
Similarly to the previous case,
$\lgrado_{\rho(\Phi_t^2)}^{\rho(\Phi_t^2)}$ in the expression \eqref{bad1} for $\badB_t$ in \eqref{etaleta}
and in the expression \eqref{bad6} for $\badG_t$ in \eqref{xinax}, $\lgrado_{\sigma(\Phi_t^2)}^{\sigma(\Phi_t^2)}$
in the expression \eqref{bad4} for $\badE_t$ in the fifth term in \eqref{xinax},  
as well as 
$\gradlo_{\Psi_p^1}^{L_p^1\setminus\Psi_p^1}$ in the expression \eqref{bad1eq} for $\badBp_t$ in \eqref{etaleta} vanish. 
Further, the contributions of $\badC_t$ to \eqref{etaleta} and to \eqref{xinay} cancel each other  for any $t$ such 
that $\beta_p^1>\beta_t^2$, while the contributions of $\badE_t$ to \eqref{etareta} and to \eqref{xinax} cancel each other  for any $t$ such that $\bar\alpha_{p-1}^1>\bar\alpha_t^2$.
Consequently, we arrive at
\begin{multline}\label{zone2}
\sum \{\badF_t-\badD_t :\ \bar\alpha_{p-1}^1>\bar\alpha_t^2, \bar\beta_{p-1}^1<\bar\beta_t^2 \}
+\sum \{\badC_{t+1}-\badD_{t} :\ \bar\alpha_{p-1}^1>\bar\alpha_t^2, \bar\beta_{p-1}^1=\bar\beta_t^2 \}\\
+\sum \{\badC_{t+1} :\  \bar\alpha_{p-1}^1<\bar\alpha_{t}^2, \bar\beta_{p-1}^1=\bar\beta_t^2  \}
+\sum \{\badE_{t}-\badDp_{t} :\ \bar\alpha_{p-1}^1=\bar\alpha_{t}^2, \bar\beta_{p-1}^1>\bar\beta_t^2  \}\\
+\sum \{\badE_{t}+\badF_t-\badDp_t :\ \bar\alpha_{p-1}^1=\bar\alpha_t^2, \bar\beta_{p-1}^1<\bar\beta_t^2 \}\\
+\sum \{\badE_{t}+\badC_{t+1}-\badDp_{t} :\  \bar\alpha_{p-1}^1=\bar\alpha_{t}^2, \bar\beta_{p-1}^1=\bar\beta_t^2 \}.
\end{multline}
A direct comparison shows that \eqref{zone2} can be obtained directly from the first six terms of \eqref{zone1} via 
switching the roles of $B^*_t$ and $\bar B^*_t$, replacing $\beta^*_t$ with $\bar\alpha^*_t$ and $\alpha^*_t$ with
$\bar\beta^*_t$, and shifting indices when necessary.

\begin{lemma}\label{zone2lemma}
{\rm (i)} Expression \eqref{zone2} is given by
\begin{multline*}
\sum_{{{\bar\alpha_{t-1}^2<\bar\alpha_{p-1}^1}\atop {\bar\beta_{t-1}^2>\bar\beta_{p-1}^1}}}
\left\langle\gradlo^{\sigma(\Psi_t^2)}_{\sigma(\Psi_t^2)}\gradld^{\Psi_t^2}_{\Psi_t^2}\right\rangle
+\sum_{{{\bar\alpha_{t-1}^2\ne\bar\alpha_{p-1}^1}\atop {\bar\beta_{t-1}^2=\bar\beta_{p-1}^1}}}
\left\langle\gradlo^{\Psi_p^1}_{\Psi_p^1}\gradld^{\Psi_t^2}_{\Psi_t^2}\right\rangle\\
+\sum_{{{\bar\alpha_{t-1}^2=\bar\alpha_{p-1}^1}\atop {\bar\beta_{t-1}^2<\bar\beta_{p-1}^1}}}
\left\langle\Ld^{L_{t-1}^2}_{\bar K_{t-1}^2}\nald^{\bar K_{t-1}^2}_{L_{t-1}^2}\right\rangle
+\sum_{{{\bar\alpha_{t-1}^2=\bar\alpha_{p-1}^1}\atop {\bar\beta_{t-1}^2\ge\bar\beta_{p-1}^1}}}
\left\langle\lgrado_{\bar K_{p-1}^1}^{\bar K_{p-1}^1}\lgradd_{\bar K_{t-1}^2}^{\bar K_{t-1}^2}\right\rangle
\\
-\sum_{{{\bar\alpha_{t-1}^2=\bar\alpha_{p-1}^1}\atop {\bar\beta_{t-1}^2\ge\bar\beta_{p-1}^1}}}
\left\langle\gradlo_{\sigma(\bar L_{t-1}^2\setminus\Psi_{t}^2)}^{\sigma(\bar L_{t-1}^2\setminus\Psi_{t}^2)}
\gradld_{\bar L_{t-1}^2\setminus\Psi_{t}^2}^{\bar L_{t-1}^2\setminus\Psi_{t}^2}\right\rangle
+{\sum_{{{\bar\alpha_{t-1}^2=\bar\alpha_{p-1}^1}\atop {\bar\beta_{t-1}^2=\bar\beta_{p-1}^1}}}}^{\hskip -9pt\el}
\left\langle\Ld^{L_{t-1}^2}_{\Phi_{t-1}^2}\nald^{\Phi_{t-1}^2}_{L_{t-1}^2}\right\rangle\\
+{\sum_{{{\bar\alpha_{t-1}^2=\bar\alpha_{p-1}^1}\atop {\bar\beta_{t-1}^2=\bar\beta_{p-1}^1}}}}^{\hskip -9pt\el}
\left\langle\gradlo_{\bar L_{p-1}^1}^{\bar L_{p-1}^1}\gradld_{\bar L_{t-1}^2}^{\bar L_{t-1}^2}\right\rangle
-{\sum_{{{\bar\alpha_{t-1}^2=\bar\alpha_{p-1}^1}\atop {\bar\beta_{t-1}^2=\bar\beta_{p-1}^1}}}}^{\hskip -9pt \el}
\left\langle\lgrado_{\bar K_{p-1}^1}^{\bar K_{p-1}^1}\lgradd_{\bar K_{t-1}^2}^{\bar K_{t-1}^2}\right\rangle,
\end{multline*}
where $\sum^\el$  is taken over the cases when the exit point of 
$Y_{I_{t-1}^2}^{J_{t-1}^2}$ lies to the left of the exit point
of $Y_{I_{p-1}^1}^{J_{p-1}^1}$. 

{\rm (ii)} Each summand in the expression above is a constant.
\end{lemma}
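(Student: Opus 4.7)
My plan is to exploit the striking parallelism between expressions \eqref{zone1} and \eqref{zone2} that was pointed out just before the lemma statement. The substitution $B_t^* \leftrightarrow \bar B_t^*$, $\beta_t^* \leftrightarrow \bar\alpha_t^*$, $\alpha_t^* \leftrightarrow \bar\beta_t^*$ (together with appropriate index shifts $t \to t-1$ reflecting that in Case 2 we are looking at $Y$-blocks $Y^{\bar J^1_{p-1}}_{\bar I^1_{p-1}}$ of $\L^1$ and $Y^{\bar J^2_{t-1}}_{\bar I^2_{t-1}}$ of $\L^2$ rather than the $X$-blocks of Case 1) is not merely formal: it reflects the transposition symmetry that exchanges the roles of rows and columns, and of $X$- and $Y$-blocks, in the geometric picture of Fig.~\ref{fig:ladder}. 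Under this symmetry, the injections $\rho$ and $\sigma$ trade places, and the roles of the sets $\Phi_t^i, K_t^i, L_t^i$ are swapped with those of $\Psi_t^i, \bar K_{t-1}^i, \bar L_{t-1}^i$, respectively.

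First, I will verify term-by-term that this substitution transforms the six sums in \eqref{zone1} exactly into the six sums in \eqref{zone2}. For instance, $\badG_t - \badB_t$ (with $\beta_p^1 > \beta_t^2$, $\alpha_p^1 < \alpha_t^2$) becomes $\badF_t - \badD_t$ (with $\bar\alpha_{p-1}^1 > \bar\alpha_t^2$, $\bar\beta_{p-1}^1 < \bar\beta_t^2$) since \eqref{bad6} and \eqref{bad5} are obtained from \eqref{bad1} and \eqref{bad3} by precisely this transposition. The remaining correspondences follow analogously from comparing \eqref{bad1}--\eqref{bad6} pairwise.

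Then I will rerun the case analysis of Lemma \ref{zone1lemma} verbatim, but applied to the transposed geometric picture. The analog of identity \eqref{compli} is \eqref{compli2}, which was already established during the treatment of $\langle (\ceta_R^1)_{\ge}, (\ceta_R^2)_{\le}\rangle$ in Lemma \ref{etaretalemma}. The cases (i)--(v), subdivided into (a) and (b) according to whether the exit point of the $Y$-block in $\L^2$ lies to the left or to the right of the exit point of the corresponding $Y$-block in $\L^1$, proceed identically after swapping $K$'s with $\bar L$'s (up to index shifts) and applying \eqref{lnal} together with the vanishing properties of $\nabla_{\L} \L$ and $\L \nabla_{\L}$ outside their respective triangular supports.

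The main obstacle, conceptually, is making sure the index shifts are tracked correctly: in Case 2, the vertex $\hat l^1$ sits in the block $Y_{\bar I_{p-1}^1}^{\bar J_{p-1}^1}$, so the analog of the condition \eqref{blockp} shifts $p$ to $p-1$ in all comparisons with the second matrix. Once the correct dictionary is in place, the three auxiliary identities that drive the entire computation in Lemma \ref{zone1lemma}---namely \eqref{compli}, \eqref{lnal}, and the block-triangular structure of $\L\nabla_{\L}$ and $\nabla_{\L}\L$---transfer without modification, and the constancy of each summand follows as before, either directly from \eqref{lnal} or via Lemma \ref{partrace} applied to traces of the form $\langle \Ld_{\bar K_{t-1}^2}^{L_{t-1}^2}\nald_{L_{t-1}^2}^{\bar K_{t-1}^2}\rangle$ and $\langle \Ld^{L_{t-1}^2}_{\Phi_{t-1}^2}\nald^{\Phi_{t-1}^2}_{L_{t-1}^2}\rangle$.
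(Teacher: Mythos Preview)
Your approach is exactly the paper's: reduce Lemma~\ref{zone2lemma} to Lemma~\ref{zone1lemma} via a formal transposition dictionary and rerun the case analysis. One correction to your dictionary, though: since the symmetry exchanges rows with columns, the correct swap is $K_* \leftrightarrow \bar L_{*-1}$, $L_* \leftrightarrow \bar K_{*-1}$ (together with $\Phi_* \leftrightarrow \Psi_*$, $\rho \leftrightarrow \sigma$, interchanging upper and lower indices, and replacing right multiplication by left), not $K \leftrightarrow \bar K$, $L \leftrightarrow \bar L$ as you wrote; you can verify this by comparing, say, term~4 of Lemma~\ref{zone1lemma} with term~4 of Lemma~\ref{zone2lemma}.
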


\begin{proof}
The contributions of the terms in \eqref{zone2} can be obtained from the computation of the contributions 
of the corresponding terms in \eqref{zone1} via a formal process, which 
replaces $K_*$, $L_*$, $\bar K_{*}$, $\bar L_{*}$, $\Phi_*$, 
$\Psi_*$, $\alpha_*$, $\beta_*$, $\bar\alpha_*$, $\bar\beta_*$ and $\sum^\ea$  
by $\bar L_{*-1}$, $\bar K_{*-1}$, $L_{*}$, $K_{*}$, $\Psi_*$, $\Phi_{*-1}$, $\bar\beta_{*-1}$, 
$\bar\alpha_{*-1}$, $\beta_*$, $\alpha_*$ and $\sum^\el$,
respectively, and interchanges $\rho$ and $\sigma$. Besides, matrix multiplication from the right should be replaced by the multiplication from the left, and the upper and lower indices should be interchanged. 

As an example of this formal process, let us consider the computation of the contribution of the fourth term in \eqref{zone2}. First observe, that the expression for  $\badC_t-\badBp_t$ in \eqref{bmicinit} is
transformed to
\begin{equation*}
\left\langle \Ld_{\Phi_{t-1}^2}^{L_{t-1}^2}
\nald_{L_{t-1}^2}^{\Phi_{t-1}^2}
\lgrado_{\Phi_{p-1}^1}^{\Phi_{p-1}^1} \right\rangle
-\left\langle \Ld_{\Phi_{t-1}^2}^{\bar L_{t-1}^2}
\nald_{\bar L_{t-1}^2}^{\bar K_{t-1}^2\setminus\Phi_{t-1}^2}
\lgrado^{\Phi_{p-1}^1}_{\bar K_{p-1}^1\setminus\Phi_{p-1}^1} 
\right\rangle,
\end{equation*}
which is exactly the expression for $\badE_{t-1}-\badDp_{t-1}$ (note that the summation index in the statement of the
lemma is shifted by one with respect to the summation index in \eqref{zone2}).

Next, we apply the transformed version of \eqref{compli} (which is identical to \eqref{compli2} with shifted indices) to the first expression above and use the transformed equality
\[
\nald_{\bar L_{t-1}^2}^{\bar K_{t-1}^2\setminus\Phi_{t-1}^2}
\lgrado^{\Phi_{p-1}^1}_{\bar K_{p-1}^1\setminus\Phi_{p-1}^1} 
+\nald_{\bar L_{t-1}^2}^{\Phi_{t-1}^2}
\lgrado^{\Phi_{p-1}^1}_{\Phi_{p-1}^1}
=\nald_{\bar L_{t-1}^2}^{\bar K_{t-1}^2}
\lgrado^{\Phi_{p-1}^1}_{\bar K_{p-1}^1}
\]
to get
\begin{equation}\label{bmic3}
\badE_{t-1}-\badDp_{t-1}=\left\langle\lgradd_{\Phi_{t-1}^2}^{\Phi_{t-1}^2}
\lgrado_{\Phi_{p-1}^1}^{\Phi_{p-1}^1}
\right\rangle-\left\langle \Ld_{\Phi_{t-1}^2}^{\bar L_{t-1}^2}
\nald_{\bar L_{t-1}^2}^{\bar K_{t-1}^2}
\gradlo^{\Phi_{p-1}^1}_{\bar K_{p-1}^1}
\right\rangle,
\end{equation}
which is the transformed version of \eqref{bmic}. Clearly, the first term above is a constant. 

Note that  $\bar\beta_{p-1}^1>\bar\beta_{t-1}^2$, which is the transformed version of 
$\alpha_p^1>\alpha_t^2$ and means that the block 
$Y_{I_{p-1}^2}^{J_{p-1}^2}$ is contained completely inside the block $Y_{I_{t-1}^1}^{J_{t-1}^1}$. Similarly to
Section \ref{case1}, we consider two sequences of blocks
\begin{equation*}
\{ X_{I_{p-1}^1}^{J_{p-1}^1}, Y_{\bar I_{p-2}^1}^{\bar J_{p-2}^1},  X_{I_{p-2}^1}^{J_{p-2}^1},\dots\}\quad\text{and}
\quad\{ X_{I_{t-1}^2}^{J_{t-1}^2}, Y_{\bar I_{t-2}^2}^{\bar J_{t-2}^2}, X_{I_{t-2}^2}^{J_{t-2}^2}, \dots\}
\end{equation*}
and study the same four cases.  
Let us consider Case (i) in detail. The analogs of $\Theta_r$ and $\Xi_r$ are  
\begin{equation*}
\bar\Theta_{r-1}=K_{r-1}\cup\bigcup_{i=2}^{m}(\bar K_{r-i}\cup K_{r-i}), \qquad
\bar\Xi_{r-1}=L_{r-1}\cup\bigcup_{i=2}^{m} (\bar L_{r-i}\cup L_{r-i}). 
\end{equation*}
We add the correspondence $\Theta_* \mapsto \bar\Xi_{*-1}$ and $\Xi_*\mapsto \bar\Theta_{*-1}$, which turns
the above relations into the transformed version of \eqref{thetaxi}.

Note that the matrix $\Ld_{\bar\Theta_{t-1}^2}^{\bar\Xi_{t-1}^2}$ coincides with a proper submatrix of 
$\Lo_{\bar\Theta_{p-1}^1}^{\bar\Xi_{p-1}^1}$; we denote the corresponding injection $\rho$. Clearly,
\begin{equation}\label{interim3}
\Ld^{\bar L_{t-1}^2}_{\Phi_{t-1}^2}\nald^{\bar K_{t-1}^2}_{\bar L_{t-1}^2}=
\lgradd^{\bar K_{t-1}^2}_{\Phi_{t-1}^2}- \Ld^{\bar\Xi_{t-1}^2}_{\Phi_{t-1}^2}\nald^{\bar K_{t-1}^2}_{\bar\Xi_{t-1}^2},
\end{equation}
which is the transformed version of \eqref{interim}.

The contribution of the first term in \eqref{interim3} to the second term in \eqref{bmic3} equals
\[
-\left\langle\lgrado^{\Phi_{p-1}^1}_{\bar K_{p-1}^1}\lgradd^{\bar K_{t-1}^2}_{\Phi_{t-1}^2}\right\rangle=
-\left\langle\lgrado^{\Phi_{p-1}^1}_{\Phi_{p-1}^1}\lgradd_{\Phi_{t-1}^2}^{\Phi_{t-1}^2}\right\rangle
\]
and cancels the contribution of the first term in \eqref{bmic3} computed above. 

To find the contribution of the second term in \eqref{interim3} to the second term in \eqref{bmic3}
note that
\begin{equation*}
\lgrado^{\Phi_{p-1}^1}_{\bar K_{p-1}^1}=\Lo^{\bar L_{p-1}^1\cup\bar\Xi_{p-1}^1}_{\bar K_{p-1}^1}
\nalo^{\Phi_{p-1}^1}_{\bar L_{p-1}^1\cup\bar\Xi_{p-1}^1},
\end{equation*}
which is the transformed version of \eqref{superdec}, 
so the contribution in question equals
\begin{equation*}
\left\langle\Ld^{\bar\Xi_{t-1}^2}_{\Phi_{t-1}^2}\nald^{\bar K_{t-1}^2}_{\bar\Xi_{t-1}^2}
\Lo^{\bar L_{p-1}^1\cup\bar\Xi_{p-1}^1}_{\bar K_{p-1}^1}
\nalo^{\Phi_{p-1}^1}_{\bar L_{p-1}^1\cup\bar\Xi_{p-1}^1}\right\rangle;
\end{equation*}
the latter expression is the transformed version of \eqref{crosscases}.
Taking into account that $\Ld^{\bar\Xi_{t-1}^2}_{\Phi_{t-1}^2}=\Lo^{\rho(\bar\Xi_{t-1}^2)}_{\Phi_{p-1}^1}$, 
$\Ld^{\bar\Xi_{t-1}^2}_{\bar\Theta_{t-1}^2\setminus\Phi_{t-1}^2}=
\Lo^{\rho(\bar\Xi_{t-1}^2)}_{\bar\Theta_{p-1}^1\setminus\Phi_{p-1}^1}$
and that
\begin{equation*}
\nalo^{\Phi_{p-1}^1}_{\bar L_{p-1}^1\cup\bar\Xi_{p-1}^1}\Lo^{\rho(\bar\Xi_{t-1}^2)}_{\Phi_{p-1}^1}=
 \gradlo^{\rho(\bar\Xi_{t-1}^2)}_{\bar L_{p-1}^1\cup\bar\Xi_{p-1}^1}-
\nalo_{\bar L_{p-1}^1\cup\bar\Xi_{p-1}^1}^{\bar\Theta_{p-1}^1\setminus\Phi_{p-1}^1}
\Lo^{\rho(\bar\Xi_{t-1}^2)}_{\bar\Theta_{p-1}^1\setminus\Phi_{p-1}^1},
\end{equation*}
which is the transformed version of \eqref{twoterm}, 
this contribution can be rewritten as
\begin{multline*}
\left\langle \Lo^{\bar L_{p-1}^1\cup\bar\Xi_{p-1}^1}_{\bar K_{p-1}^1}
\gradlo^{\rho(\bar\Xi_{t-1}^2)}_{\bar L_{p-1}^1\cup\bar\Xi_{p-1}^1}
\nald^{\bar K_{t-1}^2}_{\bar\Xi_{t-1}^2}\right\rangle\\
-\left\langle \Ld^{\bar\Xi_{t-1}^2}_{\bar\Theta_{t-1}^2\setminus\Phi_{t-1}^2}
\nald^{\bar K_{t-1}^2}_{\bar\Xi_{t-1}^2}
\Lo^{\bar L_{p-1}^1\cup\bar\Xi_{p-1}^1}_{\bar K_{p-1}^1}
\nalo_{\bar L_{p-1}^1\cup\bar\Xi_{p-1}^1}^{\bar\Theta_{p-1}^1\setminus\Phi_{p-1}^1}
\right\rangle.
\end{multline*}
Next, by\eqref{lnal}, 
\[
\Ld^{\bar\Xi_{t-1}^2}_{\bar\Theta_{t-1}^2\setminus\Phi_{t-1}^2}
\nald^{\bar K_{t-1}^2}_{\bar\Xi_{t-1}^2}=
\lgradd^{\bar K_{t-1}^2}_{\bar\Theta_{t-1}^2\setminus\Phi_{t-1}^2}=0,
\]
since the rows $\bar K_{t-1}^2$ lie above $\bar\Theta_{t-1}^2\setminus\Phi_{t-1}^2$.

Finally, by \eqref{lnal},
\[
\gradlo^{\rho(\bar\Xi_{t-1}^2)}_{\bar L_{p-1}^1\cup\bar\Xi_{p-1}^1}=
\begin{bmatrix} 0\\ \one \\ 0\end{bmatrix},
\]
where the unit block occupies the rows and the columns $\rho(\bar\Xi_{t-1}^2)$. Therefore, the remaining contribution equals
\[
\left\langle \Lo^{\rho(\bar\Xi_{t-1}^2)}_{\bar K_{p-1}^1}
\nald^{\bar K_{t-1}^2}_{\bar\Xi_{t-1}^2}
\right\rangle=
\left\langle \Ld_{\bar\Xi_{t-1}^2}^{\bar K_{t-1}^2}\nald_{\bar K_{t-1}^2}^{\bar\Xi_{t-1}^2}\right\rangle=
\left\langle \Ld_{L_{t-1}^2}^{\bar K_{t-1}^2}\nald_{\bar K_{t-1}^2}^{L_{t-1}^2}\right\rangle,
\]
which is a constant via Lemma \ref{partrace} an yields the third term in the statement of the lemma.

\end{proof}

\section{The quiver}\label{sec:quiver}

The goal of this Section is the proof of Theorem~\ref{quiver}. 

\subsection{Preliminary considerations} 
Consider an arbitrary ordering on the set of vertices of the quiver $Q_{\bfGr,\bfGc}$ in which all mutable vertices 
precede all frozen vertices. Let $B_{\bfGr,\bfGc}$ be the exchange matrix that encodes $Q_{\bfGr,\bfGc}$ under this
ordering, and let $\Omega_{\bfGr,\bfGc}$ be the (skew-symmetric) matrix of the constants $\{\log f^1, \log f^2\}$, 
$f^1, f^2\in F_{\bfGr,\bfGc}$, provided $F_{\bfGr,\bfGc}$ has the same ordering. Then by \cite[Theorem 4.5]{GSVb}, 
to prove Theorem~\ref{quiver} it suffices to check that
\begin{equation*}
B_{\bfGr,\bfGc}\Omega_{\bfGr,\bfGc}=\begin{bmatrix}\lambda \one & 0\end{bmatrix}
\end{equation*}
for some $\lambda\ne0$. In more detail, denote $\omega^{\ii\jj}_{rs}=\{\log f_{rs}, \log f_{\ii\jj}\}$,  
then the above equation can be rewritten as
\begin{equation}\label{bomega}
\sum_{(i,j)\to(r,s)}\omega^{\ii\jj}_{rs}-
\sum_{(r,s)\to(i,j)}\omega^{\ii\jj}_{rs}=
\begin{cases} 
\lambda\ \quad&\text{for $(\ii,\jj)=(i,j)$,}\\
0\ \quad & \text{otherwise}
\end{cases}							           
\end{equation}
for all pairs $(i,j), (\ii,\jj)$ such that $f_{ij}$ is not frozen. By the definition of the quiver $Q_{\bfGr,\bfGc}$
(see Section \ref{thequiver}), a non-frozen vertex can have degree six, five, four, or three. 
Consider first the case of degree six. All possible neighborhoods of a vertex in this case are shown in 
Fig.~\ref{fig:ijnei}, Fig.~\ref{fig:1jnei}(a), Fig.~\ref{fig:i1nei}(a), Fig.~\ref{fig:njnei}(a), and 
Fig.~\ref{fig:innei}(a).    

Consequently, the left hand side of \eqref{bomega} for $1<i,j<n$ can be rewritten as
\begin{multline}\label{neisum}
(\omega^{\ii\jj}_{i-1,j}-\omega^{\ii\jj}_{i,j+1})-(\omega^{\ii\jj}_{i-1,j-1}-\omega^{\ii\jj}_{i,j})-
(\omega^{\ii\jj}_{i,j}-\omega^{\ii\jj}_{i+1,j+1})+(\omega^{\ii\jj}_{i,j-1}-\omega^{\ii\jj}_{i+1,j})\\
=\delta_{ij}^1-\delta_{ij}^2-\delta_{ij}^3+\delta_{ij}^4,
\end{multline}
see Fig.~\ref{fig:ijnei}.
In other words, the neighborhood of $(i,j)$ is covered by the union of four pairs of vertices, and the contribution 
$\delta_{ij}^k$ of each pair is the difference of the corresponding values of $\omega$. More exactly, 
the first pair consists of the vertices to the north and to the east of $(i,j)$, the second pair 
consists of the vertex to the north-west of $(i,j)$ and of $(i,j)$ itself, the third pair consists of 
$(i,j)$ itself and of the vertex to the south-east of $(i,j)$, and the fourth pair consists of the vertices
to the west and to the south of $(i,j)$.  

It is easy to see that in all other cases of degree six, 
the left hand side of \eqref{bomega} can be rewritten in a similar way.
For example, for $i=1$, 
an analog of \eqref{neisum} holds with 
$\delta_{1j}^1=\omega^{\ii\jj}_{n,\gammac^*(j-1)+1}-\omega^{\ii\jj}_{1,j+1}$ and
$\delta_{1j}^2=\omega^{\ii\jj}_{n,\gammac^*(j-1)}-\omega^{\ii\jj}_{1j}$, see Fig.~\ref{fig:1jnei}(a).

Further, consider the case of degree five. All possible neighborhoods of a vertex in this case are shown in  
Fig.~\ref{fig:1jnei}(b), Fig.~\ref{fig:i1nei}(b), Fig.~\ref{fig:njnei}(b,c), 
Fig.~\ref{fig:innei}(b,c), Fig.~\ref{fig:1nnei}(a), Fig.~\ref{fig:n1nei}(a), and Fig.~\ref{fig:nnnei}(a). 
Direct inspection of all this cases shows that the lower vertex is missing either in the first pair 
(Fig.~\ref{fig:1jnei}(b),  Fig.~\ref{fig:innei}(c), and Fig.~\ref{fig:1nnei}(a)), or in the third pair
(Fig.~\ref{fig:njnei}(b),  Fig.~\ref{fig:innei}(b), and Fig.~\ref{fig:nnnei}(a)), or in the fourth pair
Fig.~\ref{fig:i1nei}(b),  Fig.~\ref{fig:njnei}(c), and Fig.~\ref{fig:n1nei}(a)). In all these cases the remaining function
in a deficient pair is a minor of size one, and hence all the above relations will remain valid if the missing function in 
the deficient pair is replaced by $f=1$ (understood as a minor of size zero).
 
Similarly, in the case of degree four the are two deficient pairs (any two of the pairs 1, 3, and 4), and in the case of degree three, all three pairs are deficient. However, adding at most three dummy functions $f=1$ as explained above, 
we can always rewrite \eqref{bomega} as
\begin{equation}\label{normneisum}
\Delta_{ij}=\delta_{ij}^1-\delta_{ij}^2-\delta_{ij}^3+\delta_{ij}^4=
\begin{cases} 
\lambda\ \quad&\text{for $(\ii,\jj)=(i,j)$}\\
0\ \ \quad& \text{otherwise.}
\end{cases}
\end{equation} 

Equation \eqref{normneisum} can be obtained as the restriction to the diagonal $X=Y$ of a similar equation in the double. Namely, assume that $\ii\ne\jj$, $r\ne s$, and put 
$\ttw_{rs}^{\ii\jj}= \{\log \ttf_{rs}, \log \ttf_{\ii\jj}\}^D$. If additionally $1<i,j<n$ and $i\ne j, j\pm 1$, we define
\begin{align*}
\ttd_{ij}^1&=\ttw^{\ii\jj}_{i-1,j}-\ttw^{\ii\jj}_{i,j+1}, \qquad 
\ttd_{ij}^2=\ttw^{\ii\jj}_{i-1,j-1}-\ttw^{\ii\jj}_{i,j},\\
\ttd_{ij}^3&=\ttw^{\ii\jj}_{i,j}-\ttw^{\ii\jj}_{i+1,j+1}, \qquad
\ttd_{ij}^4=\ttw^{\ii\jj}_{i,j-1}-\ttw^{\ii\jj}_{i+1,j}.
\end{align*}
If $i$ or $j$ equals $1$ or $n$, the above definition of $\ttd_{ij}^k$ should be modified similarly to the modification of
$\delta_{ij}^k$ explained above. 
It follows immediately from \eqref{f_ij_gen}, \eqref{twof_ii} that each $\ttd_{ij}^k$ is a difference 
$\{\log \ttf_{i^kj^k},\log \ttf_{\ii\jj}\}^D-\{\log \tilde \ttf_{i^kj^k},\log \ttf_{\ii\jj}\}^D$, where 
$\ttf_{i^kj^k}$ and $\tilde \ttf_{i^kj^k}$ are two trailing minors of the same matrix that differ 
in size by one. For example, for $i=1$ we get $\ttf_{i^1j^1}=\ttf_{n,\gammac^*(j-1)+1}$, $\ttf_{i^2j^2}=
\ttf_{n,\gammac^*(j-1)}$, $\ttf_{i^3j^3}=\ttf_{1j}$, and $\ttf_{i^4j^4}=\ttf_{1,j-1}$.  
We say that $\ttd_{ij}^k$ is of
$X$-{\it type\/} if the leading block of $\ttf_{i^kj^k}$ is an $X$-block, 
and of $Y$-{\it type\/} otherwise. 

If $i=j+1$ then we set $\ttf_{i^1j^1}=\ttf_{i-1,j}^<$.  Consequently, in this case all four 
$\ttd_{ij}^k$ are of $X$-type. Similarly, if $i=j-1$ then we set $\ttf_{i^4j^4}=\ttf_{i,j-1}^>$. 
Consequently, in this case all four $\ttd_{ij}^k$ are of $Y$-type. In what follows we will use the above conventions 
without indicating that explicitly. 

For $i\ne j$
equation \eqref{normneisum} is the restriction to the diagonal  $X=Y$ of the equation 
\begin{equation}\label{dubneisum}
\ttD_{ij}=\ttd_{ij}^1-\ttd_{ij}^2-\ttd_{ij}^3+\ttd_{ij}^4=
\begin{cases} 
\lambda\ \quad&\text{for $(\ii,\jj)=(i,j)$,}\\
0\ \ \quad&\text{otherwise}
\end{cases}
\end{equation} 
in the Drinfeld double. Note that all the quantities involved in the above equation are defined unambiguously.

The case $i=j$ requires a more delicate treatment. It is impossible to fix a choice of $\ttf_{i^2j^2}$ and $\ttf_{i^3j^3}$ in such a way
that \eqref{dubneisum} is satisfied. Consequently, to get \eqref{normneisum}, we treat each contribution to $\ttD_{ij}$ computed 
in Section \ref{sec:basis} separately, and restrict it to the diagonal $X=Y$. The obtained restrictions are combined in a proper way
to get $\Delta_{ij}$ and to prove \eqref{normneisum} directly. In more detail, we either set
$\ttf_{i^2j^2}=\ttf_{i-1,j-1}^<$ and $\ttf_{i^3j^3}=\ttf_{ij}^>$, or $\ttf_{i^2j^2}=\ttf_{i-1,j-1}^>$ and $\ttf_{i^3j^3}=\ttf_{ij}^<$. 
In the former case $\ttd_{ij}^2$ and $\ttd_{ij}^4$ 
are of $X$-type and $\ttd_{ij}^1$ and $\ttd_{ij}^3$ are of $Y$-type, while in the latter case 
$\ttd_{ij}^3$ and $\ttd_{ij}^4$ are of $X$-type and $\ttd_{ij}^1$ and $\ttd_{ij}^2$ are of $Y$-type. Note that in both cases the restriction
to the diagonal yields the same pair of functions.

Similarly, in the case  $\ii=\jj$ we set either $\ttf^2=\ttf_{\ii\jj}^<$ or $\ttf^2=\ttf_{\ii\jj}^>$, depending on the choice of the 
corresponding $\ttf^1$, so that $\ttf^1$ and $\ttf^2$ have the same type.

\subsection{Diagonal contributions} \label{trivicon}
Recall that the bracket in the double is computed via equation \eqref{bra}. 
In this section we find the contribution of the fist five terms in \eqref{bra} to $\ttD_{ij}$.

\begin{proposition}\label{ftcon}
The contribution of the first term in \eqref{bra} to $\ttD_{ij}$ vanishes.
\end{proposition}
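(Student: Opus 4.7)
My plan is to combine Lemma~\ref{R0const} with the bilinearity of the bracket to reduce the statement to a combinatorial cancellation over the four pairs of neighbors of $(i,j)$ in $Q_{\bfGr,\bfGc}$. First I would revisit the decomposition in the proof of Lemma~\ref{R0const} and extract from it that $\left\langle R_0^\ec(E_L^1),E_L^2\right\rangle$ is expressible as a finite sum of bilinear pairings of a few diagonal invariants of $\log\ttf^1$ and $\log\ttf^2$: the zero-parts $(\cxi_L^i)_0$, $(\ceta_L^i)_0$, their complementary projections $\cPi_{\hat\Gamma_1^\ec}(\,\cdot\,)$, and the traces $\Tr(E_L^i)$ and $\Tr(E_L^i\S^\ec)$. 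Each of these invariants is a linear functional of $\log\ttf^i$, so by bilinearity of the bracket \eqref{bra} the contribution of the first term to $\ttD_{ij}$ factors through the signed combination
\[
\Lambda_{ij}=\log\frac{\ttf_{i^1j^1}}{\tilde\ttf_{i^1j^1}}-\log\frac{\ttf_{i^2j^2}}{\tilde\ttf_{i^2j^2}}-\log\frac{\ttf_{i^3j^3}}{\tilde\ttf_{i^3j^3}}+\log\frac{\ttf_{i^4j^4}}{\tilde\ttf_{i^4j^4}},
\]
and it suffices to show that every one of the listed invariants vanishes when evaluated at $\Lambda_{ij}$.

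The second step is a direct bookkeeping calculation. For each $k$, both $\ttf_{i^kj^k}$ and $\tilde\ttf_{i^kj^k}$ are trailing minors of the same matrix differing in size by exactly one, so each invariant applied to $\log(\ttf_{i^kj^k}/\tilde\ttf_{i^kj^k})$ is encoded by the single extra row and column present in the larger minor; this is visible from \eqref{naxnay}--\eqref{naxyxy} together with Lemma~\ref{partrace} and Lemma~\ref{twomoreinv}. The four pairs attached to $(i,j)$ occupy the lattice-parallelogram pattern of Fig.~\ref{fig:ijnei}, and the signature $+,-,-,+$ acts on such positional data as a discrete mixed second difference, which annihilates any functional that is affine in $(r,s)$. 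I would then confirm case by case, using the block description summarized in Fig.~\ref{fig:ladder} and the nesting assertions of Proposition~\ref{compar}, that each of $\Tr(E_L\,\cdot\,)$, $\Tr(E_L\,\cdot\,\S^\ec)$, $(\cxi_L\,\cdot\,)_0$, $(\ceta_L\,\cdot\,)_0$, and their $\cPi_{\hat\Gamma_1^\ec}$-projections applied to a trailing minor of a given matrix is indeed affine in the position of that minor. The easiest illustration is $\Tr(E_L\log\ttf_{rs})$, which equals the size of the trailing minor and is evidently affine in $(r,s)$ inside a single block.

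The main obstacle will be the boundary and degenerate cases. When $i$ or $j$ lies in $\{1,n\}$, some of the four pairs are deficient and the missing member is replaced by the dummy $\ttf=1$, whose invariants all vanish; when $i=j\pm 1$ all four $\ttd_{ij}^k$ acquire a uniform $X$- or $Y$-type; and when $i=j$ one must fix the $\ttf^<$ versus $\ttf^>$ convention in agreement with the preliminary considerations and verify that both choices yield the same restriction to $X=Y$. For each of these situations I would check by hand that the cancellation is preserved, which amounts to a finite inspection of the local shapes of $Q_{\bfGr,\bfGc}$ depicted in Fig.~\ref{fig:1jnei}--Fig.~\ref{fig:11nei}. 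This case analysis, while routine, is the most tedious part of the argument.
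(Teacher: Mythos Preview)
Your overall strategy is the same as the paper's: decompose $\langle R_0^\ec(E_L^1),E_L^2\rangle$ into a finite sum of products (invariant of $\ttf^1)\cdot(\text{quantity depending on }\ttf^2)$, then show that for each invariant the alternating sum over the four $\ttd_{ij}^k$ vanishes. The paper carries this out via Lemmas~\ref{econ}--\ref{xietacon} and a case analysis on the position of $(i,j)$.

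The gap is in your proposed mechanism. The invariants $(\xi_L\log\ttf_{rs})_0$ and $(\eta_L\log\ttf_{rs})_0$ are \emph{not} affine in the position $(r,s)$ of the trailing minor. Each diagonal entry of $(\cxi_L\log\ttf)_0$ counts how many times a given column index occurs among certain blocks of the trailing minor; as $s$ varies this is a step function, and its pairing with $A_L^2$ is a weighted sum of step functions, certainly not linear in $s$. So the assertion you plan to ``confirm case by case'' is false, and the discrete-second-difference argument does not go through as stated.

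What actually makes the cancellation work (this is the content of Lemma~\ref{xietacon}) is that the contribution of $(\xi_L^1)_0$ and $(\eta_L^1)_0$ to $\ttd_{ij}^k$ depends only on the column index $j^k$ and on the $X$/$Y$-type of $\ttd_{ij}^k$, not on $i^k$. Since $j^1=j^3=j$ and $j^2=j^4=j-1$, one gets the pairwise cancellation $\ttd^1=\ttd^3$, $\ttd^2=\ttd^4$ whenever the types in each pair match. The choice of $\ttf^<$ versus $\ttf^>$ in the diagonal case $i=j$ is made precisely to force this matching. In the boundary cases $i=1$ or $j=1$, the types in a pair differ, and the cancellation relies on the exact relation $\cgammac(l)=j$ between the column indices on the two sides (see the paper's treatment of $1=i<j\le n$); this is more than a second-difference argument and uses the structure of the BD data. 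Your sketch underestimates this part.

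Minor point: the first term of \eqref{bra} involves the unringed $R_0^\ec$, so the relevant invariants are the unringed $\xi_L,\eta_L$; the ringed versions enter only for the last four terms of \eqref{bra}.
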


\begin{proof} Similarly to operators $E_L$ and $E_R$ defined in section \ref{sec:bra}, define operators $\bar E_L$ and 
$\bar E_R$ via $\bar E_L=\nabla_X X-\nabla_Y Y$ and $\bar E_R=X\nabla_X-Y\nabla_Y$.

Note that by \eqref{R0L12}, \eqref{R0L}, the first term in \eqref{bra} can be rewritten as
\begin{multline}\label{ft}
\left\langle R_0^{\ec}(E_L^1),E_L^2\right\rangle=\left\langle \left(\xi_L^1\right)_0, A_L^2\right\rangle+
\left\langle \left(\eta_L^1\right)_0, B_L^2\right\rangle+\Tr (E_L^1)\cdot p_L^2\\
+\Tr\left(\frac1{1-\gammac^*}\eta_L^1\right)\cdot q_L^2-\Tr\left(\frac1{1-\gammac}\xi_L^1\right)\cdot q_L^2-
\Tr(\bar E_L^1)\cdot q_L^2,
\end{multline}
where $A_L^2$ and $B_L^2$ are matrices depending only on $\ttf^2$ and $p_L^2$ and $q_L^2$ are functions depending only on 
$\ttf^2$.

\begin{lemma}\label{econ}
The contribution of the third term in \eqref{ft} to any one of $\ttd_{ij}^k$, $1\le k\le 4$, equals~$p_L^2$.
\end{lemma}

\begin{proof} For any $\ttf$,
\[
\Tr(E_L \log\ttf)=\frac1{\ttf}\sum_{i,j=1}^n\left(\frac{\partial\ttf}{\partial x_{ij}} x_{ij}
+\frac{\partial\ttf}{\partial y_{ij}}y_{ij}\right)=\left.\frac{d}{dt}\right|_{t=1}\log\ttf(tX,tY).
\]
If $\ttf$ is a homogeneous polynomial, then the above expression equals its total degree. 
Recall that $\ttf_{i^kj^k}$ satisfies this condition, and that 
$\deg \ttf_{i^kj^k}-\deg\tilde\ttf_{i^kj^k}=1$.
\end{proof}

\begin{lemma}\label{barecon}
The contribution of the sixth term in \eqref{ft} to any one of $\ttd_{ij}^k$, $1\le k\le 4$, equals~$q_L^2$ if $\ttd_{ij}^k$ is 
of $X$-type and $-q_L^2$ otherwise.
\end{lemma}

\begin{proof} For any $\ttf$,
\[
\Tr(\bar E_L \log\ttf)=\frac1{\ttf}\sum_{i,j=1}^n\left(\frac{\partial\ttf}{\partial x_{ij}} x_{ij}
-\frac{\partial\ttf}{\partial y_{ij}}y_{ij}\right)=\left.\frac{d}{dt}\right|_{t=1}\log\ttf(tX,t^{-1}Y).
\]
If $\ttf$ is a homogeneous polynomial both in $x$-variables and in $y$-variables, then the above expression equals 
$\deg_x\ttf-\deg_y\ttf$. Recall that $\ttf_{i^kj^k}$ satisfies this condition and that 
$\deg_x\ttf_{i^kj^k}-\deg_x\tilde\ttf_{i^kj^k}$ equals~$1$ if $\ttf_{i^kj^k}$ is of $X$-type and~$0$
if it is of $Y$-type, while $\deg_y\ttf_{i^kj^k}-\deg_y\tilde\ttf_{i^kj^k}$ equals~$0$ if 
$\ttf_{i^kj^k}$ is of $X$-type and~$1$ if it is of $Y$-type.
\end{proof}

Recall that every point of a nontrivial $X$-run except for the last point belongs to $\Gamma_1$. We denote by 
$\cGamma_1$ the union of all nontrivial $X$-runs, and by $\cgamma$ the extension of $\gamma$ that 
takes the last point of a nontrivial $X$-run $\Delta$ to the last point of $\gamma(\Delta)$. In a similar way we define
$\cGamma_2$ and $\cgamma^*$.

\begin{lemma}\label{xietacon}
{\rm (i)} The contribution of the first term in \eqref{ft} to $\ttd_{ij}^3$ equals $(A_L^2)_{jj}$ if $\ttd_{ij}^3$
is of $Y$-type, $(A_L^2)_{\cgammac(j)\cgammac(j)}-|\Delta(j)|^{-1}\sum_{k\in\Delta(j)}(A_L^2)_{kk}$ 
if $\ttd_{ij}^3$ is of $X$-type and $j\in\cGamma_1^\ec$, and $0$ otherwise.

{\rm (ii)} The contribution of the second term in \eqref{ft} to $\ttd_{ij}^3$ equals $(B_L^2)_{jj}$ if $\ttd_{ij}^3$
is of $X$-type, $(B_L^2)_{\cgamma^{\ec*}(j)\cgamma^{\ec*}(j)}-|\bar\Delta(j)|^{-1}\sum_{k\in\bar\Delta(j)}(B_L^2)_{kk}$ 
if  $\ttd_{ij}^3$ is of $Y$-type and $j\in\cGamma_2^\ec$, and $0$ otherwise.
\end{lemma}

\begin{proof}
(i) Define an $n\times n$ matrix $J_m(t)$ as the identity matrix with the entry $(m,m)$ replaced by $t$, and set 
$X_m(t)=X J_m(t)$, 
$Y_m(t)=Y J_m(t)$. By the definition of $\cxi_L$, for any $\ttf$ one has
\begin{align*}
(\cxi_L \log\ttf)_{ll}=&\frac1\ttf\sum_{i=1}^n\frac{\partial\ttf}{\partial x_{i\cgamma^{\ec*}(l)}}x_{i\cgamma^{\ec*}(l)}+
\frac1\ttf\sum_{i=1}^n\frac{\partial\ttf}{\partial y_{il}}y_{il}\\
=&\left.\frac{d}{dt}\right|_{t=1}\log\ttf(X_{{\cgamma}^{\ec*}(l)}(t),Y_l(t)).
\end{align*}
If $\ttf$ is a minor of a matrix $\L\in\bL\cup\{X,Y\}$, then the above expression equals the total number of columns $l$ 
in all column $Y$-blocks 
involved in this minor plus the total number of columns $\cgamma{^\ec*}(l)$ in all column $X$-blocks involved in this minor 
(note that $l\ne \cgamma^{\ec*}(l)$, and hence all such columns are different). Recall
that the minors $\ttf_{i^3j^3}=\ttf_{ij}$ and $\tilde\ttf_{i^3j^3}$ differ in size by one, and that the column missing 
in the latter minor is $j$. Consequently, if $\ttd_{ij}^3$ is of $Y$-type,
$(\cxi_L \log\ttf_{i^3j^3})_{ll}- (\cxi_L \log\tilde\ttf_{i^3j^3})_{ll}$ equals~$1$ if $l=j$, which yields $(A_L^2)_{jj}$,
and vanishes otherwise. Similarly, if $\ttd_{ij}^3$ is of $X$-type, this difference equals~$1$ if $j\in\cGamma_1^\ec$ 
and $l=\cgammac(j)$, which yields $(A_L^2)_{\cgammac(j)\cgammac(j)}$, and vanishes otherwise. Finally, the additional term
$-|\Delta(j)|^{-1}\sum_{k\in\Delta(j)}(A_L^2)_{kk}$ stems from the difference between $(\xi_L\log\ttf)_0$ and 
$(\cxi_L\log\ttf)_0$, see Section \ref{simplega}.

(ii) The proof is similar to the proof of (i).
\end{proof}

To prove Proposition~\ref{ftcon}, consider the contributions of the terms in the right hand side of \eqref{ft} to $\ttD_{ij}$.

Let us prove that the contributions of the first term to $\ttd_{ij}^1$ and $\ttd_{ij}^3$ cancel each other, as well as the
contributions to $\ttd_{ij}^2$ and $\ttd_{ij}^4$. 
Assume first that $1<i<j\le n$. Clearly, in this case all $\ttd_{ij}^k$ are of $Y$-type, and 
\begin{equation}\label{ttdiden}
\ttd_{ij}^1=\ttd_{i-1,j}^3,\qquad  
\ttd_{ij}^2=\ttd_{i-1,j-1}^3, \qquad\ttd_{ij}^4=\ttd_{i,j-1}^3. 
\end{equation}
Hence by Lemma \ref{xietacon}(i),  the sought for cancellations hold true, consequently, the contribution of the 
first term in \eqref{ft} to $\ttD_{ij}$ vanishes.

Assume next that $1<j<i\le n$. In this case all $\ttd_{ij}^k$ are of $X$-type, and \eqref{ttdiden} holds. Hence by Lemma \ref{xietacon}(i),  
the contribution of the first term in \eqref{ft} to $\ttD_{ij}$ vanishes, similarly to the previous case.

The next case is $1<i=j\le n$. In this case we choose $\ttf_{i^2j^2}$ and $\ttf_{i^3j^3}$ in such a way that 
$\ttd_{ij}^1$ and $\ttd_{ij}^3$ are of $Y$-type and  $\ttd_{ij}^2$ and $\ttd_{ij}^4$ are of $X$-type, and \eqref{ttdiden} holds, so the contribution of the first term in \eqref{ft} to $\ttD_{ij}$ vanishes once again. 

Assume now that $1=i<j\le n$. In this case $\ttd_{1j}^1$ and $\ttd_{1j}^2$ are of $X$-type and  $\ttd_{1j}^3$ and $\ttd_{1j}^4$ are of 
$Y$-type. Relations \eqref{ttdiden} are replaced by
\[
 \ttd_{1j}^1=\ttd_{nl}^3,\qquad  
\ttd_{1j}^2=\ttd_{n,l-1}^3, \qquad\ttd_{1j}^4=\ttd_{1,j-1}^3,
\]
where $\gammac(l-1)=j-1$, see Section \ref{thequiver}, and in particular, Fig.~\ref{fig:1jnei}. Consequently, 
$\cgammac(l-1)=j-1$ and $\cgammac(l)=j$, and hence by Lemma \ref{xietacon}(i), the sought for cancellations hold true.

Finally, assume that $1=j<i\le n$. In this case $\ttd_{i1}^1$ and $\ttd_{i1}^3$ are of $X$-type and  $\ttd_{i1}^2$ and $\ttd_{i1}^4$ are of 
$Y$-type.  Relations \eqref{ttdiden} are replaced by
\[
 \ttd_{i1}^1=\ttd_{i-1,1}^3,\qquad  
\ttd_{i1}^2=\ttd_{l-1,n}^3, \qquad\ttd_{i1}^4=\ttd_{ln}^3,
\]
where $\gammar(i-1)=l-1$, see Section \ref{thequiver}, and in particular, Fig.~\ref{fig:i1nei}. Consequently,  by Lemma \ref{xietacon}(i), 
the sought for cancellations hold true.

To treat the second term in \eqref{ft} we reason exactly in the same way and use Lemma \ref{xietacon}(ii) instead.

The third term in \eqref{ft} is treated trivially with the help of Lemma \ref{econ}. 

Cancellations for the fourth term follow from the cancellations for the second term established above and the fact that 
$\frac1{1-\gammac^*}$ 
is a linear operator. Similarly, cancellations for the fifths term follow from the cancellations for the first term established above and the fact that $\frac1{1-\gammac}$ is a linear operator.

Finally, the sixth term is treated similarly to the first one based on Lemma \ref{barecon}.
\end{proof}

\begin{proposition}\label{stcon}
The contribution of the second term in \eqref{bra} to $\ttD_{ij}$ vanishes.
\end{proposition}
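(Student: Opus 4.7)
The plan is to imitate the proof of Proposition~\ref{ftcon} step by step, exchanging throughout left gradients with right gradients, columns with rows, and the triple $\bfGc$ with $\bfGr$. Applying \eqref{gammarel} to $E_R$ exactly as it was applied to $E_L$ in the derivation of \eqref{ft}, one obtains the decomposition
\begin{multline*}
-\left\langle R_0^\er(E_R^1), E_R^2 \right\rangle
=\left\langle (\eta_R^1)_0, A_R^2\right\rangle + \left\langle (\xi_R^1)_0, B_R^2\right\rangle + \Tr(E_R^1)\,p_R^2 \\
+\Tr\!\left(\tfrac{1}{1-\gammar^*}\xi_R^1\right) q_R^2 - \Tr\!\left(\tfrac{1}{1-\gammar}\eta_R^1\right) q_R^2 - \Tr(\bar E_R^1)\,q_R^2,
\end{multline*}
with $\bar E_R := X\nabla_X - Y\nabla_Y$ and $A_R^2, B_R^2, p_R^2, q_R^2$ functions of $\ttf^2$ alone.

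Next, I establish right-gradient analogues of Lemmas~\ref{econ}, \ref{barecon}, and \ref{xietacon}. The first two are verbatim, since $\Tr(E_R\log\ttf)$ is the total degree of $\ttf$ and $\Tr(\bar E_R\log\ttf)=\deg_X\ttf-\deg_Y\ttf$, independent of the side of differentiation. For the third, the key identity is
\[
(\cxi_R \log\ttf)_{ll}= \left.\tfrac{d}{dt}\right|_{t=1}\log\ttf\!\bigl(J_l(t)X,\,J_{\cgammar(l)}(t)Y\bigr),
\]
with $J_m(t)$ as in Lemma~\ref{xietacon}, but now acting by left multiplication so as to scale the $m$-th row. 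Since passing from $\ttf_{i^kj^k}$ to $\tilde\ttf_{i^kj^k}$ deletes the row $i^k$ of the leading block, the contribution of $\langle(\xi_R^1)_0,B_R^2\rangle$ to $\ttd_{ij}^k$ is nonzero only at $l=i^k$ (when $\ttd_{ij}^k$ is of $X$-type) or at $l=\cgamma^{\er*}(i^k)$ when $i^k\in\cGamma_2^\er$ (when $\ttd_{ij}^k$ is of $Y$-type), together with a correcting average over trivial row $Y$-runs as in \eqref{complproj}. The corresponding statement for $\langle(\eta_R^1)_0,A_R^2\rangle$ involves $\cgammar$ in place of $\cgamma^{\er*}$.

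The critical observation is that while in Proposition~\ref{ftcon} the cancellations in $\ttD_{ij}=\ttd^1-\ttd^2-\ttd^3+\ttd^4$ group the four quantities by the \emph{column} removed in each underlying minor pair (so that $(1,3)$ share column $j$, $(2,4)$ share column $j-1$), here the relevant invariant is the \emph{row} removed: the minor pairs underlying $\ttd_{ij}^1$ and $\ttd_{ij}^2$ both lose row $i-1$ from the outer to the inner minor, and those underlying $\ttd_{ij}^3$ and $\ttd_{ij}^4$ both lose row $i$. Thus the analogue of Lemma~\ref{xietacon} yields $\ttd_{ij}^1=\ttd_{ij}^2$ and $\ttd_{ij}^3=\ttd_{ij}^4$, producing $\ttD_{ij}=(\ttd^1-\ttd^2)-(\ttd^3-\ttd^4)=0$ for the $\langle(\eta_R^1)_0,A_R^2\rangle$ and $\langle(\xi_R^1)_0,B_R^2\rangle$ terms of the decomposition. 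The $\Tr(E_R^1)$ and $\Tr(\bar E_R^1)$ terms cancel because they contribute equally to all four $\ttd_{ij}^k$ (respectively in the $X/Y$-type pattern required), and the two fractional traces follow from the first two by linearity of $(1-\gammar)^{-1}$ and $(1-\gammar^*)^{-1}$.

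The main obstacle, as in Proposition~\ref{ftcon}, lies in the boundary subfigures \ref{fig:1jnei}(a)--\ref{fig:nnnei}(a) and in the diagonal case $i=j$. In the boundary cases the identities $\ttd_{ij}^k=\ttd_{i^kj^k}^3$ are replaced by the inclined-edge identities dictated by $\bfGr$; in particular, neighborhoods at $(i,1)$ and $(i,n)$ involve $\gammar$ and $\gammar^*$ rather than $\gammac$ and $\gammac^*$, which is precisely why the quiver pictures for these vertices in Section~\ref{thequiver} are drawn using the row triple. In the diagonal case $i=j$ we must adopt the alternative convention $\ttf_{i^2j^2}=\ttf_{i-1,j-1}^>$ and $\ttf_{i^3j^3}=\ttf_{ii}^<$, so that $\ttd_{ij}^1,\ttd_{ij}^2$ are both of $Y$-type and $\ttd_{ij}^3,\ttd_{ij}^4$ both of $X$-type; after restriction to $X=Y$ this choice is immaterial, but it is precisely the dual of the convention chosen for Proposition~\ref{ftcon} and is what aligns the types required for the row-based cancellation to go through.
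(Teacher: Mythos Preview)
Your proposal is correct and follows essentially the same approach as the paper: dualize Proposition~\ref{ftcon} by swapping $E_L\leftrightarrow E_R$, $\gammac\leftrightarrow\gammar$, columns~$\leftrightarrow$~rows, regroup the cancellations as $(\ttd^1,\ttd^2)$ and $(\ttd^3,\ttd^4)$ since these now share the deleted row index, and for $i=j$ adopt the convention $\ttf_{i^2j^2}=\ttf_{i-1,j-1}^>$, $\ttf_{i^3j^3}=\ttf_{ij}^<$ so that $\ttd^1,\ttd^2$ are of $Y$-type and $\ttd^3,\ttd^4$ of $X$-type---exactly what the paper prescribes. One small slip: the correction term you attribute to ``trivial row $Y$-runs'' is actually the average over the (generally nontrivial) row run containing $i$, just as in Lemma~\ref{xietacon}(i) the correction is $-|\Delta(j)|^{-1}\sum_{k\in\Delta(j)}(A_L^2)_{kk}$; this does not affect the cancellation argument, which only needs that the correction depends on $i^k$ alone.
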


\begin{proof} The proof of this proposition is similar to the proof of Proposition \ref{ftcon} and is based on analogs of
Lemmas \ref{econ}--\ref{xietacon}. Note that the analog of Lemma \ref{xietacon} claims that contributions of $(\xi_R^1)_0$ and
$(\eta_R^1)_0$ to $\ttD_{ij}$ depend on $i$, $\cgammar(i)$, and $\cgamma^{\er*}(i)$. In the treatment of the case
$1<i=j\le n$ we choose $\ttf_{i^2j^2}$ and $\ttf_{i^3j^3}$ in such a way that 
$\ttd_{ij}^1$ and $\ttd_{ij}^2$ are of $Y$-type and  $\ttd_{ij}^3$ and $\ttd_{ij}^4$ are of $X$-type.
\end{proof}

\begin{proposition}\label{tffcon}
The contributions of the third, fourth, and fifth term in \eqref{bra} to $\ttD_{ij}$ vanish.
\end{proposition}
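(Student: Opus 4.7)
The plan is to reproduce, in parallel, the argument already used for the first terms of \eqref{ft} and its $R$-analog in Propositions~\ref{ftcon} and~\ref{stcon}. The crucial observation is that each of the three remaining terms in \eqref{bra} is a bilinear pairing of two diagonal matrices, one depending only on $\ttf^1$ and the other only on $\ttf^2$: the third term equals $\langle (\xi_L^1)_0,\, v^2\rangle$ with $v^2 = \frac{1}{1-\gammac^*}(\eta_L^2)_0$; the fourth equals $-\langle (\eta_R^1)_0,\, w^2\rangle$ with $w^2 = \frac{1}{1-\gammar^*}(\xi_R^2)_0$; and the fifth equals $\langle \Pi_{\hat\Gamma_2^\ec}(\xi_L^1)_0,\, u^2\rangle$ with $u^2 = \Pi_{\hat\Gamma_2^\ec}(\nabla_Y^2 Y)_0$. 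In every case the dependence on $\ttf^1$ enters only through a single diagonal matrix extracted from $\xi_L$ or $\eta_R$.

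First I would establish the exact analog of Lemma~\ref{xietacon} for each of these three diagonal operators applied to $\log \ttf_{i^kj^k} - \log \tilde\ttf_{i^kj^k}$. The argument is verbatim that of Lemma~\ref{xietacon}: the difference records which column (for $\xi_L$) or row (for $\eta_R$) is present in the larger trailing minor but absent in the smaller one, so the resulting diagonal entries are concentrated at indices $j$ and $\cgammac(j)$ (respectively $i$ and $\cgammar(i)$), up to a correction supported on trivial runs coming from the projection $\Pi_{\hat\Gamma}$.

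Next I would run precisely the cancellation of Proposition~\ref{ftcon}: in the interior case $1<i,j<n$ with $i\ne j$, the identities \eqref{ttdiden} pair $\ttd_{ij}^1$ with $\ttd_{ij}^3$ and $\ttd_{ij}^2$ with $\ttd_{ij}^4$, and each pair contributes zero since the relevant diagonal entries agree. For the diagonal case $i=j$, the choice of types of $\ttf_{i^2j^2}$ and $\ttf_{i^3j^3}$ is fixed as in Proposition~\ref{ftcon} for the third and fifth terms (which depend on column data through $\gammac$) and as in Proposition~\ref{stcon} for the fourth term (which depends on row data through $\gammar$); in each case the restriction to the diagonal $X=Y$ yields the same pair of functions, so no ambiguity arises.

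The main obstacle will be the boundary cases $i\in\{1,n\}$ or $j\in\{1,n\}$, where the missing lattice neighbors in the quiver are replaced by shifted vertices governed by $\gammac$ or $\gammar$, as catalogued in Figures~\ref{fig:1jnei}--\ref{fig:nnnei}. The required cancellations then take forms such as $\ttd_{1j}^1=\ttd_{n,\cgammac^*(j-1)+1}^3$, and they hold precisely because the diagonal entries produced by $(\xi_L^1)_0$, $(\eta_R^1)_0$ and $\Pi_{\hat\Gamma_2^\ec}(\xi_L^1)_0$ are indexed by $j$ and $\cgammac(j)$ (respectively $i$ and $\cgammar(i)$)---the same shifts that define the boundary neighborhoods in the quiver $Q_{\bfGr,\bfGc}$. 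Once this correspondence is checked case by case, exactly parallel to the bookkeeping already performed in Propositions~\ref{ftcon} and~\ref{stcon}, the vanishing follows.
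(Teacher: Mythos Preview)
Your proposal is correct and takes essentially the same approach as the paper: you recognize that each of the three terms has the form $\langle D^1, D^2\rangle$ where the $\ttf^1$-dependence enters only through $(\xi_L^1)_0$, $(\eta_R^1)_0$, or $\Pi_{\hat\Gamma_2^\ec}(\xi_L^1)_0$, and then invoke the cancellations already established in Lemma~\ref{xietacon} and its $R$-analog from Proposition~\ref{stcon}. The paper's own proof is a single sentence making exactly this observation---that the third term matches the treatment of the first term in \eqref{ft}, the fourth matches its $R$-counterpart, and the fifth follows from the third plus linearity of the projection---so your more detailed write-up simply unpacks what the paper leaves implicit.
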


\begin{proof} The claim for the third term essentially coincides with the similar claim for the first term in \eqref{ft}, the claim for the fourth term essentially coincides with the similar claim for the second term in \eqref{ft}, and the claim for the fifth term uses additionally 
the fact that $\Pi_{\hat\Gamma_1^\ec}$ is a linear operator.
\end{proof}

\subsection{Non-diagonal contributions} \label{nontrivicon}
In this section we find the contributions of the four remaining terms 
in \eqref{bra} to $\ttD_{ij}$. More exactly, we will be dealing with the contributions of the corresponding ringed
versions. The contribution of the difference between the ordinary and the ringed version to $\ttD_{ij}$ vanishes 
similarly to the contributions treated in the previous section.

\subsubsection{Case $1<j<i<n$}\label{typicase} 
In this case all seven functions $\ttf_{i^kj^k}$, $\tilde\ttf_{i^kj^k}$ satisfy 
the conditions of Case 1 in Section \ref{case1}. 
Consequently, the leading block of $\ttf_{i^1j^1}=\ttf_{i-1,j}$ and $\tilde\ttf_{i^1j^1}=\ttf_{i,j+1}$ is $X_I^J$, the leading block 
of  $\ttf_{i^2j^2}=\ttf_{i-1,j-1}$, $\tilde\ttf_{i^2j^2}=\ttf_{i^3j^3}=\ttf_{ij}$, and $\tilde\ttf_{i^3j^3}=\ttf_{i+1,j+1}$ is 
$X_{I'}^{J'}$, and the leading block of $\ttf_{i^4j^4}=\ttf_{i,j-1}$ and $\tilde\ttf_{i^4j^4}=\ttf_{i+1,j}$ is $X_{I''}^{J''}$.

We have to compute the contributions of \eqref{etaleta}, \eqref{etareta}, \eqref{xinay}, and \eqref{xinax}.
Note that the first term in \eqref{etareta} looks exactly the same as terms already treated in Section \ref{trivicon}, and hence its contribution to $\ttD_{ij}$ vanishes. The fourth term in \eqref{etareta} vanishes under the conditions of Case 1, 
since both $\gradlo_{\sigma(\bar L_t^2)}^{\sigma(\bar L_t^2)}$ and $\lgrado_{\sigma(\bar K_t^2)}^{\sigma(\bar K_t^2)}$ vanish.
Next, the contribution of the last term in \eqref{xinay} to any one of $\ttd_{ij}^k$ vanishes, 
since the leading blocks of $\ttf_{i^kj^k}$ and $\tilde\ttf_{i^kj^k}$ coincide. The same holds true for the last term 
in \eqref{xinax}. Further, the contributions of the third term in \eqref{xinay} to $\ttd_{ij}^1$ and to $\ttd_{ij}^3$
coincide, as well as the contributions of this term to $\ttd_{ij}^2$ and to $\ttd_{ij}^4$, since they depend only on $j^k$,
and $j^1=j^3=j$, $j^2=j^4=j-1$. The same holds true for the foutrh term in \eqref{xinay}. Similarly, the contributions of the
fourth term in \eqref{xinax} to $\ttd_{ij}^1$ and to $\ttd_{ij}^2$ coincide, as well as the contributions of this term to 
$\ttd_{ij}^3$ and to $\ttd_{ij}^4$, since they depend only on $i^k$, and $i^1=i^2=i-1$, $i^3=i^4=i$. The same holds true 
for the fifth term in \eqref{xinax}.

The total contribution of all $B$-terms involved in the above formulas is given in  Lemma \ref{zone1lemma}. Note that the contributions of the third, sixth, ninth and tenth terms in Lemma \ref{zone1lemma} to any one of $\ttd_{ij}^k$ vanish, 
since the dependence of all these terms on $\ttf^1$ is only over which blocks the summation goes. The latter fact, in turn, is 
completely defined by the leading block of $\ttf^1$, and the leading blocks of $\ttf_{i^kj^k}$ and $\tilde\ttf_{i^kj^k}$ coincide.

To proceed further assume first that $X_I^J=X_{I'}^{J'}=X_{I''}^{J''}$. Consider the first sum in the third term in 
\eqref{etaleta}. Each block involved in this sum contributes an equal amount to $\ttd_{ij}^1$ and $\ttd_{ij}^2$, as well as to
$\ttd_{ij}^3$ and $\ttd_{ij}^4$, so the total contribution of the block vanishes. Similarly, for the second sum in the third term in 
\eqref{etaleta}, each block involved contributes an equal amount to $\ttd_{ij}^1$ and $\ttd_{ij}^3$, as well as to
$\ttd_{ij}^2$ and $\ttd_{ij}^4$, so the total contribution of the block vanishes as well.

The first, the second, and the fifth term in Lemma \ref{zone1lemma} are treated exactly as the first sum in the third term in 
\eqref{etaleta}, and the fourth term, exactly as the the second sum in the third term in \eqref{etaleta}. Consequently, all these
contributions vanish. We thus see that $\ttD_{ij}=\ttD_{ij}[7]-\ttD_{ij}[8]$, where $\ttD_{ij}[7]$ and $\ttD_{ij}[8]$ are the contributions
of the seventh and the eights terms in Lemma \ref{zone1lemma} to $\ttD_{ij}$.

To treat $\ttD_{ij}[7]$, recall that the sum in the seventh term is taken over the cases when the exit point of 
$X_{I_t^2}^{J_t^2}$ lies above the exit point of $X_{I_p^1}^{J_p^1}$. Consequently, the treatment in the cases when the exit 
point of $\ttf^2$ lies above the exit point of $\ttf_{i^1j^1}$ is again exactly the same as for the first sum in the third term in 
\eqref{etaleta}, and the corresponding contribution vanishes. If the exit point of $\ttf^2$ coincides with the exit point of $\ttf_{i^1j^1}$, that is, if $\hat\imath-\hat\jmath=i-j-1$, one has
\begin{equation}\label{cont7u}
\ttD_{ij}[7]=-\ttd_{ij}^2[7]-\ttd_{ij}^3[7]+\ttd_{ij}^4[7]=
\begin{cases}
-\#^1-1 \quad &\text{for $\hat\imath< i$},\\
-\#^1 \quad &\text{for $\hat\imath\ge i$},
\end{cases}
\end{equation}
where $\#^1$ is the number of non-leading blocks of $\ttf^2$ satisfying the corresponding conditions. If the exit point 
of $\ttf^2$ coincides with the exit point of $\ttf_{i^2j^2}$, that is, if $\hat\imath-\hat\jmath=i-j$,
one has
\begin{equation*}\label{cont7m}
\ttD_{ij}[7]=\ttd_{ij}^4[7]=
\begin{cases}
\#^2+1 \quad &\text{for $\hat\imath\le i$},\\
\#^2 \quad &\text{for $\hat\imath>i$},
\end{cases}
\end{equation*}
where $\#^2$ is the number of non-leading blocks of $\ttf^2$ satisfying the corresponding conditions. The cases when the exit point of
$\ttf^2$ lies below the exit point of $\ttf_{i^2j^2}$ do not contribute to $\ttD_{ij}[7]$.

Similarly, the treatment of $\ttD_{ij}[8]$ in the cases when the exit 
point of $\ttf^2$ lies above the exit point of $\ttf_{i^1j^1}$ is exactly the same as for the second sum in the third term in
\eqref{etaleta}, and the corresponding contribution vanishes. If the exit point of $\ttf^2$ coincides with the exit point of 
$\ttf_{i^1j^1}$, one has
\begin{equation}\label{cont8u}
\ttD_{ij}[8]=-\ttd_{ij}^2[8]-\ttd_{ij}^3[8]+\ttd_{ij}^4[8]=
\begin{cases}
-\#^1-1 \quad &\text{for $\hat\jmath\le j$},\\
-\#^1 \quad &\text{for $\hat\jmath> j$},
\end{cases}
\end{equation}
where $\#^1$ is the same as above. If the exit point of $\ttf^2$ coincides with the exit point of $\ttf_{i^2j^2}$, one has
\begin{equation*}\label{cont8m}
\ttD_{ij}[8]=\ttd_{ij}^4[8]=
\begin{cases}
\#^2+1 \quad &\text{for $\hat\jmath< j$},\\
\#^2 \quad &\text{for $\hat\jmath\ge j$},
\end{cases}
\end{equation*}
where $\#^2$ is the same as above. The cases when the exit point of
$\ttf^2$ lies below the exit point of $\ttf_{i^2j^2}$ do not contribute to $\ttD_{ij}[8]$.

It follows from the above discussion that for $\hat\imath-\hat\jmath=i-j-1$ 
\begin{equation*}
\ttD_{ij}[7]-\ttD_{ij}[8]=\begin{cases}
1\quad &\text{for $\hat\imath\ge i$, $\hat\jmath\le j$},\\ 
-1\quad &\text{for $\hat\imath<i$, $\hat\jmath>j$},\\ 
0\quad &\text{otherwise}. 
\end{cases}
\end{equation*}
Consequently, $\ttD_{ij}$ vanishes everywhere on the line $\hat\imath-\hat\jmath=i-j-1$. 
Further, for $\hat\imath-\hat\jmath=i-j$ one has
\begin{equation*}
\ttD_{ij}[7]-\ttD_{ij}[8]=\begin{cases}
1\quad &\text{for $\hat\imath\le i$, $\hat\jmath\ge j$},\\ 
-1\quad &\text{for $\hat\imath>i$, $\hat\jmath<j$},\\ 
0\quad &\text{otherwise}. 
\end{cases}
\end{equation*}
 Consequently, $\ttD_{ij}$ vanishes everywhere on the line 
$\hat\imath-\hat\jmath=i-j$ except for the point $(\hat\imath,\hat\jmath)=(i,j)$, where it equals one. Therefore, 
for $X_I^J=X_{I'}^{J'}=X_{I''}^{J''}$ relation \eqref{dubneisum} holds with $\lambda=1$.

There are three more possibilities for relations between the blocks $X_I^J$, $X_{I'}^{J'}$, $X_{I''}^{J''}$: 

a) $X_I^J\ne X_{I'}^{J'}=X_{I''}^{J''}$; 

b) $X_I^J=X_{I'}^{J'}\ne X_{I''}^{J''}$; 

c) $X_I^J\ne X_{I'}^{J'}\ne X_{I''}^{J''}$.

To treat each of these three one has to consider correction terms with respect to the basic case $X_I^J=X_{I'}^{J'}=X_{I''}^{J''}$.
We illustrate this treatment for the first of the above possibilities.

By Lemma \ref{compar}, case a) can be further subdivided into three subcases:

a1) $I'=I$, $J'\subsetneq J$;

a2) $I'\subsetneq I$, $J'=J$;

a3) $I'\subsetneq I$, $J'\subsetneq J$.

In case a1) we have the following correction terms. For the third term in \eqref{etaleta}, there are blocks $X_{\tilde I}^{J'}$ that satisfy the summation condition $\beta_t^2<\beta_p^1$ for the pair $\ttf_{i^1j^1}$, 
$\tilde\ttf_{i^1j^1}$ but violate it for the other three pairs. By Lemma \ref{compar}, such blocks are characterized
by conditions $\tilde I \subseteq I$, $\tilde J=J'$.
 Consequently, these blocks produce the correction term
\begin{equation*}
-\sum_{\tilde J=J'} \left\langle\lgrado_{\rho(K_t^2)}^{\rho(K_t^2)}\lgradd_{K_t^2}^{K_t^2}\right\rangle+
\sum_{\tilde J=J'} \left\langle\gradlo_{\rho(L_t^2)}^{\rho(L_t^2)}\gradld_{L_t^2}^{L_t^2}\right\rangle
\end{equation*}
to $\ttd_{ij}^1$.

For the first term in Lemma \ref{zone1lemma}, the correction terms are defined by the same blocks as above except for the block $X_{I'}^{J'}$
itself (because of the additional summation condition $\alpha_t^2>\alpha_p^1$). Consequently, these blocks produce the correction term 
\begin{equation*}
\sum_{\tilde J=J'} \left\langle\lgrado_{\rho(\Phi_t^2)}^{\rho(\Phi_t^2)}\lgradd_{\Phi_t^2}^{\Phi_t^2}\right\rangle-
\sum_{\tilde J=J'\atop \tilde I=I'}\left\langle\lgrado_{\rho(\Phi')}^{\rho(\Phi')}\lgradd_{\Phi'}^{\Phi'}\right\rangle
\end{equation*}
to $\ttd_{ij}^1$, where $\Phi'$ corresponds to the block $X_{I'}^{J'}$.

For the second term in Lemma \ref{zone1lemma}, the block $X_I^J$ violates the summation condition $\beta_t^2\ne\beta_p^1$, 
$\alpha_t^2=\alpha_p^1$ for the pair $\ttf_{i^1j^1}$, $\tilde\ttf_{i^1j^1}$ but satisfies it for the other three pairs. Besides, the block
$X_{I'}^{J'}$ satisfies this condition for the pair $\ttf_{i^1j^1}$, $\tilde\ttf_{i^1j^1}$ but violates it for the other three pairs Consequently, these two blocks produce correction terms
\begin{equation*}
\sum_{\tilde J=J'\atop \tilde I=I'}\left\langle\lgrado_{\rho(\Phi')}^{\rho(\Phi')}\lgradd_{\Phi'}^{\Phi'}\right\rangle
-\sum_{\tilde J=J\atop \tilde I=I}\left\langle\lgrado_{\Phi}^{\Phi}\lgradd_{\Phi}^{\Phi}\right\rangle
\end{equation*}
to $\ttd_{ij}^1$, where $\Phi$ corresponds to the block $X_I^J$.

For the fourth term in Lemma \ref{zone1lemma}, the blocks $X_{\tilde I}^{J'}$ violate the summation condition
$\beta_t^2=\beta_p^1$, $\alpha_t^2\ge \alpha_p^1$  for the pair $\ttf_{i^1j^1}$, $\tilde\ttf_{i^1j^1}$ but satisfy it for 
the other three pairs. Besides, the block $X_I^J$ satisfies this condition  for the pair $\ttf_{i^1j^1}$, $\tilde\ttf_{i^1j^1}$ but violates it for the other three pairs. Consequently, these blocks produce correction terms
\begin{equation*}
-\sum_{\tilde J=J'} \left\langle\gradlo_{\rho(L_t^2)}^{\rho(L_t^2)}\gradld_{L_t^2}^{L_t^2}\right\rangle
+\sum_{\tilde J=J\atop \tilde I=I}\left\langle\gradlo_{L}^{L}\gradld_{L}^{L}\right\rangle
\end{equation*}
to $d_{ij}^1$, where $L$ corresponds to the block $X_I^J$.

Summation conditions in the fifth term in Lemma \ref{zone1lemma} are exactly the same as in the fourth term. Consequently, one
gets correction terms
\begin{equation*}
\sum_{\tilde J=J'} \left\langle\lgrado_{\rho(K_t^2\setminus\Phi_t^2)}^{\rho(K_t^2\setminus\Phi_t^2)}
\lgradd_{K_t^2\setminus\Phi_t^2}^{K_t^2\setminus\Phi_t^2}\right\rangle-
\sum_{\tilde J=J\atop \tilde I=I}\left\langle\lgrado_{K\setminus\Phi}^{K\setminus\Phi}\lgradd_{K\setminus\Phi}^{K\setminus\Phi}\right\rangle
\end{equation*}
to $\ttd_{ij}^1$, where $K$ corresponds to the block $X_I^J$.

For the seventh term in Lemma \ref{zone1lemma}, the block $X_I^J$ satisfies the summation condition 
$\beta_t^2=\beta_p^1$, $\alpha_t^2=\alpha_p^1$ for the pair $\ttf_{i^1j^1}$, $\tilde\ttf_{i^1j^1}$ but violates it for the other three pairs.
Besides, the additional condition on the exit points excludes the diagonal $\hat\imath-\hat\jmath=i-j-1$. Consequently, this block
produces correction terms
\begin{equation*}
\sum_{\tilde J=J\atop \tilde I=I}\left\langle\lgrado_{\Phi}^{\Phi}\lgradd_{\Phi}^{\Phi}\right\rangle
+\ttD_{ij}[7]
\end{equation*}
to $\ttd_{ij}^1$, where $\ttD_{ij}[7]$ is given by \eqref{cont7u}.

For the eights term in Lemma \ref{zone1lemma}, the situation is exactly the same as for the seventh term. Consequently, one gets correction terms 
\begin{equation*}
-\sum_{\tilde J=J\atop \tilde I=I}\left\langle\gradlo_{L}^{L}\gradld_{L}^{L}\right\rangle-\ttD_{ij}[8]
\end{equation*}
to $\ttd_{ij}^1$, where $\ttD_{ij}[8]$ is given by \eqref{cont8u}.

It is easy to note that the correction terms listed above cancel one another (recall that vanishing of $D_{ij}[7]-D_{ij}[8]$ for
$\hat\imath-\hat\jmath=i-j-1$ was already proved above), and hence relation \eqref{dubneisum} is established in the case a1).
Cases a2), a3), b), and c) are treated in a similar manner.

\subsubsection{Other cases} The case $1<i<j<n$ is treated in a similar way with \eqref{etaleta} replaced by \eqref{etareta} and
Lemma \ref{zone1lemma} replaced by Lemma \ref{zone2lemma}. 

Consider the case $1<i=j<n$. The treatment of the first term in \eqref{etareta}, the last terms in \eqref{xinay} 
and \eqref{xinax}, the third, sixth, ninth and tenth terms in Lemma \ref{zone1lemma}, and  the third and the sixth terms 
in Lemma \ref{zone2lemma} is exactly the same as in the previous section. The third and the fourth terms in \eqref{xinay}, 
as well as the fourth and the fifth terms in \eqref{xinax}, are treated almost in the same way as in the previous section; 
the only difference is an appropriate choice 
of the functions on the diagonal, which ensures required cancellations. To treat all the other contributions, recall that by the definition, the leading block of $\ttf_{ii}^<$ is $X$, and the leading block of $\ttf_{ii}^>$ is $Y$.
Denote by $X_I^J$ the leading block of $\ttf_{i,i-1}$, and by $Y_{\bar I}^{\bar J}$ the leading block of $\ttf_{i-1,i}$. Similarly to
Section \ref{typicase}, there are four possible cases: $X_I^J=X$, $Y_{\bar I}^{\bar J}=Y$; $X_I^J\ne X$, 
$Y_{\bar I}^{\bar J}=Y$; $X_I^J=X$, $Y_{\bar I}^{\bar J}\ne Y$; $X_I^J\ne X$, $Y_{\bar I}^{\bar J}\ne Y$.

Let us consider the first of the above four cases. Contributions of all terms except for the seventh and the eights terms in 
Lemmas \ref{zone1lemma} and \ref{zone2lemma} are treated in the same way as the third and the fourth terms in \eqref{xinay} 
above. For example, to treat the first sum in the third 
term in \eqref{etaleta} we choose $\ttf_{i^2j^2}=\ttf_{i-1,j-1}^<$ and $\ttf_{i^3j^3}=\ttf_{ij}^>$, so that this sum contributes only to 
$\delta_{ij}^2$ and $\delta_{ij}^4$, and the contributions cancel each other. For the remaining four terms, there is a subtlety in the case
$\ii=\jj$. We write $f_{\ii\ii}=\left.\frac12\ttf_{\ii\ii}^<\right|_{X=Y}+\left.\frac12\ttf_{\ii\ii}^>\right|_{X=Y}$ and note that
$X$ is the only block for $\ttf_{\ii\ii}^<$ and $Y$ is the only block for $\ttf_{\ii\ii}^>$. Consequently,  
for $\ttf^2= \frac12\ttf_{\ii\ii}^<$, the terms involved in Lemma \ref{zone1lemma} contribute zero for $\ii \ne i$ and $1/2$ for $\ii= i$, 
while the terms  involved in Lemma \ref{zone2lemma} contribute zero for any $\ii$. 
Similarly, for $\ttf^2= \frac12\ttf_{\ii\ii}^>$, 
the terms  involved in Lemma \ref{zone1lemma} contribute zero for any $\ii$, while
the terms involved in Lemma \ref{zone2lemma} contribute zero for $\ii \ne i$ and $1/2$ for $\ii= i$. Therefore, we get contribution $1$ for
$(i,j)=(\ii,\jj)$, as required. In the remaining three cases one has to consider correction terms, similarly to Section \ref{typicase}.

It remains to consider the cases when $i$ or $j$ are equal to $1$ or $n$. For example, let $1<j<i=n$ and assume that the degree of the vertex $(n,j)$ in $Q_{\bfGr,\bfGc}$ equals~6, see Fig.~\ref{fig:njnei}(a). 
It follows from the description of the quiver in Section \ref{thequiver} that $(n,j-1)$ is a mutable vertex. In this case the functions $\tilde\ttf_{i^3j^3}$ and $\tilde\ttf_{i^4j^4}$ satisfy conditions of Case 2 in Section~\ref{case2}, 
and all other functions satisfy conditions of Case 1 in Section~\ref{case1}. Consequently, the leading block of 
$\ttf_{i^1j^1}=\ttf_{n-1,j}$ and $\tilde\ttf_{i^1j^1}=\ttf_{n,j+1}$ is $X_I^J$, the leading block of  
$\ttf_{i^2j^2}=\ttf_{n-1,j-1}$ and $\tilde\ttf_{i^2j^2}=\ttf_{i^3j^3}=\ttf_{ij}$ is $X_{I'}^{J'}$, the leading block of
$\ttf_{i^4j^4}=\ttf_{n,j-1}$ is $X_{I''}^{J''}$, the leading block of $\tilde\ttf_{i^3j^3}=\ttf_{1,k+1}$
with $k=\gammac(j)$ is $Y_{\bar I}^{\bar J}$, and the leading block of $\tilde\ttf_{i^4j^4}=\ttf_{1k}$ is 
$Y_{\bar I'}^{\bar J'}$. 

The treatment of the last three terms in \eqref{xinay} and the last three terms in \eqref{xinax} remains the same as
in Section \ref{typicase}. To proceed further, assume that $X_I^J=X_{I'}^{J'}=X_{I''}^{J''}$ and 
$Y_{\bar I}^{\bar J}=Y_{\bar I'}^{\bar J'}$. In this case it is more convenient to
replace \eqref{dubneisum} with $\ttD_{ij}=\ttd_{ij}^1-\ttd_{ij}^2+\ttd_{ij}^{43}-\tilde\ttd_{ij}^{43}$, where 
$\ttd_{ij}^{43}=\ttf_{n,j-1}-\ttf_{ij}$ and $\tilde\ttd_{ij}^{43}=\ttf_{1k}-\ttf_{1,k+1}$, so that the first 
three terms in $\ttD_{ij}$ are subject to the rules of Case 1, and the last term to the rules of Case 2.

The contributions of the third, ninth and tenth terms in Lemma \ref{zone1lemma} to any one of $\ttd^1_{ij}$, 
$\ttd^2_{ij}$ and $\ttd^{43}_{ij}$ vanish for the same reason as in Section \ref{typicase}. The same holds true 
for the contribution of the third term in Lemma \ref{zone2lemma} to $\tilde\ttd^{43}_{ij}$. 

The first sum in the third term in \eqref{etaleta} contributes the same amount to $\ttd_{ij}^1$ and $\ttd_{ij}^2$, and 
zero to $\ttd_{ij}^{43}$. The same holds true for the first, second and the fifth terms in Lemma \ref{zone1lemma}. The second 
sum in the third term in \eqref{etaleta} vanishes since $\rho(L_t^2)$ for every $X$-block of $\ttf^2$ such that
$\beta_t^2<\beta_p^1$ lies strictly to the left of the column $j-1$. 

Further, 
$\lgrado_{\sigma(\bar K_t^2)}^{\sigma(\bar K_t^2)}$ in the second sum in the fourth term of \eqref{etareta} is an identity
matrix, and hence the contribution of this sum to $\tilde\ttd^{43}_{ij}$ vanishes, since both sides in this difference
depend only on $\ttf^2$. The same reasoning works as well for the first, the fourth
and the fifth terms in Lemma \ref{zone2lemma}, and for the first sum in the fourth term of \eqref{etareta} in the case
$\bar\beta_{t-1}^2>\bar\beta_{p-1}^1$. The contribution of this sum to $\tilde\ttd^{43}_{ij}$ for the case
$\bar\beta_{t-1}^2=\bar\beta_{p-1}^1$ cancels the contribution of the second term in Lemma \ref{zone2lemma} for the 
case $\bar\alpha_{t-1}^2<\bar\alpha_{p-1}^1$.

Let us consider now the contribution of the fourth term in Lemma \ref{zone1lemma}. Assume that
a $t$-th $X$-block of $\ttf^2$ satisfies conditions $\alpha_t^2>\alpha_p^1$ and $\beta_t^2=\beta_p^1$. Consequently, the
$(t-1)$-th $Y$-block of $\ttf^2$ satisfies conditions $\bar\alpha_{t-1}^2\ge\bar\alpha_{p-1}^1$ and 
$\bar\beta_{t-1}^2=\bar\beta_{p-1}^1$. Consider first the case when the inequality above is strict. If the $Y$-block
in question is not the leading block of $\ttf^2$, then the contributions of the $X$-block to $\ttd_{ij}^1[4]$ and 
$\ttd_{ij}^2[4]$ cancel each other, whereas the contribution of the $X$-block
to $\ttd_{ij}^{43}[4]$ cancels the contribution of the $Y$-block to $\tilde\ttd^{43}_{ij}[2]$. The same holds true if 
the $Y$-block is the leading block of $\ttf^2$ and $\hat\jmath<\gammac(j)$. If $\hat\jmath=\gammac(j)$ then the contributions of the $X$-block to $\ttd_{ij}^2[4]$ and $\ttd_{ij}^{43}[4]$ vanish, whereas the contribution of the $X$-block
to $\ttd_{ij}^1[4]$ cancels the contribution of the $Y$-block to $\tilde\ttd^{43}_{ij}[2]$. Finally, if $\hat\jmath>\gammac(j)$
then all the above contributions vanish.

Otherwise, if $\bar\alpha_{t-1}^2=\bar\alpha_{p-1}^1$, the sixth, the seventh and the eights terms
in Lemma \ref{zone2lemma} contribute to both sides of $\tilde\ttd^{43}_{ij}$, since in both cases the exit point
for $\ttf^2$ lies to the left of the exit point for $\ttf^1$. Consequently, the contributions
of the sixth and the eight terms vanish, while the contribution of the $Y$-block to $\tilde\ttd^{43}_{ij}[7]$
equals the total contribution of the $X$-block to $\ttd_{ij}^1[4]$, $\ttd_{ij}^2[4]$ and $\ttd_{ij}^{43}[4]$, similarly
to the previous case.

Assume now that a $t$-th $X$-block of $\ttf^2$ satisfies conditions $\alpha_t^2=\alpha_p^1$ and $\beta_t^2=\beta_p^1$.
We distinguish the following five cases.

A. $\hat\imath-\hat\jmath>n-j+1$; consequently, the sixth, the seventh and the eights terms
 in Lemma \ref{zone1lemma} do not contribute to $\ttD_{ij}$, since in all cases involved the exit point
for $\ttf^2$ lies below the exit point for $\ttf^1$. Besides, $\bar\alpha_{t-1}^2\ge\bar\alpha_{p-1}^1$ and 
$\bar\beta_{t-1}^2=\bar\beta_{p-1}^1$. The treatment of this case is exactly the same as the treatment of the case  
 $\alpha_t^2>\alpha_p^1$ and $\beta_t^2=\beta_p^1$ above.

B. $\hat\imath-\hat\jmath=n-j+1$; consequently, $\bar\alpha_{t-1}^2=\bar\alpha_{p-1}^1$ and 
$\bar\beta_{t-1}^2=\bar\beta_{p-1}^1$. Similarly to the case A, the sixth, the seventh and the eights terms
 in Lemma \ref{zone1lemma} do not contribute to $\ttD_{ij}$, since in all cases involved the exit point
for $\ttf^2$ lies below or coincides with the exit point for $\ttf^1$. On the other hand, the sixth, the seventh and the eights terms in Lemma \ref{zone2lemma} contribute only to the subtrahend of $\tilde\ttd^{43}_{ij}$, but not to the minuend.
If the $Y$-block in question is not the leading block of $\ttf^2$ then the contributions of the $X$-block to
$\ttd_{ij}^1[4]$ and $\ttd_{ij}^2[4]$ cancel each other, the contribution of the $X$-block to
$\ttd_{ij}^{43}[4]$ equals one, while the contributions of the $Y$-block to $\tilde\ttd^{43}_{ij}[6]$, $\tilde\ttd^{43}_{ij}[7]$
and $\tilde\ttd^{43}_{ij}[8]$ are equal to $n+1-\bar\alpha_{t-1}^2-\gammac(j)$, $\gammac(j)-n$ and $\bar\alpha_{t-1}^2$,
respectively. Consequently, the total contribution to $\ttD_{ij}$ vanishes. If the $Y$-block is the leading block of $\ttf^2$
then the contributions of the $X$-block to $\ttd_{ij}^2[4]$ and $\ttd_{ij}^{43}[4]$ vanish. Further, if $\hat\imath>1$ then 
the contribution of the $X$-block to $\ttd_{ij}^1[4]$ vanishes as well, whereas the contributions of 
the $Y$-block to $\tilde\ttd^{43}_{ij}[6]$, $\tilde\ttd^{43}_{ij}[7]$ and $\tilde\ttd^{43}_{ij}[8]$ are equal to 
$n+\hat\imath-\bar\alpha_{t-1}^2-\hat\jmath$, $\hat\jmath-n-1$ and $\bar\alpha_{t-1}^2+1-\hat\imath$, respectively. 
Consequently, the total contribution to $\ttD_{ij}$ vanishes. Finally, if $\hat\imath=1$ then the contribution of the $X$-block to $\ttd_{ij}^1[4]$ equals one, whereas the contributions of 
the $Y$-block to $\tilde\ttd^{43}_{ij}[6]$, $\tilde\ttd^{43}_{ij}[7]$ and $\tilde\ttd^{43}_{ij}[8]$ are equal to 
$n+1-\bar\alpha_{t-1}^2-\gammac(j)$, $\gammac(j)-n$ and $\bar\alpha_{t-1}^2$, respectively, and again the total contribution to $\ttD_{ij}$ vanishes.

C. $\hat\imath-\hat\jmath=n-j$; consequently, $\bar\alpha_{t-1}^2=\bar\alpha_{p-1}^1$ and 
$\bar\beta_{t-1}^2=\bar\beta_{p-1}^1$. Here the sixth, the seventh and the eights terms
 in Lemma \ref{zone2lemma} do not contribute to $\tilde\ttd^{43}_{ij}$, since in both cases involved the exit point
for $\ttf^2$ lies to the right or coincides with the exit point for $\ttf^1$. On the other hand, the sixth, the seventh and the eighth terms in Lemma \ref{zone1lemma} do not contribute to $\ttd_{ij}^1$, $\ttd_{ij}^2$ and to the subtrahend of 
$\ttd^{43}_{ij}$, but contribute to its minuend. If the $X$-block in question is not the leading block of $\ttf^2$ then
its contributions to $\ttd_{ij}^1[4]$ and $\ttd_{ij}^2[4]$ cancel each other, and its contribution to $\ttd_{ij}^{43}[4]$
equals one. The contributions of this block to $\ttd^{43}_{ij}[6]$, $\ttd^{43}_{ij}[7]$ and $\ttd^{43}_{ij}[8]$ are equal to
$\alpha_t^2-j$, $1$ and $j-2-\alpha_t^2$, respectively. Consequently, the total contribution to $\ttD_{ij}$ vanishes. The same 
holds true if this $X$-block is the leading block of $\ttf^2$ and $\hat\imath<n$. If $\hat\imath=n$, and hence
$\hat\jmath=j$, then its contribution to $\ttd_{ij}^2[4]$ and $\ttd_{ij}^{43}[4]$ vanish, and the contribution to
$\ttd_{ij}^1[4]$ equals one. The contributions of this block to $\ttd^{43}_{ij}[6]$, $\ttd^{43}_{ij}[7]$ and 
$\ttd^{43}_{ij}[8]$ are equal to $\alpha_t^2-j$, $1$ and $j-1-\alpha_t^2$, respectively. Consequently, the total contribution to $\ttD_{ij}$ equals one. If the $Y$-block in question is the leading block of $\ttf^2$ then the contributions of the
$X$-block to $\ttd_{ij}^1[4]$, $\ttd_{ij}^2[4]$ and $\ttd_{ij}^{43}[4]$ vanish, as well as the contribution of the $Y$-block
to $\ttd^{43}_{ij}[7]$, and the contributions of $Y$-block to $\ttd^{43}_{ij}[6]$ and $\ttd^{43}_{ij}[8]$ cancel each
other. Consequently, the total contribution to $\ttD_{ij}$ vanishes.

D. $\hat\imath-\hat\jmath=n-j-1$; consequently, $\bar\alpha_{t-1}^2\le\bar\alpha_{p-1}^1$ and 
$\bar\beta_{t-1}^2=\bar\beta_{p-1}^1$. Here the sixth, the seventh and the eighth terms in Lemma \ref{zone1lemma} do not contribute to $\ttd_{ij}^1$, but contribute to $\ttd_{ij}^2$ and $\ttd^{43}_{ij}$. Assume first that 
$\bar\alpha_{t-1}^2=\bar\alpha_{p-1}^1$, then the sixth, the seventh and the eights terms
 in Lemma \ref{zone2lemma} do not contribute to $\tilde\ttd^{43}_{ij}$ similarly to case C.   
If the $X$-block in question is not the leading block of $\ttf^2$ then
its contributions to $\ttd_{ij}^1[4]$ and $\ttd_{ij}^2[4]$ cancel each other, and its contribution to $\ttd_{ij}^{43}[4]$
equals one. Further, its contributions to $\ttd_{ij}^2[6]$ and $\ttd_{ij}^{43}[6]$ vanish, and contributions to
$\ttd_{ij}^2[8]$ and $\ttd_{ij}^{43}[8]$ cancel each other. Finally, its contribution to $\ttd^2_{ij}[7]$ cancels the
contribution to $\ttd_{ij}^{43}[4]$, and hence the total contribution to $\ttD_{ij}$ vanishes. The same holds true if
the $X$-block is the leading block of $\ttf^2$ and $\hat\imath>n-1$. If $\hat\imath=n-1$ the contributions to  $\ttd_{ij}^2[4]$
and $\ttd_{ij}^{43}[4]$ vanish and the contributions to $\ttd_{ij}^1[4]$ and $\ttd^2_{ij}[7]$ cancel each other. If 
$\hat\imath=n$, or if the $Y$-block in question is the leading block of $\ttf^2$ then all the above mentioned contributions vanish. The case $\bar\alpha_{t-1}^2<\bar\alpha_{p-1}^1$ is similar; additionally to the above, the contribution of the 
$Y$-block to $\tilde\ttd_{ij}^{43}$ vanishes.
 
E. $\hat\imath-\hat\jmath<n-j-1$; consequently, $\bar\alpha_{t-1}^2\le\bar\alpha_{p-1}^1$ and 
$\bar\beta_{t-1}^2=\bar\beta_{p-1}^1$. This case is similar to the previous one, with the additional cancellation of the contributions to $\ttd_{ij}^1[7]$ and $\ttd_{ij}^1[8]$. 

Therefore, the total contribution to $\ttD_{ij}$ vanishes in 
all cases except for the case $(\hat\imath,\hat\jmath)=(n,j)$ when it is equal one, 
hence under the assumptions $X_I^J=X_{I'}^{J'}=X_{I''}^{J''}$ and $Y_{\bar I}^{\bar J}=Y_{\bar I'}^{\bar J'}$ relation
\eqref{dubneisum} holds with $\lambda=1$. If these assumptions are violated, 
one has to consider correction terms similarly to Section \ref{typicase}.

\section{Regularity check and the toric action} \label{sec:regtor}
The goal of this section is threefold: 

(i) to check condition (ii) in Proposition \ref{regfun} for the family $F_{\bfGr,\bfGc}$, 

(ii) to prove Theorem \ref{genmainth}(iii), and

(iii) to prove Proposition \ref{frozen}.

\subsection{Regularity check}
We have to prove the following statement.

\begin{theorem}\label{regneighbors}
For any mutable cluster variable $f_{ij}\in F_{\bfGr,\bfGc}$, the adjacent variable $f'_{ij}$ is a regular function on
$\Mat_n$.
\end{theorem}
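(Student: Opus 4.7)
The plan is to verify regularity of $f'_{ij}$ by exhibiting the exchange relation
\[
f_{ij}\, f'_{ij} = \prod_{(i,j)\to (k,l)} f_{kl} + \prod_{(k,l)\to (i,j)} f_{kl}
\]
as an instance of a short Plücker-type (Desnanot--Jacobi) identity on minors of the glued matrix $\L(i,j)$, so that $f_{ij}$ manifestly divides the right-hand side in the polynomial ring on $\Mat_n$. I would proceed by case analysis on the neighborhood of $(i,j)$, following the classification already catalogued in Figures \ref{fig:ijnei}--\ref{fig:11nei}: whether $(i,j)$ is in the interior of a single block of some $\L$, on the boundary between two glued blocks, on one of the edges of the ambient $n\times n$ grid, or at a corner, and, in the boundary and edge cases, whether the corresponding inclined edges of $\BD_{\bfGr,\bfGc}$ are present. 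Since these figures cover every mutable vertex type, a finite list of sub-cases suffices.

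In the generic interior case (Fig.~\ref{fig:ijnei}), all six neighbors of $f_{ij}$ are trailing minors of the same matrix $\L=\L(i,j)$ whose index sets are obtained from $[s(i,j),N(\L)]$ by independently removing or adjoining a top row and/or a left column. The exchange relation is then precisely the classical Desnanot--Jacobi identity applied to the leading submatrix $\L^{[s(i,j)-1,N(\L)]}_{[s(i,j)-1,N(\L)]}$: that identity expresses the product $f_{ij}\cdot f'_{ij}$ (with $f'_{ij}$ interpreted as a size-$N(\L)-s+2$ trailing minor and its conjugate off-principal mate, respectively) as the difference of two monomials in the four corner neighbors multiplied by the two axis neighbors. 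Reading the orientations of the edges at $(i,j)$ in Fig.~\ref{fig:ijnei} shows that the two monomials on the right match the two exchange monomials up to sign, which forces divisibility.

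For the boundary, edge and corner cases (Figures \ref{fig:1jnei}--\ref{fig:11nei}), some of the geometric neighbors are replaced by the dummy value $1$, reflecting the fact that the corresponding minor drops in size, and some neighbors belong to a different glued matrix $\L'$ attached to $\L$ through an inclined edge of $\BD_{\bfGr,\bfGc}$. The exchange relation then takes the form of a shortened Desnanot--Jacobi identity. I would reduce each such case to the generic one by invoking two structural features of the construction in Section~\ref{thebasis}: consecutive $X$- and $Y$-blocks are glued along the rows or columns of a common nontrivial run, so that a trailing minor of $\L$ that contains the top of a $Y$-block factors as a minor of the uppermost $Y$-block times a minor of the remainder (and similarly for columns); and the outermost rows/columns of $\L$ adjoin zero rectangles, which causes minors containing them to collapse to minors of a neighboring $\L'$ via Laplace expansion. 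Combining these two observations identifies the "exotic" neighbor $f_{kl}\in F_{\bfGr,\bfGc}$ coming from $\L'$ with an honest minor of an appropriately enlarged or contracted leading submatrix of $\L$, after which a single application of Desnanot--Jacobi produces exactly the two monomials prescribed by the quiver.

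The main technical obstacle will be the bookkeeping for the gluing cases, specifically for the corner vertices $(1,n)$, $(n,1)$ and for the vertices $(n,n)$, $(i,n)$, $(n,j)$ where the relevant inclined edges of $\BD_{\bfGr,\bfGc}$ are (or are not) present. There one must track how the injections $\rho$ and $\sigma$ of Section \ref{sec:basis} translate a Desnanot--Jacobi identity on $\L$ into an identity among functions in $F_{\bfGr,\bfGc}$ attached to two different maximal alternating paths. Once regularity is established, the fact that $f_{ij}$ is a nonzero polynomial implies that the quotient $f'_{ij}=(M^++M^-)/f_{ij}$ lies in $\C[x_{kl}]$, completing the verification of condition (ii) in Proposition~\ref{regfun}.
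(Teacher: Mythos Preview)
Your overall strategy---case analysis on vertex type plus Desnanot--Jacobi identities---matches the paper's, but the generic interior case rests on a false premise. The six neighbours of $f_{ij}$ are \emph{not} all trailing minors of the single matrix $\L=\L(i,j)$: within an $X$-block of $\L$ the diagonal entry $\L_{s-1,s-1}$ (with $s=s(i,j)$) equals $x_{i-1,j-1}$, not $x_{i-1,j}$, so adjoining the top row and left column of $\L_{[s,N]}^{[s,N]}$ produces $f_{i-1,j-1}$, not $f_{i-1,j}$. In fact $f_{i-1,j-1},f_{ij},f_{i+1,j+1}$ are three consecutive trailing minors of $\L$, while $f_{i-1,j},f_{i,j+1}$ are consecutive trailing minors of a generally different matrix $\L_+=\L(i-1,j)$ and $f_{i,j-1},f_{i+1,j}$ of a third matrix $\L_-=\L(i,j-1)$. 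A single Desnanot--Jacobi identity on a submatrix of $\L$ therefore cannot yield the exchange relation, and this is already the situation in the interior, not only at the edges where you anticipated trouble.

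The key step you are missing is an elimination argument. The paper first observes that one of $\L,\L_+$ sits as a diagonal block inside a block-triangular extension of the other (which one depends on whether $\deg f_{ij}<\deg f_{i^1j^1}$), and applies the rectangular Desnanot--Jacobi identity~\eqref{notjacobi} to an auxiliary matrix $A$ built from this embedding; after cancelling the determinant of the complementary block one gets
\[
f_{i^1j^1}\det\bar A_{\hat 1}^{\hat 2}+f_{i^2j^2}\,\tilde f_{i^1j^1}=f_{ij}\det A^{\hat 2}.
\]
A second such identity, relating $\L$ and $\L_-$, gives
\[
f_{ij}\det C_{\hat 1}^{\hat 2}+\tilde f_{i^3j^3}\,f_{i^4j^4}=\tilde f_{i^4j^4}\det \bar A_{\hat 1}^{\hat 2}.
\]
The auxiliary determinant $\det\bar A_{\hat 1}^{\hat 2}$, which is \emph{not} a cluster variable, appears in both; the linear combination $\tilde f_{i^4j^4}\cdot(\text{first})+f_{i^1j^1}\cdot(\text{second})$ eliminates it and yields
\[
f_{ij}\bigl(\tilde f_{i^4j^4}\det A^{\hat 2}-f_{i^1j^1}\det C_{\hat 1}^{\hat 2}\bigr)=f_{i^2j^2}\,\tilde f_{i^1j^1}\,\tilde f_{i^4j^4}+f_{i^1j^1}\,\tilde f_{i^3j^3}\,f_{i^4j^4},
\]
so $f'_{ij}=\tilde f_{i^4j^4}\det A^{\hat 2}-f_{i^1j^1}\det C_{\hat 1}^{\hat 2}$ is manifestly regular. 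The lower-degree vertices are then degenerations of this two-step argument, not of a single identity.
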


\begin{proof}
The main technical tool in the proof is the version of the Desnanot--Jacobi identity for minors of a rectangular
matrix  that we have used previously 
for the regularity check in \cite{GSVMem}. Let $A$ be an $(m-1)\times m$ matrix, and $\alpha<\beta<\gamma$ be row indices, then 
\begin{equation}
\label{notjacobi}
\det A^{\hat \alpha} \det A^{\hat \beta \hat \gamma}_{\hat \delta} + 
\det A^{\hat \gamma} \det A^{\hat \alpha\hat \beta}_{\hat \delta} = 
\det A^{\hat \beta} \det A^{\hat \alpha \hat \gamma}_{\hat \delta}, 
\end{equation}
where ``hatted'' subscripts and superscripts indicate deleted rows and columns, respectively. 

Let us assume first that the degree of $(i,j)$ equals six. Following the notation introduced in the previous section, 
denote by $f_{i^1j^1}$ and $\tilde f_{i^1j^1}$ the functions at the vertices to the north and to the east of $(i,j)$, respectively, by $f_{i^2j^2}$ and $\tilde f_{i^3j^3}$ the functions at the vertices to the north-west and to the 
south-east of $(i,j)$, respectively, and by $f_{i^4j^4}$ and $\tilde f_{i^4j^4}$ the functions at the vertices 
to the west and to the south of $(i,j)$, respectively. Let $\L$ be the matrix used to define $f_{i^2j^2}$,
$f_{ij}$ and $\tilde f_{i^3j^3}$, $\L_+$ be the matrix used to define $f_{i^1j^1}$ and $\tilde f_{i^1j^1}$, and $\L_-$
be the matrix used to define $f_{i^4j^4}$ and $\tilde f_{i^4j^4}$.

Assume first that $\deg f_{ij}<\deg f_{i^1j^1}$. Define a $\deg f_{i^1j^1}\times(\deg f_{i^1j^1}+1)$ matrix $A$ via
$A=(\L_+)_{[s(i^1,j^1),N(\L_+)]}^{[s(i^1,j^1)-1,N(\L_+)]}$. Then it is easy to see that 
$\L_{[s(i,j)-1,N(\L)]}^{[s(i,j)-1,N(\L)]}=A_{[1,\deg f_{ij}+1]}^{[1,\deg f_{ij}+1]}$, and moreover, that
$A_{[1,\deg f_{ij}+1]}^{[1,\deg f_{ij}+1]}$ is a block in the block upper triangular matrix  
$A_{[1,\deg f_{i^1j^1}]}^{[1,\deg f_{i^1j^1}]}$. Consequently, 
\[
f_{i^1j^1}=\det A^{\hat 1}, \quad\tilde f_{i^1j^1}=\det A_{\hat 1}^{\hat 1\hat 2}, \quad f_{i^2j^2}\cdot\det B=\det A^{\hat m}, 
 \quad f_{ij}\cdot\det B=\det A_{\hat 1}^{\hat 1\hat m}
\] 
with $B=A_{[\deg f_{ij}+2,\deg f_{i^1j^1}]}^{[\deg f_{ij}+2,\deg f_{i^1j^1}]}$
and $m=\deg f_{i^1j^1}+1$. Applying \eqref{notjacobi} with $\alpha=1$, $\beta=2$, $\gamma=m$, $\delta=1$, one gets
\[
f_{i^1j^1}\cdot\det A_{\hat 1}^{\hat 2\hat m}+f_{i^2j^2}\cdot\det B\cdot\tilde f_{i^1j^1}=
\det A^{\hat 2}\cdot f_{ij}\cdot\det B.
\]
Note that $\det A_{\hat 1}^{\hat 2\hat m}=\det \bar A_{\hat 1}^{\hat 2}\det B$ with 
$\bar A= A_{[1,\deg f_{ij}+1]}^{[1,\deg f_{ij}+1]}$, and hence 
\begin{equation}\label{firstdj}
f_{i^1j^1}\det \bar A_{\hat 1}^{\hat 2}+f_{i^2j^2}\tilde f_{i^1j^1}=f_{ij}\det A^{\hat 2}.
\end{equation}

Let now $\deg f_{ij}\ge\deg f_{i^1j^1}$. Define a $(\deg f_{ij}+1)\times(\deg f_{ij}+2)$ matrix $A$ via adding the column
$(0,\dots,0,1)^T$ on the right to the matrix $\L_{[s(i,j)-1,N(\L)]}^{[s(i,j)-1,N(\L)]}$. Then it is easy to see that 
$(\L_+)_{[s(i^1,j^1),N(\L_+)]}^{[s(i^1,j^1),N(\L_+)]}=A_{[1,\deg f_{i^1j^1}]}^{[2,\deg f_{i^1j^1}+1]}$, and moreover, that
$A_{[1,\deg f_{i^1j^1}]}^{[2,\deg f_{i^1j^1}+1]}$ is a block in the block lower triangular matrix  
$A_{[1,\deg f_{ij}+1]}^{[2,\deg f_{ij}+2]}$. Consequently, 
\[
f_{i^1j^1}\cdot\det B=\det A^{\hat 1}, \quad\tilde f_{i^1j^1}\cdot\det B=\det A_{\hat 1}^{\hat 1\hat 2}, 
\quad f_{i^2j^2}=\det A^{\hat m}, \quad f_{ij}=\det A_{\hat 1}^{\hat 1\hat m}
\] 
with $B=A_{[\deg f_{i^1j^1}+1,\deg f_{ij}+1]}^{[\deg f_{i^1j^1}+2,\deg f_{ij}+2]}$
and $m=\deg f_{ij}+2$. Applying \eqref{notjacobi} with $\alpha=1$, $\beta=2$, $\gamma=m$, $\delta=1$, one gets
\[
f_{i^1j^1}\cdot\det B\det \bar A_{\hat 1}^{\hat 2}+f_{i^2j^2}\cdot\tilde f_{i^1j^1}\cdot\det B=
\det A^{\hat 2}\cdot f_{ij},
\]
where $\bar A=A_{[1,\deg f_{ij}+1]}^{[1,\deg f_{ij}+1]}$ is the same as in the previous case. Note that
$\det A^{\hat 2}=\det \tilde A^{\hat 2}\det B$, where $\tilde A=A_{[1,\deg f_{i^1j^1}]}^{[1,\deg f_{i^1j^1}+1]}$ is given by the same expression as the whole matrix $A$ in the previous case. Consequently, relation \eqref{firstdj} remains valid in this case as well.

To proceed further, we compare $\deg f_{ij}$ with $\deg f_{i^3,j^3}$ and consider two cases similar to the two cases above. 
Reasoning along the same lines, we arrive to the relation
\begin{equation}\label{secdj}
f_{ij}\det C_{\hat 1}^{\hat 2}+\tilde f_{i^3j^3}f_{i^4j^4}=\tilde f_{i^4j^4}\det \bar A_{\hat 1}^{\hat 2}
\end{equation}
with $C=(\L_-)_{[s(i^4,j^4), N(\L_-)]}^{[s(i^4,j^4), N(\L_-)]}$ and $\bar A$ the same as in \eqref{firstdj}. The linear combination of \eqref{firstdj} and \eqref{secdj} with coefficients $\tilde f_{i^4j^4}$ and $f_{i^1j^1}$, respectively, yields
\begin{equation}\label{onestep}
f_{ij}(\tilde f_{i^4j^4}\det A^{\hat 2}-f_{i^1j^1}\det C_{\hat 1}^{\hat 2})=f_{i^2j^2}\tilde f_{i^1j^1}\tilde f_{i^4j^4}+
f_{i^1j^1}\tilde f_{i^3j^3}f_{i^4j^4}.
\end{equation}
Combining this with Theorem \ref{quiver} we see that 
$f_{ij}'=\tilde f_{i^4j^4}\det A^{\hat 2}-f_{i^1j^1}\det C_{\hat 1}^{\hat 2}$ is a regular function on $\Mat_n$.

For vertices of degree less than six, the claim follows from the corresponding degenerate version of \eqref{onestep}.
For example, for vertices of degree five there are three possible degenerations: 

(i) $\deg f_{i^1j^1}=1$, and hence $\tilde f_{i^1j^1}=1$, which corresponds to the cases shown in Fig.~\ref{fig:1jnei}(b),
Fig.~\ref{fig:innei}(c) and Fig.~\ref{fig:1nnei}(a);

(ii) $\deg f_{i^4j^4}=1$, and hence $\tilde f_{i^4j^4}=1$, which corresponds to the cases shown in Fig.~\ref{fig:i1nei}(b),
Fig.~\ref{fig:njnei}(c) and Fig.~\ref{fig:n1nei}(a);

(iii) $\deg f_{ij}=1$, and hence $\tilde f_{i^3j^3}=1$, which corresponds to the cases shown in Fig.~\ref{fig:njnei}(b),
Fig.~\ref{fig:innei}(b) and Fig.~\ref{fig:nnnei}(a).

Vertices of degrees four and three are handled via combining the above degenerations.
\end{proof}

\subsection{Toric action}\label{sec:toric}
To prove Theorem \ref{genmainth}(iii) we show first that  the  action of $\H_{\bfGr}\times\H_{\bfGc}$ on $SL_n$ given by the 
formula $(H_1,H_2)X=H_1 X H_2$ defines a global toric action of $(\C^*)^{k_{\bfGr}+k_{\bfGc}}$ on $\CC_{\bfGr,\bfGc}$.
In order to show this we first check that the right hand sides of all exchange relations in one cluster are semi-invariants 
of this action. This statement can be expressed as follows.

\begin{lemma}\label{rlsemi} 
Let $f_{ij}(X)f_{ij}'(X)=M(X)$ be an exchange relation in the initial cluster, then 
$M(H_1XH_2)=\chi_L^M(H_1)M(X)\chi_R^M(H_2)$, where $\chi_L^M$ and $\chi_R^M$ are left and right multiplicative 
characters of $\H_{\bfGr}\times\H_{\bfGc}$ depending on $M$. 
\end{lemma}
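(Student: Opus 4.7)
The plan is to reduce the lemma to two independent claims and then address them in sequence. The first claim is that each initial cluster variable $f_{ij}$ is itself a semi-invariant, namely $f_{ij}(H_1XH_2)=\chi_L^{ij}(H_1)f_{ij}(X)\chi_R^{ij}(H_2)$ for suitable multiplicative characters $\chi_L^{ij},\chi_R^{ij}$ of $\H_{\bfGr}\times\H_{\bfGc}$; the second claim is that the two monomials $M^{+}$ and $M^{-}$ appearing in the exchange relation $f_{ij}f_{ij}'=M^{+}+M^{-}$ at the mutable vertex $(i,j)$ carry the same $\H_{\bfGr}\times\H_{\bfGc}$-weight. Granted both, the assertion is immediate with $\chi_L^M=\chi_L^{ij}\chi_L^{ij\prime}$, $\chi_R^M=\chi_R^{ij}\chi_R^{ij\prime}$, where the characters $\chi^{ij\prime}$ coincide with $\prod\chi^{kl}$ summed over either $M^{+}$ or $M^{-}$.

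For the first claim, recall that $f_{ij}(X)$ is the principal trailing minor of the block matrix $\L=\L(i,j)$ obtained after the substitution $X=Y$ in the gluing of $X$- and $Y$-blocks described in Section~\ref{thebasis}. Under $X\mapsto H_1XH_2$ each entry scales as $x_{kl}\mapsto(h_1)_k(h_2)_lx_{kl}$, so it suffices to prove that $\L(H_1XH_2)=D_1(H_1)\L(X)D_2(H_2)$ for diagonal matrices $D_1,D_2$ depending multiplicatively on $H_1,H_2$; taking determinants of the trailing block indexed by $[s(i,j),N(\L)]$ then produces the desired characters. The only nontrivial point is consistency at rows of $\L$ lying in $\Phi_t=K_t\cap\bar K_t$ and at columns lying in $\Psi_t=L_t\cap\bar L_{t-1}$: such a row receives contributions from a row index in the row $X$-run $\Delta(\alpha_t)$ and the $\gammar$-image row $Y$-run $\bar\Delta(\bar\alpha_t)=\gammar(\Delta(\alpha_t))$, and an analogous statement holds for columns with $\gammac$ in place of $\gammar$. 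Consistency therefore reduces to the requirement $(h_1)_k=(h_1)_{\gammar(k)}$ on row indices in nontrivial row $X$-runs and $(h_2)_l=(h_2)_{\gammac^{*}(l)}$ on column indices in nontrivial column $Y$-runs, which is precisely the defining condition of $\h_{\bfGr}$ and $\h_{\bfGc}$ in \eqref{smalltorus}. This generalizes Proposition~\ref{frozen}(i) from frozen variables to all members of $F_{\bfGr,\bfGc}$.

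For the second claim, having identified $\chi_L^{kl}(H_1)=\prod_{p\in K(k,l)}(h_1)_{r(p)}$ and $\chi_R^{kl}(H_2)=\prod_{p\in K(k,l)}(h_2)_{c(p)}$, where $r(p),c(p)$ record the original row and column of the gluing matrix corresponding to position $p$, one has to verify that the multisets of row indices (weighted by the $\gammar$-identification) and column indices (weighted by the $\gammac$-identification) contributing to $M^{+}$ match those contributing to $M^{-}$. This is a local combinatorial check to be carried out separately for each of the vertex types enumerated in Figures~\ref{fig:ijnei}--\ref{fig:11nei}: for an interior six-valent vertex $(i,j)$ with $1<i,j<n$, the neighbors $(i-1,j),(i,j+1)$ on one side and $(i-1,j-1),(i+1,j+1),(i,j-1),(i+1,j)$ on the other have trailing-minor index sets that telescope to the same multiset after cancellation, and the same is true for each of the boundary configurations once the presence or absence of the inclined edges $(i-1)\to(k-1)$, $i\to k$ (resp.~$(j-1)\to(k-1)$, $j\to k$) is accounted for; the $\gamma$-identifications forced by membership of $H_1$ in $\H_{\bfGr}$ and of $H_2$ in $\H_{\bfGc}$ are exactly what equate the shifted and unshifted index sets on the two sides.

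The main obstacle is the second step. The weight match is straightforward for the interior six-valent vertices, but the boundary figures (including the degenerate cases of degrees five, four, three, and two) require systematic bookkeeping, and the role of the $\gammar$, $\gammac$ identifications must be tracked carefully in each situation. Once this case analysis is completed, the lemma follows with the characters $\chi_L^M$, $\chi_R^M$ defined explicitly in terms of the diagonal entries of $H_1,H_2$ at the row and column indices of the leading block of $f_{ij}$ together with those contributed by either monomial $M^{\pm}$.
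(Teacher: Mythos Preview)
Your overall plan (semi-invariance of each $f_{ij}$, then equality of the two monomial weights) is exactly the paper's plan, but there is a genuine gap in your execution of the first step, and your second step differs from the paper's in a way worth noting.

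\medskip

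\textbf{The gap.} You claim that consistency of the row scaling at a row of $\L$ in $\Phi_t$ amounts to $(h_1)_k=(h_1)_{\gammar(k)}$, and that this ``is precisely the defining condition of $\h_{\bfGr}$''. It is not. The definition~\eqref{smalltorus} for $\sl_n$ says $\alpha_i(h)=\alpha_{\gammar(i)}(h)$, i.e.\ $h_i-h_{i+1}=h_{\gammar(i)}-h_{\gammar(i)+1}$. Multiplicatively, this only gives $(h_1)_{\gammar(k)}=g^\er_\Delta(H_1)\cdot(h_1)_k$ for a nonzero constant $g^\er_\Delta(H_1)$ that depends on the run $\Delta$ but is the same for all $k\in\Delta$; it does \emph{not} give $g^\er_\Delta=1$. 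Hence the two row-scalings at an overlap row in $\Phi_t$ disagree by this factor, and $\L(H_1X)\ne D_1(H_1)\L(X)$ for any single diagonal $D_1$. The paper fixes this exactly as in the proof of Lemma~\ref{partrace}: the left action of $H_1$ on $X$ corresponds to left \emph{and} right diagonal multiplication of $\L$, via explicit diagonal matrices $A^\er(H_1)C^\er(H_1)$ on the left and $B^\er(H_1)$ on the right built from the characters $g^\er_\Delta$. Once you allow the correction on both sides, every minor of $\L$ is a semi-invariant and your first claim follows. The same remark applies to the column side with $\gammac$.

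\medskip

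\textbf{The second step.} You propose a direct case-by-case comparison of the two monomial weights over all the vertex types in Figures~\ref{fig:ijnei}--\ref{fig:11nei}. This is doable but laborious, and you do not carry it out. The paper bypasses the combinatorics entirely: the Desnanot--Jacobi identities \eqref{firstdj} and \eqref{secdj} already established in Theorem~\ref{regneighbors} express each monomial $M^\pm$ as a sum/difference of products of minors of $\L$-matrices, all of which are semi-invariants by the first step. Since \eqref{firstdj} and \eqref{secdj} are genuine polynomial identities between nonzero semi-invariants, the characters of all three terms in each must agree; adding the two resulting character equations (with $\chi^{\tilde f_{i^4j^4}}$ and $\chi^{f_{i^1j^1}}$ respectively) eliminates the auxiliary minor $\det\bar A_{\hat1}^{\hat2}$ and yields $\chi^{M^+}=\chi^{M^-}$ in one line. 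This also handles all the degenerate boundary cases simultaneously, since they are specializations of \eqref{onestep}.
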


\begin{proof} Notice first that all cluster variables in the initial cluster are semi-invariants of the action of
$\H_{\bfGr}\times\H_{\bfGc}$. Indeed, recall that by \eqref{f_ij_gen},~\eqref{twof_ii}
any cluster variable $f_{ij}$ in the initial cluster is a minor of a matrix $\L$ of size $N=N(\L)$.  
Clearly, minors are semi-invariant of the left-right action of the torus $\Diag_{N}\times \Diag_{N}$ on $\Mat_N$,
where $\Diag_N$ is the group of invertible diagonal $N\times N$ matrices.
We construct now two injective homomorphisms $r:\H_{\bfGr}\to \Diag_{N}\times \Diag_N$ and 
$c_N:\H_{\bfGr}\to \Diag_{N}\times \Diag_N$ such that 
the homomorphism $(r,c):\H_{\bfGr}\times\H_{\bfGc}\to \Diag_{N}\times \Diag_{N}$ given by $(r,c)(H_1,H_2)=r(H_1)\cdot c(H_2)$
extends the left-right action of $\H_{\bfGr}\times\H_{\bfGc}$ on $SL_n$  to an action on $\Mat_N$.
Note that $\Diag_{N}\times \Diag_{N}$ is a commutative group, so $(r,c)$ is well-defined.

We describe first the construction of the homomorphism $r$.
Let $\Delta$ be a nontrivial row $X$-run, and $\bar\Delta=\gammar(\Delta)$ be the corresponding row $Y$-run. 
Recall that $\H_{\bfGr}=\exp\h_{\bfGr}$. Consequently, it follows from \eqref{smalltorus} that
for any fixed $T\in\H_{\bfGr}$ there exists a constant $g^\er_{\Delta}(T)\in \C^*$ such that for any pair of 
corresponding indices $i\in\Delta$ and $j\in\bar\Delta$ one has $T_{jj}=g^\er_\Delta(T)\cdot T_{ii}$. 
Clearly, $g^\er_\Delta$  is a multiplicative character of $\H_{\bfGr}$.

Fix a pair of blocks $X_{I_t}^{J_t}$ and $Y_{\bar I_t}^{\bar J_t}$ in $\L$. Let $\Delta_t$ be the row $X$-run 
corresponding to $\Phi_t$, then we put $g_t^\er=g_{\Delta_t}^\er$ and define a  matrix
$A_t^{\er}(T)\in \Diag_N$ such that its entry $(j,j)$ equals $g_t^\er(T)$ for 
$j\in \cup_{i=1}^{t-1}(K_i\cup\bar K_i)\cup(K_t\setminus\Phi_t)$ and~$1$ otherwise, and a matrix
$B_t^{\er}(T)\in \Diag_N$ such that its entry $(j,j)$ equals $\left(g_t^\er(T)\right)^{-1}$ for 
$j\in \cup_{i=1}^{t-1}(L_i\cup\bar L_i)\cup L_t$ and~$1$ otherwise, see Fig.~\ref{fig:ladder}.

Put $A^\er(T)=\prod_{t=1}^s A_t^\er(T)$ and $B^\er(T)=\prod_{t=1}^s B_t^\er(T)$. 
Finally, for any $j\in [1,N]$ define
$\zeta^\er(j)$ as the image of $j$ under the identification of $\bar K_t$ and $\bar I_t$ if $j\in \bar K_t$ and 
as the image of $j$ under the identification of $K_t$ and $I_t$ if $j\in K_t\setminus\Phi_t$, and put 
$C^\er(T)=\diag(T_{\zeta^\er(j),\zeta^\er(j)})_{j=1}^N$. Then, similarly to the proof of Lemma \ref{partrace}, one obtains
$\L(T X,T Y)=A^\er(T)C^\er(T)\L(X,Y)B^\er(T)$, and hence 
$r:T\mapsto (A^\er(T)C^\er(T),B^\er(T))$ is the desired homomorphism.

The construction of the homomorphism $c$ is similar, with $g_t^\ec$ defined by the column $X$-run corresponding to $\Psi_t$, 
$A_t^\ec(T)$ having $g_t^\ec(T)$ as the entry $(j,j)$ for $j\in \cup_{i=1}^{t-1}(L_i\cup\bar L_i)\setminus\Psi_t$ and 1 otherwise,
$B_t^\ec(T)$ having $(g_t^\ec(T))^{-1}$ as the entry $(j,j)$ for $j\in \cup_{i=1}^{t}(K_i\cup\bar K_i)$ and 1 otherwise,
$A^\ec(T)=\prod_{t=1}^s A_t^\ec(T)$, $B^\ec(T)=\prod_{t=1}^s B_t^\ec(T)$, and 
$C^\ec(T)=\diag(T_{\zeta^\ec(j),\zeta^\ec(j)})_{j=1}^N$, where $\zeta^\ec(j)$ is the image of $j$ under the identification 
of $L_t$ and $J_t$ if $j\in L_t$, and the image of $j$ under the identification of $\bar L_t$ and $\bar J_t$ if 
$j\in \bar L_t\setminus\Psi_{t+1}$. Consequently, the desired homomorphism is given by
$C:T\mapsto (A^\ec(T),B^\ec(T)C^\ec(T))$.

We thus see that any minor $P$ of $\L$ is a semi-invariant of the left-right action of $\H_{\bfGr}\times\H_{\bfGc}$ on $SL_n$,
and we can define multiplicative characters $\chi_L^P$ and $\chi_R^P$ as the products of the corresponding minors of 
$A^\er$, $A^\ec$ and $C^\er$, or $B^\er$, $B^\ec$ and $C^\ec$, respectively. 

To prove the lemma,  we consider first the most general case when the degree of the vertex 
$(i,j)$ is 6. Then, borrowing notation from the proof of Theorem \ref{regneighbors}, 
\[
 M(X)=\tilde f_{i^1 j^1}(X) f_{i^2 j^2}(X)\tilde f_{i^4 j^4}(X)+f_{i^1 j^1}(X)\tilde f_{i^3 j^3}(X) f_{i^4 j^4}(X).
\]
 It follows from \eqref{firstdj} that  
$\chi^{\tilde f_{i^1 j^1}}+\chi^{\tilde f_{i^2 j^2}}=\chi^{f_{i^1 j^1}}+\chi^{\det(\bar A_{\hat 1}^{\hat 2})}$, where $\chi$
means $\chi_L$ or $\chi_R$. Similarly, it follows from \eqref{secdj} that 
 $\chi^{\tilde f_{i^4 j^4}}+\chi^{\det(\bar A_{\hat 1}^{\hat 2})}=\chi^{f_{i^4 j^4}}+\chi^{\tilde f_{i^3 j^3}}$.
 Adding to both sides of the first equality $\chi^{\tilde f_{i^4 j^4}}$, to the both sides of the second equality 
 $\chi^{f_{i^1 j^1}}$ and adding these two equations together we obtain 
 \[
\chi^{\tilde f_{i^1 j^1}}+\chi^{\tilde f_{i^2 j^2}}+\chi^{\tilde f_{i^4 j^4}}=\chi^{f_{i^1 j^1}}+\chi^{\tilde f_{i^3 j^3}}+
\chi^{ f_{i^4 j^4}}=\chi^M,
\] 
which proves the assertion of the lemma.
 
 Other cases are obtained from the general case by the same specializations (setting one or more functions above to be $1$) 
that were used in the proof of Theorem \ref{regneighbors} above. This concludes the proof of the lemma.
 \end{proof}

To complete the proof we have to show that any toric action on $\CC_{\bfGr,\bfGc}$ can be obtained in this way. 
To prove this claim, we first note that the dimension of $\H_{\bfGr}$ equals $k_{\bfGr}$, and the dimension of $\H_{\bfGc}$ 
equals $k_{\bfGc}$. Consequently, the construction of Lemma \ref{rlsemi} produces $k_{\bfGr}+k_{\bfGc}$ weight vectors 
that lie in the kernel of the exchange matrix corresponding to $Q_{\bfGr,\bfGc}$, see \cite[Lemma 5.3]{GSVb}. Assume that
there exists a  vanishing nontrivial linear combination of these weight vectors; this would mean that all cluster variables
remain invariant under the toric action induced by a nontrivial right-left action of  $\H_{\bfGr}\times\H_{\bfGc}$ on $SL_n$.
However, by Theorem \ref{laurent} below, every matrix entry of the initial matrix in $SL_n$ can be written as a Laurent polynomial in the cluster variables of the initial cluster. Hence, a generic matrix remains invariant under this nontrivial right-left action on $SL_n$, a contradiction. Note that the proof of Theorem \ref{laurent} does not use the results of
Section \ref{sec:toric}.

\subsection{Proof of Proposition \ref{frozen}}
(i) We will focus on the behavior of $\det\L(X,Y)$ under the right action of $\DD_-=\DD_-^{\ec}$. 
The left action of $\DD_-^{\er}$ can be treated in a similar way. In fact, we will show that $\det\L(X,Y)$ is a semi-invariant 
of the right action of a larger subgroup of $D(GL_n)$. Let $\P_\pm$ be the parabolic subgroups in $SL_n$ that correspond to parabolic subalgebras \eqref{parabolics}, and let $\hat \P_\pm$ be the corresponding  parabolic subgroups in $GL_n$. Elements of $\hat\P_+$ (respectively, $\hat\P_-$) are block upper (respectively, lower) invertible triangular matrices whose 
square diagonal blocks correspond to column $X$-runs (respectively, column $Y$-runs).

It follows from \eqref{d_-} that $\DD_-$ is contained in a subgroup $\tilde\DD_-$ of $\hat\P_+\times\hat\P_-$ defined by the
property that every square diagonal block in the first component determined by a nontrivial column $X$-run $\Delta$ 
coincides with the square diagonal block in the second component determined by the corresponding nontrivial column $Y$-run. 
For $g=(g_1,g_2)\in \tilde\DD_-$, consider the transformation of $\L(X,Y)$ under the action $(X,Y)\mapsto (X,Y)\cdot g$, 
in particular the transformation of the block column $L_t \cup \bar L_{t-1}$ as depicted in Fig.~\ref{fig:ladder}. 
In dealing with the block column we only need to remember that $(g_1,g_2)$ can be written as
\[
(g_1,g_2) =\left (  \begin{bmatrix} A_{11} &A_{12} & A_{13}\\  0& C & A_{23}\\ 0 & 0 & A_{33} \end{bmatrix} ,
 \begin{bmatrix} B_{11} & 0 & 0\\  B_{21} & C & 0\\ B_{31}&  B_{32}& B_{33} \end{bmatrix}
\right ),
\]
where $A_{11}, A_{33}, B_{11} , B_{33}$ and $C$ are invertible and $C$ occupies rows and columns labeled by 
$\Delta(\beta_t)$ in $g_1$ and rows and columns labeled by $\bar\Delta(\bar\beta_{t-1})$ in $g_2$ 
(recall that both these runs correspond to $\Psi_t$). Then the effect of the transformation  $(X,Y)\mapsto (X,Y)\cdot g$ on the 
block column is that it is multiplied on the right by an invertible matrix
\[
\begin{bmatrix} A_{11} & A_{12} & 0\\  0 & C & 0\\  0 &  B_{32}& B_{33} \end{bmatrix}.
\]
The cumulative effect on $\L(X,Y)$ is that it is transformed via a multiplication on the right by an invertible block diagonal matrix with blocks as above, and therefore $\det\L(X,Y)$ is transformed via a multiplication  by the determinant of this matrix. 
The latter, being a product of powers of determinants of diagonal blocks of $g_1$ and $g_2$, is a character of $\tilde\DD_-$, 
which proves the statement.

(ii) The claim follows from a more general statement: $\det\L(X,Y)$ is log-canonical with all matrix entries $x_{ij}$, 
$y_{ij}$ with respect to the Poisson bracket \eqref{sklyadoublegen} which, in our situation, takes the form \eqref{bracket}. 
Semi-invariance of $\det\L(X,Y)$ described in part (i) above, together with the fact that subalgebras $\D_-=\D_-^r$ and $\D_-'=\D_-^c$ are isotropic with respect to  the bilinear form $\langle\langle \ , \ \rangle\rangle$ implies 
\[
\dnabla^L f\in \D_- \dot{+} \left (\D_+ \cap \h\oplus \h\right ),\qquad   
\dnabla^R f\in \D'_- \dot{+} \left (\D_+ \cap \h\oplus \h\right )
\]
for $f=\log\det\L(X,Y)$. This means that in  \eqref{sklyadoublegen} 
\[
R_D ( \dnabla^L f) = -  \dnabla^L f + \pi_{\D_+}\left (\dnabla^L f \right )_0,\qquad 
R'_D ( \dnabla^R f) = -  \dnabla^R f + \pi'_{\D_+}\left (\dnabla^R f \right )_0, 
\]
where $\left (\  \right)_0$ denotes the natural projection  to $D(\h)=\h  \oplus \h$ and $\pi_{\D_+}, \pi'_{\D_+}$ are projections to $\D_+$ along $\D_-,\D'_-$ respectively. Due to the invariance of $\langle\langle \ , \ \rangle\rangle$, 
\eqref{sklyadoublegen} then reduces to 
\[
\{f,\varphi\}^D_{r,r'} = \frac{1}{2}\left (\langle\langle \pi_{\D_+}(\dnabla^L f)_0, \left (\dnabla{^L} \varphi \right )_0\rangle\rangle 
- \langle\langle \pi'_{\D_+}(\dnabla^R f)_0, \left (\dnabla^R \varphi \right )_0\rangle\rangle \right)
\]
for any $\varphi=\varphi(X,Y)$. 

Let now $\varphi(X,Y) = \log x_{ij}$. Then $\left (\dnabla{^L} \varphi \right )_0 = \left (e_{jj}, 0\right )$, 
$\left (\dnabla{^R} \varphi \right )_0 = \left (e_{ii}, 0\right )$. Thus, to prove the desired claim we need to show that 
$\pi_{\D_+}(\dnabla^L f)_0$ and $\pi'_{\D_+}(\dnabla^R f)_0$ do not depend on $X,Y$. To this end, we first recall an explicit formula for $\pi_{\D_+}$: 
\[
\pi_{\D_+}(\xi,\eta) = \left (\xi - R_+(\xi -\eta) ,\xi - R_+(\xi -\eta) \right ),
\]
which can be easily derived using the property $R_+ - R_- = \Id$ satisfied by R-matrices \eqref{r-matrix}. Since in our situation the left gradient $\dnabla^L f$ computed with respect to $\langle\langle \ , \ \rangle\rangle$ is equal to 
$  \left (\nabla_X f \cdot X, - \nabla_Y f\cdot Y \right )$, we conclude that  components of $\pi_{\D_+}(\dnabla^L f)_0$ are equal to
$\left(\nabla_X f \cdot X - R_+\left (E_L f \right ) \right)_0$, where $\left (\  \right)_0$ now means the projection to the diagonal in $\gl_n$. By \eqref{term1left}, \eqref{for_frozen}, \eqref{R0}, 
\begin{multline*}
\left(\nabla_X f \cdot X - R_+\left (E_L f \right ) \right)_0 
= \frac 1 2 \left (-\frac {1}{1-\gamma} \left (\xi_L f\right )_0 + \frac{1} {1 - \gamma^*}\left (\eta_L f\right )_0\right ) \\ 
+ \frac 1 n \left(\Tr(E_L f)\S - \Tr \left((E_L f) \S\right )\one\right).
\end{multline*}
By \eqref{infinv2}, Corollary \ref{traces} and \eqref{traceELS}, the right hand side above is constant. The constancy of 
$\pi'_{\D_+}(\dnabla^R f)_0$ and the case of $\varphi(X,Y) = \log y_{ij}$ can be treated similarly. This completes the proof.

\section{Proof of Theorem \ref{genmainth}(ii)} \label{sec:induction}

As it was explained above in Section \ref{outline}, we have to prove the following statement.

\begin{theorem} \label{laurent}
Every matrix entry can be written as a Laurent polynomial in the initial cluster $F_{\bfGr,\bfGc}$ and 
in any cluster adjacent to it.
\end{theorem}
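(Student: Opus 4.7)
The plan is to prove Theorem \ref{laurent} by induction on the total size $|\Gamma^\er_1|+|\Gamma^\ec_1|$, following the roadmap laid out in Section \ref{outline}. For the base case $|\Gamma^\er_1|+|\Gamma^\ec_1|=1$ there is essentially one nontrivial inclined edge in $\BD_{\bfGr,\bfGc}$ to deal with, so the required Laurent expressions can be written down explicitly by direct matrix computation, much as in the simplest nontrivial BD cases treated in \cite{Eis1,Eis2}.

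For the inductive step, I assume $|\Gamma^\er_1|>0$ (the case $|\Gamma^\ec_1|>0$ being symmetric) and pick $\alpha$ to be the rightmost root in an arbitrarily chosen nontrivial row $X$-run of $\bfGr$. Removing $\alpha$ from $\Gamma^\er_1$ and $\gammar(\alpha)$ from $\Gamma^\er_2$ produces an oriented BD triple $\tbfGr$; the pair $(\tbfGr,\bfGc)$ is still aperiodic since the alternating path through the inclined edge $\alpha\to\gammar(\alpha)$ simply splits into two shorter alternating paths, and no alternating cycle is created. The total size drops by one, so the induction hypothesis applies to $\tilde\CC=\CC_{\tbfGr,\bfGc}$. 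Comparing the quivers $Q$ and $\tilde Q$, the only difference is that the vertex $v=(\alpha+1,1)$ becomes frozen in $\tilde Q$ and loses the three edges that crossed the split inclined edge, as described in the outline.

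The heart of the argument is then to invoke Theorem \ref{prototype}: it produces a birational map $U:\X\to\ZZ$ whose denominators are powers of $\tilde f_v(Z)$, together with an invertible polynomial map $T:\FF\to\tilde\FF$ with monomial inverse, such that $\tilde f\circ T=U\circ f$. Setting $P=T^{-1}\circ\tilde P\circ U$, where $\tilde P$ is the Laurent inverse of $\tilde f$ supplied by the induction hypothesis, the identities $f\circ P=\Id=P\circ f$ follow from (a) and the algebraic independence of the $\tilde f_{ij}$, while conditions (b) and (c) guarantee that each $P(x_{ij})$ is a Laurent polynomial in the variables $\fy_{\alpha\beta}$. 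To handle an adjacent cluster $\mu_u(F)$ with $u\neq v$, I replace $T$ by a map $T'$ that agrees with $T$ except at $u$, where it sends $\fy'_u\mapsto\tfy'_u\tfy_v^{\lambda_u}$ for an appropriate integer $\lambda_u$; verifying the analogues of (a)--(c) for $(U,T')$ and applying the induction hypothesis to $\mu_u(\tilde F)$ gives the required Laurent expansion in $\mu_u(F)$. The one cluster missed by this construction, $\mu_v(F)$, is covered by repeating the whole argument with a different choice of $\alpha$ (the rightmost root of a different nontrivial row $X$-run, or the leftmost root of the same run, or a column root if $|\Gamma^\ec_1|>0$), and showing the degenerate case $|\Gamma^\er_1|+|\Gamma^\ec_1|=1$ also admits such an alternative.

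The main obstacle will clearly be the construction of the maps $U$ and $T$ demanded by Theorem \ref{prototype}, especially verifying condition (b) that the denominators of $U(x_{ij})$ are pure powers of $\tilde f_v(Z)$, and condition (c) that $T^{-1}$ is a monomial transformation. Concretely, $U$ should be read off from the explicit block structure of the matrices $\L\in\bL_{\bfGr,\bfGc}$ versus $\L\in\bL_{\tbfGr,\bfGc}$: removing $\alpha$ merges the pair of blocks glued along the run containing $\alpha$ into a single larger block, so $U$ must express the original matrix entries in terms of the entries of the enlarged-block matrix, with the block determinant $\tilde f_v(Z)$ appearing in denominators via Schur-complement style identities. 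Producing $T$ compatible with this $U$ requires a careful matching of the two families of functions $F_{\bfGr,\bfGc}$ and $\tilde F=F_{\tbfGr,\bfGc}$ minor by minor, and is the technical core of the induction; the remaining bookkeeping (choice of $\lambda_u$, treatment of $\mu_v(F)$ via alternative choices of $\alpha$, and the final passage from $\Mat_n$ to $SL_n$ via restriction to $\det X=1$) is routine once Theorem \ref{prototype} and its variant for $T'$ are in hand.
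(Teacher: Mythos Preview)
Your high-level plan is exactly the paper's: induction on $|\Gamma^\er_1|+|\Gamma^\ec_1|$, Theorem~\ref{prototype} for the initial cluster, its $T'$-variant for adjacent clusters $\mu_u(F)$ with $u\ne v$, and a different choice of $\alpha$ to catch $\mu_v(F)$. Where your proposal diverges is in the sketch of \emph{how} Theorem~\ref{prototype} is actually proved, and that sketch is off in two concrete ways.

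First, the block picture is reversed. Deleting $\alpha$ from $\Gamma_1^\er$ does not merge two blocks into one; it \emph{splits} the unique maximal alternating path through the inclined edge $\alpha\to\gammar(\alpha)$ into two, so that exactly one matrix $\L^*\in\bL$ becomes reducible, with two irreducible diagonal blocks $\tilde\L_1^*,\tilde\L_2^*$, while every other $\L\in\bL$ is only mildly altered (a single row removed from each $Y$-block of the relevant shape). The map $T$ is then forced: $f_{ij}(X)=\tilde f_{ij}(Z)$ unless $\J(i,j)=(\L^*,s)$ with $s<s_k$, in which case $f_{ij}(X)=\tilde f_{ij}(Z)\tilde f_v(Z)$. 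This is what makes $T^{-1}$ monomial.

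Second, the construction of $U$ is not a Schur-complement manipulation. The paper takes $X=\nuu(Z)Z$ with $\nuu$ a unipotent upper-triangular matrix, built as a (finite) product $\prod_{i\ge0}\exp(i\gammar)(\nuu_0)$ from a single-column matrix $\nuu_0=\one_n+\sum_{\varkappa=1}^{k-1}\alpha_\varkappa e_{q+\varkappa,q+k}$. The crucial point is the invariance property~\eqref{2.1}: it collapses $\ttf_{ij}(\nuu Z,\nuu Z)$ to $\ttf_{ij}(Z,\nuu_0 Z)$, so only the effect of $\nuu_0$ on the relevant $Y$-blocks must be controlled. The parameters $\alpha_\varkappa$ are then chosen as explicit ratios of minors of $\tilde\L^*$ (with denominator $\tilde f_v(Z)$), and the verification that all trailing minors match reduces, via a block Laplace expansion (the paper's Lemma~\ref{blockmatrix}), to a family of Pl\"ucker-type identities. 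None of this is visible from a Schur-complement heuristic, and without the invariance trick you would have to control the effect of $\nuu$ on \emph{every} block of \emph{every} $\L$, which is not manageable.

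One smaller point: when $|\Gamma_1^\er|+|\Gamma_1^\ec|=1$ there is no ``alternative $\alpha$'' available, so $\mu_v(F)$ in the base case genuinely has to be handled by a separate direct computation (the paper does this by producing explicit cluster variables via a specific mutation sequence and row expansions), not by another instance of the inductive machinery.
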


Below we implement the strategy of the proof outlined in Section \ref{outline}.

\subsection{Proof of Theorem \ref{prototype} and its analogs}\label{matrixmaps}
Given an aperiodic pair $(\bfGr, \bfGc)$  and a non-trivial row $X$-run $\Delta^{\er}$, 
we want to explore the relation between   cluster structures 
$\CC=\CC_{\bfGr,\bfGc}$ and $\tilde\CC=\CC_{\tbfGr,\bfGc}$, where  $\tbfGr=\tbfGr(\overrightarrow{\Delta}^\er)$
is obtained by deletion of the rightmost root in $\Delta^\er$ and its image in $\gamma(\Delta^\er)$.
Note that the pair $(\tbfGr(\overrightarrow{\Delta}^\er),\bfGc)$ remains aperiodic. 

Assume that $\Delta^\er$ is $[p+1, p+k]$, and the corresponding row  $Y$-run  $\gamma(\Delta^\er)$ is
$[q+1, q+k]$. Then, in considering $(\tbfGr(\overrightarrow{\Delta}^\er),\bfGc)$, we replace 
the former one with $[p+1, p+k-1]$, and the latter one  with $[q+1, q+k-1]$. Besides, a trivial row $X$-run
$[p+k,p+k]$ and a trivial row $Y$-run $[q+k,q+k]$ are added. 
The rest of row $X$- and $Y$-runs as well as all column $X$- and $Y$-runs remain unchanged.
In what follows, parameters $p$, $q$ and $k$ are assumed to be fixed.

We say that a matrix $\L\in \bL$ is $r$-{\em piercing\/} for an $r\in [2,k]$ if $\J(p+r,1)=(\L,s_r)$ for some 
$s_r\in [1,N(\L)]$. Note that two distinct matrices cannot be simultaneously $r$-piercing. On the other hand,  
a matrix can be $r$-piercing simultaneously for several distinct values of $r$; the set of all such values is called the {\it piercing set\/} of $\L$. If a piercing set consists of $r_1,\dots,r_l$, we will assume that 
$s_{r_1}>\dots > s_{r_l}$. The subset of all matrices in $\bL$ that are not $r$-piercing for any $r\in [2,k]$ is 
denoted $\bL_\varnothing$. 

Let $\tilde\bL=\bL_{\tbfGr (\overrightarrow{\Delta}^\er),\bfGc}$, 
$\tilde\J=\J_{\tbfGr (\overrightarrow{\Delta}^\er),\bfGc}$, and let the functions
${\tilde\ttf}_{ij}(X,Y)$ and $\tilde f_{ij}(X)$ be defined via the same expressions as ${\ttf}_{ij}(X,Y)$ and $f_{ij}(X)$
with $\bL$ and $\J$ replaced by $\tilde\bL$ and $\tilde\J$. It is convenient to restate Theorem \ref{prototype} in more detail
as follows.

\begin{theorem}
\label{matrixmap1}
Let $Z=(z_{ij})$ be an $n\times n$ matrix. Then there exists a unipotent upper triangular $n\times n$ matrix $\nuu(Z)$ 
whose entries are rational functions in $z_{ij}$ with denominators equal to powers of $\tilde f_{p+k,1}(Z)$
such that for $X=\nuu(Z) Z$ and for any $i,j\in [1,n]$,
$$
f_{ij} (X) =\begin{cases}  
\tilde f_{ij} (Z)\tilde f_{p+k,1} (Z)\quad &\text{if $\J (i,j) = (\L^*, s)$ and $s< s_k$},\\
\tilde f_{ij} (Z)\quad &\text{otherwise},
\end{cases}
$$
where $\L^*$ is the $k$-piercing matrix in $\bL$.
\end{theorem}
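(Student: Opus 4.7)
The strategy is to construct the unipotent upper triangular matrix $U(Z)$ explicitly so that the substitution $X:=U(Z)Z$ realigns the block structure of matrices in $\bL$ with that of $\tilde\bL$. Passing from $(\bfGr,\bfGc)$ to $(\tbfGr,\bfGc)$ removes the rightmost root $p+k-1$ from $\Gamma_1^\er$ and its image $q+k-1=\gammar(p+k-1)$ from $\Gamma_2^\er$, splitting the row $X$-run $[p+1,p+k]$ into $[p+1,p+k-1]$ and the new trivial run $[p+k,p+k]$ (and analogously for the row $Y$-run). This deletes one inclined edge from $\BD_{\bfGr,\bfGc}$ and breaks the alternating path containing $\L^*$ at that edge. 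At the block level, the $X$-block of $\L^*$ with exit point $(p+k,1)$ shrinks from $X_{[p+1,n]}^{[1,\beta]}$ in $\bfGr$ to $X_{[p+k,n]}^{[1,\beta]}$ in $\tbfGr$, and a few $Y$-blocks in other matrices of $\bL$ analogously lose one row.

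I would define $U(Z)$ as the unique unipotent upper triangular matrix that adds linear combinations of rows $[p+k,n]$ of $Z$ to the rows $[p+1,p+k-1]$ in such a way that $X=U(Z)Z$ has $X_{[p+1,n]}^{[1,\beta]}$ block lower triangular with respect to the row partition $\{[p+1,p+k-1],[p+k,n]\}$ and an appropriately matched column partition. Cramer's rule realizes the off-diagonal entries of $U(Z)$ as ratios of minors of $Z$ with common denominator a trailing minor of $Z_{[p+k,n]}^{\bullet}$; identifying this minor with the leading block of $\tilde\L(p+k,1)\in\tilde\bL$ shows that it equals a power of $\tilde f_{p+k,1}(Z)$, establishing the denominator claim.

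The verification of the two identities rests on the fundamental observation that, for $U$ unipotent upper triangular and any row range $I=[i_0,n]$, one has $(UZ)_I^J=U_I^I\cdot Z_I^J$ with $\det U_I^I=1$, so trailing $X$-block determinants are preserved: $\det X_I^J=\det Z_I^J$. This immediately yields $f_{ij}(X)=\tilde f_{ij}(Z)$ whenever $\L(i,j)\ne\L^*$ and the block data of $\L(i,j)$ is preserved in $\tilde\bL$. For matrices whose $Y$-blocks shrink by one row, the identity follows by a similar argument using that $U$ modifies only rows $\le p+k-1$ by additions of rows $\ge p+k$, so the missing row in $\tilde\bL$ is supplied by the corresponding Laplace expansion. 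For $\L^*$ at a trailing position $s<s_k$, the block-triangular structure of $X_{[p+1,n]}^{[1,\beta]}$ imposed by $U$ propagates to the ambient trailing submatrix of $\L^*(X,X)$, causing its determinant to factor as the product of the corresponding trailing minor of $\tilde\L(i,j)\in\tilde\bL$ (equal to $\tilde f_{ij}(Z)$) and the determinant of the newly-separated lower block (equal to $\tilde f_{p+k,1}(Z)$). The principal obstacle is the combinatorial bookkeeping: verifying this factorization at every trailing position $s<s_k$ simultaneously, and handling the parallel changes to the affected $Y$-blocks, requires careful tracing of the alternating path containing $\L^*$ and of its splitting in $\BD_{\tbfGr,\bfGc}$ at the removed inclined edge $p+k-1\to q+k-1$.
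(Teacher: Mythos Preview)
Your proposal targets the wrong rows and misses the key mechanism of the proof. You propose a $U$ that adds linear combinations of rows $[p+k,n]$ to rows $[p+1,p+k-1]$ (the $X$-run), aiming to make the $X$-block $X_{[p+1,n]}^{[1,\beta]}$ of $\L^*$ block triangular. But the obstruction to the factorization at position $s_k$ in $\L^*$ sits in the $Y$-block, not the $X$-block: the entries in row $s_k$, columns $[1,s_k-1]$ of $\L^*(X,X)$ come from row $q+k$ of the $Y$-block $Y_{[1,q+k]}^{\bar J}$, and your $U$ does not touch row $q+k$ (in general $p\neq q$). So your block-triangularity condition on the $X$-block does not produce the required splitting.

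The paper's construction is quite different. One first takes
\[
U_0(Z)=\mathbf 1_n+\sum_{\varkappa=1}^{k-1}\alpha_\varkappa(Z)\,e_{q+\varkappa,\,q+k},
\]
so only $k-1$ parameters, acting on the rows $[q+1,q+k-1]$ of the $Y$-run, and then sets $U=\prod_{i\ge 0}\exp(i\gammar)(U_0)$. The point of this product is to invoke the invariance $\ttf_{ij}(N_+X,\exp(\gammar)(N_+)Y)=\ttf_{ij}(X,Y)$, which collapses $\ttf_{ij}(UZ,UZ)$ to $\ttf_{ij}(Z,U_0Z)$: the $X$-argument is left untouched and only the $Y$-argument is modified. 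Your direct approach, acting on both arguments simultaneously without this invariance, cannot separate the two effects.

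Two further points. First, your denominator identification is off: $\tilde f_{p+k,1}(Z)=\det\tilde\L_2^*$ is the determinant of the entire lower block of $\tilde\L^*$, built from a chain of $X$- and $Y$-blocks along the tail of the alternating path, not a single minor of $Z_{[p+k,n]}^{\bullet}$. Second, the genuinely hard step---which your outline does not address---is that the \emph{same} $k-1$ parameters $\alpha_\varkappa$ must make \eqref{mm2r} hold for every $r$-piercing matrix in $\bL$, not just $\L^*$. In the paper this is handled by a block lemma (Lemma~\ref{blockmatrix}) reducing the problem to linear conditions \eqref{eqs_rr}, and then a Pl\"ucker-relation argument (exploiting how the blocks $\tilde B^r$ and $\tilde B^k$ overlap, cf.~\eqref{compareB1}--\eqref{compareB2}) showing that the explicit choice \eqref{alpha} satisfies all of them simultaneously.
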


\begin{proof} 
In what follows we assume that $i\ne j$, since for $i=j$ the claim of the theorem is trivial.

For any $\L(X,Y) \in \bL$ define $\tilde\L(X,Y)$ obtained from   $\L(X,Y)$ by removing the last row from 
every building block of the form $Y_{[1,q+k]}^{\bar J}$. In particular, 
if $\L(X,Y)$ does not have building blocks like that  then $\tilde\L(X,Y)=\L(X,Y)$. 

Note that all matrices $\tilde \L$ defined above are irreducible except for the one obtained from  the $k$-piercing
matrix $\L^*$. The corresponding matrix $\tilde\L^*$ has two irreducible diagonal blocks 
$\tilde\L_{1}^*$, $\tilde\L_{2}^*$ of sizes $s_k-1$ and $N(\L^*) - s_k+1$, respectively. 
As was already noted in Section \ref{outline}, all maximal alternating paths in $\BD_{\Gammar,\Gammac}$ are preserved in 
$\BD_{\tbfGr (\overrightarrow{\Delta}^\er),\bfGc}$ except for the path that goes through the directed inclined edge
$(p+k-1)\to (q+k-1)$. The latter one is split into two: the initial segment up to the vertex $p+k-1$ and the closing segment
starting with the vertex $q+k-1$. Consequently, $\tilde{\bL}=\{\tilde \L\: \L\in \bL, \L\ne \L^*\}\cup
\{\tilde\L_{1}^*,\tilde\L_{2}^*\}$. 

Further, if $\J (i,j) = (\L, s)$ and $\L\ne \L^*$ 
then $\tilde{\J}(i,j) =(\tilde\L, s)$. Furthermore, if $\L \in \bL_\varnothing$ then 
additionally ${\ttf}_{ij}(X,Y)$ and  ${\tilde {\ttf}}_{ij}(X,Y)$ coincide. 
However, if $\J (i,j) = (\L^*, s)$ then 
\begin{equation*}
\tilde{\J}(i,j) =\begin{cases} (\tilde\L_{1}^*, s) \quad& \text{for $s=s(i,j)  < s_k$}, \\
                               (\tilde\L_{2}^*, s - s_k +1) \quad&  \text{for $s=s(i,j)  \geq s_k$}.
\end{cases}
\end{equation*}															

It follows from the above discussion that the claim of the theorem is an immediate corollary of the equalities
\begin{equation}\label{formeri}
 \det \L(X,X)_{[s, N(\L)]}^{[s, N(\L)]} =  \det \tilde \L(Z,Z)_{[s, N(\L)]}^{[s, N(\L)]}
\end{equation}
for any $\L \in \mathbf L$ and $s\in [1, N(\L)]$. 

To prove \eqref{formeri}, we select a particular "shape" for $\nuu(Z)$. Let
\begin{equation}
\label{mm0}
\nuu_0=\nuu_0(Z) = \one_n + \sum_{\varkappa=1}^{k-1} \alpha_\varkappa(Z) e_{q+\varkappa,q+k},
\end{equation}
where $\alpha_\varkappa(Z)$ are coefficients to be determined, and
\begin{equation}
\label{mm1}
\nuu=\nuu(Z) ={\stackrel {\leftarrow}  {\prod}_{i\geq 0}}\exp(i\gammar) (\nuu_0(Z)).
\end{equation}
Due to the nilpotency of $\gammar$ on $\n_+$, the product above is finite. Clearly, if  $\alpha_\varkappa(Z)$ are 
polynomials in $z_{ij}$ divided by a power of $\tilde f_{p+k,1}$
then the same is true for the entries of $\nuu(Z)$.

The invariance property \eqref{2.1} implies that for every
$(i,j)$,
$$
{\ttf}_{ij}(\nuu Z,\nuu Z) = {\ttf}_{ij}(Z,\exp(\gammar)(\nuu^{-1})\nuu Z) = {\ttf}_{ij} (Z,\nuu_0 Z);
$$
here the second equality follows from \eqref{mm1}. 
Thus, to prove \eqref{formeri}  for $X= \nuu Z$ it is sufficient to select parameters $\alpha_\varkappa(Z)$ in \eqref{mm0}
in such a way that
\begin{equation}
\label{mm2}
 \det \L(Z, \nuu_0 Z)_{[s, N(\L)]}^{[s, N(\L)]} =  \det \tilde \L(Z,Z)_{[s, N(\L)]}^{[s, N(\L)]}\ 
\end{equation}
 for all $\L \in \mathbf L$ and $s\in [1, N(\L)]$. 

Observe, that the equation above is satisfied for any choice of $\alpha_\varkappa$ if 
 $\L \in \bL_\varnothing$, that is, if $\L(X,Y)=\tilde \L(X,Y)$.  
Indeed, in this case any $Y$-block in $\L$ either does not contain any of the rows
$q+1, \ldots, q+k$, or contains all of them but without an overlap with the $X$-block to the right. 
If the former is true, the block rows corresponding to this $Y$-block in $\L(Z, \nuu_0 Z)$ and $\L(Z, Z)$ coincide, 
while if the latter is true, then  the block of $k$ rows under consideration in $\L(Z, \nuu_0 Z)$ is obtained from the corresponding block row of $\L(Z, Z)$ via left multiplication by a $k\times k$ unipotent upper triangular matrix $ \one_k + \sum_{\varkappa=1}^{k-1} \alpha_\varkappa(Z) e_{\varkappa k}$, which does not affect trailing principal minors.

Let us now turn to matrices $\L \in \bL\setminus\bL_\varnothing$. In fact, the same reasoning 
as above shows that for any such matrix, the functions in the left hand side of \eqref{mm2} 
 do not change if $\L(Z, \nuu_0 Z)$ is replaced by $\hat\L(Z,\nuu_0 Z)$  
obtained from $\L(Z,Z)$ via replacing every $Y$-block $Z_{[1,q+k]}^{\bar J}$ by 
$\left (\nuu_0 Z\right )_{[1,q+k]}^{\bar J}$ and retaining all other $Y$-blocks $Z_{\bar I}^{\bar J}$. 
Therefore, in what follows we aim at proving 
\begin{equation}
\label{mm2r}
 \det \hat\L(Z, \nuu_0 Z)_{[s, N(\L)]}^{[s, N(\L)]} =  \det \tilde \L(Z,Z)_{[s, N(\L)]}^{[s, N(\L)]}
\end{equation}
 for all $\L \in \bL\setminus\bL_{\varnothing}$ and $s\in [1, N(\L)]$. 

Assume that $\L=\L(X,Y)$ is $r$-piercing, and so 
there exists $s_r \in [1, N(\L)]$ such that $\L(X,Y)_{s_r s_r} = x_{p+r,1}$; 
the $X$-block of $\L(X,Y)$ that contains the diagonal entry $(s_r,s_r)$ is denoted 
$X^{J^r}_{[p+1,n]}$. 
We can decompose $\hat\L=\hat\L(Z,\nuu_0 Z)$ into blocks as follows:
\begin{equation}
\label{uglymatrix}
\hat\L(Z,\nuu_0 Z)=\begin{bmatrix} \hat A^r_1 & 0\\ \hat A^r_2 & \hat B^r_1\\ 0 & \hat B^r_2 
\end{bmatrix},
\end{equation}
where the sizes of block rows are $s_r-r$, $k$ and $N(\L)-s_r-k+r$, and the sizes of block columns are $s_r-1$ and 
$N(\L)-s_r+1$. Note that the blocks are given by
\begin{equation*}
\hat A^r_1=\begin{bmatrix} \ast & \ast\\ 0 & (\nuu_0 Z)^{\bar J^r}_{[1,q]}
\end{bmatrix}, \qquad
\hat A^r_2= \begin{bmatrix} 0 & (\nuu_0 Z)^{\bar J^r}_{[q+1,q+k]} \end{bmatrix}
\end{equation*}
and
\begin{equation*}
\hat B^r_1= \begin{bmatrix}  Z^{J^r}_{[p+1,p+k]} & 0 \end{bmatrix}, \qquad
\hat B^r_2=\begin{bmatrix} Z^{J^r}_{[p+k+1,n]} & 0\\ \ast & \ast
\end{bmatrix}.
\end{equation*}

It will be convenient to combine $\hat A^r_1$ and $\hat A^r_2$ into one $(s_r+k-r)\times(s_r-1)$ block
$\hat A^r$, and $\hat B^r_1$ and $\hat B^r_2$ into one $\theta_r\times(\theta_r-r+1)$ block 
$\hat B^r$  with $\theta_r=N(\L)-s_r+r$. 
A similar decomposition into blocks of the same size for $\tilde\L=\tilde\L(Z,Z)$ 
contains blocks $\tilde A^r_1$, 
$\tilde A^r_2$, $\tilde B^r_1$ and $\tilde B^r_2$ that may be combined into $\tilde A^r$ and 
$\tilde B^r$, respectively; consequently, the last row of $\tilde A^r_2$ (and hence of $\tilde A^r$) is zero. 
Note that since exactly one matrix in $\bL\setminus\bL_\varnothing$ is $r$-piercing for any fixed $r$, 
notation $\hat A^r$, $\hat B^r$, and $\tilde A^r$, $\tilde B^r$ is unambiguous.

Denote the column set of the second block column in \eqref{uglymatrix} by $M_r$. 
Let
\begin{equation}
\label{alpha}
\alpha_\varkappa (Z) =  \frac{\det (\tilde \L^*)^{M_k}_{(M_k\setminus \{s_k\})\cup 
\{s_k + \varkappa-k \} }}
{\det (\tilde\L^*)_{M_k}^{M_k}}, \quad  \varkappa =1,\ldots, k;
\end{equation}
note that $\alpha_k =1$.
We claim that $\nuu_0(Z)$ given by \eqref{mm0} and \eqref{alpha} satisfies conditions \eqref{mm2r}.
Note that the denominator in \eqref{alpha} equals $\tilde f_{p+k,1}(Z)$, and hence the denominators 
of the entries of $\L$ defined by \eqref{mm1} are powers of  $\tilde f_{p+k,1}(Z)$.

 Assume that the piercing set of $\L$ is $\{r_1,\dots,r_l\}$;  additionally, set $s_{r_{l+1}}=1$.
Recall that $Y$-blocks of the form $Z_{[1,q+k]}^{\bar J}$ do not appear in the columns $M_{r_1}$ in 
$\hat\L$, and hence  
\eqref{mm2r} is trivially satisfied for $ s \geq s_{r_1}$.

For $s_{r_2}\le s \le s_{r_1}-1$, we are in the situation covered by Lemma \ref{blockmatrix} (see Section below) with 
$\M=\hat\L^{M_{r_2}}_{M_{r_2}}$, $\tilde\M=\tilde\L^{M_{r_2}}_{M_{r_2}}$,  $N=\theta_{r_2}-r_2+1$, 
$N_2=\theta_{r_1}-r_1+1$, and $k_1=r_1-1$. Condition (iii) 
in the lemma is satisfied trivially, since in this case $B=\tilde B$. Consequently, \eqref{mm2r} is satisfied if the parameters
$\alpha_\varkappa=\alpha_{\varkappa}(Z)$ satisfy equations  
\begin{equation}
\label{eqs_r}
\sum_{\varkappa \in S}  (-1)^{\varepsilon_{\varkappa S}} \alpha_\varkappa 
\det (\tilde B^{r_1})_{(S\setminus\{\varkappa \})\cup [k+1, \theta_{r_1}]}=0     
\end {equation}
for any  $(k-r_1+2)$-element subset $S$ in $[1, k]$ such that $k\in S$, where
\[
\varepsilon_{\varkappa S} = \# \{ i\in S : i > \varkappa\}.
\]

If $l=1$, there are no other conditions on the parameters $\alpha_\varkappa$, since $s_{r_2}=1$.
Otherwise,  let  $s_{r_3}\le s \le s_{r_2}-1$ and 
consider the block decomposition \eqref{uglymatrix} for $r=r_2$. We claim that the situation is now 
covered by Lemma \ref{blockmatrix} with 
$\M=\hat\L^{M_{r_3}}_{M_{r_3}}$, $\tilde\M=\tilde\L^{M_{r_3}}_{M_{r_3}}$, $N=\theta_{r_3}-r_3+1$, 
$N_2=\theta_{r_2}-r_2+1$, and $k_1=r_2-1$. To check condition (iii) 
in the lemma, we pick an arbitrary subset  $T\subset [s_{r_2}-r_2+1,s_{r_2}-r_2+k]$ of size $k-r_2+1$ and
apply Lemma \ref{blockmatrix} to matrices $\M=\hat\L^{T\cup M_{r_2}\setminus[s_{r_2},s_{r_2}-r_2+k]}_{M_{r_2}}$ and 
$\tilde\M=\tilde\L^{T\cup M_{r_2}\setminus[s_{r_2},s_{r_2}-r_2+k]}_{M_{r_2}}$ with parameters 
$N=\theta_{r_2}-r_2+1$, $N_2=\theta_{r_1}-r_1+1$,  and $k_1=r_1-1$. It follows that the condition in question is guaranteed by the same equations \eqref{eqs_r}. Consequently, by Lemma~\ref{blockmatrix}, equations \eqref{mm2r} for $s_{r_3}\le s \le s_{r_2}-1$ are guaranteed by equations \eqref{eqs_r} with $r_1$ replaced by $r_2$. 

Continuing in the same fashion, we conclude that if conditions
\begin{equation}
\label{eqs_rr}
\sum_{\varkappa \in S}  (-1)^{\varepsilon_{\varkappa S}} \alpha_\varkappa 
\det (\tilde B^{r})_{(S\setminus\{\varkappa \})\cup [k+1, \theta_{r}]}=0     
\end {equation}
are satisfied for any $r\in \{r_1,\dots,r_l\}$ and  any  $(k-r+2)$-element subset $S$ in $[1, k]$ containing $k$, 
then \eqref{mm2r} holds for any $s\in [1, N(\L)]$. It remains to show that \eqref{eqs_rr} are valid  with $\alpha_\varkappa$ defined 
in \eqref{alpha}.  

Rewrite \eqref{alpha} as
\begin{equation}
\label{alphaB}
\alpha_\varkappa (Z) = 
\frac{ \det (\tilde B^k)_{\{\varkappa\}\cup [k+1,\theta_k]}}  
{ \det (\tilde B^k)_{ [k, \theta_k]} }, \quad  \varkappa =1,\ldots, k.
\end{equation}
If $r=k$, and hence $\L=\L^*$, then every $S$ in \eqref{eqs_rr} is a two element set $\{ \varkappa, k\}$ 
with $\varkappa \in [1,k-1]$, 
$\varepsilon_{\varkappa S}=1$, $\varepsilon_{k S}=0$. Plugging \eqref{alphaB} into the left hand side of 
\eqref{eqs_rr} and clearing denominators we obtain two terms that differ only by sign and thus the claim follows.

For $r<k$, we need to evaluate
\begin{equation}
\label{uzhas}
\sum_{\varkappa \in S}  (-1)^{\varepsilon_{\varkappa S}}  \det (\tilde B^k)_{\{\varkappa\}\cup [k+1, \theta_k]}
\det (\tilde B^r)_{(S\setminus\{\varkappa \})\cup [k+1, \theta_r]}.
\end {equation}
Note that the blocks $Z_{[p+1, n]}^{J^k}$ and $Z_{[p+1, n]}^{J^r}$ have the same row set, and the exit point 
of the former lies below the exit point of the latter. Consequently, $J^k\subseteq J^r$, and the first of the blocks is a 
submatrix of the second one. Therefore, we find ourselves in a situation similar to the one discussed in Section \ref{case1}
above while analyzing sequences \eqref{blocks} of blocks. Reasoning along the same lines, we either arrive at the cases (ii) 
and (iii) in Section \ref{case1}, and then
\begin{equation}
\label{compareB1}
\tilde B^k = \left [
\begin{array}{ccc }
U_1 & U_2 & 0\\
0 & V_1 & V_2
\end{array}
\right ], \qquad 
\tilde B^r = \left [
\begin{array}{@{}ccccc@{} }
U_1& U_2 & U_3 & U_4 & 0\\
0 &0 & 0 & W_1 & W_3
\end{array}
\right ],
\end{equation}
where odd block columns and the second block row of $\tilde B^k$ and $\tilde B^r$ might be empty, or
at the cases (i) and (iv) in Section \ref{case1}, and then 
\begin{equation}
\label{compareB2}
\tilde B^k = \left [
\begin{array}{cc }
U_1 &  0\\
U_2 & 0\\
U_3 & 0\\
U_4 & V_1\\
0 & V_2
\end{array}
\right ],\qquad 
\tilde B^r =\left [
\begin{array}{cc }
U_1 &  0\\
U_2 & W_1\\
0 & W_2
\end{array}
\right ],
\end{equation} 
where odd block rows and the second block column of $\tilde B^k$ and $\tilde B^r$ might be empty. 
In particular, if $\tilde B^k$ is a submatrix of $\tilde B^r$ (cf.~case (iv) in Section \ref{case1}) 
then \eqref{compareB1} applies with an empty second block row and third block column in the expression for $\tilde B^k$. Similarly, if $\tilde B^r$ is a submatrix of  $\tilde B^k$ (cf.~case (iii) in Section \ref{case1}) then \eqref{compareB2} 
applies with an empty second block column and third block row in the expression for $\tilde B^r$.

Suppose \eqref{compareB1} is the case. Define $\tau_4>\tau_3\geq \tau_2>\tau_1\geq\tau_0=0$ and $\sigma>0$ so that the size 
of the block $U_i$ equals $\sigma \times ( \tau_i - \tau_{i-1})$ for $1\leq i\leq 4$. 
Note that $\sigma \geq n-p \geq k$ and $\sigma > \tau_3$.
We will use the Laplace expansion of the minors in \eqref{uzhas} with respect to the first block row:
\begin{equation}\label{decomp}
\begin{aligned}
 \det (\tilde B^k)_{\{\varkappa\}\cup [k+1, \theta_k]} &  
 =\sum\limits_{\Theta} (-1)^{\varepsilon_\Theta} \det (\tilde B^k)_{\{\varkappa\}\cup [k+1, \sigma]}^{[1, \tau_1]\cup \Theta} 
\det (\tilde B^k)_{[\sigma+1, \theta_k]}^{\bar\Theta\cup [\tau_2 +1, \theta_k-k+1]},  \\ 
 \det (\tilde B^r)_{(S\setminus\{\varkappa \})\cup [k+1, \theta_r]}&
 =  \sum\limits_{\Xi} (-1)^{\varepsilon_\Xi} \det (\tilde B^r)_{(S\setminus\{\varkappa \})\cup [k+1, \sigma]}^{[1, \tau_3]\cup 
\Xi} 
\det (\tilde B^r)_{[\sigma+1, \theta_r]}^{\bar\Xi\cup [\tau_4 +1, \theta_r-r+1]}.
\end{aligned}
\end{equation}
Here  the first sum runs over all $\Theta \subset [\tau_1+1, \tau_2]$ such that $|\Theta|=\sigma - \tau_1-k+1$, 
and $\bar\Theta$ is the complement of $\Theta$ in $[\tau_1+1, \tau_2]$; the second sum runs over all
$\Xi \subset [\tau_3+1, \tau_4]$ such that $|\Xi|=\sigma - \tau_3-r+1$, and $\bar\Xi$ is the complement of $\Xi$ 
in $[\tau_3+1, \tau_4]$; $\varepsilon_\Theta$ and $\varepsilon_\Xi$ depend only on $\Theta$ and $\Xi$, respectively, and 
$[k+1,\sigma]$ is empty if $\sigma = k$. 
Plug \eqref{decomp} into \eqref{uzhas} and note that for any fixed pair $\Theta$, $\Xi$, the coefficient at 
$$
\det (\tilde B^k)_{[\sigma+1, \theta_k]}^{\bar\Theta\cup [\tau_2 +1, \theta_k-k+1]} 
\det (\tilde B^r)_{[\sigma+1, \theta_r]}^{\bar\Xi\cup [\tau_4 +1, \theta_r-r+1]}
$$ 
is equal to
\begin{equation}
\label{uzhas?}
(-1)^{\varepsilon_\Theta+\varepsilon_\Xi}\sum_{\varkappa \in S}  (-1)^{\varepsilon_{\varkappa S}}  
\det (\tilde B^r)_{\{\varkappa\}\cup [k+1, \sigma]}^{[1, \tau_1]\cup \Theta} 
 \det (\tilde B^r)_{(S\setminus\{\varkappa \})\cup [k+1, \sigma]}^{[1, \tau_3]\cup T},
\end {equation}
since the upper left $\sigma\times \tau_2$ blocks of $\tilde B^r$ and $\tilde B^k$ coincide. 
Observe that $[1, \tau_1]\cup\Theta \subset [1,\tau_3]$, and hence \eqref{uzhas?} is equal to the left-hand 
side of the Pl\"ucker relation \eqref{plu2} with $A=\tilde B^r$, $I=S$, $J=[k+1,\sigma]$, $L= [1, \tau_1]\cup\Theta$ and 
$M = \left ( [1, \tau_3]\cup T\right ) \setminus \left ([1, \tau_1]\cup\Theta\right )$. Thus 
\eqref{uzhas?} vanishes for any $\Theta$, $\Xi$, and so \eqref{uzhas} is zero in the case \eqref{compareB1}. 
The case  \eqref{compareB2} can be treated similarly: using the Laplace expansion with respect to the first block column, 
one concludes that  \eqref{uzhas} is zero.
This proves that with $\alpha_\varkappa$ defined by \eqref{alpha}, all conditions \eqref{eqs_rr} are satisfied, 
and therefore \eqref{mm2r} is valid, which completes the proof of the theorem. 

\end{proof}

As it was explained in Section \ref{outline}, we also need a version of Theorem \ref{prototype} 
relating $\CC=\CC_{\bfGr,\bfGc}$ and $\tilde\CC=\CC_{\tbfGr,\bfGc}$, where  $\tbfGr=\tbfGr(\overleftarrow{\Delta}^\er)$ 
is obtained by the deletion of 
the leftmost root in $\Delta^\er$. The treatment of this case follows the same strategy as above.
Once again, we assume that the non-trivial row  $X$-run that corresponds to $\Delta^{\er}\subset\Gamma^{\er}_1$ is 
$[p+1, p+k]$, and the corresponding row  $Y$-run  is  $[q+1, q+k]$. This time, in considering $(\tbfGr,\bfGc)$, we replace 
the former one with $[p+2, p+k]$, and the latter one  with $[q+2, q+k]$, and add a trivial row $X$-run $[p+1,p+1]$ and
a trivial row $Y$-run $[q+1,q+1]$. The rest of nontrivial row $X$- and $Y$-runs as well as all column $X$- and $Y$-runs remain unchanged. In what follows, parameters $p$, $q$ and $k$ are assumed to be fixed.

Let $\tilde\bL=\bL_{\tbfGr(\overleftarrow{\Delta}^\er),\bfGc}$, 
$\tilde\J=\J_{\tbfGr(\overleftarrow{\Delta}^\er),\bfGc}$, and let the functions
${\tt \tilde f}_{ij}(X,Y)$ and $\tilde f_{ij}(X)$ be defined via the same expressions as ${\tt f}_{ij}(X,Y)$ and $f_{ij}(X)$
with $\bL$ and $\J$ replaced by $\tilde\bL$ and $\tilde\J$. A suitable version of Theorem \ref{prototype} can be stated as follows.

\begin{theorem}
\label{matrixmap2}
Let $Z=(z_{ij})$ be an $n\times n$ matrix. Then there exists a unipotent upper triangular $n\times n$ matrix $\nuu(Z)$ 
whose entries are rational functions in $z_{ij}$ with denominators equal to powers of $\tilde f_{p+2,1}(Z)$
such that for $X=\nuu(Z) Z$ and for any $i,j\in [1,n]$,
$$
f_{ij} (X) =\begin{cases}  
{\tilde f_{ij} (Z)}{\tilde f_{p+2,1} (Z)}\quad&\text{if $\J (i,j) = (\L^*, s)$ and $s< s_2$},\\
\tilde f_{ij} (Z)\quad &\text{otherwise},
\end{cases}
$$
where $\L^*\in\bL$ is the $2$-piercing matrix in $\bL$.
\end{theorem}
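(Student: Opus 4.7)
The proof mirrors that of Theorem \ref{matrixmap1} with a left/right reflection driven by the combinatorics of the row $X$-run $\Delta^\er=[p+1,p+k]$ and the row $Y$-run $[q+1,q+k]$, and I will sketch only the modifications.

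First I would describe $\tilde\L$ explicitly. Deletion of the inclined edge between $p+1$ (upper) and $q+1$ (lower) from $\BD_{\bfGr,\bfGc}$ splits the unique maximal alternating path that contains it into two, and this path is precisely the one producing the $2$-piercing matrix $\L^*$. Accordingly $\tilde\L^*$ decomposes into two irreducible diagonal blocks $\tilde\L_1^*,\tilde\L_2^*$ of sizes $s_2-1$ and $N(\L^*)-s_2+1$. For every other $\L\in\bL$ I would define $\tilde\L(X,Y)$ by deleting the top row $p+1$ from every building block of the form $X_{[p+1,n]}^J$ associated to a horizontal edge $i\to n-i$ in the upper part of $\BD_{\bfGr,\bfGc}$ with $n-i+1\in[p+2,p+k]$, and, when present, deleting the bottom $k-1$ rows from the building block $Y_{[1,q+k]}^{\bar J}$ associated to a horizontal edge with $i=q+1$. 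Together with the corresponding bijection $\tilde\J$, this mirrors the initial reduction in the proof of Theorem \ref{matrixmap1} and reduces the claim to the identity
\[
\det\L(\nuu Z,\nuu Z)_{[s,N(\L)]}^{[s,N(\L)]}=\det\tilde\L(Z,Z)_{[s,N(\L)]}^{[s,N(\L)]}
\]
for every $\L\in\bL$ and every $s\in[1,N(\L)]$, with $\nuu$ to be constructed.

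Next I would invoke invariance \eqref{2.1} to reduce the problem to finding a unipotent upper triangular $\nuu_0(Z)$ such that $\det\L(Z,\nuu_0 Z)_{[s,N(\L)]}^{[s,N(\L)]}=\det\tilde\L(Z,Z)_{[s,N(\L)]}^{[s,N(\L)]}$; the matrix $\nuu(Z)=\stackrel{\leftarrow}{\prod}_{i\ge 0}\exp(i\gammar)(\nuu_0(Z))$ then does the job. I would take the left-reflected analog of \eqref{mm0},
\[
\nuu_0(Z)=\one_n+\sum_{\varkappa=2}^{k}\alpha_\varkappa(Z)\,e_{q+1,q+\varkappa},
\]
so that $(\nuu_0 Z)$ differs from $Z$ only in row $q+1$, which becomes $Z_{q+1}+\sum_{\varkappa=2}^{k}\alpha_\varkappa Z_{q+\varkappa}$. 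Where the original proof absorbs the bottommost row $q+k$ of the critical $Y$-block into the rows above it, here we absorb the topmost row $q+1$ of the critical $Y$-block into the rows below it. For $\L\in\bL_\varnothing$ the identity is automatic by the same row-operation argument as in the original proof. For $\L\in\bL\setminus\bL_\varnothing$ with piercing set $\{r_1,\ldots,r_l\}$ satisfying $r_1>\cdots>r_l\ge 2$, I would repeat the block decomposition \eqref{uglymatrix} of $\hat\L$ and apply Lemma \ref{blockmatrix} inductively through the piercing strata $[s_{r_{j+1}},s_{r_j}-1]$, starting however from the smallest piercing index $r_l$ rather than from $r_1$. This cascade reduces the required identities to a family of linear relations on the $\alpha_\varkappa$'s of the form
\[
\sum_{\varkappa\in S}(-1)^{\varepsilon_{\varkappa S}}\alpha_\varkappa\det(\tilde B^{r})_{(S\setminus\{\varkappa\})\cup[k+1,\theta_r]}=0
\]
for every $r\in\{r_1,\ldots,r_l\}$ and every suitable subset $S\subset[1,k]$ containing $2$ rather than $k$ as in \eqref{eqs_rr}. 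I would then solve this system explicitly by setting
\[
\alpha_\varkappa(Z)=(-1)^{\varkappa}\,\frac{\det(\tilde B^{2})_{\{\varkappa\}\cup[k+1,\theta_2]}}{\det(\tilde B^{2})_{[1,2]\cup[k+1,\theta_2]}},\qquad\varkappa=2,\ldots,k,
\]
so that $\alpha_2=1$; the denominator is a constant multiple of $\tilde f_{p+2,1}(Z)$, which yields the required form of the denominators of the entries of $\nuu(Z)$. The case $r=2$ of the linear system collapses to a two-term identity that is immediate, while for $r>2$ the verification reduces, via the Laplace expansion used around \eqref{uzhas}, to a Pl\"ucker identity applied to an appropriate submatrix of $\tilde B^r$.

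The main obstacle is that the inclusion pattern of the blocks $J^r$ is reversed compared to the original proof: there, $J^k\subseteq J^r$ for $r<k$, so the $k$-piercing $X$-block is the innermost; here, $J^2\supseteq J^r$ for $r>2$, so the $2$-piercing $X$-block is the outermost and $\tilde B^2$ is the larger matrix. Consequently the two structural alternatives \eqref{compareB1}, \eqref{compareB2} for the pair $(\tilde B^2,\tilde B^r)$ are reflected, with the roles of the smaller and larger matrix swapped. The Laplace expansion in \eqref{decomp} must therefore be redone with the nesting reversed, and the resulting Pl\"ucker identity applied to the appropriate submatrices. Once this reversal is navigated with care, the remaining steps are essentially verbatim copies of those in the proof of Theorem \ref{matrixmap1}.
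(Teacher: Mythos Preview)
Your overall strategy---define $\tilde\L$, reduce via invariance \eqref{2.1} to an identity involving $\nuu_0$, and use a block decomposition of type \eqref{uglymatrix}---matches the paper. However, the execution has a genuine gap in how the two matrices $\hat\L$ and $\tilde\L$ actually differ, and this propagates into the wrong lemma, the wrong linear system, and an ill-defined solution formula.

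The key point you miss is this: when the leftmost root is deleted, $\tilde\L$ is obtained from $\L$ by removing the \emph{first row} $p+1$ from every $X$-block $X_{[p+1,n]}^{J}$ (your extra deletion of rows from a $Y$-block is spurious). In the decomposition \eqref{uglymatrix} this means $\tilde B^r_1$ is $\hat B^r_1$ with its \emph{first row replaced by zeros}, while $\tilde A_2^r=\hat A_2^r$ stays intact. The effect of $\nuu_0=\one_n+\sum_{\varkappa=2}^k\alpha_\varkappa e_{q+1,q+\varkappa}$ is on the \emph{first} row of $A_2$, i.e.\ $A_2=(\one_k+\sum_{\varkappa=2}^k\alpha_\varkappa e_{1\varkappa})\tilde A_2$. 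So the hypotheses of Lemma~\ref{blockmatrix} are \emph{not} met; one needs its companion Lemma~\ref{blockmatrix2}, whose conditions (ii)--(iv) are exactly these. The induction through the piercing strata proceeds in the same order as in Theorem~\ref{matrixmap1} (from $r_1$ down), not reversed.

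Consequently the linear constraints on $\alpha_\varkappa$ take the form
\[
\sum_{\varkappa\in[1,k]\setminus S}(-1)^{\varepsilon_{\varkappa S}}\alpha_\varkappa\det(\hat B^{r})_{S\cup\{\varkappa\}\cup[k+1,\theta_r]}=0,\qquad S\subset[2,k],\ |S|=k-r,
\]
with the convention $\alpha_1=1$. For $r=2$ this determines
\[
\alpha_\varkappa(Z)=(-1)^{\varkappa-1}\frac{\det(\hat B^{2})_{[1,\theta_2]\setminus\{\varkappa\}}}{\det(\hat B^{2})_{[2,\theta_2]}},
\]
whose denominator is precisely $\tilde f_{p+2,1}(Z)$. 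Your proposed formula $\alpha_\varkappa=(-1)^\varkappa\det(\tilde B^2)_{\{\varkappa\}\cup[k+1,\theta_2]}\big/\det(\tilde B^2)_{[1,2]\cup[k+1,\theta_2]}$ is not even well-defined for $k>2$: the row sets have sizes $\theta_2-k+1$ and $\theta_2-k+2$, neither of which matches the $\theta_2-1$ columns of $\hat B^2$, so these are not square minors. The final Pl\"ucker verification does go through as in \eqref{uzhas} after the substitution $S\mapsto\bar S=[1,k]\setminus S$, with no reversal of the inclusion pattern needed.
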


\begin{proof} Our approach is similar to that in the proof of Theorem \ref{matrixmap1}. 

For any $\L(X,Y) \in \bL$ define $\tilde\L(X,Y)$ obtained from   $\L(X,Y)$ by removing the first row from 
every building block of the form $X_{[p+1,N]}^{J}$. In particular, 
if $\L(X,Y)$ does not have building blocks like that  then $\tilde\L(X,Y)=\L(X,Y)$. 

Similarly to the previous case, all matrices $\tilde \L$ defined above are irreducible except for the one obtained from  
the $2$-piercing matrix $\L^*$. The corresponding matrix $\tilde\L^*$ has two irreducible diagonal blocks 
$\tilde\L_{1}^*$, $\tilde\L_{2}^*$ of sizes $s_2-1$ and $N(\L^*) - s_2+1$, respectively. As was already noted in 
Section \ref{outline}, all maximal alternating paths in $\BD_{\Gammar,\Gammac}$ are preserved in 
$\BD_{\tbfGr (\overleftarrow{\Delta}^\er),\bfGc}$ except for the path that goes through the directed inclined edge
$(p+1)\to (q+1)$. The latter one is split into two: the initial segment up to the vertex $p+1$ and the closing segment
starting with the vertex $q+1$. Consequently, $\tilde{\bL}=\{\tilde \L\: \L\in \bL, \L\ne \L^*\}\cup
\{\tilde\L_{1}^*,\tilde\L_{2}^*\}$. 

As before, if $\J (i,j) = (\L, s)$ and $\L\ne \L^*$ 
then $\tilde{\J}(i,j) =(\tilde\L, s)$. Furthermore, if $\L \in \bL_\varnothing$ then 
additionally ${\tt f}_{ij}(X,Y)$ and  ${\tilde {\tt f}}_{ij}(X,Y)$ coincide. 
However, if $\J (i,j) = (\L^*, s)$ then 
\[
\tilde{\J}(i,j) =
\begin{cases} (\tilde\L_{1}^*, s)\quad& \text{for $s=s(i,j)  < s_2$},\\  
         (\tilde\L_{2}^*, s - s_2 +1)\quad &\text {for $s=s(i,j)  \geq s_2$}.
\end{cases}				
\]

It follows from the above discussion that the claim of the theorem is an immediate corollary of the equalities
\eqref{formeri} for any $\L \in \bL$ and $s\in [1, N(\L)]$.

Let
\begin{equation}\label{mm02}
\nuu_0(Z) = \one_n + \sum_{\varkappa=2}^{k} \alpha_\varkappa e_{q+1,q+\varkappa}
\end{equation}
and
\begin{equation*}
\nuu(Z) ={\stackrel {\leftarrow}  {\prod}_{t\geq 0}}\gamma^t (\nuu_0(Z)).
\end{equation*}
As before,  the invariance property \eqref{2.1} 
allows to reduce the problem to selecting  parameters $\alpha_\varkappa=\alpha_\varkappa(Z)$ such  that
the analog of \eqref{mm2} with $\nuu_0(Z)$ given by \eqref{mm02} is satisfied for all $\L \in \bL$ and 
$s\in [1, N(\L)]$. 

Once again, this relation is satisfied for any choice of $\alpha_\varkappa$ if 
 $\L \in \bL_\varnothing$, that is, if $\L(X,Y)=\tilde \L(X,Y)$, while for   
matrices $\L \in \bL\setminus\bL_\varnothing$ one has to replace 
$\L(Z, \nuu_0 Z)$ by the matrix $\hat\L(Z,\nuu_0 Z)$ similar to the one defined in the proof of 
Theorem~\ref{matrixmap1}.  
Therefore, in what follows we aim at proving the analog of \eqref{mm2r} 
 for all $\L \in \bL\setminus\bL_{\varnothing}$ and $s\in [1, N(\L)]$. 

We can again use decomposition \eqref{uglymatrix} for $\hat\L$ and $\tilde \L$, except that now $\tilde B^r_1$ is obtained from 
$\hat B^r_1$ by replacing the first row with zeros,  whereas the last row of $\tilde A_2^r$ remains as is, 
unlike the previous case. Consequently, for $s\geq s_{r_1}$ the analog of \eqref{mm2r} is satisfied trivially.

For $s_{r_2}\le s \le s_{r_1}-1$, we are in the situation covered by Lemma \ref{blockmatrix2} with 
$\M=\hat\L^{M_{r_2}}_{M_{r_2}}$, $\tilde\M=\tilde\L^{M_{r_2}}_{M_{r_2}}$, $N=\theta_{r_2}-r_2+1$, 
$N_2=\theta_{r_1} -{r_1}+1$, and $k_1=r_1-1$. Condition (iv) in the lemma is satisfied trivially, since in this case 
$B_{[N_1-k_1+2,N]}=\tilde B_{[N_1-k_1+2,N]}$. Consequently, the analog of \eqref{mm2r} 
holds true if the parameters $\alpha_\varkappa=\alpha_{\varkappa}(Z)$ satisfy equations  
\begin{equation}
\label{eqs_r2}
\sum_{\varkappa \in [1,k]\setminus S}  (-1)^{\varepsilon_{\varkappa S}} \alpha_\varkappa 
\det (\hat B^{r_1})_{S\cup\{\varkappa \}\cup [k+1, \theta_{r_1}]}=0     
\end {equation}
for any  $(k-r_1)$-element subset $S$ in $[2, k]$. 

Continuing in the same way as in the proof of Theorem~\ref{matrixmap1} and using Lemma~\ref{blockmatrix2} instead of 
Lemma~\ref{blockmatrix}, we conclude that if conditions 
\begin{equation}
\label{condition_lemma22}
\sum_{\varkappa \in [1,k]\setminus S}  (-1)^{\varepsilon_{\varkappa S}} \alpha_\varkappa 
\det (\hat B^r)_{S\cup\{\varkappa \}\cup [k+1, \theta_r]} = 0
\end {equation}
are satisfied for any $r\in \{r_1,\dots,r_l\}$ and  any  $(k-r)$-element subset $S$ in $[2, k]$, 
then the analog of \eqref{mm2} holds for any $s\in [1, N(\L)]$. 

In particular, when $r=2$, and hence $\L=\L^*$, every  $S$ in
\eqref{condition_lemma22} is obtained by removing a single index $\varkappa$ from $[2,k]$. Therefore, 
the sum in the left hand side of \eqref{condition_lemma22} is taken over a two-element set $\{1,\varkappa\}$
with $\varkappa\in [2,k]$. Since $\varepsilon_{1S}=k-2$ and $\varepsilon_{\varkappa S}=k-\varkappa$,  
$\alpha_\varkappa$ is determined uniquely as
\begin{equation}
\label{alphaB2}
\alpha_\varkappa (Z) = (-1)^{\varkappa-1} \frac{ \det (\hat B^2)_{[1,\theta_2]\setminus\{\varkappa\}}}
{ \det (\hat B^2)_{ [2,\theta_2]}},  \quad \varkappa =1,\ldots, k.
\end{equation}
Therefore \eqref{condition_lemma22} is equivalent to vanishing of
\begin{equation}
\label{snovauzhas}
\sum_{\varkappa \in [1,k]\setminus S}  (-1)^{\varepsilon_{\varkappa S} +\varkappa} 
\det (\hat B^2)_{[1,\theta_2]\setminus\{\varkappa\}} 
\det (\hat B^r)_{S\cup\{\varkappa \}\cup [k+1,  \theta_r]} = 0.
\end {equation}

Denote $\bar S=[1,k]\setminus S$, then $\varepsilon_{\varkappa S}+\varepsilon_{\varkappa\bar S}=k-\varkappa$, and hence
\eqref{snovauzhas} can be re-written as 
\begin{equation*}
(-1)^k\sum_{\varkappa \in \bar S}  (-1)^{\varepsilon_{\varkappa\bar S}} \det (\hat B^2)_{(\bar S\setminus \{\varkappa\}) 
\cup S \cup [k+1,\theta_2]} \det (\hat B^r)_{\{\varkappa \}\cup S\cup [k+1,  \theta_r]} = 0.
\end {equation*}
The latter equation is similar to \eqref{uzhas} in the proof of Theorem \ref{matrixmap1}, and the current proof can be completed in exactly  the same way taking into account that the denominator in \eqref{alphaB2} equals $\tilde f_{p+2,1}(Z)$.
\end{proof}

There are two more versions of Theorem \ref{prototype} relating the cluster structures $\CC_{\bfGr,\bfGc}$ and
$\CC_{\bfGr,\tbfGc}$, where $\tbfGc=\tbfGc(\overrightarrow{\Delta}^\ec)$ or $\tbfGc=\tbfGc(\overleftarrow{\Delta}^\ec)$
for a nontrivial column $X$-run $\Delta^\ec$. They are obtained easily from Theorems \ref{matrixmap1} and~\ref{matrixmap2}
via the involution
\[
\bL_{\bfGr,\bfGc} \ni \L(X,Y) \mapsto \L(Y^T,X^T)^T\in \bL_{\bfG^\ec_{\rm opp},\bfG^\er_{\rm opp}},
\]
where $\bfG_{\rm opp}=(\Gamma_2,\Gamma_1, \gamma^{-1}: \Gamma_2\to\Gamma_1)$ is the {\em opposite\/} BD triple to
$\bfG=(\Gamma_1,\Gamma_2,\gamma:\Gamma_1\to\Gamma_2)$. Consequently, $X$ is obtained from $Z$ via multiplication by a lower triangular matrix, and the distinguished function $\tilde f_v(Z)$ equals $\tilde f_{1,q+k}(Z)$ for 
$\tbfGc=\tbfGc(\overrightarrow{\Delta}^\ec)$ and equals $\tilde f_{1,q+2}(Z)$ for $\tbfGc=\tbfGc(\overleftarrow{\Delta}^\ec)$.

\subsection{Handling adjacent clusters}\label{adjcl}

Let us continue the comparison of cluster structures $\CC=\CC_{\bfGr,\bfGc}$ and $\tilde\CC=\CC_{\tbfGr,\bfGc}$, where  
$\tbfGr=\tbfGr(\overrightarrow{\Delta}^\er)$. Recall that the corresponding initial quivers $Q$ and $\tilde Q$ differ as follows. The vertex $v=(p+k,1)$ is frozen in $\tilde Q$, but not in $Q$. Three of the edges incident to the vertex  $(p+k,1)$ in 
$Q$---the one connecting it to the vertex $(p+k-1,1)$ and the two connecting it to the vertices $(\gammar(p+k-1),n)$ and
$(\gammar(p+k-1)+1,n)$---are absent in $\tilde Q$ (in more detail, the 
neighborhood of $v$ in $Q$ looks as shown in Fig.~\ref{fig:i1nei}(b), Fig.~\ref{fig:n1nei}(a), or Fig.~\ref{fig:n1nei}(b),
while the neighborhood of $v$ in $\tilde Q$ looks as shown in Fig.~\ref{fig:i1nei}(d), Fig.~\ref{fig:n1nei}(c), or 
Fig.~\ref{fig:n1nei}(d), respectively). 

As it was explained in Section \ref{outline}, we have to establish an analog of Theorem~\ref{prototype} for the fields 
$\FF'=\C(\fy_{11},\dots,\fy'_u,\dots,\fy_{nn})$ and $\tilde\FF'=\C(\tfy_{11},\dots,\tfy'_u,\dots\tfy_{nn})$ and the map
$T': \FF'\to\tilde\FF'$ given by 
\begin{equation}\label{Tprime}
T'(\fy_{ij})=\begin{cases} T(\fy_{ij}) \quad &\text{for $(i,j)\ne u$,}\\
                           \tfy'_u\tfy^{\lambda_u}_v \quad & \text{for $(i,j)=u$}
						\end{cases}							
\end{equation}
for some integer $\lambda_u$, where $T: \FF\to \tilde \FF$ is the map constructed in Theorem \ref{matrixmap1}.  
The map $U:\X\to\ZZ$ is also borrowed from Theorem \ref{matrixmap1}, so condition b) in Theorem~\ref{prototype} 
holds true. Condition c) follows 
immediately from \eqref{Tprime}. Condition a) reads $\tilde f'\circ T'=U\circ f'$.

Recall that cluster mutation formulas provide isomorphisms $\mu: \FF'\to\FF$ and $\tilde\mu:\tilde \FF'\to\tilde\FF$ such that 
$f'=f\circ\mu$ and $\tilde f'=\tilde f\circ\tilde\mu$. Consequently, condition a) above would follow from 
$\tilde\mu\circ T'=T\circ\mu$. The latter statement can be reformulated as follows.  

\begin{proposition} \label{phi-tilde-phi} 
Let $\tilde\psi$ be the cluster variable in $\CC(\tilde Q, \tfy)$ obtained via a sequence of mutations at vertices $(i_1,j_1), \ldots, (i_N,j_N)$ in $\tilde Q$ avoiding $v$, and let $\psi$ be a cluster variable in 
$\CC(Q, \fy)$ obtained via the same sequence of mutations in $Q$. Then $\psi = \tilde \psi \tilde \fy^{\lambda_u}_{v}$ for some integer $\lambda_u$.
\end{proposition}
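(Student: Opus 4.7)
The plan is to proceed by induction on the length $N$ of the mutation sequence, working with the monomial identification $T\colon\FF\to\tilde\FF$ built in Theorem~\ref{matrixmap1} and its natural extension to successive mutated fields. The base case $N=0$ is Theorem~\ref{matrixmap1} itself: it asserts $T(\fy_{ij})=\tfy_{ij}\tfy_v^{\epsilon_{ij}}$ with $\epsilon_{ij}\in\{0,1\}$, so we set $\lambda_{(i,j)}^{(0)}=\epsilon_{ij}$. A structural observation used throughout: since neither mutation sequence touches $v$, and since $Q$ and $\tilde Q$ agree on all edges between vertices other than $v$, the mutation rule implies that after any $N$ common mutations the quivers $Q_N$ and $\tilde Q_N$ still differ only in arrows incident to $v$.

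For the inductive step, assume $T(\psi_w^{(N-1)})=\tilde\psi_w^{(N-1)}\tfy_v^{\lambda_w^{(N-1)}}$ for every cluster variable in the current cluster. Apply $T$ to the exchange relation $\psi_u^{(N)}\psi_u^{(N-1)}=M^++M^-$ at $u=(i_N,j_N)$ and compare with the corresponding exchange relation $\tilde\psi_u^{(N)}\tilde\psi_u^{(N-1)}=\tilde M^++\tilde M^-$ in $\tilde Q_{N-1}$. Using the inductive hypothesis factor by factor, one obtains $T(M^\pm)=\tfy_v^{m^\pm}\tilde M^\pm$ for explicit integers $m^\pm$ depending on the $\lambda_w^{(N-1)}$ and on the discrepancy of the $v$-row of $B^{Q_{N-1}}$ and $B^{\tilde Q_{N-1}}$. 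Once we establish $m^+=m^-$, we may set $\lambda_u^{(N)}$ equal to this common value, keep $\lambda_w^{(N)}=\lambda_w^{(N-1)}$ for $w\neq u$, and the claim at step $N$ follows.

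The hard part will be the balance identity $m^+=m^-$, which rewrites as
\[
B^{Q_{N-1},\mathrm{sgn}}_{vu}-B^{\tilde Q_{N-1},\mathrm{sgn}}_{vu}=\sum_{w\neq v}\lambda_w^{(N-1)}\,B^{\tilde Q_{N-1},\mathrm{sgn}}_{uw},
\]
an equality between the $v$-row of the discrepancy of the two signed exchange matrices and a $\lambda$-weighted combination of signed arrows at $u$ in $\tilde Q_{N-1}$. I would promote this to a second inductive invariant indexed by all mutable $u\neq v$. Verifying it at $N=0$ is a combinatorial case check: the three extra arrows at $v$ in $Q$, recorded in Figs.~\ref{fig:i1nei} and~\ref{fig:n1nei}, must match, with signs, the net outgoing arrows from $u$ into the distinguished vertices $w$ with $\epsilon_w=1$, namely those indexed by the trailing segment of the $k$-piercing matrix $\L^*$; this follows from direct inspection of the quiver rules in Section~\ref{thequiver}. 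Its preservation under mutation is then a routine but bookkeeping-heavy consequence of applying the exchange-matrix mutation formula simultaneously to $B^{Q_{N-1}}$ and $B^{\tilde Q_{N-1}}$, using the defining equation of $\lambda_u^{(N)}$ together with skew-symmetry.
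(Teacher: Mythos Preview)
Your inductive strategy is correct and would succeed, but it differs from the paper's argument and re-derives from scratch a lemma the paper simply cites.

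The paper freezes $v$ in $Q$ to obtain $Q_v$, notes that mutations avoiding $v$ in $Q$ coincide with mutations in $Q_v$, and then invokes \cite[Lemma~8.4]{GSVMem} (reproduced as Lemma~\ref{MishaSha}): whenever two exchange matrices differ only in the column of a single frozen vertex and both admit kernel vectors normalized to $1$ at that vertex, the cluster variables in one structure are those of the other times powers of the frozen variable. The hypothesis is checked by observing that the degree vectors $d=(\deg f_{ij})$ and $\tilde d=(\deg\tilde f_{ij})$ lie in the respective kernels, with $d_v=\tilde d_v$; then $\lambda_{ij}=(d_{ij}-\tilde d_{ij})/d_v\in\{0,1\}$ matches the exponent of $\tilde f_v$ in Theorem~\ref{matrixmap1}.

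Your balance identity $m^+=m^-$ at step $N=0$ is precisely the difference of the two kernel equations $B\,w=0$ and $\wB\,\tilde w=0$ restricted to row $u$, so the base case you propose to check by direct quiver inspection is exactly the kernel condition the paper verifies in one line via degrees. Your inductive-preservation step---that the balance identity survives mutation once $\lambda_u$ is updated---is exactly the content of the proof of Lemma~\ref{MishaSha} in \cite{GSVMem}, which is why the paper cites it rather than carrying out the bookkeeping. So what you call ``routine'' is real work, but it is already packaged in the cited lemma; your approach trades a black-box citation for a self-contained induction. The degree-vector verification of the base case is both shorter and conceptually cleaner than a case-by-case reading of Section~\ref{thequiver}.
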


\begin{proof}
Define a quiver $Q_v$ by freezing the vertex $v$ in $Q$ and retaining all the edges from $v$ to non-frozen vertices. Then any sequence of mutations in $Q$ avoiding $v$ translates into the sequence of mutations in $Q_v$, and all the resulting cluster 
variables in $\CC(Q,\fy)$ and $\CC(Q_v,\fy)$ coincide. We will use 
the statement that describes the relation between cluster variables in two cluster structures 
whose initial quivers are ``almost the same''. That is, there is a bijection between vertices of these quivers that restricts to the bijection of subsets of frozen vertices and under this bijection the two quivers differ only in terms of edges incident to one specified frozen vertex. 

\begin{lemma}\cite[Lemma 8.4]{GSVMem} \label{MishaSha}
Let $\wB$ and $B$ be integer $n\times (n+m)$ matrices that differ in the last column only.
Assume that there exist $\tilde w, w\in\C^{n+m}$ such that $\wB\tilde w=Bw=0$ and
$\tilde w_{n+m}=w_{n+m}=1$. Then for any cluster $(x_1',\dots,x_{n+m}')$ in $\CC(\wB)$ there exists a collection of numbers
$\lambda_i'$, $i\in [1,n+m]$,  such that $x_i' x_{n+m}^{\lambda_i'}$ satisfy exchange relations of 
the cluster structure $\CC(B)$. In particular, for the initial cluster $\lambda_i=w_i-\tilde w_i$,
$i\in [1,n+m]$.
\end{lemma}

 In our current situation, $\wB$ and $B$ are adjacency matrices of quivers $\tilde Q$ and $Q_v$, respectively. 
The last columns of $\wB$  and $B$  correspond to the frozen vertex $(p+k,1)$. To establish the claim of 
Proposition \ref{phi-tilde-phi}, we just need to define appropriate weights $\tilde w$ and $w$ and to
show that for any noon-frozen vertex $(i,j)$, $\lambda_{ij}= w_{ij}-\tilde w_{ij}$ coincides with the exponent
of $\tilde f_{p+k,1}(Z)$ in the right hand side of the expression for $f_{ij}(X)$ in Theorem \ref{matrixmap1}.

Put $\tilde d_{ij}=\deg\tilde f_{ij}(Z)$ and $d_{ij}=\deg f_{ij}(X)$. A direct check proves that the vectors
$\tilde d=(\tilde d_{ij})$ and $d=(d_{ij})$ satisfy relations $\wB\tilde d=Bd=0$. Besides, 
$\tilde d_v=d_v=\delta$, and hence vectors $\tilde w=\frac1\delta\tilde d$ and $w=\frac1\delta d$ 
satisfy the conditions of
Lemma \ref{MishaSha}. Moreover, $\tilde d_{ij}$ and $d_{ij}$ coincide for any $f_{ij}$ that is a minor of $\L\ne\L^*$,
or a minor of $\L^*$ with $s(i,j)\ge s_k$. If $f_{ij}$ is  a minor of $\L^*$ with $s(i,j)> s_k$ then 
$d_{ij}-\tilde d_{ij}=\delta$. Consequently $\lambda_{ij}$ satisfies the required condition. 
\end{proof}

\subsection{Base of induction: the case $|\Gamma^\er_1|+|\Gamma^\ec_1|=1$}

It suffices to consider the case $|\Gamma^\er_1|=1$, $|\Gamma^\ec_1|=0$, the other case can then be treated via taking the
opposite BD triple. In this case all the reasoning exhibited in Sections \ref{matrixmaps} and \ref{adjcl} is still valid, so 
to complete the proof we only need to check that every matrix element $x_{\alpha\beta}$ can be expressed as a Laurent polynomial in terms of cluster variables in the cluster $\mu_v(F)$. We will do this directly. 

Let $\bfGr =( \{p\}, \{q\}, p\mapsto q)$ with $q\ne p$ and $\bfGc =\varnothing$.  The functions
forming the initial cluster $F_{\bfGr, \varnothing}$ are $f_{ij}(X)= \det X_{[i,n]}^{[j, n-i +j]}$ for $i\geq j$, 
$f_{ij}(X)= \det X^{[j,n]}_{[i, n-j +i]}$ for $i < j$, $j-i \ne n-q$, and 
$f_{i,n-q+i}(X)= \det \L_{[i,N]}^{[i,N]}$ for $i\in [1, q]$, where $N=n-p+q$ and the $N\times N$ matrix $\L$ is given by
\begin{equation}
\label{easyL}
\L=\begin{bmatrix}  X_{[1,q-1]}^{[n-q+1,n]} & 0\\  X_{[q,q+1]}^{[n-q+1,n]}  &  X_{[p,p+1]}^{[1,n-p]} \\ 0 & X_{[p+2,n]}^{[1,n-p]}
\end{bmatrix}.
\end{equation}
These last $q$ functions distinguish  $F_{\bfGr, \varnothing}$ from $F_{\varnothing, \varnothing}$ that forms an initial cluster for the standard cluster structure on $GL_n$. Also, the function
$f_{p+1,1}(X)= \det X_{[p+1,n]}^{[1, n-p]}$ is a frozen variable in $\CC_{\varnothing, \varnothing}$, but is mutable in 
$\CC_{\bfGr, \varnothing}$. The mutation at $v=(p+1,1)$ transforms $f_{p+1,1}(X)$ into
\begin{equation}\label{newfv}
\begin{aligned}
f'_{p+1,1}(X)&= \frac { f_{p1}(X) f_{p+2,2 }(X) f_{q+1,n}(X) + f_{p+1,2 }(X) f_{qn}(X)}{f_{p+1,1}(X)}\\ &= 
\det \begin{bmatrix}    X_{[q,q+1]}^{[n]}  &  X_{[p,p+1]}^{[2,n-p+1]} \\ 0 & X_{[p+2,n]}^{[2,n-p+1]}
\end{bmatrix}
\end{aligned}
\end{equation}
with $f_{p+2,2}(X)=1$ in case $p=n-1$, see Fig.~\ref{fig:i1nei}(b) and~\ref{fig:n1nei}(b). 
The last equality follows from the short Pl\"ucker relation based on columns $1,2,3, n-p+3$ applied to the 
$(n-p+1)\times (n-p+3)$ matrix
\[
\begin{bmatrix}    \begin{array}{c} 1\\ 0 \end{array} &X_{[q,q+1]}^{[n]}  &  X_{[p,p+1]}^{[1,n-p+1]} \\ 
0 &0 & X_{[p+2,n]}^{[1,n-p+1]}
\end{bmatrix}.
\]

Observe that $\{f_{ij}(X) = f_{ij}\left(X_{[q+1,n]}^{[1,n]}\right): i\in [q+1,n], j\in [1,n]\}$ together with the 
restriction of  $Q_{\varnothing,\varnothing}$ to its lower $n-q$ rows and freezing row $q+1$ form an initial cluster for the standard cluster 
structure $\CC_q$ on $(n-q)\times n$ matrices. It follows immediately from \cite[Prop.~4.15]{GSVb} that every minor of 
$X$ with the row set in $[q+1,n]$ is a cluster variable in $\CC_q$, and hence can be written as a Laurent polynomial in any cluster of $\CC_q$. Note that for $p>q-2$ the variable $f_{p+1,1}(X)$ is frozen in $\CC_q$, therefore, 
by~\cite[Prop.~3.20]{GSVb}, it does not enter the denominator of this Laurent polynomial; for $p\le q-2$ this variable 
does not exist in $\CC_q$. Consequently, all such minors remain Laurent polynomials in the cluster adjacent to the initial one in $\CC_{\bfGr,\varnothing}$ after the mutation at $(p+1,1)$.
In particular, for any $i\in [q+1,n]$, $j\in [1,n]$,  $x_{ij}$   can be written as a Laurent polynomial in this cluster. 

For $s\le q-1$, consider the sequence of consecutive mutations at $(s+1,n), \ldots, (s+1,s), (s+1,s+1), \ldots, (s+1,2)$ starting with the initial cluster in $\CC_{\bfGr,\varnothing}$ and denote the obtained cluster variables $ f'_{s+1,n-t+1}(X)$, 
$t\in [1,n-1]$.
The same sequence of mutations in $\CC_{\varnothing,\varnothing}$ produces cluster variables 
\begin{equation}\label{tfasminors}
\begin{aligned}
\tilde f'_{s+1,n-t+1}(Z)&=\det Z_{\{s\}\cup [s+2,s+t+1]}^{[n-t,n]}, \quad t\in [1,n-s -1],\\ 
\tilde f'_{s+1,n -t+1}(Z)&=\det Z_{\{s\}\cup [s+2,n]}^{[n-t,2n-t -s-1]}, \quad t\in [n-s,n -1]. 
\end{aligned}
\end{equation}
Indeed, every mutation in the sequence is applied to a four-valent vertex, and we obtain consecutively 
\[
\tilde f'_{s+1,n}(Z)=
\frac{\tilde f_{s,n-1}(Z) \tilde f_{s+2,n}(Z) + \tilde f_{s+1,n-1}(Z) \tilde f_{sn}(Z)}{\tilde f_{s+1,n}(Z)}
\]
and
\[ 
\tilde f'_{s+1,n-t}(Z)= 
\frac{\tilde f_{s,n-t-1}(Z) \tilde f_{s+2,n-t}(Z) + \tilde f_{s+1,n-t-1}(Z) \tilde f'_{s+1,n-t+1}(Z)}{\tilde f_{s+1,n-t}(Z)}
\]
for $t\in [1,n-2]$. Explicit formulas \eqref{tfasminors} now follow by applying an appropriate version of the short Pl\"ucker relation.

Recall that by Theorem \ref{matrixmap1},  $X$ and $Z$ differ only in the $q$-th row. Moreover, every minor of $X$ whose row set either does not contain $q$ or contains both  $q$ and $q+1$ is equal to the corresponding minor of $Z$. Let $\tilde\psi(Z)$ be such a minor; invoking  once again \cite[Prop.~4.15]{GSVb}, one can obtain it by a sequence of mutations in 
$\CC_{\varnothing,\varnothing}$. Let $\psi(X)$ be the cluster variable obtained by applying the same sequence of mutations
to the initial seed of   $\CC_{\bfGr,\varnothing}$.  By Proposition~\ref{phi-tilde-phi}, 
$\psi(X) = \tilde\psi(Z) \left (f_{p+1,1}(Z)\right )^\lambda=\tilde\psi(X) \left (f_{p+1,1}(X)\right )^\lambda$ 
for some integer $\lambda$. Clearly, minors in \eqref{tfasminors} satisfy the above condition unless $s+t+1=q$, and hence
\[
f'_{s+1,n-t+1}(X) = \tilde f'_{s+1,n-t+1}(X) \left (f_{p+1,1}(X)\right )^{\lambda_{s+1,n-t+1}}
\]
for $t\ne q-s-1$.
However, the exponents $\lambda_{s+1,n-t+1}$ are easily computed to be all zero. Thus, we conclude that 
\begin{equation}\label{firsttype}
\det X_{\{s\}\cup [s+2,s+t+1]}^{[n-t,n]}= f'_{s+1,n-t+1}(X), \quad t\in [1,n-s -1]\setminus \{q-s-1\}, 
\end{equation}
and
\begin{equation}\label{sectype}
 \det X_{\{s\}\cup [s+2,n]}^{[n-t,2n-t -s-1]}=f'_{s+1,n-t+1}(X), \quad t\in [n-s,n -1],
 \end{equation}
 are cluster variables in  $\CC_{\bfGr,\varnothing}$.

Now we are ready to deal with the entries in the $q$-th row $X$. First, expand $f'_{p+1,1}(X)$ in \eqref{newfv} by the 
first column as
\[
f'_{p+1,1}(X)= x_{qn} f_{p+1,2}(X) + x_{q+1,n} \det X_{\{p\}\cup [p+2,n]}^{[2, n-p+1]}.
\] 
For $p>q$, the row set of $\det X_{\{p\}\cup [p+2,n]}^{[2, n-p+1]}$ lies completely within the last $n-q$ rows of $X$, and hence,
as explained above, it is a Laurent polynomial in the cluster we are interested in. For $p<q$, this determinant is a 
cluster variable in $\CC_{\bfGr,\varnothing}$ by \eqref{sectype} with $t=n-2$, and hence it is a Laurent polynomial in any
cluster in $\CC_{\bfGr,\varnothing}$. Consequently, in both cases $x_{qn}$ is a Laurent polynomial 
in the cluster we are interested in. Further, this claim can be established inductively for $x_{q,n-1}, x_{q,n-2},\ldots, x_{q 1}$ 
by expanding first the minors $f_{q,n-t}(X)=\det X_{[q,q+t]}^{[n-t,n]}$, $t\in [1,n-q]$, and then the minors 
$f_{q,n-t}(X)=\det X_{[q,n]}^{[n-t,2n-t-q]}$, $t\in [n-q+1, n-1]$, by the first row as 
$f_{q,n-t}(X)  = x_{q,n-t} f_{q+1,n-t+1}(X) + P(x_{q, n-t+1},\ldots, x_{qn}, x_{ij}\ : i > q)$,
where $P$ is a polynomial. 
 
Finally, for $s < q$, $x_{sn}$ is a cluster variable in $\CC_{\bfGr,\varnothing}$, and hence is a Laurent polynomial in any cluster.   
For $t=1,  \ldots, q-s-1$, Laurent polynomial expressions for $x_{s,n-t}$ can obtained recursively using expansions of the cluster variable
$f_{s,n-t}(X)=\det X_{[s,s+t]}^{[n-t,n]}$ by the first row exactly as above. For $t=q-s,\dots,n-s-1$, such expressions are obtained 
recursively by expanding the cluster variable $f'_{s+1,n-t+1}(X)$ given by \eqref{firsttype} by the first row as  
$f'_{s+1,n-t+1}(X) = x_{s,n-t} f_{s+2,n-t+1}(X) + P'(x_{s, n-t+1},\ldots, x_{sn}, x_{ij}\ : i > s)$, where $P'$ is a polynomial.
For $t=n-s,\dots,n-1$ we use the same expansion for $f'_{s+1,n-t+1}(X)$ given by \eqref{sectype}. This completes the proof.

\begin{remark}
In fact, one can show that every minor of $X$ whose row set either does not contain $q$ or contains both $q$ and $q+1$ is a 
cluster variable in $\CC_{\bfGr,\varnothing}$.
\end{remark}

\subsection{Auxiliary statements}
In this section we collected several technical statements that were used before.

\begin{lemma}
\label{blockmatrix}
Let $N=N_1 + N_2$, $k = k_1 + k_2$, and let $\M$, $\tilde \M$ be two $N\times N$ matrices 
\begin{equation}\label{twomatrices}
\M = \left [
\begin{array}{@{}cc@{} }
A_1 & 0\\
A_2 & B_1\\
0 & B_2
\end{array}
\right ],\qquad
\tilde\M = \left [
\begin{array}{@{}cc@{} }
\tilde A_1 & 0\\
\tilde A_2 & \tilde B_1\\
0 & \tilde B_2
\end{array}
\right ],
\end{equation}
with block rows of sizes $N_1-k_1$, $k$ and $N_2-k_2$ and block columns of sizes $N_1$ and $N_2$.
Assume that

{\rm (i)} $A_1=\tilde A_1$;

{\rm (ii)} there exists $A'_2$ such that $A_2=\left (\one_k + \sum_{i=1}^{k-1} \alpha_i e_{ik} \right )A'_2$ 
and $\tilde A_2$ is obtained from $A'_2$ by replacing the last row with zeros;

{\rm (iii)} every maximal minor of $B=\begin{bmatrix} B_1\\ B_2\end{bmatrix}$ that contains the last $N_2 - k_2$ rows 
coincides with the corresponding minor of $\tilde B=\begin{bmatrix} \tilde B_1\\ \tilde B_2\end{bmatrix}$.

Then conditions
\begin{equation}
\label{condition_lemma}
\sum_{\varkappa \in S}  (-1)^{\varepsilon_{\varkappa S}} \alpha_\varkappa 
\det B_{S\setminus\{\varkappa \}\cup [k+1, N_2+k_1]}=0
\end {equation}
for any $S\subset [1,k]$ such that $|S|=k_2+1$ and $k\in S$ guarantee that
\begin{equation}
\label{principal}
\det \M_{[s,N]}^{[s,N]} = \det \tilde \M_{[s,N]}^{[s,N]}
\end{equation}
for all $s\in [1,N]$; here $\varepsilon_{\varkappa S} = \# \{ i\in S : i > \varkappa\}$
and $\alpha_k=1$.
\end{lemma}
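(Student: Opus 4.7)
The strategy is first to use the unipotent factor from condition (ii) to reduce $\M$ to an auxiliary matrix that differs from $\tilde\M$ only in the last row of the middle block and possibly in the right-column block, and then to Laplace-expand the trailing minors along the last $N_2$ columns so that the surviving difference is organized into sums matching the left-hand side of \eqref{condition_lemma}.

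First, I introduce
$$\hat\M = \begin{bmatrix} A_1 & 0\\ A'_2 & B'_1\\ 0 & B_2\end{bmatrix},$$
where $B'_1$ is obtained from $B_1$ by the row operations $(B'_1)_i = (B_1)_i-\alpha_i(B_1)_k$ for $i<k$, with $(B'_1)_k = (B_1)_k$. By (ii), $\M = U\hat\M$ for the unipotent matrix $U = \one_N + \sum_{i=1}^{k-1}\alpha_i\,e_{N_1-k_1+i,\,N_1-k_1+k}$, so $\M$ results from a family of elementary row operations on $\hat\M$ with source row $N_1-k_1+k$. I claim these operations preserve every trailing principal minor: when $s\le N_1+k_2$ the source row lies inside the trailing submatrix and the row operations act within it, so the determinant is unchanged; when $s>N_1+k_2$ all of the modified rows sit outside the trailing submatrix entirely. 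Hence the claim reduces to $\det\hat\M_{[s,N]}^{[s,N]} = \det\tilde\M_{[s,N]}^{[s,N]}$.

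Next I Laplace-expand both minors along the block of columns $[N_1+1,N]$. The zero blocks pin all rows in $[s,N_1-k_1]$ to the ``left part'' and all rows in $[N_1+k_2+1,N]$ to the ``right part'' of the expansion, so the free data is a subset $R\subset[1,k]$ of middle-row indices, $|R|=k_2$ (restricted to those present given $s$), assigned to the right. I split the sum by whether $k\in R$. If $k\in R$, the two left-part minors agree (since $(A'_2)_i=(\tilde A_2)_i$ for all $i<k$ and row $k$ is now on the right), and moreover $\det\bigl[(B'_1)_R;B_2\bigr]=\det\bigl[(B_1)_R;B_2\bigr]$ because the row operations from $B_1$ to $B'_1$ subtract multiples of $(B_1)_k$, which is itself present in the minor; by (iii) this matches $\det\bigl[(\tilde B_1)_R;\tilde B_2\bigr]$. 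Thus the $k\in R$ terms cancel between the two sides. If $k\notin R$, the left-part minor of $\tilde\M$ vanishes because its row of local index $k$ is the zero row $(\tilde A_2)_k=0$, so only the $\hat\M$ contribution survives in the difference.

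The residual difference $\det\hat\M_{[s,N]}^{[s,N]} - \det\tilde\M_{[s,N]}^{[s,N]}$ is therefore a signed sum, over subsets $R$ with $|R|=k_2$ and $k\notin R$, of products of the left-part minor of $\hat\M$ complementary to $R$ with $\det\bigl[(B'_1)_R;B_2\bigr]$. Expanding $\det\bigl[(B'_1)_R;B_2\bigr]$ by multilinearity in its rows yields $\det\bigl[(B_1)_R;B_2\bigr]$ together with a signed linear combination, with coefficients $\alpha_i$ ($i\in R$), of minors $\det\bigl[(B_1)_{(R\setminus\{i\})\cup\{k\}};B_2\bigr]$. Collecting the resulting terms by the $(k_2+1)$-subset $S\subset[1,k]$ containing $k$ that they involve, and by the index $\varkappa\in S$ parametrizing which row of $S$ has been removed, the inner $\varkappa$-sum assembles itself into the left-hand side of \eqref{condition_lemma} (with the prescribed $(-1)^{\varepsilon_{\varkappa S}}\alpha_\varkappa$ coefficients) and therefore vanishes by hypothesis. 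The main obstacle is the sign bookkeeping in this reorganization: one must confirm that the combined signs coming from the Laplace expansion, from the multilinearity expansion of $\det\bigl[(B'_1)_R;B_2\bigr]$, and from the row-reorderings triggered by the substitution $R\mapsto(R\setminus\{i\})\cup\{k\}$ reproduce exactly the factor $(-1)^{\varepsilon_{\varkappa S}}$ in every case, and that the (simpler) range $s>N_1$, where the Laplace expansion degenerates, is covered by an analogous direct argument using (iii).
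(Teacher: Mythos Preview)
Your proof is correct, and the sign bookkeeping you flag does work out: for $\varkappa\in R$ one has $\varepsilon_{\varkappa S}=k_2-j(\varkappa)+1$ (where $j(\varkappa)$ is the position of $\varkappa$ in the ordered set $R$) and $\varepsilon_{kS}=0$, which matches exactly the signs produced by the multilinear expansion of $\det[(B'_1)_R;B_2]$ combined with the reordering $(R\setminus\{\varkappa\})\cup\{k\}$.

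The paper uses the same Laplace expansion along the right block column but organizes the computation on the opposite side. Rather than first row-reducing $\M$ to an auxiliary $\hat\M$ (thereby pushing the unipotent factor onto the $B$-block and then expanding $\det[(B'_1)_R;B_2]$ by multilinearity in its rows), the paper works directly with $\M$ and $\tilde\M$ and expands the left-part minors $\det(A_2)_T^\Theta$ and $\det(\tilde A_2)_T^\Theta$ in terms of $\det(A'_2)$-minors via condition~(ii). The two expansions are dual---the paper's on the $A$-side, yours on the $B$-side---and collapse into the same combinatorial collection indexed by $(k_2+1)$-subsets $S\ni k$. Your preprocessing has the pleasant side effect of handling all $s\le N_1$ uniformly with a single Laplace expansion, whereas the paper first treats $s\in[N_1-k_1+1,N_1]$, upgrades the conclusion to arbitrary left column sets $\Theta$, and then reduces the range $s\le N_1-k_1$ to that case via a second Laplace expansion along the top block row.
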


\begin{proof}
Denote
\[ 
\xi_s=\det \M_{[s, N]}^{[s, N]},\qquad \tilde\xi_s=\det \tilde\M_{[s, N]}^{[s, N]}.
\]
By condition (iii), we only need to consider $s \leq N_1$. 
First, fix  $s \in [N_1 - k_1 +1, N_1]$, which means that $\M_{ss}$ is in the block $A_2$. 
We use the Laplace expansion of $\xi_s$ and $\tilde\xi_s$ with respect to the second block column. Define $t=s-N_1+k_1$, then
\begin{equation}
\label{laplace1}
\begin{aligned}
\xi_s &= \sum_{T} (-1)^{\varepsilon_T} \det( A_2)_{T}^{\Theta} \det B_{\bar T\cup[k+1, N_2+k_1]},\\
\tilde\xi_s &=  \sum_{T} (-1)^{\varepsilon_T} \det( \tilde A_2)_{T}^{\Theta}  \det \tilde B_{\bar T\cup [k+1, N_2+k_1]},
\end{aligned}
\end{equation}
where  the sum is taken over all  $(N_1 -s +1)$-element subsets $T$ in $[t,  k]$, 
$\bar T=[t, k]\setminus T$, $\Theta=[s, N_1]$  
and $\varepsilon_T=\sum_{i\in T}i+\varepsilon_s$ with $\varepsilon_s$ depending only on $s$.

By condition (ii), 
\begin{equation}
\label{Ztheta1}
\det (A_2)_{T}^{\Theta} =
\begin{cases}
\det ( A_2')_{T}^{\Theta} &\  \mbox{if}\ k \in T,\\
\det ( A_2')_{T}^{\Theta} + \sum\limits_{\varkappa \in T} 
(-1)^{\varepsilon_{\varkappa T}}\alpha_\varkappa\det (A_2')_{\left( T\setminus \{\varkappa\}\right ) \cup \{k\}}^{\Theta} &\  \mbox{if}\ k \notin T,
\end{cases}
\end{equation}
and
\begin{equation}
\label{Ztheta2}
\det (\tilde A_2)_{T}^{\Theta} =\begin{cases}
0 &\ \mbox{if}\ k \in T,\\
\det ( A_2')_{T}^{\Theta} &\  \mbox{if}\ k \notin T.
\end{cases}
\end{equation}
Besides, $\det B_{\bar T\cup[k+1, N_2+k_1]}=\det \tilde B_{\bar T\cup [k+1, N_2+k_1]}$ by condition (iii). 
Therefore, the difference $\xi_s - \tilde\xi_s$ can be written as a linear combination of $\det(A_2')_{T}^{\Theta}$ 
such that $k \in T$. Let $T = T'\cup \{k\}$;  define $S=\bar T'=\bar T\cup\{k\}$, then $|S|=k_2+1$ and $k\in S$. The
coefficient at $\det(A_2')_{T}^{\Theta}$ equals, up to a sign,
\begin{multline}
  \sum_{\varkappa\in [t,k]\setminus T'}  (-1)^{\varepsilon_{\varkappa, T'\cup\{k\}}+\varkappa} \alpha_\varkappa \det B_{(S\setminus\{\varkappa\})\cup [k+1, N_2+k_1]} \\
=(-1)^{k} \sum_{\varkappa \in S}  (-1)^{\varepsilon_{\varkappa S}} \alpha_\varkappa 
\det B_{(S\setminus\{\varkappa \})\cup [k+1, N_2+k_1]},
\end {multline}
since $\varepsilon_{\varkappa, T'\cup\{k\}}+\varepsilon_{\varkappa S}=k-\varkappa$.
Thus  for \eqref{principal} to be valid for $s \in [N_1 - k_1 +1, N_1]$
it is sufficient that \eqref{condition_lemma} be satisfied for any $S\subset [t,k]$, 
$|S|=k_2+1$, $k\in S$.
In fact, since \eqref{Ztheta1} and \eqref{Ztheta2} remain valid for any set $\Theta \subset [1,N_1]$ of size  $|\Theta|=N_1-s+1$,  
similar considerations show that \eqref{condition_lemma} implies
\begin{equation}
\label{Thetas}
\det \M_{[s,N]}^{\Theta\cup [N_1+1,N]} = \det \tilde \M_{[s,N]}^{\Theta\cup [N_1+1,N]}
\end{equation}
for any such $\Theta$ and $s \in [N_1 - k_1 +1, N_1]$. This, in turn, results in \eqref{principal} being valid for all 
$s\in [1, N_1 - k_1]$. To see this, one has to use  the Laplace expansion of $\xi_s$ and $\tilde\xi_s$ with respect to the block row $[s,N_1-k_1]$: 
\begin{align*}
\xi_s &= \sum_{\Theta} (-1)^{\varepsilon_{\bar\Theta}} \det( A_1)_{[s,N_1-k_1]}^{\bar\Theta} 
\det \M_{[N_1-k_1+1, N]}^{\Theta\cup [N_1+1,N]},\\
\tilde\xi_s &= \sum_{\Theta} (-1)^{\varepsilon_{\bar\Theta}} \det(\tilde A_1)_{[s,N_1-k_1]}^{\bar\Theta} 
\det \tilde\M_{[N_1-k_1+1, N]}^{\Theta\cup [N_1+1,N]},
\end{align*}
where $\bar\Theta=[s,N_1]\setminus\Theta$, and the sums are taken over all subsets $\Theta$ in $[s,N_1]$ of size 
$|\Theta|=k_1$. It remains to note that $\det( A_1)_{[s,N_1-k_1]}^{\bar\Theta} =\det(\tilde A_1)_{[s,N_1-k_1]}^{\bar\Theta}$
by condition (i), and $ \det \M_{[N_1-k_1+1, N]}^{\Theta\cup [N_1+1,N]}=\det \tilde\M_{[N_1-k_1+1, N]}^{\Theta\cup [N_1+1,N]}$
is a particular case of \eqref{Thetas} for $s=N_1-k_1 +1$.
\end{proof}

\begin{lemma}
\label{blockmatrix2}
Let $\M$ and $\tilde\M$ be two $N\times N$ matrices given by \eqref{twomatrices} with the same sizes of block rows and block 
columns. Assume that

{\rm (i)} $A_1=\tilde A_1$;

{\rm (ii)} $A_2=\left (\one_k + \sum_{i=2}^{k} \alpha_i e_{1i} \right )\tilde A_2$;

{\rm (iii)}  $\tilde B_1$ is obtained from $B_1$ by replacing the first row with zeros;

{\rm (iv)} every maximal minor of $B=\begin{bmatrix} B_1\\ B_2\end{bmatrix}$ that contains the last $N_2 - k_2$ rows and does not
contain the first row 
coincides with the corresponding minor of $\tilde B=\begin{bmatrix} \tilde B_1\\ \tilde B_2\end{bmatrix}$.

Then conditions 
\begin{equation}
\label{condition_lemma2}
\sum_{\varkappa \in [1,k]\setminus S}  (-1)^{\varepsilon_{\varkappa S}} 
\alpha_\varkappa \det B_{S\cup\{\varkappa \}\cup [k+1, N_2+k_1]}=0
\end {equation}
for any $S\subset [2,k]$ such that $|S|=k_2-1$ guarantee that
\begin{equation}
\label{principal2}
\det \M_{[s,N]}^{[s,N]} = \det \tilde \M_{[s,N]}^{[s,N]}
\end{equation}
for all $s\in [1,N]$; here $\alpha_1=1$.
\end{lemma}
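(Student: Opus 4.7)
The plan is to mirror the proof of Lemma~\ref{blockmatrix}, with the distinguished row relocated from the bottom of the middle block to its top. Conditions~(ii) and~(iii) together imply that $\M$ and $\tilde\M$ differ only in row $N_1-k_1+1$, namely the first row of the middle block $[A_2\mid B_1]$. Consequently $\det\M_{[s,N]}^{[s,N]}=\det\tilde\M_{[s,N]}^{[s,N]}$ holds trivially whenever $s\ge N_1-k_1+2$, and the task reduces to verifying \eqref{principal2} for $s\in[1,N_1-k_1+1]$.

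First I will handle the pivotal case $s_0=N_1-k_1+1$ by a Laplace expansion of $\xi_{s_0}$ and $\tilde\xi_{s_0}$ along the first $k_1$ columns of the submatrix (which correspond to $[N_1-k_1+1,N_1]$ in $\M$). Since the block $[\,0\mid B_2\,]$ vanishes in these columns, the $k_1$ rows paired with them must come from the middle block; indexing middle-block rows by $[1,k]$ from top to bottom and letting $T\subset[1,k]$, $|T|=k_1$, range over such choices, one obtains
\[
\xi_{s_0}=\sum_{T}(-1)^{\varepsilon_T}\det(A_2)_T^{[N_1-k_1+1,N_1]}\det B_{([1,k]\setminus T)\cup[k+1,N_2+k_1]},
\]
and an analogous expression for $\tilde\xi_{s_0}$. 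The difference $\xi_{s_0}-\tilde\xi_{s_0}$ splits according to whether the distinguished index $1$ belongs to $T$. When $1\in T$, condition~(iv) gives $\det B_{\bar T\cup[k+1,\,\cdot\,]}=\det\tilde B_{\bar T\cup[k+1,\,\cdot\,]}$, while multilinearity along row~$1$ of $A_2$ combined with condition~(ii) produces
\[
\det(A_2)_T^\cdot-\det(\tilde A_2)_T^\cdot=\sum_{i\in[2,k]\setminus T}(-1)^{\sigma_i}\alpha_i\det(\tilde A_2)_{(T\setminus\{1\})\cup\{i\}}^\cdot.
\]
When $1\notin T$, row $1$ is untouched, so $\det(A_2)_T^\cdot=\det(\tilde A_2)_T^\cdot$; however, $1\in\bar T$, and since the first row of $\tilde B$ is zero, $\det\tilde B_{\bar T\cup[k+1,\,\cdot\,]}=0$, whereas $\det B_{\bar T\cup[k+1,\,\cdot\,]}$ may be nonzero. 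Regrouping contributions by the subsets $T_0\subset[2,k]$ of size $k_1$ that label the minors of $\tilde A_2$ actually appearing (with $T_0=(T\setminus\{1\})\cup\{i\}$ in the first branch, $T_0=T$ in the second, and the convention $\alpha_1=1$ absorbing the latter), the coefficient of $\det(\tilde A_2)_{T_0}^\cdot$ in $\xi_{s_0}-\tilde\xi_{s_0}$ equals---up to a global sign---the left-hand side of \eqref{condition_lemma2} with $S=[2,k]\setminus T_0$, so that $|S|=k_2-1$ and the sum runs over $\varkappa\in\{1\}\cup T_0=[1,k]\setminus S$. Hence \eqref{condition_lemma2} forces $\xi_{s_0}=\tilde\xi_{s_0}$.

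The same Laplace expansion applies verbatim with $[N_1-k_1+1,N_1]$ replaced by any $k_1$-subset $\bar C\subset[1,N_1]$, since the $B$-factors depend only on $T$; this yields the auxiliary identity $\det\M_{[s_0,N]}^{\bar C\cup[N_1+1,N]}=\det\tilde\M_{[s_0,N]}^{\bar C\cup[N_1+1,N]}$ for every such $\bar C$. To handle $s<s_0$, I will perform a Laplace expansion along the top rows $[s,N_1-k_1]$, which vanish in the right columns and are identical in $\M$ and $\tilde\M$ by~(i); choosing $k_1$ columns $\bar C\subset[s,N_1]$ for the middle-and-bottom rows to occupy in the left block, the auxiliary identity then gives $\xi_s=\tilde\xi_s$. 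The main technical obstacle is the sign bookkeeping at $s_0$: one must verify that the two contributions to the coefficient of $\det(\tilde A_2)_{T_0}^\cdot$---one from the $1\in T$ branch via multilinearity along the modified row, the other from the $1\notin T$ branch where the $\tilde B$-minor vanishes---assemble into the precise signed sum in \eqref{condition_lemma2}. This parallels the analysis around \eqref{Ztheta1}--\eqref{Ztheta2} in the proof of Lemma~\ref{blockmatrix}, with the distinguished index $k$ replaced by $1$.
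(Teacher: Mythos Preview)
Your overall strategy matches the paper's: Laplace-expand along the first block column, split according to whether the distinguished index $1$ lies in the row-selection $T$, and regroup so that the coefficient of each $\det(\tilde A_2)_{T_0}$ becomes the left-hand side of \eqref{condition_lemma2}. Your two-step treatment of $s\le N_1-k_1+1$ (handle $s_0=N_1-k_1+1$ first, then propagate to smaller $s$ via Laplace along the top rows $[s,N_1-k_1]$) is equivalent to the paper's single expansion with row set $[s,N_1-k_1]\cup T$; the paper just absorbs both steps into one expansion.

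There is, however, a genuine gap in your handling of $s\ge N_1-k_1+2$. You assert that conditions (ii) and (iii) force $\M$ and $\tilde\M$ to differ only in row $N_1-k_1+1$, but the hypotheses say nothing directly about $B_2$ versus $\tilde B_2$: condition (iv) equates only certain \emph{maximal} minors of $B$ and $\tilde B$, not the matrices themselves. So the equality of trailing principal minors for $s\in[N_1-k_1+2,N_1]$ is not literally ``trivial''. The paper handles this range by the same Laplace expansion \eqref{laplace1}: for such $s$ one has $t=s-N_1+k_1\ge 2$, so every $T\subset[t,k]$ avoids $1$, whence $\det(A_2)_T^\Theta=\det(\tilde A_2)_T^\Theta$ by (ii), and every $\bar T\subset[t,k]$ also avoids $1$, so $\det B_{\bar T\cup[k+1,N_2+k_1]}=\det\tilde B_{\bar T\cup[k+1,N_2+k_1]}$ by (iv). Once you replace your ``trivial'' claim with this argument, the proof is complete and in line with the paper's.
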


\begin{proof} 
The proof is a straightforward modification of the proof of Lemma \ref{blockmatrix}.
For  $s \in [N_1-k_1+2, N_1]$, Laplace expansions of $\xi_s$ and $\tilde\xi_s$
with respect to the second block column are given by \eqref{laplace1}. By condition (ii),
$\det (A_2)_T^\Theta=\det(\tilde A_2)_T^\Theta$, while by condition (iv), 
$\det B_{\bar T\cup[k+1, N_2+k_1]}=\det \tilde B_{\bar T\cup [k+1, N_2+k_1]}$. Consequently,
$\xi_s-\tilde\xi_s$ vanishes, and hence \eqref{principal2} holds true.

For $s\in [1,N_1-k_1+1]$, the corresponding Laplace expansions are given by
\begin{equation*}
\begin{aligned}
\xi_s &= \sum_{T} (-1)^{\varepsilon_T} \det A_{[s,N_1-k_1]\cup T}^{[s,N_1]} \det B_{\overleftarrow{T}\cup[k+1, N_2+k_1]},\\
\tilde\xi_s &=  \sum_{T} (-1)^{\varepsilon_T}\det \tilde A_{[s,N_1-k_1]\cup T}^{[s,N_1]}  
\det \tilde B_{\overleftarrow{T}\cup [k+1, N_2+k_1]},
\end{aligned}
\end{equation*}
where $T$ runs over all  $k_1$-element subsets in $[N_1-k_1+1, N_1 + k_2]$
and $\overleftarrow{T}=\{ i-N_1+k_1 \: i\in\bar T\}$ for 
$\bar T=[N_1-k_1+1, N_1 + k_2]\setminus T$.

Next, by conditions (i) and (ii), 
\begin{equation*}
\det A_{\Xi\cup T}^{[s,N_1]}  =
\begin{cases}
\det \tilde A_{\Xi\cup T}^{[s,N_1]}   &\  \mbox{if}\ t \notin T,  \\
\det  \tilde A_{\Xi\cup T}^{[s,N_1]} + \sum\limits_{\chi \notin T}   
(-1)^{k_1-1-\varepsilon_{\chi T}}\alpha_\varkappa\det \tilde A_{\Xi\cup\left( T\setminus \{t\}\right ) \cup 
\{\chi\}}^{[s,N_1]} 
&\  \mbox{if}\ t \in T,
\end{cases}
\end{equation*}
where $\Xi=[s,N_1-k_1]$, $t=N_1-k_1+1$ and $\varkappa=\chi-N_1+k_1\in[1, k]$. Further, by conditions (iii) and (iv),
\begin{equation*}
\det \tilde B_{\overleftarrow{T}\cup [k+1, N_2+k_1]}=\begin{cases}
0 &\ \mbox{if}\ t \notin T,  \\
\det B_{\overleftarrow{T}\cup[k+1, N_2+k_1]} &\ \mbox{if}\ t \in T.
\end{cases}
\end{equation*}
Therefore, the difference $\xi_s - \tilde\xi_s$ can be written as a linear combination of $\det \tilde A_{\Xi\cup T}^{[s,N_1]}$
such that $t \notin T$. Let $\bar T=\{t\}\cup\bar T'$; define $S=\overleftarrow{T}'=\overleftarrow{T}\setminus\{1\}$, then 
$S\subset[2, k]$ and $|S|=k_2-1$. Consequently, the coefficient at $\det \tilde A_{\Xi\cup T}^{[s,N_1]}$ equals, up to a sign, 
\begin{equation*}
\sum_{\varkappa \in [1,k]\setminus S}  (-1)^{\varepsilon_{\varkappa S} } \alpha_\varkappa \det B_{S\cup\{\varkappa \}\cup [k+1, N_2+k_1]}, 
\end{equation*}
and the claim follows.
\end{proof}

\begin{lemma}
Let $A$ be a rectangular matrix, $I=(i_1,\ldots i_k)$ and $J$ be disjoint row sets, $L$ and $M$ be disjoint column sets, 
and $|L|=|J| +1$, $|M|=|I|-2$. Then 
\begin{equation}\label{plu2}
 \sum_{\lambda =1}^k (-1)^\lambda\det A_{\{i_\lambda\}\cup J}^L \det A_{(I\setminus \{i_\lambda\})\cup J}^{L\cup M}=0.
\end{equation}
\end{lemma}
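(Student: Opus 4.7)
My plan is to prove the Plücker-type identity by exhibiting it as the Laplace expansion of a determinant that vanishes for trivial column-dependence reasons. Specifically, I would introduce an auxiliary column vector indexed by $I\cup J$ and exploit the fact that the small minors $\det A_{\{i_\lambda\}\cup J}^L$ are themselves linear in the rows of $A$.

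Here is the construction I have in mind. Consider the augmented matrix $\hat A = [A_{I\cup J}^{L\cup M} \mid b]$ where $b$ is the column vector indexed by $I\cup J$ whose $i$-th entry is
\[
b_i=\det A_{\{i\}\cup J}^{L}
\]
(with the rows of $A_{\{i\}\cup J}^{L}$ arranged so that the row indexed by $i$ is placed first, which fixes the sign convention). Since $|L|=|J|+1$, expanding $b_i$ along the row indexed by $i$ gives $b_i=\sum_{\ell\in L}(-1)^{\ell^\#}a_{i\ell}\det A_J^{L\setminus\{\ell\}}$, where $\ell^\#$ denotes the position of $\ell$ in the ordered set $L$. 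This shows that $b$ is a fixed linear combination of the columns of $A_{I\cup J}^L$; as $L\subset L\cup M$, these columns already occur in $\hat A$. Hence $\det\hat A=0$.

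The next step is to compute $\det\hat A$ by Laplace expansion along the last column $b$. Because $J$ and $\{i\}$ would coincide in a row if $i\in J$, we have $b_i=0$ for $i\in J$, so only the rows indexed by $i_1,\dots,i_k$ contribute. The cofactor of $b_{i_\lambda}$ is, up to a sign determined by the position of $i_\lambda$ among $I\cup J$, precisely $\det A_{(I\setminus\{i_\lambda\})\cup J}^{L\cup M}$. A direct bookkeeping shows that this sign combines with the sign already present in the definition of $b_{i_\lambda}$ (coming from placing $i_\lambda$ first) to yield exactly $(-1)^\lambda$, independent of where the elements of $J$ sit relative to $I$. Equating $\det\hat A$ with this cofactor expansion gives the desired identity \eqref{plu2}.

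The only real obstacle is the sign verification in step three. I would handle it by first reducing to the case where the indices in $I$ are all smaller than those in $J$ (which can be arranged by a simultaneous permutation of the row set that cancels between the two factors in each summand), at which point removing $i_\lambda$ from the ordered list $I\cup J$ contributes the sign $(-1)^{\lambda+1}$, and the sign from placing $i_\lambda$ first in $\{i_\lambda\}\cup J$ becomes trivial; the two combine to $(-1)^\lambda$, as required. Everything else in the argument is a one-line observation.
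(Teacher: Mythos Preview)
Your argument is correct and gives a self-contained proof, whereas the paper simply notes that the identity is a standard Pl\"ucker relation for the matrix $[\one\ A]$ and says no more.  Your singular-matrix trick is in fact one of the classical ways to \emph{prove} such Pl\"ucker relations, so the two approaches are closely related in spirit; yours has the advantage of being fully explicit, while the paper's has the advantage of being one line for a reader who already knows the Grassmannian picture.

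One comment on the sign verification.  Your proposed reduction ``permute the row set so that all of $I$ precedes $J$'' is not quite right as stated: relabelling the rows of $A$ changes each minor $\det A_R^C$ by a sign that depends on the restriction of the permutation to $R$, and these signs do \emph{not} in general cancel between the two factors in a summand.  Fortunately the reduction is unnecessary.  With the rows of $\hat A$ listed in the increasing order of $I\cup J$, let $q_\lambda=\#\{j\in J:j<i_\lambda\}$; then the position of $i_\lambda$ in $I\cup J$ is $p_\lambda=\lambda+q_\lambda$, while your convention ``$i_\lambda$ first'' in $b_{i_\lambda}$ introduces exactly the sign $(-1)^{q_\lambda}$ relative to the natural-order minor $\det A_{\{i_\lambda\}\cup J}^{L}$.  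Hence the cofactor expansion of $\det\hat A$ along the last column contributes the sign $(-1)^{p_\lambda+q_\lambda}=(-1)^{\lambda}$ (up to a global sign independent of $\lambda$), which is what you need.  With this direct bookkeeping in place of the reduction step, your proof is complete.
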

 
\begin{proof} The formula can be obtained from standard Pl\"ucker relations via a natural interpretation of minors 
of $A$ as Pl\"ucker coordinates for $\left [ \one \ A\right]$.
\end{proof}

\end{document}